\DeclareSymbolFont{AMSb}{U}{msb}{m}{n}
\DeclareSymbolFontAlphabet{\mathbbm}{bbold}
\DeclareSymbolFontAlphabet{\mathbb}{AMSb}%
\DeclareRobustCommand{\SkipTocEntry}[5]{}
\newcommand{\fugcrit}{\varphi_c}
\newcommand{\fug}{\varphi}
\newcommand{\SCE}{\mathsf{SCE}}
\newcommand{\ExgCE}{\mathsf{ExCE}}
\newcommand{\Exg}{\mathsf{Ex}}
\newcommand{\ExgE}{\bN\times \bN_0}
\newcommand{\EhNL}{{\ensuremath{\hat{E}^{N,L}}}}
\newcommand{\pgen}{\mathop{}\!\mathrm{Q}^{N,L}}
\newcommand{\cgen}{\mathop{}\!\mathrm{\mathcal{Q}}^{N,L}}
\newcommand{\equi}{\omega}
\newcommand*{\vhat}[2][0pt]{
	\setbox0=\hbox{$#2$}%
	\widehat{\mathrlap{\phantom{\rule{\wd0}{\ht0+{#1}}}}\smash{#2}}%
}
\DeclareMathOperator{\Div}{div}
\DeclareMathOperator{\gDiv}{\overline{div}}
\newcommand{\DivL}{\mathop{\vhat[-0.5pt]{\Div}{\vphantom{\Div}}^{L}}}
\newcommand{\weakto}{\xrightharpoonup{}}
\newtheorem*{rep@theorem}{\rep@title}
\newcommand{\newreptheorem}[2]{%
	\newenvironment{rep#1}[1]{%
		\def\rep@title{#2 \ref{##1}}%
		\begin{rep@theorem}}%
		{\end{rep@theorem}}}
\theoremstyle{plain}
\newtheorem{theorem}{Theorem}[section]
\newtheorem{lemma}[theorem]{Lemma}
\newtheorem*{lemma*}{Lemma}
\newtheorem{corollary}[theorem]{Corollary}
\newtheorem{proposition}[theorem]{Proposition}
\newtheorem*{conjecture*}{Conjecture}
\newtheorem{assumption}[theorem]{Assumption}
\theoremstyle{definition}
\newtheorem{definition}[theorem]{Definition}
\theoremstyle{remark}
\newtheorem{remark}[theorem]{Remark}
\DeclareMathOperator{\AC}{AC}
\DeclareMathOperator{\supp}{supp}
\DeclareMathOperator{\Mom}{M}
\DeclareMathOperator{\tail}{T}
\DeclareMathOperator{\TV}{TV}
\DeclareMathOperator{\ExDiv}{\overline{div}}
\DeclareMathOperator{\ExNabla}{\overline{\nabla}}
\DeclareMathOperator{\VorCel}{VorCell}
\DeclareMathOperator*{\GammaLim}{\Gamma-\lim}
\def\bq{\begin{eqnarray}}
	\def\eq{\end{eqnarray}}
\def\bqq{\begin{eqnarray*}}
	\def\eqq{\end{eqnarray*}}
\def\epsilon{\varepsilon}
\def\net{\mathup{net}}
\newcommand\1{{\ensuremath {\mathds 1} }}
\newcommand\pscal[1]{{\ensuremath{\left\langle #1 \right\rangle}}}
\newcommand{\bsnorm}[2][]{%
	\ifthenelse{\isempty{#1}}%
	{{\ensuremath{|\! |\! |  #2 |\! |\! |_{\beta,s}}}}
	{{\ensuremath{|\! |\! |  #2 |\! |\! |_{\beta,s}^#1}}}
}
\newcommand{\flipL}[1]{{\ensuremath{S_\# {#1^L}^\dagger}}}
\newcommand{\flip}[1]{{\ensuremath{S_\# #1^\dagger}}}
\def\d{{\rm d}}
\def\bC{\mathbb{C}}
\def\bN{\mathbb{N}}
\def\bR{\mathbb{R}}
\def\bJ{\mathbb{J}} 
\def\bW{\mathbb{W}}
\def\bV{\mathbb{V}} 
\def\cB{\mathcal{B}}
\def\cD{\mathcal{D}}
\def\cF{\mathcal{F}}
\def\cL {\mathcal{L}}
\def\cE {\mathcal{E}}
\def\N{\mathcal{N}}
\def\cP{\mathcal{P}}
\def\cM{\mathcal{M}}
\def\cR{\mathcal{R}}
\def\cZ{\mathcal{Z}}
\DeclareMathAlphabet{\mathup}{OT1}{\familydefault}{m}{n}
\newcommand{\dx}[1]{\mathop{}\!\mathup{d} #1}
\newcommand{\pderiv}[3][]{\frac{\mathop{}\!\mathup{d}^{#1} #2}{\mathop{}\!\mathup{d} #3^{#1}}}
\newcommand{\eps}{\varepsilon}
\DeclarePairedDelimiter{\abs}{\lvert}{\rvert}
\DeclarePairedDelimiter{\norm}{\lVert}{\rVert}
\DeclarePairedDelimiter{\bra}{(}{)}
\DeclarePairedDelimiter{\pra}{[}{]}
\DeclarePairedDelimiter{\set}{\{}{\}}
\DeclarePairedDelimiter{\skp}{\langle}{\rangle}
\DeclarePairedDelimiter{\ceil}{\lceil}{\rceil}
\newcommand{\customlabel}[2]{%
   \protected@write \@auxout {}{\string \newlabel {#1}{{#2}{\thepage}{#2}{#1}{}} }%
   \hypertarget{#1}{}
}
\numberwithin{figure}{section}
\numberwithin{equation}{section}
\def\calD{{\mathcal D}} \def\calE{{\mathcal E}}
\def\calP{{\mathcal P}}  \def\calR{{\mathcal R}}
\def\rmA{{\mathrm A}} \def\rmB{{\mathrm B}} 
\def\rmD{{\mathrm D}}
  \def\sfC{{\mathsf C}}
\def\sfD{{\mathsf D}} \def\sfE{{\mathsf E}} 
  \def\sfL{{\mathsf L}}
  \def\sfR{{\mathsf R}}
\def\scrD{{\mathscr  D}}
\def\bbJ{{\mathbb J}}  
 \def\bbN{{\mathbb N}} 
  \def\bbR{{\mathbb R}}
\title[Variational Convergence of irreversible EDG]{Variational convergence for an irreversible exchange-driven stochastic particle system}
\author{Jasper Hoeksema}
\email{j.hoeksema@tue.nl} 
\address{Department of Mathematics and Computer Science, Eindhoven University of Technology}
\author{Chun Yin Lam}
\email{chun.lam@uni-ulm.de}
\author{André Schlichting}
\email{andre.schlichting@uni-ulm.de}
\address{Institute for Applied Analysis, University of Ulm, Germany}
\thanks{This work is partly funded by the Deutsche Forschungsgemeinschaft (DFG, German Research Foundation) under Germany's Excellence Strategy EXC 2044 --390685587, Mathematics M\"unster: Dynamics--Geometry--Structure.}
\begin{document}
 \begin{abstract}
 We show the variational convergence of an irreversible Markov jump process describing a finite stochastic particle system to the solution of a countable infinite system of deterministic time-inhomogeneous quadratic differential equations known as the exchange-driven growth model, which has two conserved quantities.  As a bounded perturbation of the reversible kernel, the variational formulation is a generalization of the gradient flow formulation of the reversible process and can be interpreted as the large deviation functional of the Markov jump process. As a consequence of the variational convergence result, we show the propagation of chaos of the Markov processes to the limiting equation and the $\Gamma$-convergence of the energy functional. The latter convergence is consistent with related results for reversible coagulation-fragmentation equations and reveals the connection of stochastic processes to the long-time condensation phenomena in the limit equation. 
 \end{abstract}

\subjclass[2020]{Primary 35Q70 Secondary 82C22, 70G75, 60F10.}
 
\maketitle

\vspace{-\baselineskip}
\tableofcontents

\section{Introduction}\label{sec:intro}
We show variational convergence of an irreversible time-inhomogeneous stochastic particle system with exchange dynamics in the thermodynamic limit to the deterministic exchange-driven growth model. Consequently, we show the convergence of the stochastic process to the solutions of its mean field equation and connect the phase transition in the deterministic equation with the stochastic process. 
\subsection{Setting}
The EDG model with (possibly time-dependent) kernel $K$ is the system of ordinary differential equations of cluster size density $c$ describing cluster formation in a closed system: 
\begin{equation*}\label{eq:EDG}\tag{\ensuremath{\mathsf{EDG}}}
	\begin{aligned}
		\dot c_k &=\sum_{l\geq 1} K(l,k-1)c_l c_{k-1} - \sum_{l\geq 1} K(k,l-1) c_{k} c_{l-1}  \\
		&\quad-  \sum_{l\geq 1} K(l,k)c_l c_k + \sum_{l\geq 1} K(k+1,l-1)c_{k+1} c_{l-1}\ ,  
	\end{aligned}\qquad\text{ for } k\geq 0\,.
\end{equation*}
The mass $\sum_{k=0}^\infty c_k $ and the first moment $\sum_{k=0} k c_k$ are preserved along the evolution so we normalize $c \in \cP(\bN_0)$ and consider initial values with finite first moments, that is $c(0)\in \cP^1 :=\{c\in \cP(\bN_0): \sum_{k=0}^\infty k c_k <+\infty \}$. 
    
The exchange-driven growth model is a mathematical model for cluster formation with applications in droplet condensation, polymer formation, population dynamics, and wealth exchange~\cite{BenNaimKrapivsky2003}. 
It is a generalization of the well-studied Becker-Döring model~\cite{BCP86} and can also be viewed as a special version of a coagulation-fragmentation equation similar to the Smoluchowski equations~\cite{Smoluchowski1916}, but with quadratic fragmentation dependence on the density.
Basic mathematical theory of EDG, including well-posedness, was investigated in~\cite{Esenturk2018}. 
This model exhibits a phase transition with the formation of an infinite cluster when the exchange rate given by the kernel~$K$ is growing too fast~\cite{EichenbergSchlichting2021,si2024existence}. In the regime of sublinear growth, where global well-posedness results are obtained, the model exhibits the phenomenon of condensation at infinite time, also called phase separation~\cite{EsenturkValazquez2021, Schlichting2020}. This model has recently been generalized to have clusters exchanging mass of arbitrary size~\cite{barik2024discrete}.
 
The system~\eqref{eq:EDG} can be seen as the reaction rate equation with law of mass action kinetics for the chemical network 
\begin{equation}\label{e:EDG:ChemReact}
	\{k-1\} + \{l\} \xrightleftharpoons[K(k,l-1)]{K(l,k-1)} \{k \} + \{l-1\}\ , \qquad \text{for}\quad k, l \geq 1 \ .
\end{equation}
This motivates a microscopic description given by a Markov jump process, in which each jump can be interpreted as a transfer of a particle between clusters.
We consider the infinitesimal generator on the configuration space $\hat{V}^{N,L}=\set[\big]{ c \in   (L^{-1}\bN_0)^{N+1}: \sum_{k=1}^N c_k =1 ;\sum_{k=1}^N c_k k =\frac{N}{L}}$ given by
\begin{align}\label{eq:def:lifted-generator}
	\cgen_t G(c)=\sum_{(k,l): (c,k,l)\in\hat{E}^{N,L}} \kappa^L_t[c](k,l-1) \hat{\nabla}^L_{k,l-1} G(c) \,,
\end{align}
with the edge set $\hat{E}^{N,L}$ containing all $(c,k,l)$ so that $\{c, c^{k,l-1}\}  \subset \hat{V}^{N,L}$. 
Moreover, the discrete gradient from $c$ along an edge labelled by $(c,k,l-1)$ is given by
\begin{equation}\label{eq:def:hatnabla}
	\hat{\nabla}^L_{k,l-1} G(c) :=L( G(c^{k,l-1})-G(c))
	\ \text{ with }\ 
	c^{k,l-1}:=c+L^{-1}\gamma^{k,l-1}
	\text{ and }
	\gamma^{k,l-1}:= \mathbf{e}_{k-1}+\mathbf{e}_{l} -\mathbf{e}_{k}-\mathbf{e}_{l-1}
\end{equation} 
and the jump rates $\kappa^L:\hat V^{N,L} \to \bR_{\ge 0}^{\ExgE}$ defined via the law of mass action by
\begin{equation}\label{eq:jump_rates}
	{\kappa^{L}_t}^{(\dagger)}[c](k,l-1):= \tfrac{L}{L-1}c_t(k)\bigl(c_t(l-1) -L^{-1} \delta_{k,l-1}\bigr) K^{(\dagger)}_t(k,l-1) \qquad \text{ for } (c,k,l) \in \hat{E}^{N,L} \,.
\end{equation}  From the structure of $\gamma$, it is readily seen that the mass and the first moment of $c$ are conserved under a jump.
Hereby, $K^{\dagger}$ denotes the backward kernel~\eqref{eq:backward-kernel}, $K^\dagger=K$ if and only if the kernel is reversible.  We use the notation $(\dagger)$ to define both quantities simultaneously, the one with and without $\dagger$. 
The evolution of the law of the Markov process $\bC^{N,L}$ is given by the \emph{forward Kolmogorov equation~(FKE)}
\begin{equation*}
\partial_t \bC^{N,L}_t =(\cgen_t)^* \bC^{N,L}_t.
\end{equation*} 
The result of \cite{GrosskinskyJatuviriyapornchai2019} implies the convergence of the stochastic process to the classical EDG. Stochastic methods are used in that work; see also the recent generalization in~\cite{LamSchlichting2025}. In this work, we extend the result to time-dependent kernels and remove the second-moment conditions on the initial data, but with slightly more restrictive growth. To our knowledge, it is the first convergence result on the time-inhomogeneous EDG. 

Our main tool is exploiting the variational structure of the above equations, and we will, under suitable assumptions on~$K$, show that the stochastic particle system of exchange dynamics converges to the solution of~\eqref{eq:EDG} in a variational sense. 

In addition, the variational structure provides insight into the mathematical description of condensation via the formal relation to the large deviation rate functional of both the corresponding Markov jump processes and, when the kernel is reversible, the invariant measures and the gradient flow description.

\subsection{Variational convergence}\label{subsec:VarCon}

 We will now introduce the variational formulations of the processes and their limit via energy-dissipation functionals and show that the corresponding gradient structure is stable under convergence.

The general idea of the variational formulation involves relaxing the evolution equation into a continuity equation with an auxiliary flux and then using Young's inequality to define a functional that is zero for the flux that satisfies the evolution equation. 

This approach to view the evolution equation of jump process as a gradient flow of some functional and to show the convergence of finite particle gradient structure to a mean field equation is well-established, see for instance: \cite{ErbarFathiLaschosSchlichting2016,ErbarFathiSchlichting2020} for mean-field limits toward McKean-Vlasov equations on finite graphs, \cite{Schlichting2019} for the macroscopic limit of the related Becker-Döring model~\cite{BCP86}, \cite{FathiSimon2016} for the hydrodynamic limit of the simple exclusion process and~\cite{MaasMielke2020} for the thermodynamic limit of finite chemical reaction networks. Those works are based on quadratic/Riemannian-like gradient flow structure for Markov chains under detailed balance condition by Maas~\cite{maas2011gradient}, Mielke~\cite{mielke2013geodesic}, and Chow~et.~al.~\cite{Chow2012} following the convergence via variational formulation of gradient flows going back to Sandier and Serfaty~\cite{sandier2004gamma,Serfaty11}. The basic strategy of the proof of the present work follows the blueprint in~\cite{ErbarFathiLaschosSchlichting2016}. 

The equivalent continuity equation formulation for \ref{eq:EDG}  is
\begin{equation*}\label{eq:MF}\tag{\ensuremath{\mathsf{MFE}}}
\partial_t c_t + \gDiv \overline{\jmath}_t \text{ with } \overline{\jmath}_t = \kappa_t[c](k,l-1),\quad  \kappa_t[c](k,l-1):=K_t (k,l-1)c_t(k) c_t(l-1), 
\end{equation*}
with $\gDiv$ being the negative dual  of 
\begin{equation}\label{eq:def:exchangeGradDiv}
\overline{\nabla}_{k,l-1} \Phi = - \Phi_k - \Phi_{l-1} + \Phi_{k-1} + \Phi_{l}.
\end{equation}
The energy dissipation functional with respect to \eqref{eq:MF} is given for $(c,j)\in \mathsf{ExCE}(0,T)$ by
\begin{equation*}
\sfL^{\rho\wedge \rho_c}(c,j):=\left.\sfE^{\rho\wedge \rho_c}(c(t))\right|_{t=0}^T + \int_0^T \bigl( \sfR_t(c , j) +  \sfD_t(c)\bigr) \dx t
\end{equation*}
and set to $\sfL^{\rho\wedge \rho_c}(c,j):=+\infty$ if $(c,j)\notin \ExgCE(0,T)$ (see Section~\ref{sec:varEDG} for the complete definition). Here $\rho\wedge \rho_c:= \min(\rho,\rho_c)$ with $\rho_c$ being the critical first moment of the equilibrium (see Definition~\ref{def:eq-meas} below).

Similarly, the forward Kolmogorov equation for the Markov jump process admits the following continuity equation formulation  
\begin{equation*}\label{eq:FKE}\tag{\ensuremath{\mathsf{FKE}_N}}
\partial_t \bC^{N,L}_t + \DivL \hat \bJ^{N,L}_t  = 0, \quad \text{with the reference net flux } \hat \bJ^{N,L}_t:= \hat \bJ^{N,L}[\bC^{N,L}_t],
\end{equation*} 
\begin{equation}\label{eq:def:FluxN}
	\hat \bJ^{N,L}_t[\bC^{N,L}](c,k,l-1) :=    \nu^L_t[\bC^{N,L}](c,k,l-1) 
\end{equation}
and
\begin{equation}\label{eq:expected-flux-finite}{\nu^L_t}^{(\dagger)}[\bC^{N,L}](c,k,l-1):={\kappa^L_t}^{(\dagger)}[c](k,l-1) \bC^{N,L}_t(c). \end{equation}The graph divergence $\DivL$ is
\begin{equation}\label{eq:def-divL}
\DivL \bJ^{N,L} (c) :=  \!\! \sum_{(k,l): (c,k,l) \in \hat{E}^{N,L}}\!\! L\Bigl(\bJ^{N,L}(c,k,l-1)-\bJ^{N,L}(c^{k,l-1},l,k-1)\Bigr).
\end{equation} 
The corresponding energy dissipation functional is given for any $(\bC^{N,L},\bJ^{N,L}) \in \ref{eq:cgce}(0,T)$
\begin{equation}
	\cL^{N,L}(\bC^{N,L}, \bJ^{N,L}):=
\left.\cE^{N,L}(\bC^{N,L}_t)\right|_{t=0}^T
\!+\int_0^T  \Bigl[  \calR^{N,L}_t(\bC^{N,L},\bJ^{N,L})+\calD^{N,L}_t(\bC^{N,L})  \Bigr]\d t \,,
\end{equation}  
see Section~\ref{sec:var4FKE} and Definition~\ref{def:EDF_FKE} for its complete specification.

As the equations are defined on different spaces, we connect \eqref{eq:FKE} and \eqref{eq:MF} by the Liouville equation and the superposition principle. The limit measure $(\bC,\overline\bJ)$ of $(\bC^{N,L}, \hat{\bJ}^{N,L})$ satisfies the Liouville equation
\begin{equation*}\label{eq:Li}\tag{\ensuremath{\mathsf{LiE}}}
\partial_t \bC_t+\ExDiv^\infty \overline\bJ_t=0, \qquad\text{with}\qquad \overline\bJ_t(\dx c,k,l-1)= \nu_t[\bC] (\d c,k,l-1),
\end{equation*}
where
\begin{equation}\label{eq:expected-flux-in-limit}
    {\nu_t}^{(\dagger)}[\bC] (\d c,k,l)= \bC_t(\d c) {\kappa_t}^{(\dagger)}[c](k,l)
\end{equation} 
and the lifted exchange divergence operator $\ExDiv^\infty$ is the dual operator of the lifted gradient operators $	\ExNabla \! \nabla^\infty$ defined via smooth cylindrical test functions $\Phi: \cP^1 \to \bR$ by 
\begin{equation}\label{eq:def:NablaInf}
	\ExNabla_{k,l-1} \! \nabla^\infty \Phi(c) :=(\partial_{c_{k-1}} -  \partial_{c_{k}} +  \partial_{c_{l}} -  \partial_{c_{l-1}}) \Phi(c),
\end{equation}
\begin{equation}\label{eq:def:GradInf}\text{with the infinite gradient }\nabla^\infty \Phi(c) := \bigl(\partial_{c_k} \Phi(c)\bigr)_{k\in \bN_0}  \end{equation} and $\ExNabla_{k,l-1}$ the exchange gradient defined in~\eqref{eq:def:exchangeGradDiv}.  The superposition principle (Theorem~\ref{thm:super_in_dExg}) projects the energy-dissipation functional $\cL$ (Definition ~\ref{def:EDF4Lie}) of the Liouville equation \eqref{eq:Li} to the energy-dissipation functional of \eqref{eq:MF}: 
\begin{equation}\label{eq:SuperPosEDF}
\cL^{\rho\wedge \rho_c}(\bC,\bJ)= \int_{{C([0,T]; \cP^1)}} \sfL^{\rho\wedge \rho_c}\biggl(\gamma, \frac{\d \bJ}{\d \bC}(\gamma)\biggr) \,\lambda (\d \gamma)  \,,
\end{equation}
where $\lambda$ is a path measure concentrated on curves $\gamma$ such that $\bigl(\gamma,\frac{\d \bJ}{\d \bC}(\gamma)\bigr)\in\ExgCE(0,T)$.

By characterizing the zero point of the energy-dissipation functionals as the solution of the corresponding evolution equations, the variational convergence of the functionals implies the convergence of the solutions of the evolution equations via the superposition principle, see Figure~\ref{fig:scheme}.
\begin{figure}[ht]\centering
\adjustbox{scale=0.95,center}{%
\begin{tikzcd}[row sep=large]
\bC^{N,L}\text{ solves }\eqref{eq:FKE} \iff \cL^{N,L}(\bC^{N,L},\bJ^{N,L})=0  \; \arrow[r, "{\substack{N,L \to \infty \\ N/L \to \rho}}", ""']  &  \;  &[-68pt] \cL^{\rho\wedge \rho_c}(\bC,\bJ) =0 \iff \bC \text{ solves } \eqref{eq:Li} \arrow[d, "\substack{\text{via}\\
	\text{superposition principle}\ }"] \\
   (c_i) \in (\frac{1}{L}\bN_0)^{N+1}  \arrow[u, "\substack{\text{The law of the process}\\ \text{with generator $\cgen$~\eqref{eq:def:lifted-generator}}}"] &  \; &\qquad \sfL^{\rho\wedge \rho_c}(c,j(c))=0 \iff c\text{ solves } \eqref{eq:EDG}.  \qquad
  \end{tikzcd}
  }
\caption{Overview of the variational characterizations and proof strategy.}\label{fig:scheme}
\end{figure} 
	The variational characterizations are contained in Proposition~\ref{prop:zero-set-mean-field-EDP-functional}, Proposition~\ref{prop:EDP-solution-and-finite-FKE-solution} and Proposition~\ref{prop:EDP-solution-Li-solution}, respectively. 
The solution here includes a suitable energy dissipation balance and for this reason is called \emph{EDP solution} for the respective equation. The results also entail that all three energy dissipation functionals are non-negative. The non-negativity of $\sfL^{\rho\wedge \rho_c}$ and $\cL^{N,L}$ are consequences of suitable chain rules proved in Proposition~\ref{prop:chain_rule} and Proposition~\ref{prop:finitechainrule}, whereas the non-negativity of $\cL^{\rho\wedge \rho_c}$ follows by the superposition principle from~\eqref{eq:SuperPosEDF} and the non-negativity of $\sfL^{\rho\wedge \rho_c}$.

\subsection{Assumptions}\label{subsec:ass}
We consider the time-dependent perturbed forward kernels $K$  from a given family of time-independent kernel $\overline{K}$ which satisfies the detailed balance condition. 

\begin{assumption}[Local assumptions on the reversible kernel]\label{as:elective}
The reversible kernel $\overline{K}$ is assumed to have at most linear growth, that is 
\begin{equation*}\label{e:ass:K1}
	0 \leq \overline{K}(k,l-1) \leq C_{\overline{K}} \, k\, l \qquad\text{ for } k,l \geq 1.\tag{$\overline{K}_1$}
\end{equation*} 
A stronger assumption is a strict sublinear growth assumption. 
There exists $m$ sublinear: $\lim_{x\to\infty}\frac{x}{m(x)} = +\infty$ such that
\begin{equation*}\label{e:ass:K2}
	0 \le \overline{K}(k,l-1)\le m(k)m(l)   \qquad\text{ for } k,l \geq 1 \,.
\tag{$\overline{K}_2$}
\end{equation*} 
Furthermore, the uniqueness of solutions to~\eqref{eq:EDG} with $\overline{K}$ is ensured under the assumption (see~\cite[Theorem 1.3]{Schlichting2020})
\begin{equation*}
	\label{e:ass:Ku}
	|\overline{K}(l,k)-\overline{K}(l,k-1)|\le C_{\overline{K}} l 
	\quad\text{ and }\quad
	 |\overline{K}(l+1,k-1)-\overline{K}(l,k-1)|\le C_{\overline{K}} k \tag{$\overline{K}_u$} \qquad\text{ for } k,l \geq 1.
\end{equation*} 
\end{assumption}

\begin{assumption}[Global assumptions on the reversible kernel]\label{as:global}
The kernel $\overline{K}:\bN\times \bN_0\to [0,\infty)$ is assumed to be positive 
\begin{equation*}\label{positive_weight}
\overline{K}(l,0)>0 \quad\text{ and }\quad \overline{K}(1,l-1)>0 \quad \forall l \in \bN \tag{$\overline{K}_{>0}$}.
\end{equation*} 
Moreover, the following limit exists and is positive and finite
\begin{equation*}\label{e:ass:Kc}
	\lim_{k \to \infty} \frac{\overline{K}(k,0)}{\overline{K}(1,k-1)} = \fugcrit \in (0,\infty). \ \tag{$\overline{K}_c$}
\end{equation*}
The kernel satisfies the Becker-Doering assumption
\begin{equation*}\label{ass:BDA}\tag{\ensuremath{\mathsf{BDA}}}
	\frac{\overline{K}(k,l-1)}{\overline{K}(l,k-1)}=\frac{\overline{K}(k,0)\overline{K}(1,l-1)}{\overline{K}(l,0)\overline{K}(1,k-1)} \qquad\text{ for } k,l \geq 1.
\end{equation*}
\end{assumption}
Hereby the condition \eqref{e:ass:Ku} ensures uniqueness~\cite{Schlichting2020}, it is the sublinearity requirement \eqref{e:ass:K1} that ensures stability of \eqref{eq:EDG} and convergence of the particle system under narrow convergence. Moreover, \eqref{e:ass:Kc} implies additional regularity and exponential tails on $w$, which is crucial in proving the chain rule. 
\begin{assumption}\label{e:ass:b}
A time-inhomogeneous kernel $K:[0,\infty)\times \bN \times \bN_0$ is called \emph{admissible} provided that there exists 
a reverse kernel~$\overline{K}$ satisfying Assumptions \ref{as:elective} and \ref{as:global} and a bounded function $b\in L^\infty([0,\infty)\times \bN \times \bN_0)$ satisfying for some $C_b\in (0,\infty)$ the bound
\[  \sup_{t\geq 0} | b_t(l,k) - b_t(l,k-1)|\le C_b k^{-1} \text{ and}\quad \sup_{t\geq 0} | b_t(l+1,k-1) -  b_t(l,k-1) | \le C_b l^{-1} \,, \] 
such that 
\[ K_t(k,l)= \overline{K}(k,l) \exp\bigl(b_t(k,l)\bigr) \,. \] 
\end{assumption}
The assumption of Assumption \ref{e:ass:b} is chosen such that the condition \eqref{e:ass:Ku} holds for $K$. It can be readily checked that an adaptation of the arguments in the classic proof of existence via truncation and uniqueness in~\cite[Theorem 2.12]{Schlichting2019} generalizes to the time-inhomogeneous setting. 
Nevertheless, in Proposition~\ref{prop:EDG-time:Existence} in Appendix~\ref{append:Exist}, we provide a direct proof via the Banach fixed-point theorem for the existence and uniqueness of the time-dependent kernel~$K$. This implies the uniqueness of the solution of~\eqref{eq:EDG} with perturbed kernel~$K$.

The time-inhomogeneous Markov jump process can be rigorously defined via the piecewise deterministic jump process~\cite{davis1993markov}. 
Under the boundedness and measurability assumption of $b$, the existence and uniqueness of the Markov process for the finite particle system can be obtained from the time-homogeneous theory by the time-space process because the corresponding generator is a bounded operator for fixed $(N,L)$.
We note that in \cite{faggionato2022}, this type of perturbation from the reversible kernel has been considered for linear response problems.
\begin{definition}[Backward kernel]
 The backward kernel $K^\dagger$
is defined via 
\begin{equation}\label{eq:backward-kernel}
        K_t(k,l-1)\equi_k \equi_{l-1}= K^{\dagger}_t (l,k-1)\equi_{l}\equi_{k-1} \,  \text{ for all } t\ge0.
\end{equation}
\end{definition}
\begin{remark} 
	As discussed in \cite[after Example 1.6]{Schlichting2020}, the \eqref{ass:BDA} is equivalent to the detailed balance condition 
	\begin{equation}\label{eq:unperturbed-DBC}
		\overline{K}(k,l-1)\equi_k \equi_{l-1}=\overline{K}(l,k-1) \equi_l \equi_{k-1}
	\end{equation}
	for some equilibrium measures $\omega \in \cP^1$ which can be parametrized by the first moment $\rho \in [0,\rho_c]$ (with $\rho<+\infty$ if $\rho_c=+\infty)$) up to the maximum first moment $\rho_c\in [0,\infty]$. 
	However, the equation \eqref{eq:backward-kernel} is invariant under the change of first moment.
	In other words, the perturbed kernel $K$ does not depend on the first moment of the equilibrium measure but only on the kernel $\overline{K}$.
\end{remark} 
More explicitly, the detailed balance condition \eqref{eq:unperturbed-DBC} is satisfied by a one-parameter family of probability measures:
\begin{definition}[Equilibrium cluster distributions]\label{def:eq-meas}
The \emph{equilibrium cluster distributions} of the reversible kernel $\overline{K}$ 
\begin{align}\label{eq:canmeasure}
	\equi^\fug(n)= \frac{1}{Z(\fug)} \fug^n w(n) \quad\text{with}\quad Z(\fug) := \sum_{n= 0}^\infty \fug^n w(n) \quad\text{for any } \fug \in [0,\fugcrit) \,,
\end{align}
where $\fugcrit:=\sup \{ \fug \colon Z(\fug)<+\infty\}\in [0,\infty]$ and
\begin{equation}\label{eq:def:w} 
w(0):=1 , \ 
w(1):=1 , \ 
w(n):= \prod_{k=2}^{n} \frac{\overline{K}(1,k-1)}{\overline{K}(k,0)} \quad \text{ if } 1 \le n\in \bN.  
\end{equation}
The \emph{maximal cluster volume} is given by $\rho_c:= \limsup_{\fug\nearrow \fugcrit} \Mom_1(\equi^\fug)\in [0,\infty]$ with the convention $\rho_c=0$ for $\fugcrit=0$.
Since the first moment map $\fug \mapsto \rho (\fug): = \Mom_1(\omega^\fug)$ is also monotone, 
which implies that for $\rho \in [0,\rho_c]$ there is a unique $\Phi(\rho)\in [0,\fugcrit]$ such that
\begin{align}\label{eq:canmeasure:rho}
	\sum_{k\geq 1} k \equi^{\Phi(\rho)}(k) = \rho \qquad\text{and we write}\qquad \equi^\rho := \equi^{\Phi(\rho)} . 
\end{align}
In other words, the measure $\equi^\rho $ is the equilibrium distribution for the cluster sizes in the exchange-driven growth model~\eqref{eq:EDG} with the initial first moment $\rho $. 
\end{definition}
Clearly, \eqref{e:ass:Kc} implies that
\[\lim_{k\to \infty} \frac{w(k-1)}{w(k)}=\fugcrit,\]
motivating why the parameter $\fug$ ranges from $0$ to $\fugcrit$. This parameter is called \emph{fugacity}, which determines the expected cluster size. 
To exclude the discussion about the \emph{vacuum} state for $\rho_c=0$, the assumption~\eqref{e:ass:Kc} also enforces $\fugcrit>0$, which implies $\rho_c>0$.
\begin{remark}
	The definition of $K^\dagger$ \eqref{eq:backward-kernel} implies 
	\[
		\quad K^{\dagger}_t(k,l)=\overline{K}_t(k,l)\exp(b_t(l+1,k-1)) \quad \forall t\ge0.
	\]
	The definition of $K^\dagger$ is related to the jump rate of the time-reversed process. We refer to \cite[Remark 10.2]{Hoeksema-thesis} for details.
\end{remark}

\subsection{Main results}
Our main result is the Energy Dissipation Principle (EDP) convergence of a Markov jump process with a time-dependent, irreversible quadratic kernel, which is a bounded perturbation from the detail-balanced kernel of the exchange-driven growth model in the thermodynamic limit.  Under this convergence, the structure of the EDP functionals in energy is stable.  To show the EDP convergence, we establish a general $\Gamma$-convergence of relative entropy with product reference measure, which, in particular, covers the equilibrium measure of the reversible kernel $\overline{K}$.

\begin{theorem}[$\Gamma$-convergence of energy]\label{thm:gameconv-energy} 
 	The sequence of entropies $\bigl(
    \bC^{N,L}\mapsto \mathcal{Ent}(\bC^{N,L}\,|\,\mathbbm{\Pi}^{N,L}_\omega) \bigr)_{L}$ is equicoercive in the narrow topology of $\cP(\cP_{\le\bar{\rho}})$ (Definition \ref{def:conv_of_pair_measure}), and
    \begin{equation}\label{eq:thmgammalim}
        \GammaLim_{L\to \infty, N/L\to \rho} \frac{1}{L} \mathcal{Ent}\bigl(\cdot\,|\,\mathbbm{\Pi}^{N,L}_\omega\bigr)\bigl(\bC\bigr)
        = \int  \pra[\Big]{ \mathsf{Ent}\bigl(c\,|\,\equi^{\rho \wedge \rho_c}\bigr) + \bigl( \log \Phi(\rho_c)- \log \Phi(\rho)\bigr)_+ \bigl(\rho-\Mom(c) \bigr)  } \bC(\d c),
    \end{equation} where $\Phi(\rho)$ and $\rho_c$ are defined in  Definition~\ref{def:eq-meas} and we use the convention $+\infty \cdot 0 = 0$. The functions $\mathcal{Ent}, \mathsf{Ent}$ are the relative entropies on the respective spaces.

    Equivalently, the family $(\mathbbm{\Pi}^{N,L}_\omega)_{N,L}$ is exponentially tight 
    and satisfies the large deviation principle with rate $L$ and rate function given in terms of the integrand in~\eqref{eq:thmgammalim}.
\end{theorem}

\begin{remark}
\begin{enumerate}[ (i) ]
  \item In the following applications of the above theorem, we always choose $\omega$ as the equilibrium measure of the reversible part $\overline{K}$. In this case, by Assumption \ref{e:ass:Kc}, $\Phi(\rho_c) \in (0,\infty)$.
  \item For the reversible kernel $\overline{K}$,  the condensation phenomena at equilibrium in the  EDG system were proved in \cite[Theorem 1.7]{Schlichting2020} for $\rho_c<\infty$. The $\Gamma$-convergence result, in particular, connects the condensation phenomena in EDG to the stochastic process. When the first moment $N/L$ along the finite stochastic particle system converges to $\rho>\rho_c$, then the invariant state of the finite particle process converges weakly to $\omega^{\rho_c}$. This loss of the first moment hints at the formation of an infinite cluster at long time. The condensation phenomenon has also been studied in other related stochastic particle systems such as zero-range processes \cite{Grosskinsky-Schutz-Spohn2003,ChlebounGarbielGrosskinsky2022,ChlebounGrosskinsky2014}.
  \item The same functional has been proved as the large deviation rate functional of the empirical measure for the reversible Smoluchowski coagulation and fragmentation equation in~\cite{sun2024condensation}, for the Boltzmann equation and Kac model in~\cite{Basile2024}. Similarly, a generalization to multiple constraints, in the context of extended Sanov's theorem, can be found in~\cite{Nam2020}. In all cases, the proofs work via a modification of G\"artner-Ellis theorem, resulting in a dual representation of the rate function. In contrast, we employ a mixed approach, using both dual representations and explicit recovery sequences in the $\Gamma$-convergence. Moreover, our techniques allow us to relax the assumptions on the equilibrium measure. 
  \item The extra term $\bigl( \log \Phi(\rho_c)- \log \Phi(\rho)\bigr)_+ \bigl(\rho-\Mom(c) \bigr)$ of the limiting rate functional (see right-hand side of~\eqref{eq:thmgammalim}) also occurs as the defect in the relative entropy in the characterization of the longtime limit of~\eqref{eq:EDG} if started from an initial datum with $\rho>\rho_c$, see~\cite[Theorem~1.7]{Schlichting2020}.
\end{enumerate}
\end{remark}

\begin{definition}[Convergence of pair measures]\label{def:conv_of_pair_measure}
	Let $N,L\to \infty$ with $\frac{N}{L} \to \rho \in (0,+\infty)$ and set $\overline{\rho}:=\sup \frac{N}{L}$.
	A pair $(\bC^{N,L}, \bJ^{N,L})\in \ref{eq:cgce}(0,T)$ converges to $(\bC,\bJ)$, provided that
	\begin{alignat*}3
		\bC^{N,L}_t &\weakto \bC_t && \text{ narrowly in } \cP(\cP_{\le \overline{\rho}}) \text{ for each } t\in [0,T],\\
		\bJ^{N,L}_t(\d c,k,l-1)\dx r&\to \bJ_t(\d c,k,l-1)\dx r \quad  && \text{ narrowly in } \cM^+ \bigl(\cP_{\le \overline{\rho}}\times \bN \times \bN_0\times [s,t]  \bigr).
	\end{alignat*} 
 where $\cP_{\le\overline{\rho}}$ is the subset of $\cP^1$ with first moment less than or equal to $\overline{\rho}$ and is equipped with the topology of weak convergence, for each  $0\le s\le t \le T$,
\end{definition}

\begin{theorem}[EDP convergence]\label{thm:EDP_con}
	Let $0<\rho  <+\infty$. The finite particle gradient structure $\bigl(\hat V^{N,L},\hat E^{N,L},\calE^{N,L},\calR^{N,L}\bigr)$ \emph{EDP converges} to the limiting gradient structure $\bigl(\cP_{\leq\overline\rho},E(\cP_{\leq\overline\rho}),\allowbreak\calE^{\rho\wedge \rho_c},\calR\bigr)$, that is 
	\begin{enumerate}[(a)] 
		\item $\Gamma$-convergence of the free energy on $\cP(\cP_{\le \overline{\rho}})$ in the narrow topology
\begin{equation}\label{eq:Gamma-free} 
		\GammaLim_{N/L \to \rho} \cE^{N,L}(\bC^{N,L}) =\cE^{\rho\wedge \rho_c}(\bC) \,.
		\end{equation}
		\item  $\Gamma$-$\liminf$ of dissipation potentials: Any sequence $(\bC^{N,L},\bJ^{N,L})\in \ref{eq:cgce}(0,T)$ converging to $(\bC,\bJ)\in \ref{eq:def:CEinfty}(0,T)$ 
		with
		\begin{equation}\label{eq:uniform potential bound}\begin{split}\sup_{N/L \to \rho}   \int_{0}^{T} \cR^{N,L}_t(\bC^{N,L},\bJ^{N,L}) \dx t<+\infty \,,
		\end{split}\end{equation} 
		satisfies
		\begin{equation}\label{eq:lscPotentials}
			\liminf_{N/L\to \rho} \int_0^T  \Bigl( \cR^{N,L}_t(\bC^{N,L},\bJ^{N,L})+ \cD^{N,L}_t (\bC^{N,L})\Bigr) \dx t 
                \ge \int_0^T \Bigl(\cR_t(\bC,\bJ)+ \cD_t (\bC) \Bigr) \dx t \,
		\end{equation}  where $\cR,\cD$ are the dissipation potentials of the energy-dissipation functional of the limit system.
		In particular, the limit flux satisfies $\bJ_t\ll\mathbbm{\Theta}_t [\bC_t]$ for each $k,l-1$ and almost all $t\in[0,T]$.
		\item Under assumption \eqref{e:ass:K2}, let $((\bC^{N,L},\bJ^{N,L})\in \ref{eq:cgce}(0,T))_{\frac{N}{L}\to \rho}$ and \eqref{eq:uniform potential bound} hold. Then 
		\begin{enumerate}[(i)]
			\item $(\bC^{N,L},\bJ^{N,L})\in \ref{eq:cgce}(0,T)$ converges to $(\bC,\bJ)\in \ref{eq:def:CEinfty}(0,T)$ along a subsequence.
			\item For each $t\in[0,T]$, $\bC_t$ is concentrated on $\cP_{\le \!\rho}$.
		\end{enumerate}
		
	\end{enumerate}
\end{theorem}

\begin{theorem}[$\Gamma$-lower-semicontinuity of energy-dissipation functional]\label{thm:EDP_con_EDP}
Let $(\bC^{N,L},\bJ^{N,L})\in \ref{eq:cgce}(0,T)$ be a sequence converging to $(\bC,\bJ)\in \ref{eq:def:CEinfty}(0,T)$ and satisfy
\begin{enumerate}[(a)]
\item well-preparedness of initial data:
\begin{equation}\label{eq:well-pose-initial-data}
\lim_{N/L\to \rho} \cE^{N,L}(\bC^{N,L}_0) =\cE^{\rho\wedge \rho_c}(\bC_0) <+\infty \,,
\end{equation} 
\item uniformly bounded energy dissipation functional:
\[
	\sup_{N/L \to \rho} \cL^{N,L}(\bC^{N,L},\bJ^{N,L})<+\infty \,;
\] 
\item EDP solution property:
\[
	\liminf_{N/L\to \rho} \cL^{N,L}(\bC^{N,L},\bJ^{N,L})\le 0 \,;
\]
\end{enumerate}
Then it holds
\[
	\cL^{\rho\wedge\rho_c}(\bC,\bJ)\le 0 \,.
\]
\end{theorem}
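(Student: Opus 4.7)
The plan is to assemble the result from the EDP convergence statement in Theorem \ref{thm:EDP_con} together with the well-preparedness hypothesis by decomposing the energy dissipation functional into its three natural constituents (initial energy, final energy, and the time-integrated dissipation potentials) and controlling each one separately.

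First, I would use hypothesis (b) together with non-negativity of the relative entropy $\cE^{N,L}$ and the dissipation functionals to derive the uniform bound
\[
\sup_{N/L\to\rho} \frac{1}{L}\int_0^T \cR^{N,L}(\bC^{N,L}_t,\bJ^{N,L}_t) \dx t <+\infty \,,
\]
namely by rearranging $\cL^{N,L} \ge L^{-1}\cE^{N,L}(\bC^{N,L}_T) - L^{-1}\cE^{N,L}(\bC^{N,L}_0) + \int_0^T(L^{-1}\cR^{N,L}+\cD^{N,L})\dx t$, using $\cE^{N,L}(\bC^{N,L}_T)\ge 0$ and the bound on the initial energy from (a). This verifies the hypothesis \eqref{eq:uniform potential bound} of Theorem \ref{thm:EDP_con}(b), so the $\Gamma$-$\liminf$ inequality for the dissipation potentials \eqref{eq:lscPotentials} is available.

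Next, I would treat the boundary term $L^{-1}\cE^{N,L}(\bC^{N,L}_t)|_{t=0}^T$ by splitting it. The well-preparedness (a) directly provides the convergence $L^{-1}\cE^{N,L}(\bC^{N,L}_0)\to \cE^{\rho\wedge\rho_c}(\bC_0)$. For the terminal time, the $\Expair$ convergence gives in particular $\bC^{N,L}_T \ExWeakTo \bC_T$, so the $\Gamma$-convergence part (a) of Theorem \ref{thm:EDP_con} supplies the $\Gamma$-$\liminf$ inequality
\[
\liminf_{N/L\to\rho} L^{-1}\cE^{N,L}(\bC^{N,L}_T) \ge \cE^{\rho\wedge\rho_c}(\bC_T) \,.
\]
Since the initial term converges (not just $\liminf$), these combine cleanly to
\[
\liminf_{N/L\to\rho} L^{-1}\bigl(\cE^{N,L}(\bC^{N,L}_T)-\cE^{N,L}(\bC^{N,L}_0)\bigr) \ge \cE^{\rho\wedge\rho_c}(\bC_t)\bigr|_0^T \,.
\]

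Finally, I would combine the two liminf inequalities. Using $\liminf(a_n+b_n)\ge \liminf a_n + \liminf b_n$ (which is valid since both liminfs are bounded below, the dissipation terms being non-negative and the free energy difference being bounded below by the $\Gamma$-$\liminf$ just obtained), I get
\[
\liminf_{N/L\to\rho}\cL^{N,L}(\bC^{N,L},\bJ^{N,L}) \ge \cE^{\rho\wedge\rho_c}(\bC_t)\bigr|_0^T + \int_0^T\bigl(\cR(\bC_t,\bJ_t)+\cD(\bC_t)\bigr)\dx t = \cL^{\rho\wedge\rho_c}(\bC,\bJ).
\]
Together with hypothesis (c) this yields $\cL^{\rho\wedge\rho_c}(\bC,\bJ)\le 0$, as desired. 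The hypotheses of Theorem \ref{thm:EDP_con} about $(\bC,\bJ)\in \ref{eq:CEI}(0,T)$ are exactly the data of the present statement, so no further compactness work is needed here. The main substantive step is the $\Gamma$-$\liminf$ estimate for the dissipation part (Theorem \ref{thm:EDP_con}(b)); the current theorem is the packaging that turns that analytic statement, together with the EDP solution property, into a statement about the limiting functional. The only delicate bookkeeping is confirming that all one-sided limits can be added, which is why extracting the uniform dissipation bound in the first step is essential.
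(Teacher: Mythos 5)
Your proposal is correct and follows essentially the same route as the paper, which likewise deduces the claim by combining the lower semicontinuity of the free energy (Proposition~\ref{prop:cty_energy}, applied at the terminal time together with the well-preparedness at time zero) with the $\Gamma$-$\liminf$ of the dissipation potentials \eqref{eq:lscPotentials} from Theorem~\ref{thm:EDP_con}(b). Your explicit derivation of the uniform dissipation bound \eqref{eq:uniform potential bound} from hypotheses (a) and (b) is a detail the paper leaves implicit, but it does not change the argument.
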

Recall that by an EDP solution, we mean a pair of a curve and a flux such that the associated energy dissipation functional is non-positive.
By the variational characterization of evolution equations, they are the weak solutions of the corresponding evolution equation. 
As a consequence of the theorem above, we obtain the convergence of solutions in law.
\begin{corollary}[Convergence of EDP solutions]\label{cor:sol2sol}
	Under assumption  \eqref{e:ass:K2}, consider a sequence  $(\bC^{N,L},\bJ^{N,L})\in\ref{eq:cgce}(0,T)$ in the limit $\frac{N}{L}\to \rho\in (0,\infty)$.
	Suppose
	\begin{enumerate}[(a)]
		\item $(\bC^{N,L},\bJ^{N,L})$ is an EDP solution of~\eqref{eq:FKE} (see Proposition \ref{prop:EDP-solution-and-finite-FKE-solution});
		\item $\bC^{N,L}_0 \to \bC_0$ narrowly and it is well-prepared satisfying~\eqref{eq:well-pose-initial-data}.
	\end{enumerate}
	Then  
	\begin{enumerate}[(i)]
		\item there exists a convergent subsequence  with the  limit $(\bC,\bJ)\in \ref{eq:def:CEinfty}(0,T)$  and it is an EDP solution of~\eqref{eq:Li} (see Proposition~\ref{prop:EDP-solution-Li-solution});
		\item there exists a path measure $\lambda$ which is concentrated on EDP solutions to~\eqref{eq:MF} (see Proposition \ref{prop:zero-set-mean-field-EDP-functional}) and $(e_t)_\#\lambda = \bC_t$ for each $t$;
		\item the energy converges 
		 \[
		 	\lim_{N/L\to \rho} \cE^{N,L}(\bC^{N,L}_t) = \cE^{\rho\wedge \rho_c}( \bC_t ) \qquad \text{for each } t\in [0,T] \, .
		 \]
	\end{enumerate}
\end{corollary}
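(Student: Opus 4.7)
The plan is to stitch together the machinery already developed: turn the EDP-solution property into a uniform rate bound, apply Theorem~\ref{thm:EDP_con}(c) for compactness, invoke Theorem~\ref{thm:EDP_con_EDP} to identify the limit as an EDP solution of~\eqref{eq:Li}, and use the superposition principle (Theorem~\ref{thm:super_in_dExg}) to transfer this to the mean-field level. Energy convergence will then be a sharpness argument that saturates the $\Gamma$-$\liminf$ inequalities on every sub-interval.

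First I would use that $\cL^{N,L}(\bC^{N,L},\bJ^{N,L})=0$ together with $\cE^{N,L}\ge 0$ to derive
\[
\int_0^T \bigl(L^{-1}\cR^{N,L}(\bC^{N,L}_t,\bJ^{N,L}_t)+\cD^{N,L}(\bC^{N,L}_t)\bigr)\dx t \;\le\; L^{-1}\cE^{N,L}(\bC^{N,L}_0),
\]
whose right-hand side converges to $\cE^{\rho\wedge\rho_c}(\bC_0)<\infty$ by well-preparedness~\eqref{eq:well-pose-initial-data}. In particular \eqref{eq:uniform potential bound} is satisfied, so combined with~\eqref{eq:bdd_on_C_for_Cpt} Theorem~\ref{thm:EDP_con}(c) produces a subsequence converging in the \Expair sense to some $(\bC,\bJ)\in\ref{eq:CEI}(0,T)$ with $\bC_t$ concentrated on $\cP_{\!\rho}$ for every $t$.

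Next, the hypotheses of Theorem~\ref{thm:EDP_con_EDP} hold trivially since $\cL^{N,L}\equiv 0$, so that theorem yields $\cL^{\rho\wedge\rho_c}(\bC,\bJ)\le 0$. Coupling this with the chain rule of Proposition~\ref{prop:chain_rule} and the superposition identity~\eqref{eq:SuperPosEDF} (which together imply $\cL^{\rho\wedge\rho_c}\ge 0$ on $\ref{eq:CEI}(0,T)$) forces $\cL^{\rho\wedge\rho_c}(\bC,\bJ)=0$; by Proposition~\ref{prop:EDP-solution-Li-solution} this is the EDP-solution property of~\eqref{eq:Li}, proving (i). Inserting the vanishing EDF into~\eqref{eq:SuperPosEDF} produces a path measure $\lambda$ with $(e_t)_\#\lambda=\bC_t$ and
\[
\int \sfL^{\rho\wedge\rho_c}\bigl(\gamma,\tfrac{\d\bJ}{\d\bC}(\gamma)\bigr)\,\lambda(\d\gamma)=0;
\]
since $\sfL^{\rho\wedge\rho_c}\ge 0$ by Proposition~\ref{prop:zero-set-mean-field-EDP-functional}, the integrand vanishes $\lambda$-a.e., giving (ii).

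For (iii), fix $t\in(0,T]$. Theorem~\ref{thm:EDP_con}(a) already provides $\liminf L^{-1}\cE^{N,L}(\bC^{N,L}_t)\ge \cE^{\rho\wedge\rho_c}(\bC_t)$. For the reverse inequality I would use the finite-level EDP balance $\cL^{N,L}|_{[0,t]}=0$, which is valid because both pieces in the additive decomposition $\cL^{N,L}|_{[0,T]}=\cL^{N,L}|_{[0,t]}+\cL^{N,L}|_{[t,T]}$ are nonnegative by Proposition~\ref{prop:finitechainrule}; this balance rewrites as
\[
L^{-1}\cE^{N,L}(\bC^{N,L}_t)= L^{-1}\cE^{N,L}(\bC^{N,L}_0) - \int_0^t\bigl(L^{-1}\cR^{N,L}(\bC^{N,L}_s,\bJ^{N,L}_s)+\cD^{N,L}(\bC^{N,L}_s)\bigr)\dx s.
\]
Applying Theorem~\ref{thm:EDP_con}(b) on $[0,t]$ bounds the integral below in the limit, while on the infinite-particle side the same additivity/nonnegativity argument gives $\cL^{\rho\wedge\rho_c}|_{[0,t]}=0$, so the chain rule produces $\cE^{\rho\wedge\rho_c}(\bC_t)=\cE^{\rho\wedge\rho_c}(\bC_0)-\int_0^t(\cR(\bC_s,\bJ_s)+\cD(\bC_s))\dx s$. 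Passing to $\limsup$ and invoking well-preparedness then yields $\limsup L^{-1}\cE^{N,L}(\bC^{N,L}_t)\le \cE^{\rho\wedge\rho_c}(\bC_t)$, completing (iii). The hard part is this sub-interval sharpness, which crucially relies on the nonnegativity of each $\cL|_{[s,r]}$ guaranteed by the chain rules, together with the fact that the $\Gamma$-$\liminf$ estimate of Theorem~\ref{thm:EDP_con}(b) can be applied to arbitrary time restrictions -- something that is immediate from the definition of the \Expair topology but must be checked.
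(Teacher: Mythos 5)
Your proposal is correct and follows essentially the same route as the paper's proof: the uniform bound~\eqref{eq:uniform potential bound} extracted from $\cL^{N,L}(\bC^{N,L},\bJ^{N,L})=0$ and well-preparedness feeds Theorem~\ref{thm:EDP_con}(c) for compactness, Theorem~\ref{thm:EDP_con_EDP} combined with the superposition identity~\eqref{eq:SuperPosEDF} and the chain-rule nonnegativity gives $\cL^{\rho\wedge\rho_c}(\bC,\bJ)=0$ and the path measure $\lambda$ (this is exactly Corollary~\ref{cor:connection-to-mean-field-equation} in the paper), and the energy convergence follows from the EDP balance plus the $\Gamma$-$\liminf$ estimates and well-preparedness. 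Your explicit justification of the restriction to $[0,t]$ via nonnegativity of the energy dissipation functional on subintervals is a more careful rendering of the step the paper's proof of (iii) only states implicitly.
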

As a consequence of EDP convergence, we have the convergence of the stochastic process to the deterministic solution pointwise in time, given the convergence of the initial conditions. 
\begin{theorem}[Propagation of chaos]\label{thm:PropaChaos}
Under assumptions \eqref{e:ass:K2} and \eqref{e:ass:Ku}, suppose 
\begin{enumerate}[(a)]
\item $c_0\in \cP_{\!\le\rho}$ with $\sfE^{\rho\wedge \rho_c}(c_0)<+\infty$,
\item $(\bC,\bJ)\in \ref{eq:def:CEinfty}(0,T)$ is the unique EDP solution of~\eqref{eq:Li} with deterministic initial data $\bC_0=\delta_{c_0}$,
\item $c_t$ is the unique solution to~\eqref{eq:MF} with initial data $c_0$,
\item $(\bC^{N,L},\bJ^{N,L})$ satisfies the assumptions of Corollary \ref{cor:sol2sol}.
\end{enumerate}
Then, the limit is deterministic for all $t\in [0,T]$, that is 
\[
	\bC^{N,L}_t \to \bC_t = \delta_{c_t} \text{ narrowly }  \quad\text{ and }\quad \lim_{N/L\to\rho }\cE^{N,L}(\bC^{N,L}_t) = \cE^{\rho\wedge \rho_c}(\bC_t).
\] 
\end{theorem}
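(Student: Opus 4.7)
The plan is to reduce the stochastic limit to a deterministic trajectory by combining Corollary~\ref{cor:sol2sol}, which delivers both a limit pair and its representing path measure, with the pointwise uniqueness of~\eqref{eq:EDG} guaranteed by~\eqref{e:ass:Ku}. Since $\sfE^{\rho\wedge\rho_c}(c_0) < +\infty$ by hypothesis~(a), we have $\cE^{\rho\wedge\rho_c}(\delta_{c_0}) = \sfE^{\rho\wedge\rho_c}(c_0) < +\infty$, so the well-preparedness assumption of Corollary~\ref{cor:sol2sol} is automatically met by the implicit choice $\bC_0 = \delta_{c_0}$ in~(d).

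First, I would invoke Corollary~\ref{cor:sol2sol} applied to $(\bC^{N,L},\bJ^{N,L})$: hypothesis~(d) supplies exactly the required ingredients. Along a subsequence one obtains a limit $(\tilde\bC,\tilde\bJ) \in \ref{eq:CEI}(0,T)$ in the \Expair sense, which is an EDP solution of~\eqref{eq:Li}. Because $\bC^{N,L}_0 \to \delta_{c_0}$ we have $\tilde\bC_0 = \delta_{c_0}$, and by the uniqueness stipulated in~(b) we identify $(\tilde\bC,\tilde\bJ) = (\bC,\bJ)$.

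Second, by Corollary~\ref{cor:sol2sol}(ii) there exists a Borel path measure $\lambda$ on $C([0,T],\bR^{\bN_0})$ concentrated on EDP solutions of~\eqref{eq:MF} with $(e_t)_\#\lambda = \bC_t$ for every $t \in [0,T]$. Taking $t=0$, we have $(e_0)_\#\lambda = \delta_{c_0}$, so $\lambda$-a.e.\ curve starts at $c_0$. By Proposition~\ref{prop:zero-set-mean-field-EDP-functional}, EDP solutions of~\eqref{eq:MF} coincide with solutions of~\eqref{eq:EDG}, and under~\eqref{e:ass:Ku} the MFE starting at $c_0$ admits the unique solution $c_\cdot$ from hypothesis~(c). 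Consequently $\lambda = \delta_{c_\cdot}$, and pushing forward at time $t$ yields $\bC_t = (e_t)_\#\delta_{c_\cdot} = \delta_{c_t}$.

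Third, since the identified subsequential limit $\delta_{c_t}$ is independent of the subsequence, the standard Urysohn subsequence principle upgrades the convergence to the full sequence $\bC^{N,L}_t \ExWeakTo \delta_{c_t}$ in duality with $C_b(\cP_{\le\overline\rho},\d_\Exg)$. The energy convergence $L^{-1}\cE^{N,L}(\bC^{N,L}_t) \to \cE^{\rho\wedge\rho_c}(\delta_{c_t})$ is precisely Corollary~\ref{cor:sol2sol}(iii). The only nontrivial point in this strategy is ensuring that the representing measure $\lambda$ is genuinely concentrated on solutions of~\eqref{eq:EDG} rather than merely on pairs in $\ExgCE(0,T)$, so that ODE-level uniqueness is available to force $\lambda$ to be a Dirac; fortunately, this work is encoded in Corollary~\ref{cor:sol2sol}(ii), which combines the superposition principle of Theorem~\ref{thm:super_in_dExg} with the $\Gamma$-$\liminf$ bound of Theorem~\ref{thm:EDP_con}(b), so the remaining argument is purely organizational.
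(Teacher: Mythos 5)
Your proof is correct, and it takes a route that is slightly more elementary than the paper's in a way worth pointing out. The paper's proof invokes Lemma~\ref{lem:unique_Li}, which for an \emph{arbitrary} initial law $\bC_0\in\cP(\cP_{\!\rho})$ establishes the representation $\lambda(B)=\int\1_{B|_0}(c)\,\bC_0(\d c)$ via a disintegration of $\lambda$ along the evaluation map $e_0$; the theorem then follows by specialising to $\bC_0=\delta_{c_0}$. You instead argue directly from $(e_0)_\#\lambda=\delta_{c_0}$ that $\lambda$-a.e.\ curve starts at $c_0$, then use Proposition~\ref{prop:zero-set-mean-field-EDP-functional} together with the uniqueness of~\eqref{eq:MF} under~\eqref{e:ass:Ku} to conclude that $\lambda$ is concentrated on the singleton $\{(c_t)_{t\in[0,T]}\}$, hence $\lambda=\delta_{c_\cdot}$. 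Because the initial law is a Dirac, no disintegration is needed, so your argument avoids that machinery entirely; the trade-off is that the paper's route, by proving Lemma~\ref{lem:unique_Li} once, also delivers the uniqueness of EDP solutions to~\eqref{eq:Li} for general random initial data, which is of independent use. One stylistic remark: your first-step identification $(\tilde\bC,\tilde\bJ)=(\bC,\bJ)$ via the uniqueness asserted in hypothesis~(b) is not really needed — your second step already forces $\tilde\bC_t=\delta_{c_t}$ for every subsequential limit, after which the subsequence principle and Corollary~\ref{cor:sol2sol}(iii) finish the argument exactly as you say. The rest matches the paper's reasoning.
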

We observe the loss of the first moment along the converging processes for a certain class of initial conditions.
\begin{corollary}[Loss of first moment at initial time]\label{cor:loss-1-mom} 
	In the setting of Theorem \ref{thm:PropaChaos} assuming $\rho_c<+\infty$ and let $N/L\to\rho>\rho_c$, then there exists $\bC^{N,L}_0$ converging to $\bC_0=\delta_{c_0}$ for some $c_0\in\cP_{\rho_c}$ and $\Mom(c_t)=\Mom(c_0)=\rho_c$ for all $t\in [0,T]$.
\end{corollary}

\subsection{Relations to previous works and implications for future works}

\subsubsection*{Variational convergence and Large Deviation Principle}
One novelty of this work is the variational convergence for an irreversible system. Yet, as a perturbation of the reversible kernel, our approach of the EDP convergence and the variational formulations still stems from the convergence of the gradient flow structures for Markov processes under the detailed balance conditions.

A natural starting point is to use the $\cosh$ gradient flow structure in continuity equation format for the Markov jump process under a detailed balance condition as in ~\cite{MielkePeletierRenger2014GFLDP,LieroMielkePeletierRenger2017,KaiserJackZimmer2019,PRST22}.  This is a natural structure in the sense that it has a microscopic origin--- the variational formulation coincides with the large deviation rate functional of the Markov jump process. The structure of \emph{energy dissipation principle (EDP)} convergence was formulated in ~\cite{Mielke2016,LieroMielkePeletierRenger2017,MielkeMontefuscoPeletier2021,PeletierSchlichting2022}.
The rigorous definitions of the energy-dissipation (ED) functionals for the variational formulation of gradient structures for linear jump processes are extensively discussed in~\cite{PRST22}.  

Beyond reversible kernels, however, the $\cosh$ structure for the antisymmetric net fluxes is no longer possible; instead, the exponential structure for one-way fluxes is a natural generalization. Their variational formulations are related by a contraction principle in large deviation theory. In particular, the convergence using one-way fluxes formulation implies that of the net fluxes \cite{Renger2018}. The irreversible evolution is no longer a gradient flow, but the large deviation interpretation of the variational formulation is still valid. For example, in \cite{KaiserJackZimmer2018, PattersonRengerSharma2024, Renger2018}, they proposed an orthogonal decomposition of the LDP rate functionals via forces with net fluxes without detailed balance conditions. In this decomposition, the symmetric part of the force is the driving functional for the gradient flow, which is connected to macroscopic fluctuations theory \cite{BertiniDeSoleGabrielleJonaLasinioLandim2015} and can be viewed as a generalization of gradient flow and GENERIC structures.

Here we use the decomposition of the rate functions with one-way fluxes first described in the work by one of the authors \cite{Hoeksema-thesis, HoeksemaTse2023}, which allows us to treat irreversible jump processes with  a bounded, possibly time dependent perturbation of the reversible kernel. With this variational structure, the EDP convergence for a version of the  Bolker–Pacala and Dieckmann-Law (BPDL) model was proven.  Under the assumption of weak detailed balance and the reference measure being invariant in the driving functional, it is shown in \cite[Chapter 10.5.3]{Hoeksema-thesis} that this formulation can be viewed as a decomposition of symmetric and antisymmetric forces. 

This interpretation does not hold in our case because we do not have the weak detailed balance assumption, and more critically, our reference measure satisfies the detailed balance for the unperturbed kernel, which is not an invariant measure in the perturbed kernel. Nevertheless, it can be observed that the additional cost due to the non-invariant reference measure arises from the $\cD^{N,L}/\sfD$ terms in the rate functionals, with the precise physical interpretation not directly clear. Since this term has no sign, it is a non-dissipative term, and the driving functional is not a Lyapunov functional of evolution.  

There is also an intimate connection to flux large deviation principles, also called level $2.5$ as established in~\cite{Renger2018,BertiniFaggionatoGabrielli2014,PattersonRenger2019,AgazziAndreisPattersonRenger2022}. 
 We expect that it is possible to extend the EDP convergence statement to a large deviation principle for the system along the lines of~\cite{AdamsDirrPeletierZimmer2011,AdamsDirrPeletierZimmer2013,Fathi2016,Mariani2018,KaiserJackZimmer2018}.

\subsubsection*{Kernels with linear growth}

By the analysis of the variational functional, under the sublinear kernel assumption, we observe a possible loss of mass at the initial time of the approximating stochastic processes but conservation of the first moment in time. The kernel's sublinear growth is necessary to conserve the first moment for the process and pass to the limit.  It is known that the Marcus-Lushnikov process converges to the Smoluchowski equation with sublinear kernel and to the Flory equation with kernel of linear growth \cite{fournier2009marcus}. One might as well expect similar interactions in EDG. This leads to our conjecture for the kernel with linear growth:   the processes converge to a relaxation of the EDG, in which there is interaction between the finite cluster and the infinite cluster with mass $\Mom(c_0)- \Mom(c)$.
Given the validity of this conjecture, our result would be optimal.
\subsection{Structure of this paper}
In Chapter 2, we introduce the metric well-adapted to the evolution of EDG. This leads to a notion of a continuity equation for exchange dynamics.

In Chapters 3-5, we prove the variational characterization of the evolution equations via the continuity equation in the mean-field equation, Markov process, and the Liouville equation levels.

In Sections 6.1-6.3, we show the compactness of the sequence of processes and the convergence to the continuity equation. 
In Sections 6.4-6.5, we show the $\Gamma$-convergence of the energy functional and the lower semicontinuity of the dissipation potentials.
In Section 6.6, we provide proof of the main theorems given the functional convergences.

\section{The exchange metric}\label{sec:metric}
We introduce the topology induced by the following exchange metric for the infinite-dimensional state space~$\cP^1$. This will give a notion of continuity in time for the exchange dynamics.
\begin{definition}[Exchange metric]
The \emph{tail distribution} $\tail:\cP^1\to \ell^1(\bN)$ for some $\mu\in \cP^1$ is defined by
\begin{equation*}
	\tail(\mu)=(\tail_k(\mu))_{k\in\bN}\qquad\text{with}\qquad \tail_k(\mu) := \sum_{n= k}^\infty \mu_n \,.
\end{equation*}
The \emph{exchange metric} on $\cP^1$ is defined for $\mu,\nu \in \cP^1$ via
\begin{equation}\label{eqdef:dEx}
\d_{\Exg}(\mu,\nu)= \lVert \tail(\mu) - \tail(\nu) \rVert_{\ell^1(\bbN)} .
\end{equation}
\end{definition}
Thanks to the Kantorovich-Rubinstein duality~\cite{KantorovichRubinstein1958}, we can identify the exchange metric as the Wasserstein distance.
\begin{proposition}\label{prop:dex-is-W1}
The metric $\d_{\Exg}$ on $\cP^1$ is the 1-Wasserstein distance of probability measures on $\bN_0$ with the Euclidean  metric on $\bR$ restricted to $\bN_0$. In particular, 
\begin{equation}
    \d_{\Exg}(\mu,\nu)= \sup \set[\bigg]{\abs[\bigg]{\sum_{k\geq 0} f_k (\mu_k -\nu_k)}: f \, \text{ is } 1\mbox{-Lipschitz} }.
\end{equation}

\end{proposition}
\begin{proof}
Let $\gamma \in \Gamma(\mu,\nu)$ be any coupling between $\mu$, $\nu$. We estimate
\begin{align*}
\d_{\Exg}(\mu,\nu)&=\|\tail(\mu) -\tail(\nu)\|_{\ell^1(\bN)}=\sum_{i=1}^\infty \biggl|\sum_{k=i}^\infty \mu_k - \sum_{k=i}^\infty \nu_k \biggr| \\
&=\sum_{i=1}^\infty \biggl|\sum_{k=1}^\infty \1_{k\ge i}\mu_k - \1_{k\ge i}\nu_k \biggr|
=\sum_{i=1}^\infty \biggl| \sum_{k,l=1}^\infty (\1_{k\ge i} - \1_{l\ge i}) \gamma(k,l) \biggr| \\
&\le  \sum_{k,l=1}^\infty\sum_{i=1}^\infty \left|\1_{k\ge i} - \1_{l\ge i}\right| \gamma(k,l)\\
&\le    \sum_{k,l=1}^\infty |k-l| \gamma(k,l).
\end{align*}
Now we take the infimum over all couplings to get 
\begin{equation}\label{eq:dex upbd}\d_{\Exg}(\mu,\nu) \le  \inf_{\gamma\in \Gamma(\mu,\nu)} \sum_{k,l=1}^\infty |k-l| \gamma(k,l)=W_1(\mu,\nu),\end{equation} for the metric space $(\bN_0, \|\cdot \|_{\bR})$.
On the other hand, let $f\in \bR^{\bN_0}$ such that $|f(i)-f(i-1)|\le 1$, $f(-1)$ be arbitrary and  $\mu,\nu \in \cP(\bN_0)$, we have 
\begin{align*}
\d_{\Exg}(\mu,\nu)&=\sum_{k=0}^\infty \lvert\tail_k(\mu-\nu)\rvert\ge \sum_{k=0}^\infty (f(k)-f(k-1)) \tail_k(\mu-\nu)\\
&=\sum_{k=0}^\infty f(k) (\tail_{k}(\mu-\nu) - \tail_{k+1}(\mu-\nu)) - f(-1) \tail_0(\mu-\nu)\\
&=\sum_{k=0}^\infty f(k) (\mu_{k} - \nu_{k}).
\end{align*}
Taking supremum over all 1-Lipschitz $f$, we have 
\begin{equation}\label{eq:dex lowbd}\sup_{f\in 1\text{-Lip}}\sum_{k=0}^\infty f(k) (\mu_k -\nu_k) \le  \d_{\Exg}(\mu,\nu).\end{equation}
By the Kantorovich-Rubinstein theorem, we have the equality between 
 \[\sup_{f \in 1\text{-Lip}}\sum_{k=0}^\infty f(k) (\mu_k -\nu_k) = \inf_{\gamma\in \Gamma(\mu,\nu)} \sum_{k,l=1}^\infty |k-l| \gamma(k,l).\]
Hence together with \eqref{eq:dex lowbd} and \eqref{eq:dex upbd}, we conclude $\d_{\Exg}(\mu,\nu)= W_1(\mu,\nu)$. \end{proof}
From standard properties of the Monge-Kantorovich distance~\cite{Rachev1991}, we conclude the separability and completeness of the metric space $(\cP^1,\d_\Exg)$.
\begin{corollary}\label{prop:d_separable}
The metric space $(\cP^1,\d_\Exg)$ is separable and complete.
\end{corollary}
 
In the following, we will see the connection of the exchange metric $\d_\Exg$ to the particle exchange via the characterisation with functions with bounded exchange gradient, which are crucial for stating continuity properties for the exchange continuity equation~Definition \ref{def:ExCE}.
\begin{definition}[Dual metric]\label{def:Xmetric}
The set of functions with the exchange gradient bounded by one is
\[
	\cF:=\set[\bigg]{ f:\bN_0 \to \bR \colon \sup_{k,l\geq 1}|\ExNabla_{k,l-1} f | \le 1 }, 
\] 
and measures summable with respect to the family $\cF$ are
\[
	\cP_\cF:=\set[\bigg]{\mu \in \cP(\bN_0)\colon \sum_{k\geq 0} |f_k| \mu_k < \infty\quad \forall f \in \cF }. 
\]
The dual metric on $\cP_\cF$ is defined as  
\begin{equation*}
 \d_\cF(\mu,\nu):=\| \mu -\nu\|_{\cF}^* := \sup \set[\bigg]{\abs[\bigg]{\sum_{k\geq 0} f_k (\mu_k -\nu_k)}: f \in \cF }.\end{equation*} 
\end{definition}
\begin{remark}
\begin{enumerate}
\item By definition, $\mu^n \to \mu$ in $\| \cdot \|_{\cF}^*$ if and only if $\pscal{f,\mu^n} \to \pscal{f,\mu}$ for $f\in\cF$ uniformly.
\item Any element $f\in\cF$ gives rise to a linear functional on $\cP_{\cF} \to \bR$ by setting $f(\mu)= \pscal{f,\mu} = \sum_{k=0}^\infty f_k \mu_k$, 
      which is $1$-$\d_\cF$~Lipschitz by definition. 
\end{enumerate}
\end{remark}
\begin{proposition}[Connection between $\d_\cF$ and $\d_{\Exg}$]\label{prop:closedformdcF}
The class of function $\cF$ is characterized by the sum of a linear part and a $1/2$-Lipschitz function. In particular, $f \in \cF$ has at most linear growth. Furthermore,  the dual metric $\d_\cF$ is an extended metric on $\cP^1$ and a metric precisely on any $\cP_{\!\rho}$ for $\rho\in [0,\infty)$, where it agrees with $\d_{\Exg}$ up to a factor of $1/2$, that is if $\mu,\nu \in \cP^1$, then
\[
	\d_\cF(\mu,\nu)=
		\begin{cases}
			\frac12  \d_{\Exg}(\mu,\nu)  &\text{ if } \Mom_1(\mu)=\Mom_1(\nu); \\
			+ \infty &\text{ otherwise.}
		\end{cases}
\]
\end{proposition}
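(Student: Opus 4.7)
The strategy is to parametrise $\cF$ through discrete derivatives and reduce the supremum in the definition of $\d_\cF$ to an elementary optimisation over a bounded family of sequences, then read off the answer via Abel summation. Introduce $d_k := f_k - f_{k-1}$ for $k \ge 1$ and observe
\[
	\ExNabla_{k,l-1} f \;=\; (f_{k-1}-f_k) + (f_l - f_{l-1}) \;=\; d_l - d_k,
\]
so $f \in \cF$ is equivalent to the family $(d_k)_{k\ge 1}$ having diameter at most one in $\bR$, i.e.\ $d_k = A + \epsilon_k$ for some $A\in\bR$ and $\epsilon_k \in [0,1]$. In particular $|f_k| \le |f_0| + (|d_1|+1)\,k$, which yields $\sum_k |f_k|\mu_k < \infty$ iff $\Mom_1(\mu) < \infty$; hence $\cP_\cF = \Pinfty$, and this bound also secures the absolute convergence needed below.

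Next I separate the two cases. If $\Mom_1(\mu) \ne \Mom_1(\nu)$, the affine functions $f_k = t k$ have $d_k \equiv t$ and thus lie in $\cF$ for every $t\in\bR$; the pairing $\sum_k f_k(\mu_k - \nu_k) = t\bigl(\Mom_1(\mu)-\Mom_1(\nu)\bigr)$ is unbounded as $|t|\to\infty$, so $\d_\cF(\mu,\nu)=+\infty$.

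If instead $\Mom_1(\mu) = \Mom_1(\nu)$, shifting $f$ by a constant changes neither the constraint nor the pairing (the latter because $\Mom_0(\mu)=\Mom_0(\nu)=1$), so one may assume $f_0=0$ and write $f_k = \sum_{j=1}^k d_j$. Abel summation (justified by the bound $|f_k|\le (|d_1|+1)k$ together with $\tail(\mu), \tail(\nu) \in \ell^1(\bN)$) yields
\[
	\sum_{k\ge 1} f_k(\mu_k - \nu_k) \;=\; \sum_{j\ge 1} d_j \, a_j, \qquad a_j := \tail_j(\mu) - \tail_j(\nu).
\]
Using $d_j = A + \epsilon_j$ and the identity $\sum_{j\ge 1}\tail_j(\mu) = \Mom_1(\mu)$ (similarly for $\nu$), the $A$-contribution cancels and the pairing becomes $\sum_j \epsilon_j a_j$ with $\epsilon_j \in [0,1]$. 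The supremum over such $\epsilon$ is $\sum_j a_j^+$, attained by the admissible choice $\epsilon_j = \mathbf{1}_{a_j > 0}$. Since $\sum_j a_j = \Mom_1(\mu)-\Mom_1(\nu) = 0$, one has $\sum_j a_j^+ = \tfrac12\|a\|_{\ell^1} = \tfrac12\|\tail(\mu)-\tail(\nu)\|_{\ell^1}$, which is the claimed formula.

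The metric/extended-metric statements are then routine: triangle inequality and symmetry are automatic for a sup-dual metric, while positivity/separation on each $\cP_{\!\rho}$ follows from the identity and the fact that $\d_\Exg$ is a metric on $\Pinfty$ (Proposition~\ref{prop:topo-d_Ex}). The only genuine content is the correspondence between $\cF$ (via diameter-one sequences $(d_k)$) and the $[0,1]$-parameter family $(\epsilon_j)$; once this is isolated, the factor $\tfrac12$ emerges simply from $\sum_j a_j = 0$, and no further work is needed.
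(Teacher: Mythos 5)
Your proof is correct and follows essentially the same route as the paper: write the pairing via Abel summation in terms of the discrete derivatives $d_k=f_k-f_{k-1}$, use the first-moment equality to cancel the affine part, and optimise over the remaining width-one constraint set. The only cosmetic difference is that you normalise to $\epsilon_j\in[0,1]$ and extract the factor $\tfrac12$ from $\sum_j a_j=0$, whereas the paper centres to $\Delta_k f\in[-\tfrac12,\tfrac12]$ and reads off the $\tfrac12$ directly from the half-width.
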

\begin{proof} 
 Notice that $f\in \cF$ is equivalent to the variation of $(\partial^-_l f)_{l\in \bN_0}$ to be bounded by one where 
\begin{equation}\label{def:discrete-derivative}
(\partial^{-}f)_l:= f_l -f_{l-1} \quad \text{ for } l \in \bN.
\end{equation}
Let  $m=m(f):= \frac12 (\inf_l \partial^-_l f + \sup_l \partial^-_l f)$ then it holds \[m-\frac12\le \inf_l \partial^-_l f \le m \le \sup_{l} \partial^-_l f \le m+\frac12.\] 
It follows that $g_l=f_l - m l$ is $1/2$-Lipschitz. Conversely, for any $g$ that is $1/2$-Lipschitz and   $m \in \bR$, the sequence $(f_l := g_l + m l)_{l\in\bN_0}$ has its exchange gradient bounded by $1$. Hence, for every $f = \cF$, $f_l$ can be decomposed into a sum of a linear function and a $\frac12$-Lipschitz function. 
In the case when $\mu,\nu \in \cP_{\!\rho}$ for some $\rho\in [0,\infty)$, we have for $f\in \cF$, 
\[\begin{split}\sum_{k\in\bN_0} f_k(\mu_k -\nu_k) &=\sum_{k\in\bN_0} g_k(\mu_k -\nu_k) + m \sum_{k\in\bN_0} k(\mu_k -\nu_k) 
=\sum_{k\in\bN_0} g_k(\mu_k -\nu_k).  
\end{split}\]
Above we used $\sum_{n=0}^\infty n (\mu_n - \nu_n)=0$. 
  Therefore, we obtain the identity by Proposition \ref{prop:dex-is-W1}
\begin{equation*}\begin{split}
  \sup_{ f \in \cF } \set[\bigg]{\abs[\bigg]{\sum_{k\in\bN_0} f_k (\mu_k -\nu_k)}} = \sup_{g: 1/2\text{-Lipschitz} } \set[\bigg]{\abs[\bigg]{\sum_{k\in\bN_0} g_k (\mu_k -\nu_k)}}= 
  \frac12 \d_{\Exg}(\mu,\nu)
\end{split}\end{equation*}
which is the claim when $\Mom_1(\mu)=\rho=\Mom_1(\nu)$.
 
In the case $\mu$ and $\nu$ have different first moments, we need to show 
$\d_\cF(\mu,\nu)=+\infty$.
Indeed, we take the test function in the definition of $\d_\cF$ to be $f_k=\lambda k$ for $\lambda\in \bR$. Then
\[
	\d_{\cF}(\mu,\nu) \ge \biggl|\sum_{k\in\bN} \lambda k (\mu_k - \nu_k)\biggr|=\lvert\lambda\rvert \bigl\lvert\Mom_1(\mu) - \Mom_1(\nu)\bigr\rvert \,,
\]
which diverges to $+\infty$ as $\lambda \to \pm \infty$.  
\end{proof}
\begin{remark}
It follows from Proposition \ref{prop:closedformdcF} that the metric $\d_\cF$ is stronger than the metric induced by $\ell^1$ on $\cP^1$. Indeed, we can take  $f_k = \frac14 \operatorname{sign} (\mu_k - \nu_k)$ so that $(f_k)_{k\in\bN_0}\in \cF$  and thus  
 \[\frac14 \sum_{k\ge 0} |\mu_k - \nu_k |\le \d_{\cF}(\mu, \nu)\] for $\mu, \nu \in \cP^1$. 
Therefore, on probability measures with a fixed first moment, the topology induced by $\d_\Exg$ is between the metric induced by the pure $\ell^1$ norm and the first-moment-weighted $\ell^1$ norm, denoted by $\ell^{1,1}$, \begin{equation}\label{def:ell1,1} 
	\| \mu\|_{\ell^{1,1}}:= \sum_{k=0}^\infty (1+k) \mu_k. 
\end{equation}
\end{remark}
 The connection to the microscopic dynamic induced through the jumps~\eqref{eq:def:exchangeGradDiv} becomes apparent through the representation in the next Lemma. 
\begin{lemma}[{Variational form of $\d_{\Exg}$  in terms of jumps}]\label{lem:varform_d_Exg}
Suppose $\mu,\nu \in\cP_{\!\rho}$ for some $\rho\in [0,\infty)$ then 
\begin{equation*}
\d_{\Exg}(\mu,\nu)= 2 \sup_{f \in  \cF}\set[\bigg]{\sum_{k,l\in\bN}\alpha_{k,l-1}\ExNabla_{k,l-1} f  \,\bigg|\, \alpha \in \ell^{1}(\bN\times \bN_0) : \mu= \nu + \sum_{k,l\in\bN} \alpha_{k,l-1} \gamma^{k,l-1}  }. 
\end{equation*}
As a consequence, if $\mu$ and $\nu$ is connected by a single exchange jump, 
that is $\mu=\nu + m \gamma^{k,l-1}$ for some $k,l\in \bN$ and $m\in \bR$, then $\d_{\Exg}(\mu,\nu)=2|m|$.
\end{lemma}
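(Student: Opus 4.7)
My plan is to exploit a duality that collapses the right-hand side into the linear-functional supremum already identified in Proposition~\ref{prop:closedformdcF}. The key observation is that for any $f:\bN_0\to\bR$ and any $\alpha\in\ell^1(\bN\times\bN_0)$ satisfying the mass-balance constraint $\mu-\nu=\sum_{k,l}\alpha_{k,l-1}\gamma^{k,l-1}$, Fubini and the definition of the exchange gradient in~\eqref{eq:def:exchangeGradDiv} give
\[
\sum_{k,l\in\bN}\alpha_{k,l-1}\ExNabla_{k,l-1}f
=\sum_{k,l\in\bN}\alpha_{k,l-1}\,f\cdot\gamma^{k,l-1}
=f\cdot(\mu-\nu),
\]
so the inner expression is independent of the admissible $\alpha$ and the right-hand side reduces to $2\sup_f\langle f,\mu-\nu\rangle$ over $f$ with $\sup_{k,l}|\ExNabla_{k,l-1}f|\leq 1$ (I read the constraint ``$=1$'' as the sharpness-equivalent bound ``$\leq 1$'', since the supremum is invariant under positive rescaling of $f$).

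Next I will verify that the admissible set for $\alpha$ is non-empty. For $\mu,\nu\in\cP_{\!\rho}$ the tail vector $T:=\tail(\mu-\nu)$ lies in $\ell^1(\bN)$ with $\|T\|_{\ell^1}=\d_\Exg(\mu,\nu)\leq 2\rho$ by Proposition~\ref{prop:topo-d_Ex}(1), and a Fubini computation gives $\sum_{l\geq 1}T_l=\Mom_1(\mu)-\Mom_1(\nu)=0$. Taking the ``star-shaped'' representation $\alpha_{1,l-1}:=T_l$ for $l\geq 2$ and $\alpha_{k,l-1}:=0$ otherwise yields $\alpha\in\ell^1$, and using the elementary identity $\tail(\gamma^{1,l-1})=\mathbf{e}_l-\mathbf{e}_1$ in $\ell^1(\bN)$ together with the vanishing total sum of $T$ shows that the tails of both sides of the proposed mass balance agree, hence so do the underlying signed measures. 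Combining the two steps and invoking Proposition~\ref{prop:closedformdcF}, the right-hand side equals $2\d_\cF(\mu,\nu)=\d_\Exg(\mu,\nu)$.

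For the corollary, when $\mu=\nu+m\gamma^{k,l-1}$ with $k\neq l$, a case analysis of the indicator functions in $\tail_j(\gamma^{k,l-1})=\mathbf{1}_{j\leq k-1}+\mathbf{1}_{j\leq l}-\mathbf{1}_{j\leq k}-\mathbf{1}_{j\leq l-1}$ collapses all but two indices and gives $\tail(\gamma^{k,l-1})=\mathbf{e}_l-\mathbf{e}_k$, whence $\d_\Exg(\mu,\nu)=|m|\,\|\mathbf{e}_l-\mathbf{e}_k\|_{\ell^1(\bN)}=2|m|$; the case $k=l$ is trivial since $\gamma^{k,k-1}=0$. I do not anticipate a serious obstacle: once the duality identity collapses the inner sum to $\langle f,\mu-\nu\rangle$, the remainder is an explicit discrete inversion of the tail operator and a restatement of Proposition~\ref{prop:closedformdcF}.
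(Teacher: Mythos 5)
Your derivation of the main identity follows essentially the same route as the paper: invoke Proposition~\ref{prop:closedformdcF} to replace $\d_\Exg$ by $2\sup_{f\in\cF}\abs{\langle f,\mu-\nu\rangle}$, then observe via Fubini that $\sum_{k,l}\alpha_{k,l-1}\ExNabla_{k,l-1}f = f\cdot(\mu-\nu)$ for any admissible $\alpha$. Your non-emptiness check (the explicit ``star-shaped'' $\alpha_{1,l-1}=T_l$, valid because $\sum_{l\ge1}T_l = \Mom_1(\mu)-\Mom_1(\nu)=0$ and $\tail(\gamma^{1,l-1})=\mathbf{e}_l-\mathbf{e}_1$) is a welcome addition: the paper leaves this implicit, yet the supremum over an empty inner set would be $-\infty$, so the verification does matter. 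Where you genuinely diverge is the corollary. The paper argues by choosing $\mathbf{e}_l$ or $\mathbf{e}_k$ as test functions and asserting they lie in $\cF$, but this is in fact false: $\ExNabla_{l+1,l-1}\mathbf{e}_l = 2$, so $\mathbf{e}_l\notin\cF$, and the argument needs to be repaired (e.g.\ by using the step function $\mathbf{1}_{n\ge l}$, which does lie in $\cF$ and achieves $\ExNabla_{k,l-1}\mathbf{1}_{n\ge l}=1$ when $k\neq l$). Your route avoids the variational characterisation altogether and computes $\tail(\gamma^{k,l-1})=\mathbf{e}_l-\mathbf{e}_k$ directly from the definition~\eqref{eqdef:dEx}, giving $\d_\Exg(\mu,\nu)=\abs{m}\,\norm{\mathbf{e}_l-\mathbf{e}_k}_{\ell^1}=2\abs{m}$ immediately for $k\neq l$ (and $0$ trivially for $k=l$). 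This is more elementary, more robust, and sidesteps the subtlety the paper's test-function choice runs into.
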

\begin{proof}
By Proposition \ref{prop:closedformdcF}, we have 
\[
	\d_\Exg(\mu,\nu)=2 \d_{\cF}(\mu,\nu)=2 \sup_{f \in \cF}  \abs[\Bigg]{\sum_{k\geq 0} f_k (\mu_k -\nu_k)}\,.
\]
Let $\alpha\in \ell^1(\bN\times \bN_0)$ be such that $\mu=\nu +\sum_{k,l\in\bN} \alpha_{k,l-1}\gamma^{k,l-1}$. The definition of the exchange gradient~\eqref{eq:def:exchangeGradDiv} implies
\[
	\d_{\Exg}(\mu,\nu)=2 \sup_{f \in \cF} \abs[\Bigg]{\sum_{n\geq 0}f_n  \biggl[\sum_{k,l\in\bN} \alpha_{k,l-1}  \gamma^{k,l-1}_n\biggr]}  =2 \sup_{f \in \cF}\abs[\Bigg]{ \sum_{k,l\in\bN} \alpha_{k,l-1} \ExNabla_{k,l-1}f } \, .
\]
For $f\in \cF$, we can apply the Fubini theorem to exchange the summation order if $\sum_{k,l\in\bN} |\alpha_{k,l-1}|<+\infty$. We conclude the proof by noting that $\cF$ is invariant under $f \mapsto -f$ so we can remove the absolute value. 

Now suppose $\mu-\nu = m \gamma^{k,l-1}$ for some $k,l\in\bN$, since the unit vectors $\mathbf{e}_l$ and $\mathbf{e}_{k}$ are  in $\cF$. We can choose the former if $m\ge 0$ and the latter otherwise. Then the supremum is attained and $\d_\Exg = 2 |m|$.
\end{proof}

\section{Variational formulation of the systems}\label{sec:var}

\subsection{The exchange-driven growth system}\label{sec:varEDG}

\begin{definition}\label{def:ExCE} A pair $(c,j) \in \ExgCE(0,T)$ solves the exchange continuity equation provided that
\begin{enumerate}
\item the curve $c$ is absolutely $\d_\Exg$-continuous, that is $c \in \AC((0,T); (\cP^1, \d_{\Exg}))$;
\item the Borel measurable family of fluxes $(j_t)_{t\in[0,T]} \subset \cM^+ (\bN\times \bN_0))$ satisfy the bound
\begin{equation*}
\int_0^T  \sum_{k,l\geq 1}  j_t({k,l-1})\dx t<\infty ;
\end{equation*}
\item the couple $(c,j)$ satisfies the exchange continuity equation, that is, for any sequence $\Phi:\bN_0 \to \bR$ with finite support, and for all $0 \le s\le t\le T$,
\begin{equation*}\label{eq:intro-ExCE}
  \sum_{k\ge 0} \Phi_{k} (c_t(k)- c_s(k)) = \int_s^t \sum_{k,l \ge 1} \ExNabla_{k,l-1}\! \Phi \ j_{r}(k,l-1)\dx r ,\tag{\ensuremath{\mathsf{ExCE}}}
\end{equation*}  
where $\ExNabla$ is the exchange gradient from ~\eqref{eq:def:exchangeGradDiv}.
\end{enumerate} 
\end{definition}

Clearly, condition $(3)$ is equivalent to requiring that \ref{eq:intro-ExCE} is valid for $\Phi=\mathbf{e}_{k}$ for any $k\in \bN$, in this case 
\[c_t(k)-c_s(k)=-\int_s^t (\gDiv j_r)(k) \dx r \]
which is similar to the pointwise form of \eqref{eq:EDG}. 
Moreover, we can extend \eqref{eq:intro-ExCE} to all functions $\Phi$ with bounded exchange gradient. 
\begin{lemma}\label{lm:bgrad}
For any $(c,j)\in \ExgCE(0,T)$ and any $\Phi:\bN_0 \to \bR$ with $\sup_{k,l \in \bN}\lvert\ExNabla_{k,l-1} \Phi\rvert <\infty$, and for all $0 \le s\le t\le T$, the equality \eqref{eq:intro-ExCE} holds. 

In particular, with the choice $\Phi_k=k$, we have conservation of the first moment, i.e. $\Mom_1(c_t)$ is constant in time. 
\end{lemma}
\begin{remark}
For more general exchange equations, where for example $(k,l)\to (k-m,l+m)$ according to some rate depending on $k,l,m$, the first moment might not be preserved along the solution, in a similar way to the Lu-Wennberg solutions to the Boltzmann equations \cite{LUW2022}. However, in our current setting, truncation of the function $\Phi_k=k$ in the continuity equation leads to uniform estimates and hence conservation via a dominated convergence.
\end{remark}

\begin{proof}
Fix any $\Phi$ such that $\sup_{k,l \in \bN}\lvert\ExNabla_{k,l-1} \Phi\rvert < \infty$, and note that by Proposition \ref{prop:closedformdcF} that there exists constants $A,B$ such that $|\Phi_k|\leq A k+B$ and $\Phi$ is $A$-Lipschitz, and in particular the left-hand side of \eqref{eq:intro-ExCE} is well-defined. Moreover, consider the sequence of functions $\Phi^n$ 
\[ \Phi^n_k := \left\{ 
\begin{aligned}
    &\Phi_k, \qquad &&0\leq k \leq n \\
    &n^{-1}(-k+n)\Phi_n +\Phi_n , \qquad &&n\leq k \leq 2n
\end{aligned}
\right.\]
Note that $|\Phi^n_k|\leq A k+B$ for all $k,n$ and $\Phi^n$ is $A$-Lipschitz, uniformly in $n$.  Therefore, a dominated convergence argument allows us to take the limit $n\to \infty$ in \eqref{eq:intro-ExCE}.
\end{proof}

\begin{definition}[EDF for MFE]\label{def:EDF4MFE}
The Energy-Dissipation Functional for $\rho\in [0,\rho_c]$ with $\rho<\infty$ is defined as
\begin{equation}
\mathsf{L}^{\rho}(c,j) := \begin{cases}
\mathsf{E}^{\rho}(c(T))-\mathsf{E}^{\rho}(c(0)) + \int_0^T \Bigl(\mathsf{R}_t(c,j) + \mathsf{D}_t(c) \Bigr)\dx t \ &\text{if } (c,j) \in \mathsf{ExCE}(0,T), \\
+\infty\ & \text{otherwise.}
\end{cases}
\end{equation}    
In the above formula, the energy functional $\mathsf{E}: \cP^1 \to \bR_{\ge 0}$ is
\[\mathsf{E}^{\rho}(c):= \frac12 \mathsf{Ent}(c | \equi^{\rho}) =\frac12 \sum_{k\in\bN_0}  c_k \log \frac{c_k}{\equi_k^{\rho}}.\] See also Subsections~\ref{subsec:VarCon} and~\ref{subsec:ass} for the definition of the notations used here. The Fisher information $\mathsf{D}:\cP^1 \to \bR$ is
\[\mathsf{D}(c) := \sum_{k,l\in\bN}\mathrm{D}(\kappa[c](k,l-1), \kappa^{\dagger}[c](k,l-1)),\]
where $\rmD(u,v)=\frac12 \bigl((\sqrt{u}-\sqrt{v})^2 + (u -v)\bigr)= u- \sqrt{uv}$ and can be written with the Hellinger distance
\begin{align*} 
\mathsf{D}(c)
& = \mathsf{H}^2(\kappa[c] , \kappa^{\dagger}[c])+\frac12 \sum_{k,l\in \bN}( \kappa[c](k,l-1) - \kappa^{\dagger}[c](k,l-1) ).
\end{align*} 
with 
\[
    \mathsf{H}^2(\kappa[c] , \kappa^{\dagger}[c]) := \sum_{k,l\in\bN} \frac12  \biggl( \sqrt{ \kappa[c](k,l-1)} - \sqrt{\kappa^{\dagger}[c](k,l-1)}\biggr)^{\!2}. 
\]
The geometric average  $\theta$ of $\kappa,\kappa^\dagger$ is
\begin{equation}\label{def:theta}\theta[c](k,l-1) := \sqrt{ \kappa[c](k,l-1) \kappa^\dagger[c](k,l-1)}=K(k,l-1)\equi_{k}^\rho\equi_{l-1}^{\rho}\sqrt{u_k u_{l-1}u_{l}u_{k-1} },\end{equation} where $u \in \bR^{\bN_0}$ is given by
\[u := \frac{c}{\equi^\rho}.\]
The primal dissipation potential is $\mathsf{R}:\cP^1 \times \cM^{+}(E) \to \bR_{\ge 0}$ 
\[
    \mathsf{R}(c,j):= \mathsf{Ent}(j \,|\, \theta[c])=\sum_{k,l\in \bN} \phi\bigl(j(k,l-1) \,\big|\, \theta[c](k,l-1) \bigr) , \]
where $\phi(x)=x\log x -x +1$, and its perspective function is given by  
\[
    \phi(a\mid b)=
    \begin{cases} 
        b\phi(\frac{a}{b}) , & a\ne 0 , b>0 ;\\
        0 , & a=0 ; \\
        +\infty , & a \ne , b=0 .
    \end{cases}
\]
\end{definition}
\begin{remark}\label{rem:RD}
If the density $c\in \cP^1$ is positive, the identity $\mathsf{D}(c)= \mathsf{R}^{*}(c, -\frac{1}{2} \overline{\nabla} \log u)$ holds with the Legendre dual $\sfR^*$ of $\sfR$ given by
\[\mathsf{R}^{*}(c,\zeta)= \sum_{k,l\in\bN}(e^{\zeta(k,l-1)}-1 ) \theta[c](k,l-1) = \sum_{k,l\in\bN} (e^{\zeta(k,l-1)}-1 ) \sqrt{u_k u_{l-1} u_{l} u_{k-1}}\equi_k^\rho \equi_{l-1}^\rho  K(k,l-1).\]
Here and in the following, we use the convex duality pair $\phi, \psi:[0,\infty) \to [0,\infty)$ defined by
$\phi(r)= r \log r -r +1$ and
$\psi(r)=\phi^*(r) = e^r -1$. 
\end{remark}
The Remark~\ref{rem:RD} shows that we need to take care of the case when the argument of $\log$ is zero.
In particular, this is necessary in the chain rule for the entropy, where an appropriate regularization is used, for which we introduce 
\begin{definition}\label{def:A-and-B}
The extended function $\rmA:\bR_{\ge 0}\times \bR_{\ge 0} \to [-\infty,+\infty]$ is defined by 
\begin{align*}
\rmA(u,v):= \begin{cases} \log(v) -\log (u) &\text{ if } u,v \in \bR_{\ge 0}\times \bR_{\ge 0} \setminus \{(0,0)\},\\
0 &\text{ if } u=v=0
\end{cases}
\end{align*} 
and 
$\rmB(u,v,w):=\rmA(u,v) w$ with the convention for infinity cases as in \cite[Equation 4.41]{PRST22} given for $a\in [-\infty,\infty]$ by
\begin{equation}\label{eq:infinities}
    \abs*{\pm \infty} := +\infty, \quad a \cdot (+\infty) := 
    \begin{cases}
        + \infty , & \text{ if } a>0 ; \\
        0 , & \text{ if } a= 0 ; \\
        - \infty , & \text{ if } a< 0 , 
    \end{cases}
    , \quad a \cdot (-\infty) := -a \cdot (+\infty) \,.
\end{equation}
\end{definition}
\begin{proposition}[Chain rule]\label{prop:chain_rule}
Under assumption \eqref{e:ass:K1}, any curve $(c,j) \in \ExgCE(0,T)$ starting from $c(0)\in \cP^1$, $\rho\in[0,\rho_c]$ with $\rho<\infty$ and
\begin{equation}\label{ass:EDG-chain-rule:finiteRE}
  \int_0^T \sfR_r(c , j) \dx r < +\infty \qquad\text{and}\qquad  \sfE^{\rho}(c(0))<+\infty 
\end{equation}
satisfies for all $0\leq s \leq t \leq T$ the chain rule identity
\begin{equation}\label{eq:EDG-chain-rule}
\sfE^{\rho}(c(t) ) - \sfE^{\rho}(c(s))= \int_s^t \sum_{k,l\geq 1}\frac12 \rmB\bigl(\kappa_r[c](k,l-1),\kappa^{\dagger}_r[c](l,k-1), j_{r}(k,l-1)\bigr) \dx  r.
\end{equation} 
In particular, the energy dissipation functional is non-negative
\begin{equation}\label{eq:ineq_L_non-negative}
\sfL^{\rho}(c,j)\ge 0.
\end{equation} 
\end{proposition}
\begin{proof}
If $\rho=0$, the finiteness of $\sfE^{\rho}(c(0))$ and conservation of first moment imply all terms in \eqref{eq:EDG-chain-rule} are zeros, so it holds. Otherwise, we define the $m$-truncated logarithm for $m>0$ by
\begin{equation}\label{def:m-log}
	\log_m(x) := \max\set*{ \min\set*{ \log x , m},-m} .
\end{equation}
Further, we define the $m$-Lipschitz entropy density function by
\begin{equation*}
	\phi_m(x) := \int_1^x \log_m(y) \dx{y} . 
\end{equation*}
With this, we define the regularised energy 
\begin{equation}\label{eq:def:regularized:energy}
  \sfE^\rho_m(c) := \frac12 \sum_{k\geq 0} \equi^\rho_k \phi_m\biggl( \frac{c_k}{\equi^\rho_k}\biggr).
\end{equation}
\noindent
\emph{Step 1:} We first establish the chain rule for the $m$-regularised energy by a time-convolution and passing to the limit as $m\to +\infty$.
To do so, it is enough to observe that $|\log_m(x)| \leq m$ for any $x\geq 0$ by definition \eqref{def:m-log}.
Indeed, by Definition~\ref{def:ExCE} of $(c,j)\in \ExgCE(0,T)$, we can choose the test function to be $\Phi=\mathbf{e}_k$ for $k\in\bN_0$ and obtain
by linearity of the continuity equation, the strong form of the continuity equation for the time regularised pair $(c^{\delta}=c\ast \eta^\delta, j^{\delta}=j\ast \eta^\delta)$ by the rescaled standard mollifier $\eta$, that is
\begin{equation}\label{eq:cdelta:CE:strong}
	\partial_t c^\delta_k (t)= - (\ExDiv j^\delta)_k (t) \qquad\text{ for all } k \ge 0. 
\end{equation}
Next, we define an admissible test function in the continuity equation by additionally truncating the regularised energy in~\eqref{eq:def:regularized:energy} for some $B\in\bN$ by
\[ 
	\sfE^\rho_{m,B}\colon \bR^\bN \to \bR \qquad\text{with}\qquad  
	\sfE^\rho_{m,B}(c):=\frac12 \sum_{k\ge 0}^B \equi^\rho_k \phi_m \Bigl(\frac{c_k}{\equi^\rho_k}\Bigr) .
\]
Then, we obtain from~\eqref{eq:cdelta:CE:strong} and by the usual chain-rule 
\begin{equation}\label{eq:B-decay}\begin{split}
 2 \pderiv{}{t} \sfE^\rho_{m,B}(c^\delta(t)) &= \sum_{k= 0}^B \equi^\rho_k \phi_m'\biggl(\frac{c^\delta_k(t)}{\equi^\rho_k}\biggr) \frac{\partial_t c^\delta_k(t)}{ \equi^\rho_k} 
	= \sum_{k= 0}^B \log_m\biggl(\frac{c^\delta_k(t)}{\equi^\rho_k}\biggr) (-\ExDiv j^\delta_k(t)) .
\end{split}\end{equation}
Using the boundedness $|\log_m(x) |\le m $ and  \[\sum_{k\geq 0} | (\ExDiv j^\delta)_k (t)|  \le 4 \sum_{k,l\geq 1} | j_{k,l- 1}^\delta(t) |\le 4 \sum_{k,l\geq 1} | j_{k,l-1}(t) |,\] together TV bound of $j$ from $(c,j) \in \ExgCE(0,T)$,  we have the estimate uniform in  $B$ and $\delta$
\begin{equation}\label{eq:fluxbound} 
	\int_0^T \sum_{k = 0}^B \biggl\lvert \log_m\biggl(\frac{c^\delta_k(t)}{\equi^\rho_k} \biggr) (-\ExDiv j^\delta_k(t)) \biggr\rvert \dx t \le  4 m \int_0^T \sum_{k,l\geq 1} |j_{k,l-1}(t) |\dx t  <\infty.  \end{equation}
So we can take limit $B \to \infty$  for the integral of  \eqref{eq:B-decay} in time to obtain 
 \[\sfE^\rho_m(c^\delta(t))-\sfE^\rho_m(c^\delta(0))=\frac12 \int_0^t \skp[\bigg]{\log_m\biggl(\frac{c^\delta(r)}{\equi^\rho}\biggr), -\ExDiv j^\delta (r) }_{\!\bN} \dx r \qquad \forall t\ge 0. \]
 Since the estimate \eqref{eq:fluxbound} is uniform in $\delta$, we have 
 \[\lim_{\delta \to 0} \int_0^t \skp[\bigg]{\log_m\biggl(\frac{c^\delta(r)}{\equi^\rho}\biggr), -\ExDiv j^\delta (r) }_{\!\bN} \d r =  \int_0^t \skp[\bigg]{\log_m \biggl(\frac{c(r)}{\equi^\rho}\biggr), -\ExDiv j (r) }_{\!\bN} \d r \qquad \forall t\ge 0\]  
by the pointwise convergence of $c^\delta_k(t) \to c_k(t)$ and $\ExDiv_k j^\delta(t) \to j(t)$ on $[0,T]$ for each $k\ge 0$ and dominated convergence.
On the other hand, by the definition of $\phi_m$, we have 
\[
	\sum_{k\ge0} \biggl\lvert\equi^\rho_k \phi_m\biggl(\frac{c^\delta_k(t)}{\equi^\rho_k}\biggr)\biggr\rvert \le \sum_{k\ge0}  m c^\delta_k(t) \le m \sum_{k\ge 0} c_k(t) = m  
\]
which implies $\lim_{\delta \to 0}\sfE^m(c^\delta(t))=\sfE^m(c(t))$ for all $t\ge 0$, again by dominated convergence. Therefore, we get the chain rule for the $m$-regularised energy
\[
	\sfE^\rho_m(c(t))-\sfE^\rho_m(c(0))=\frac12 \int_0^t \skp[\bigg]{\log_m \biggl(\frac{c(r)}{\equi^\rho}\biggr), -\ExDiv j (r) }_{\!\bN} \d r =\frac12 \int_0^t \skp[\bigg]{\ExNabla \log_m\biggl(\frac{c(r)}{\equi^\rho}\biggr), j (r) }_{\!\ExgE} \d r,
\]
for which we used that $\log_m$ is bounded and thus can be used as the test function in the continuity equation \eqref{eq:intro-ExCE}.

\smallskip
\noindent
\emph{Step 2: Existence of limit.}
We claim that for almost all $r \in [0,T]$
\begin{align}\label{eq:rigorousLogDualforMF}
\rmB\biggr(\frac{c_k(r)c_{l-1}(r)}{\equi_k^\rho \equi_{l-1}^\rho}, \frac{c_{k-1}(r)c_l(r)}{\equi^\rho_{k-1}\equi^\rho_{l}}, j_{k,l-1}(r) \biggl)\!
&=\!\!\lim_{m \to \infty}\biggl[- \log_m \!\!\! \frac{c_k(r)c_{l-1}(r)}{\equi^\rho_k \equi^\rho_{l-1}}+ \log_m \!\!\! \frac{c_{k-1}(r) c_l(r)}{\equi^\rho_{k-1} \equi^\rho_{l}}  \biggr]  j_{k,l-1}(r).
\end{align} 
Note, by construction of the truncated logarithm~\eqref{def:m-log}, we get
\begin{equation*}
\lim_{m\to \infty} a \log_m b =
 \begin{cases} + \infty &\text{ if }a <0, b=0\\
 										- \infty &\text{ if } a>0, b=0\\
 										a \log b &\text{ if } a\neq 0, b> 0\\
 										 0 & \text{ if } a=0. \end{cases}\end{equation*} 
To see \eqref{eq:rigorousLogDualforMF}, we note that the integrability condition \eqref{ass:EDG-chain-rule:finiteRE} implies $\sfR(c,j)$ is finite for almost all $r\in[0,T]$. So we have the following two cases on the limit in $m$:							
\begin{enumerate}[(1)]
\item If all of $c_k,c_l,c_{l-1},c_{k-1}$ are non-zero, the limit converges to $\ExNabla_{k,l-1}\log \bigl(\frac{c}{{\equi^\rho}}\bigr) j_{k,l-1}$
\item If $\sfR(c,j)= \sum_{k,l-1}\mathsf{Ent}(j_{k,l-1}| \theta[c](k,l-1) ) <+\infty$ , then when any of $c_k,c_l,c_{l-1},c_{k-1}$ is zero, it also forces $j_{k,l-1}=0$ by recalling the definition of $ \theta$ in~\eqref{def:theta}.
In this case
\[\biggl(- \log_m  \frac{c_kc_{l-1}}{\equi^\rho_k \equi^\rho_{l-1}}+ \log_m \frac{c_{k-1} c_l}{\equi^\rho_{k-1} \equi^\rho_{l}} \biggr) j_{k,l-1} =0\] so the limit is zero.
\end{enumerate}
Comparing with Definition \ref{def:A-and-B}, we see the equality  \eqref{eq:rigorousLogDualforMF} holds for almost all  $r \in [0,T]$. 

\smallskip
\noindent
\emph{Step 3:}
To pass to the limit in $m\to \infty$, we derive a uniform bound via the convex duality
\begin{equation}\label{eq:abs of flux-dissipation}\Big| \frac12 \overline{\nabla}_{k,l-1} \log_m \Big(\frac{c}{\omega^\rho}\Big) \Big| j_{k,l-1}  \le \phi(j_{k,l-1}\,|\,\theta[c](k,l-1)) + \theta[c](k,l-1) \phi^{*}\Big(\Big|\frac12 \overline{\nabla}_{k,l-1} \log_m \Big(\frac{c}{\omega^\rho}\Big)\Big|\Big). \end{equation}
Using the monotonicity of $\phi^*(s)=e^s -1$,
\begin{equation*}\label{eq:nablam-log bound}
	\abs[\Big]{ \ExNabla_{k,l-1} \log_m \frac{c}{\equi^\rho}} \leq \abs[\Big]{ \partial^-_k \log_m \frac{c}{\equi^\rho}} + \abs[\Big]{\partial^-_l \log_m \frac{c}{\equi^\rho}} \leq  \abs[\Big]{ \partial^-_k \log \frac{c}{\equi^\rho}} + \abs[\Big]{\partial^-_l \log \frac{c}{\equi^\rho}},
\end{equation*}
we have
\begin{align*}
 \phi^*\bra*{ \frac12 |\ExNabla_{k,l-1} \log_m u|} &\leq  \exp\bra*{\frac12 \abs*{ \partial^-_k \log u} + \frac12 \abs*{\partial^-_l \log u}} \\
 &=  \max\set*{\sqrt{\frac{u_k}{u_{k-1}}},\sqrt{\frac{u_{k-1}}{u_{k}}}} \max\set*{\sqrt{\frac{u_l}{u_{l-1}}},\sqrt{\frac{u_{l-1}}{u_{l}}}}. \end{align*}
Then
\begin{align}\label{eq:total-daulity-bound}
  \theta[c](k,l-1)  \phi^*\bra[\bigg]{ \frac12 |\ExNabla_{k,l-1} \log_m u |}  &\le   \equi^\rho_k \equi^\rho_{l-1} K(k,l-1) \max\set*{ u_k,u_{k-1}} \max\set*{ u_l ,u_{l-1}}.\end{align} Hence using the assumptions  \eqref{e:ass:K1} and we have a uniform bound in $m$
\begin{equation*}
	\sum_{k,l}  \theta[c](k,l-1)  \phi^*\bra[\bigg]{ \frac12 |\ExNabla_{k,l-1} \log_m u |} \leq  C_K C_{\equi^\rho} \sum_{k,l\geq 1} k l \bra*{  c_k + c_{k-1}} \bra*{c_l + c_{l-1}} 
	\leq 4 C_K C_{\equi^\rho}  (\tilde{\rho}+1)^2 \,,
\end{equation*}
where $\tilde{\rho}\in [0,+\infty)$ is the first moment of $c(0)$,\begin{equation*}
	C_{\equi^\rho} =  \sup_{k\geq 1} \max\set*{\frac{\equi^\rho_k}{\equi^\rho_{k-1}} , \frac{\equi^\rho_{k-1}}{\equi^\rho_{k}}} < \infty
\end{equation*} due to  \eqref{e:ass:Kc}. Indeed, the finiteness of $C_{\equi^\rho}$ is thanks to assumption~\eqref{e:ass:Kc} due to 
\[
  \lim_{k\to \infty}\frac{\equi^\rho_k}{\equi^\rho_{k-1}}=\Phi(\rho) \cdot \lim_{k\to\infty}\frac{K(1,k-1)}{K(k,0)}<\infty \,.
\] 
By dominated convergence, we can pass to the limit in the chain rule for $m$-truncated logarithm and use the definition of $K^\dagger $ to get
\[\begin{split}\sfE^\rho(c(t))-\sfE^\rho(c(0)) &= \int_0^t \sum_{k,l\ge 1}    \frac12 \rmB\biggr(\frac{c_k(r)c_{l-1}(r)}{\equi_k^\rho \equi_{l-1}^\rho}, \frac{c_{k-1}(r)c_l(r)}{\equi^\rho_{k-1}\equi^\rho_{l}}, j_{k,l-1}(r) \biggl) \d r\\
&=  \int_0^t \sum_{k,l\ge 1} \frac12   \rmB\biggr(\kappa_r[c](k,l-1), \kappa^\dagger_r[c](l,k-1) , j_{r}(k,l-1) \biggl) \d r.
\end{split}\]
\noindent
\emph{Step 4:}
Define
\[\mathsf{D}^{-}(c) := \sum_{k,l\in\bN}\mathrm{D}^{-}(\kappa[c](k,l-1), \kappa^{\dagger}[c](k,l-1)),\] with
\[\mathrm{D}^-(\kappa[c](k,l-1), \kappa^{\dagger}[c](k,l-1)):=\begin{cases} 0 \quad&\text{ if } \theta[c] (k,l-1)=0\\
\mathrm{D}(\kappa[c](k,l-1), \kappa^{\dagger}[c](k,l-1)) &\text{ otherwise. }
\end{cases}\] 
We claim that the inequality 
\[-\frac12 \mathrm{B}(u_k u_{l-1}, u_l u_{k-1}, j_{k,l-1}) \le \phi(j_{k,l-1}|\theta[c](k,l-1))+ \mathrm{D}^-(c,k,l-1)
\] holds.
Indeed, if $\theta[c](k,l-1)>0$, it follows from the following form of Fenchel's inequality for $u,v>0$, $j>0$,
\[\frac12 (\log u - \log v) j \le \phi(j|\theta ) + \theta \phi^*\left(\frac12 (\log u -\log v)\right)\]
and if $\theta=K \sqrt{u v}$ for some $K>0$,  we have 
\[\theta \phi^*\left(\frac12 (\log u - \log v)\right)= \frac12 K ( (\sqrt{u}-\sqrt{v})^2 + (u-v)).\] 
Choosing $u=u_{k}u_{l-1}, v = u_{l}u_{k-1}, j = j_{k,l-1}$ and $\theta= \theta[c](k,l-1)$, we have the inequality for the case $\theta[c](k,l-1)>0$.
Now consider the case for $\theta\propto \sqrt{uv} =0$, we have
\[-\mathrm{A}(u,v)j=\mathrm{A}(v,u) j=\begin{cases} -\infty &\text{ if } j>0, u=0, v>0,\\
+\infty &\text{ if } j>0, u>0,v=0,\\
0 &\text{ if } u=v=0 \text{ or } j=0.
\end{cases}
\]
On the other hand,
\[\phi(j| \theta) =\begin{cases} 0 &\text{ if } j = 0 =\theta,\\
                                  +\infty &\text{ if } j >0 , \theta = 0.
\end{cases}\]
Hence if $\theta=0$, $-\frac12 \mathrm{B}(u,v,j) \le \phi(j | \theta)$ with values on $[-\infty,\infty]$. Taking the integral and summation on both sides of the inequality, we get
\[\begin{split} \sfE^{\rho}(c(s) ) - \sfE^{\rho}(c(t))&\le \int_s^t \sum_{k,l} \phi(j_{r}(k,l-1)|\theta_r[c](k,l-1))+ \mathrm{D}^-_r (c,k,l-1)  \d r
\\ &= \int_s^t  \mathsf{R}_r(c,j) + \mathsf{D}^-_r (c) \d r.
\end{split}\]
In fact, by the estimate in Step 3 \eqref{eq:abs of flux-dissipation}, \eqref{eq:total-daulity-bound}, the lower bound of inequality always takes a finite value for our choice of parameters. So we get the inequality \begin{equation*}
\sfL^{\rho,-}(c,j)\ge 0.
\end{equation*}  where $\rmD$ is replaced by $\rmD^-$ in the definition of $\sfL$.
Due to $\rmD(u,v)= u- \sqrt{uv} \ge 0$ if $uv=0$ and $u \ge 0$, we have $\rmD \ge \rmD^-$ and $\sfD\ge \sfD^-$ so that $\sfL^\rho(c,j) \ge 0$.
\end{proof}

\begin{lemma}[Equality conditions]\label{lem:equ_in_D} Let $u,v \ge 0$.
If $j\ge 0$ is such that
\begin{equation}\label{eq:equality_in_duality}
-\frac12 \rmB(u, v, j)= \phi\bigl(j| \theta \bigr) +  K \rmD(u,v)\in \bR,
\end{equation} 
$\rmD(u,v)=\frac12 ((\sqrt{u}-\sqrt{v})^2 + (u -v))$, $\theta=K\sqrt{uv}$, $K>0$
then we have either $\{u=0\}$  or  $\{u> 0, v>0\}$ and
\begin{equation}\label{eq:equal_implies_flux}
j=K u.
\end{equation}
In particular, in this case $\rmD(u,v) =\rmD^-(u,v)$.

Conversely, if $j=Ku$, then
\begin{equation}\label{eq:equality_in_duality_inf}
-\frac{1}{2}\rmB(u, v, j)= \phi(j|\theta)+ K \rmD(u,v) \in [0,+\infty].
\end{equation} 
\end{lemma}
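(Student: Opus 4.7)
The plan is to recognise the claimed identity as the equality case of a Fenchel--Young inequality between the Legendre pair $(\sfC,\sfC^{*})$ in the regime $\sqrt{uv}>0$, and to dispatch the degenerate regime $\sqrt{uv}=0$ by a short case analysis consistent with the conventions adopted for $\rmA,\rmB$ in Definition~\ref{def:A-and-B}. The key algebraic preparation is the identity
\[
	\tfrac12\sqrt{uv}\,\sfC^{*}\bigl(\log u - \log v\bigr)
	\;=\;(\sqrt u-\sqrt v)^{2}\;=\;\rmD(u,v)\qquad\text{for }u,v>0,
\]
which follows directly from $\sfC^{*}(\xi)=4(\cosh(\xi/2)-1)$ and $\cosh\bigl(\tfrac12\log(u/v)\bigr)=\tfrac12(\sqrt{u/v}+\sqrt{v/u})$. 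Once this is in hand, setting $\sigma:=\tfrac12\sqrt{uv}$ and $\xi:=\log u-\log v=-\rmA(u,v)$ recasts the claimed identity exactly as $\sfC(j|\sigma)+\sigma\sfC^{*}(\xi)=j\xi$.

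In the non-degenerate case $\sqrt{uv}>0$ I would then invoke the standard Fenchel--Young inequality $\sfC(s)+\sfC^{*}(\xi)\ge s\xi$ applied to $s=j/\sigma$; equality holds if and only if $s=(\sfC^{*})'(\xi)=2\sinh(\xi/2)$, and since
\[
	(\sfC^{*})'(\log u-\log v)
	\;=\;\sqrt{u/v}-\sqrt{v/u}
	\;=\;\frac{u-v}{\sqrt{uv}},
\]
the equality condition is precisely $j=\tfrac12(u-v)$, proving \eqref{eq:equal_implies_flux}. Note that in this regime $u,v>0$ forces $\rmD(u,v)=\rmD^-(u,v)$ automatically, giving the claim about $\rmD^-$.

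In the degenerate case $\sqrt{uv}=0$, finiteness of the right-hand side of \eqref{eq:equality_in_duality} forces $\sfC(j|0)<\infty$ in the perspective convention, hence $j=0$; and the right-hand side reduces to $\rmD(u,v)=u+v$. If $u=v=0$ then $\rmA(0,0)=0$ and $\rmB=0$, so the identity holds trivially and gives $u-v=0$. If exactly one of $u,v$ vanishes, say $u=0<v$, then $\rmA(0,v)=+\infty$ but the convention $\infty\cdot 0=0$ yields $\rmB(0,v,0)=0$; the identity would then force $v=0$, a contradiction. Thus only $u=v=0$ survives. The converse is immediate: for $\sqrt{uv}>0$ the substitution $j=\tfrac12(u-v)$ realises the Fenchel--Young equality just derived; for $\sqrt{uv}=0$ either $u=v=0$ (both sides vanish) or $j=\tfrac12(u-v)\ne 0$, in which case $\sfC(j|0)=+\infty$ matches $-\rmB(u,v,j)=+\infty$ (with the sign of $\rmA$ and $j$ coinciding), so \eqref{eq:equality_in_duality_inf} holds in $[0,\infty]$.

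The substantive step is the algebraic identification $\rmD(u,v)=\tfrac12\sqrt{uv}\,\sfC^{*}(\log(u/v))$, which turns the problem into a clean Young-equality statement; after that the $\sqrt{uv}>0$ case is essentially a one-line duality argument. The only real subtlety, and what I expect to be the main obstacle, is the careful bookkeeping of the conventional products $\pm\infty\cdot 0$ appearing in $\rmA\cdot j$ for the boundary case $\sqrt{uv}=0$; this is precisely the role played by the extensions $\rmA,\rmB,\rmD^-$ introduced above, so the required conventions are already in place.
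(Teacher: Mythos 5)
Your proof is correct, and it takes a more self-contained route than the paper. Where the paper delegates both the equality condition in the non-degenerate case and the converse to the external duality result \cite[Lemma 4.19]{PRST22}, you reprove exactly that content for the specific $\cosh$ pair: the algebraic identity $\rmD(u,v)=\tfrac12\sqrt{uv}\,\sfC^*\bigl(\log(u/v)\bigr)$ recasts \eqref{eq:equality_in_duality} as the Fenchel--Young equality $\sfC(j|\sigma)+\sigma\sfC^*(\xi)=j\xi$ with $\sigma=\tfrac12\sqrt{uv}$, $\xi=\log u-\log v$, and the equality criterion $j/\sigma=(\sfC^*)'(\xi)=2\sinh(\xi/2)$ gives $j=\tfrac12(u-v)$ directly. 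The degenerate case $\sqrt{uv}=0$ is handled in essentially the same way as in the paper: finiteness forces $j=0$ via the perspective convention, and then the mixed case (exactly one of $u,v$ positive) is excluded because $\rmB$ vanishes by the convention $\pm\infty\cdot 0=0$ while $\rmD(u,v)=\max(u,v)>0$. What your version buys is independence from the cited lemma and an explicit display of why the cosh structure produces the arithmetic-mean flux $\tfrac12(u-v)$; what the paper's version buys is brevity and uniform treatment of the sign conventions already codified in \cite{PRST22}. Two cosmetic points: in the converse with, say, $u=0<v$ one has $\rmA(0,v)=+\infty$ and $j=-v/2<0$, so $\rmA$ and $j$ have \emph{opposite} signs (which is precisely what makes $-\rmB=+\infty$); your parenthetical says ``coinciding'', though your stated conclusion $-\rmB(u,v,j)=+\infty$ is the correct one. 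Also, the claim $\rmD(u,v)=\rmD^-(u,v)$ should be recorded in both admissible alternatives; you note it for $\sqrt{uv}>0$, and in the remaining case $u=v=0$ it is trivial since both sides vanish.
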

\begin{proof}
We consider different cases for $j$.
First suppose $j=0$, then we have $-\frac12 \rmB(u, v, j)=0$ and $\phi\bigl(j| \theta \bigr) +  K \rmD(u,v) = \theta +  K D(u,v)=K u $ so the Equation \eqref{eq:equality_in_duality}  implies  $u=0$. 

Alternatively for $j > 0$ and $\theta=0$, we have $\phi(j|\theta)= +\infty$, 
so this cannot happen if the equality \eqref{eq:equality_in_duality} holds in $\bR$. Thus, we must have $j>0$ and $\theta>0$, which implies $u>0$ and $v>0$. By the definition of Legendre transform, $\phi^*(w) = \sup_{a > 0} (w a - \phi(a))$, we find that  the supremum is attained at 
\[
	j = \theta \exp\bra*{ \tfrac12 (\log u - \log v)} = K u \,.
\]

Conversely, we start with $j = Ku$. We consider cases where $u,v$ are zero and positive.  If $u=j=0$, the functions $\mathrm{B}(u,v,j)$, $\phi(j|\theta)$ and $D(u,v)$ are zero. If $u>0$ and  $v=0$, both sides equal to $+\infty$. In the case $u>0, v>0$, by the definition of $\rmB$
\begin{equation*}
-\frac12 \mathrm{B}(u,v, Ku) - \phi(Ku|\theta) = \frac12 K u \log(\frac{u}{v})- K \sqrt{u v} \phi (\sqrt{\frac{u}{v}})=Ku(1 - \sqrt{\frac{v}{u}}) =   K D(u,v) \,. \qedhere
\end{equation*}
\end{proof}

\begin{proposition}[EDP solution for~\eqref{eq:MF}]\label{prop:zero-set-mean-field-EDP-functional}
Under assumption \eqref{e:ass:K1}, let $\rho\in [0,\rho_c]$ with $\rho<\infty$. Then for any $c(0)\in \cP^1$ with $\sfE^{\rho}(c(0))<\infty$, the following are equivalent
\[\sfL^{\rho}(c,j) = 0 \iff 
\begin{cases}  (c,j) \in \ExgCE(0,T)  \\ 
j(t) = \overline\jmath_t[c] \text{ for almost all } t \in [0,T]\\
\sfE^{\rho}(c(T))+ \int_0^T\mathsf{F}_t(c)    \d t  \le \sfE^{\rho}(c(0)). \end{cases}\]
Hereby, 
\[
	\mathsf{F}_t(c):=-\frac12 \sum_{k,l\in\bbN} \rmB\Bigl(\kappa_t[c](k,l-1) ,\kappa^{\dagger}_t[c](k,l-1) , \overline\jmath_t [c](k,l-1) \Bigr) \,,
\]
with $\overline\jmath_t[c]= \kappa_t[c]$ the expected flux defined in~\eqref{eq:MF}.
\end{proposition}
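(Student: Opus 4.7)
The strategy is to combine the chain rule from Proposition~\ref{prop:chain_rule} with the pointwise equality characterisation of Lemma~\ref{lem:equ_in_D}, reducing the functional equality $\sfL^{\rho}(c,j)=0$ to a pointwise identification of the flux $j$ with the reference flux $\overline\jmath[c]$.

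For the forward direction $(\Rightarrow)$, note that $\sfL^{\rho}(c,j)=0$ forces $(c,j)\in\ExgCE(0,T)$ by definition, and since $\sfE^{\rho}\ge 0$, we obtain $\int_0^T \sfR(c(t),j(t))\,dt \le \sfE^{\rho}(c(0))<\infty$. Hence the hypotheses of Proposition~\ref{prop:chain_rule} are met, yielding
$$
\sfE^{\rho}(c(T)) - \sfE^{\rho}(c(0)) = \int_0^T \sum_{k,l \ge 1} \rmB\bigl(\kappa[c](k,l-1), \kappa[c](l,k-1), j_{k,l-1}\bigr)\,dt.
$$
Substituting into $\sfL^{\rho}(c,j)=0$ gives
$$
\int_0^T \sum_{k,l \ge 1}\bigl[\rmB + \sfC(j_{k,l-1}| \sigma_{(c,k,l-1)}) + \rmD_{\kappa}(c,k,l-1)\bigr]\,dt = 0.
$$
By the Legendre inequality~\eqref{eq:Legendre-ineq-for-B} combined with $\rmD\ge \rmD^-$, each summand is non-negative, so by countability of $\bN\times\bN$ each term must vanish for almost every $t$ and every $(k,l)$. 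The equality assertion of Lemma~\ref{lem:equ_in_D} then forces $j_{k,l-1}(t) = \tfrac12(\kappa[c(t)](k,l-1)-\kappa[c(t)](l,k-1)) = \overline\jmath[c(t)](k,l-1)$, and the dissipation balance $\sfE^{\rho}(c(T))+\int_0^T\mathsf{Dis}(c)\,dt=\sfE^{\rho}(c(0))$ follows directly from the chain rule with this choice.

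For the converse direction $(\Leftarrow)$, the converse part of Lemma~\ref{lem:equ_in_D} applied to $j=\overline\jmath[c]$ yields the pointwise identity
$$
-\rmB\bigl(\kappa[c](k,l-1),\kappa[c](l,k-1),\overline\jmath[c](k,l-1)\bigr) = \sfC\bigl(\overline\jmath[c](k,l-1)| \sigma_{(c,k,l-1)}\bigr) + \rmD_{\kappa}(c,k,l-1),
$$
whose summation gives $\sfR(c,\overline\jmath[c])+\sfD(c)=\mathsf{Dis}(c)$. The assumed dissipation inequality then provides $\int_0^T(\sfR+\sfD)\,dt\le\sfE^{\rho}(c(0))<\infty$, so Proposition~\ref{prop:chain_rule} applies and turns the assumed inequality into an equality, delivering $\sfL^{\rho}(c,j)=0$.

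The main delicate point is the treatment of the extended logarithm when some of the entries $c_k,c_{k-1},c_l,c_{l-1}$ vanish: the function $\rmB$ from Definition~\ref{def:A-and-B} provides the correct extension making both sides of the chain rule and the Legendre identity simultaneously meaningful, and Lemma~\ref{lem:equ_in_D} ensures $\rmD=\rmD^-$ at the point of equality. This last fact is what aligns the physically natural dissipation functional $\sfD$ in the statement with the auxiliary $\sfD^-$ featured in Proposition~\ref{prop:chain_rule}, so no discrepancy between $\sfL^{\rho}$ and its non-negative lower bound $\sfL^{\rho,-}$ arises on the zero set.
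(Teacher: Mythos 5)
Your proof is correct and follows essentially the same approach as the paper: reduce $\sfL^{\rho}(c,j)=0$ via the chain rule of Proposition~\ref{prop:chain_rule} to pointwise equality in the Legendre-type bound $|\rmB|\le\sfC(\cdot\,|\,\sigma)+\rmD$, then invoke Lemma~\ref{lem:equ_in_D} to identify $j=\overline\jmath[c]$. The one small difference is in the converse direction: the paper concludes via $\sfL^{\rho}(c,j)\le 0$ combined with the non-negativity~\eqref{eq:ineq_L-_non-negative}, whereas you re-apply the chain rule to upgrade the assumed dissipation inequality to an equality directly — both routes are valid, yours bypasses one intermediate invocation but still rests on the same two ingredients.
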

\begin{proof} 
For $\sfL^{\rho}(c,j)=0$, we can apply the chain rule  in Proposition \ref{prop:chain_rule}  to see the equality 
\[\int_0^T -\frac12 \sum_{k,l\in\bbN} \rmB\Bigl(\kappa_t[c](k,l-1) ,\kappa^{\dagger}_t [c](k,l-1) , \overline\jmath_t[c](k,l-1) \Bigr) \d t =  \int_0^T \sfR_t (c,j)+  \sfD_t (c)\d t \] holds. Then Lemma \ref{lem:equ_in_D} implies  $j_{t}(k,l-1)= K_t [\omega](k,l-1)u_{k}(t) u_{l-1}(t)= K_t [c](k,l-1)$ for each $k,l$ and for almost all $t\in[0,T]$. This implies
\[\sfE^{\rho}(c(T))+ \int_0^T\mathsf{F}_t(c)    \d t  = \sfE^{\rho}(c(0)). 
\]
Conversely, we can apply the converse of the equality condition in Lemma \ref{lem:equ_in_D} to get 
\[\int_0^T \mathsf{F}_t(c) \d t = \int_0^T \sfR_t(c,j)+  \sfD_t(c)\d t.\] Hence $\sfL^{\rho}(c,j)\le 0$ and by the non-negativity from~\eqref{eq:ineq_L_non-negative}, we have $\sfL^{\rho}(c,j)= 0$ . 

\end{proof}

\subsection{The forward Kolmogorov equation}
\label{sec:var4FKE}
\begin{definition}[Finite particle continuity equation]
	Let $N, L\in \bN$, a pair $(\bC^{N,L}_t,\bJ^{N,L}_t)_{t\in[0,T]}\subset  \cP(\hat{V}^{N,L})\times \cM^+(\hat{E}^{N,L})$ is said to satisfy the continuity equation, denoted by $(\bC^{N,L},\bJ^{N,L})\in \mathsf{CE}^N(0,T)$ provided that:
\begin{enumerate}
\item the fluxes $(\bJ^{N,L}_t)_{t\in[0,T]} \subset \cM^+(\hat{E}^{N,L})$ is a Borel measurable family which maps to finite measure on $\hat{E}^{N,L}$ satisfying 
\[
	\int_0^T \sum_{(c,k,l)\in \hat E^{N,L}}    \bJ^{N,L}_t(c,k,l-1) \dx t <\infty \,.
\]
\item $(\bC^{N,L},\bJ^{N,L})$ satisfies the continuity equation $\partial_t \bC_t^{N,L} = - \DivL  \bJ^{N,L}_t$ in the sense that
 for all $\mathbbm{\Phi} :\hat{V}^{N,L}\to \bR$, and all $0\le s\le t\le T$
\begin{align*}\label{eq:cgce}\tag{\ensuremath{\mathsf{CE}^N}}
  \sum_{c\in \hat{V}^{N,L}} \bigl(\bC^{N,L}_t(c) -\bC^{N,L}_s(c)\bigr) \mathbbm\Phi(c) =\int_s^t \sum_{(c,k,l) \in \hat{E}^{N,L}} \hat{\nabla}^L_{k,l-1}\mathbbm\Phi(c) \, \bJ^{N,L}_r(c,k,l-1)  \dx r. 
\end{align*}
\end{enumerate}
\end{definition}
Now we introduce the microscopic model on $V^{N,L}$ for which the Markov process on $\hat{V}^{N,L}$ can be considered as the lifted version. The microscopic detailed balance condition will be lifted to a condition of the stochastic particle systems.
\begin{definition}[Microscopic model]\label{def:micromodel}
Consider $N$ indistinguishable particles on a complete graph with $L$ nodes, which are also called \emph{clusters} in the present context. The system is described by counting the number of particles on 
\begin{equation}\label{eq:def:micro-state-space}
	\eta \in  V^{N,L}:=\biggl\{\eta\in \bN_0^{L} :  \sum_{x =1 }^L \eta_x = N\biggr\} .
\end{equation}
The infinitesimal generator of the jump process with state space $V^{N,L}$, for any $f: V^{N,L}\to \mathbb{R}$, is given by 
\begin{equation}\label{eq:def:generator}
	\pgen f(\eta) := \frac{1}{L-1}\sum_{x,y=1}^L K(\eta_x,\eta_y) ( f(\eta^{x,y}) - f(\eta))  ,
\end{equation}
with $\eta^{x,y}:= \eta- \mathbf{e}_x +\mathbf{e}_y$ is the system after a particle jump from the $x$-th cluster to the $y$-th cluster and $\mathbf{e}_x\in \bN_0^L$ is the $x$-th canonical unit vector. We denote the edge set  by 
\begin{equation}\label{eq:def edge-set-micro}
E^{N,L}:=\{(\eta,x,y)\in V^{N,L}\times \{1,...,L\} \times \{1,...,L\}\colon 	\eta^{x,y}\in V^{N,L}\}.
\end{equation}
The kernel $K:\bN\times \bN_0\to \bR_{\geq0}$ in the generator \eqref{eq:def:generator} determines the jump rate $K(\eta_x,\eta_y)$ for a particle jumping from a cluster of $\eta_x$ particles to a cluster of $\eta_y$ particles. The overall prefactor $\frac{1}{L-1}$ is the uniform rate to jump to any of the $L-1$ neighbours from the given cluster and shows the mean-field scaling of the equation.

The two generators are related by the consistency relation $\pgen(G\circ C^L) (\eta) =: \cgen G(c)$ for any $\eta\in V^{N,L}$ such that $C^L[\eta]=c$, 
\begin{equation}\label{eq:def of lift}
C^L[\eta] := \frac{1}{L}\sum_{x =1}^L \delta_{\eta_x}  \text{ for } \eta\in V^{N,L}
\end{equation} and for any function $G: \hat V^{N,L}\to \bR$.
\end{definition}
\begin{definition}[Reference measure]\label{def:lifted-equilibrium} Define the lifted measure $\mathbbm{\Pi}^{N,L}$ on $\cP(\hat{V}^{N,L})$ of the probability measure $\pi \in  \cP(V^{N,L})$  as the pushforward measure of $C^L$ 
\[\mathbbm{\Pi}^{N,L} = C^L_{\#}\pi^{N,L},\] where 
$C^L(\eta)=\frac1L \sum_{x=1}^N \delta_{\eta_x} \in \hat{V}^{N,L}$, $\pi^{N,L}(\eta)=\frac{1}{\cZ^{N,L}}\prod_{x=1}^L w(\eta_x) \in \cP(V^{N,L})$ with normalization $\cZ^{N,L}$ for $\eta \in V^{N,L}$ and $w$ as defined in \eqref{eq:def:w}.
\end{definition}
 
%


\begin{proposition}\label{prop:property-backwardkernel}
The backward kernel $K^{\dagger}$ satisfies the 
\begin{enumerate}[(i)]
\item \label{eq: backward-on-position-rep}
$\pi^{N,L}(\eta)K(\eta_x,\eta_y) = \pi^{N,L}(\eta^{x,y})K^{\dagger}(\eta_y^{x,y},\eta_x^{x,y})$,

\item \label{eq: lifted-backward}$
\mathbbm{\Pi}^{N,L}(c) \kappa^L[c](k,l-1) = \mathbbm{\Pi}^{N,L}(c^{k,l-1}){\kappa^{L}}^{\dagger}[c^{k,l-1}](l,k-1)$.
\end{enumerate}
\end{proposition}
\begin{proof}
\noindent
\emph{Ad (i):}
\item Since $\pi^{N,L}(\eta^{x,y})= \pi^{N,L}(\eta)  \frac{\omega(\eta_x -1)}{\omega(\eta_x)} \frac{\omega(\eta_y +1 )}{\omega(\eta_y)}$, where  $\omega=\omega^\fug$ for any $\fug\in[0,\fugcrit)$ defined in \eqref{eq:canmeasure}, we have
\[\begin{split}\pi^{N,L}(\eta^{x,y}) K^\dagger(\eta^{x,y}_y, \eta^{x,y}_x) &= \pi^{N,L}(\eta)\frac{1}{\omega(\eta_x)\omega(\eta_y)} \omega(\eta_x-1)\omega(\eta_y+1) K^{\dagger}(\eta_y +1 , \eta_x -1) \\
&= \pi^{N,L}(\eta) K(\eta_x ,\eta_y).
\end{split}\]
\noindent
\emph{Ad (ii):}
We decompose the microscopic edge set $E^{N,L}$ from \eqref{eq:def edge-set-micro} into
\[
	F(c,k,l-1):=\bigl\{(\eta,x,y)\in E^{N,L}: C^L(\eta)=c, C^L(\eta^{x,y})=c^{k,l-1} \bigr\}.
\]
We observe that by definition 
\begin{equation*}\begin{split}
F(c,k,l-1)&= \bigl\{ (\eta,x,y)\in E^{N,L} : C^L(\eta)=c, \eta_x =k ,\eta_{y}=l-1  \bigr\} \\
& = \bigl\{ (\eta,x,y)\in E^{N,L} : C^L(\eta^{x,y})=c^{k,l-1}, \eta^{x,y}_x= k-1, \eta^{x,y}_y = l  \bigr\} \,,
\end{split}\end{equation*} 
so that we can describe a transition from $c$ to $c^{k,l-1}$ either by the initial distribution with the number of particles in the two boxes involved or by the final distribution with the number of particles in the two boxes involved.

From \ref{eq: backward-on-position-rep}, we have
\begin{equation}\label{eq:DBC-position-level} 
	\pi^{N,L}(\eta) K(k,l-1)=\pi^{N,L}(\eta^{x,y}) K^\dagger (l, k-1) \quad \text{ for all } (\eta,x,y) \in  F(c,k,l-1)\,.
\end{equation}
By summing over the above equation and after multiplying by $L^{-2}$ on both sides, we arrive at
 \begin{equation*}\begin{split}
L^{-2} \!\! \sum_{\substack{(\eta,x,y\in \\F(c,k,l-1)}}\!\! \pi^{N,L}(\eta) K(\eta_x,\eta_y)
&=\frac{K(k,l-1)}{L^2} \!\!\sum_{(\eta,x,y) \in F(c,k,l-1)}\!\!   \pi^{N,L}(\eta)\\
&= \frac{K(k,l-1)}{L^2}\!\!\sum_{\eta:C^L(\eta)=c}\!\! \pi^{N,L}(\eta) \; \#\bigl\{(x,y) :  C^L(\eta^{x,y})=c^{k,l-1} \bigr\}\, .
\end{split}\end{equation*}
The idea behind the last equality is that we can sum over $F(c,k,l-1)$ in two steps. First, we fix an~$\eta$ with $C^L(\eta)=c$ and count how many possible pathways to change from $C^L(\eta)$ to $C^L(\eta^{x,y})$ and second sum over $\{\eta: C^L(\eta)=c\}$. For a fixed $\eta $ with $C^L(\eta)=c$, we have
\begin{equation}\label{eq:counting-forward}
\#\bigl\{(x,y) : C^L(\eta^{x,y})=c^{k,l-1}\bigr\}=  L^2 c_{k}\bigl(c_{l-1}-L^{-1} \delta_{k,l-1}\bigr) \,,
\end{equation} 
because $L c_k$ is the number of boxes with $k$ particles in $\eta$ and we only count the jump between different boxes. So we need to reduce the number by $1$ if $k=l-1$. It follows that
\begin{equation*}\begin{split}
\MoveEqLeft\frac{K(k,l-1)}{L^2}\sum_{\eta:C^L(\eta)=c} \pi^{N,L}(\eta) \; \#\{(x,y) : C^L(\eta^{x,y})=c^{k,l-1} \}\\
&=K(k,l-1) c_{k} (c_{l-1}-L^{-1}\delta_{k,l-1})\sum_{\eta:C^L(\eta)=c} \pi^{N,L}(\eta) \\
&=K(k,l-1) c_{k} (c_{l-1}-L^{-1}\delta_{k,l-1}) C^L_{\#}\pi^{N,L}(c)\\
&= \frac{L-1}{L}\mathbbm{\Pi}^{N,L}(c) \kappa^L[c](k,l-1).
\end{split}\end{equation*}
On the other hand, we can analogously modify the summation on the other side of the equation \eqref{eq:DBC-position-level} to get
\begin{equation*}\begin{split}
\MoveEqLeft L^{-2}\sum_{\substack{(\eta,x,y) \in \\F(c,k,l-1)}} \pi^{N,L}(\eta^{x,y}) K^\dagger (\eta_y^{x,y}, \eta_x^{x,y})\\
&=\frac{K^\dagger (l,k-1)}{L^2}\sum_{(\eta,x,y) \in F(c,k,l-1)} \pi^{N,L}(\eta^{x,y}) \\
&=\frac{K^\dagger (l,k-1)}{L^2}\sum_{V^{N,L}[c^{k,l-1}] } \pi^{N,L}(\eta^{z,z'})\; \#\{(x,y) :  C^L(\eta^{x,y})=c^{k,l-1}, C^L(\eta)=c\} \,
\end{split}\end{equation*} 
where set $V^{N,L}[c^{k,l-1}] := \{\eta: C^L(\eta^{z,z'})=c^{k,l-1} \text{for some } z, z' \in \set*{1,\dots,L} \}$ and for the last equality sum in two steps: we fix $\eta$ such that it has a transition pathway to $C^L(\eta^{z,z'})= c^{k,l-1}$ and then count the number of transition pathways that from $c^{k,l-1}$ back to $c$. We note that by definition of $\pi^{N,L}$, it does not depend on the label of the box but the cluster size distribution so we can write $\pi^{N,L}(\eta^{z,z'})$ independent of $(x,y)$ in the product form above.

On the set $V^{N,L}[c^{k,l-1}]$, we get similar to \eqref{eq:counting-forward} the identity
\[
	\#\{(x,y) :  C^L(\eta^{x,y})=c^{k,l-1}, C^L(\eta)=c\}=  L^2 c^{k,l-1}_{l} (c^{k,l-1}_{k-1}-L^{-1}\delta_{l,k-1}) .
\]
Consequently, we have
\begin{equation*}\begin{split}
\MoveEqLeft\frac{K^\dagger (l,k-1)}{L^2}\sum_{\eta \in V^{N,L}[c^{k,l-1}] } \pi^{N,L}(\eta^{z,z'})\; \#\{(x,y) :  C^L(\eta^{x,y})=c^{k,l-1}, C^L(\eta)=c\}\\
&= K^\dagger (l,k-1) c^{k,l-1}_{l} (c^{k,l-1}_{k-1}-L^{-1}\delta_{l,k-1})\sum_{\eta \in V^{N,L}[c^{k,l-1}] } \pi^{N,L}(\eta^{z,z'})\\
&= K^\dagger (l,k-1) c^{k,l-1}_{l} (c^{k,l-1}_{k-1}-L^{-1}\delta_{l,k-1}) C^L_{\#}\pi^{N,L}(c^{k,l-1})\\
&=\frac{L-1}{L}\mathbbm{\Pi}^{N,L}(c^{k,l-1}) {\kappa^L}^{\dagger}[c^{k,l-1}](l,k-1) 
\end{split}\end{equation*}
Finally, by the detailed balance condition \eqref{eq:DBC-position-level}, we have 
\[
	\mathbbm{\Pi}^{N,L}(c)\kappa^L[c](k,l-1) =\mathbbm{\Pi}^{N,L}(c^{k,l-1}) {\kappa^L}^{\dagger}[c^{k,l-1}](l,k-1) \,. \qedhere
\]
\end{proof}

\begin{definition}[EDF for the finite particle system~\eqref{eq:FKE}]\label{def:EDF_FKE}
The energy dissipation function for the finite particle system is defined for any $(\bC^{N,L},\bJ^{N,L}) \in \ref{eq:cgce}(0,T)$ to be given by
\begin{equation}\label{eq:def:EDG:EDF}
	\cL^{N,L}(\bC^{N,L}, \bJ^{N,L}):=
\left. \cE^{N,L}(\bC^{N,L}_t)\right|_{t=0}^T
\!+\int_0^T  \Bigl[  \calR^{N,L}_t (\bC^{N,L},\bJ^{N,L})+\calD^{N,L}_t (\bC^{N,L})  \Bigr]\d t ,  
\end{equation} where the energy functional $\mathcal{E}^{N,L}$ is
\[ \mathcal{E}^{N,L}(\bC^{N,L})=  \frac{1}{2L} \mathcal{Ent}(\bC^{N,L}| \mathbbm{\Pi}^{N,L}),\]\\
the primal dissipation potential is defined as 
\[ \mathcal{R}^{N,L}(\bC,\bJ)= \mathcal{Ent}(\bJ | \mathbbm{\Theta}^L[{\bC}]), \quad \mathbbm{\Theta}^L[\bC]= \sqrt{ \nu^L[\bC] S_{\#}{{\nu^L}^{\dagger}[\bC]}},\] 
with the expected flux $\nu^L[\bC^{N,L}](c,k,l)= \bC^{N,L}(c) \kappa^L[c](k,l)$ and the pushforward measure
\[ \flipL{\nu}[\bC^{N,L}](c,k,l-1)=\bC^{N,L}(c^{k,l-1}) {\kappa^L}^{\dagger}[c^{k,l-1}](l,k-1)\] 
induced by the flip map  
$S(c,k,l-1)=(c+\frac1L \gamma^{k,l-1},l,k-1)$. The inverse of the flip map is $ S^{-1}(c,k,l-1)=(c -\frac1L \gamma^{k,l-1},l,k-1)$ defined for $c-\frac1L \gamma^{k,l-1} \ge 0$.
Finally, we define the Fisher information 
\begin{align*}
\mathcal{D}^{N,L}(\bC^{N,L})
&= \int \sum_{k,l} \mathrm{D}(\nu^L[\bC^{N,L}](c,k,l-1),\flipL{\nu}[\bC^{N,L}](c,k,l-1) )\\
&= \mathcal{H}^2(\nu^L[\bC^{N,L}], \flipL{\nu}[\bC^{N,L}])+\frac12 \int \sum_{k,l} (\nu^L[\bC^{N,L}]-\flipL{\nu}[\bC^{N,L}])(\d c, k ,l-1),
\end{align*}
with
\[\mathcal{H}^2(\mu,\nu) =\frac12 \int \abs[\Bigg]{\sqrt{\frac{\dx \mu}{\d \lambda}} - \sqrt{\frac{\d \nu}{\d \lambda}}} ^2 \dx \lambda,
\] for any measure $\lambda$ dominating both $\mu,\nu\ll \lambda$.

\end{definition}
\begin{remark}\label{rk:change-of-gradient strucutre}
The dissipation potential $\calD^{N,L}$ is related (as in Remark~\ref{rem:RD}) to the primal $\calR^{N,L}$ for the force given in terms of the energy, that is
\[ 
	\mathbb{W}^{N,L}[\bC^{N,L}](c,k,l-1) = -\frac{1}{2} \hat{\nabla}^{N,L}_{k,l-1} D \mathcal{E}^{N,L}(\bC^{N,L}(c))=-\frac{1}{2} ( \log \frac{\bC^{N,L}}{\mathbbm{\Pi}^{N,L}}(c) - \log \frac{\bC^{N,L}}{\mathbbm{\Pi}^{N,L}}(c^{k,l-1}))
\]
with $\frac{\bC^{N,L}}{\mathbbm{\Pi}^{N,L}}(c),\frac{\bC^{N,L}}{\mathbbm{\Pi}^{N,L}}(c^{k,l-1})>0$. 
In this case, whenever $\mathbbm{\Theta}^L[\bC^{N,L}](c,k,l-1)>0$ for all $(c,k,l)\in\hat{E}^{N,L}$, we have the identity
\[ {\mathcal{R}^{N,L}}^{*}(\bC^{N,L},\bW^{N,L}[\bC^{N,L}])= \int \sum_{k,l \ge 1 } \bigl(\exp({\bW^{N,L}[\bC^{N,L}]})-1 \bigr) \mathbbm{\Theta}^L[\bC^{N,L}](\d c,k ,l-1)= \mathcal{D}^{N,L}(\bC^{N,L}).\]
\end{remark}
 
\begin{lemma}[Uniform boundedness of kernel] \label{lem:ubkernel} 
	The jump kernel is uniformly bounded by
\[ \max \Big(\sum_{(c,k,l) \in \hat{E}^{N,L}} {\nu}^L[{\bC}^{N,L}] (c,k,l-1), \sum_{(c,k,l) \in \hat{E}^{N,L}} \flipL{\nu}[\bC^{N,L}]\Big) \le 2 C_{\overline{K}} e^{\|b\|_{\infty}}  \sup_{N/L\to \rho}\frac{N}{L} \biggl(\frac{N}{L}+1\biggr). \]
\end{lemma}
\begin{proof}
We consider the first term
\[\begin{split}\sum_{(c,k,l) \in \hat{E}^{N,L}}\nu^L[\bC^{N,L}](c,k,l-1)&= 
\sum_{(c,k,l) \in \hat{E}^{N,L}}\bC^{N,L}(c) \kappa^L[c](k,l-1)\\
&\le\frac{L}{L-1}\sum_{(c,k,l) \in \hat{E}^{N,L}}\bC^{N,L}(c) K(k,l-1) c_k c_{l-1}\\
&\le C_{\overline{K}} e^{\|b\|_{\infty}}\frac{L}{L-1} \sum_{(c,k,l) \in \hat{E}^{N,L}} \bC^{N,L}(c) k l c_k c_{l-1} \\
&\le C_{\overline{K}} e^{\|b\|_{\infty}}\frac{L}{L-1} \frac{N}{L} \biggl(\frac{N}{L}+1\biggr).\end{split}\]   
For the second term
\[\begin{split}\sum_{(c,k,l) \in \hat{E}^{N,L}} \flipL{\nu}[\bC^{N,L}](c,k,l-1) &= \sum_{(c,k,l) \in \hat{E}^{N,L}} {\nu^L}^{\dagger}[\bC^{N,L}](c,k,l-1) \1_{S^{-1}(\hat{V}^{N,L})}(c,k,l-1) \\
&=\sum_{(c,k,l) \in \hat{E}^{N,L}}{\nu^L}^{\dagger} [\bC^{N,L}](c,k,l-1)\\
&\le C_{\overline{K}} e^{\|b\|_{\infty}}\frac{L}{L-1} \frac{N}{L} \biggl(\frac{N}{L}+1\biggr).
\end{split}\] This concludes the proof.
\end{proof}
 
\begin{proposition}[Chain rule]\label{prop:finitechainrule} Under assumption~\eqref{e:ass:K1}, let $(\bC^{N,L},  \bJ^{N,L}) \in \ref{eq:cgce}(0,T)$ with 
\begin{equation}
\label{ass:finite_chain_rule}  \int_0^T \calR^{N,L}_t (\bC^{N,L}, \bJ^{N,L}) \dx t <+\infty \qquad\text{ and }\qquad \cE^{N,L}(\bC_0^{N,L})<+\infty.
\end{equation}
Then for $0\le s\le t\le T$, the chain rule holds
\begin{equation}\label{eq:EDF_FKE_chain_rule}
\cE^{N,L}(\bC^{N,L}_t)-\cE^{N,L}(\bC^{N,L}_s)= \int^t_s  \sum_{(c,k,l)\in \hat{E}^{N,L}}   \frac1{2L} \hat{\nabla}^L_{k,l-1}\phi'\biggl(\frac{\bC^{N,L}_r}{\mathbbm{\Pi}^{N,L}}(c)\biggr)\bJ^{N,L}_r(c,k,l-1) \dx r 
\end{equation}
and the energy dissipation function defined in~\eqref{eq:def:EDG:EDF} is non-negative
\begin{equation}\label{eq:EDF_FKE_non-negative}
{\cL^{N,L}}(\bC^{N,L},\bJ^{N,L})\ge 0 \,.
\end{equation}
\end{proposition}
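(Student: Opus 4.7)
The plan is to follow the template of the MFE chain-rule proof (Proposition~\ref{prop:chain_rule}), exploiting the fact that $\hat V^{N,L}$ is a \emph{finite} vertex set. This finiteness removes the need for the $B$-truncation step of the MFE proof: every function on $\hat V^{N,L}$ is automatically bounded and hence admissible as a test function in~\eqref{eq:cgce}.

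First I would introduce the truncated logarithm $\log_m$ and its antiderivative $\phi_m$ from the proof of Proposition~\ref{prop:chain_rule}, and define the regularised relative entropy
\[
\cE^{N,L}_m(\bC^{N,L}) := \sum_{c\in\hat V^{N,L}} \mathbbm{\Pi}^{N,L}(c)\,\phi_m\bigl(u^{N,L}(c)\bigr),\qquad u^{N,L}(c)=\frac{\bC^{N,L}(c)}{\mathbbm{\Pi}^{N,L}(c)}.
\]
After mollifying $\bC^{N,L}$ and $\bJ^{N,L}$ in time by a rescaled standard mollifier $\eta^\delta$, the pair $(\bC^{N,L,\delta},\bJ^{N,L,\delta})$ solves $\partial_t \bC^{N,L,\delta} = -\DivL \bJ^{N,L,\delta}$ pointwise, so the classical chain rule combined with the discrete integration-by-parts $\sum_c f\,\DivL\bJ = -\sum \hat\nabla^L f\,\bJ$ yields
\[
\cE^{N,L}_m(\bC^{N,L,\delta}_t)-\cE^{N,L}_m(\bC^{N,L,\delta}_s)=\int_s^t\!\!\sum_{(c,k,l)\in \hat E^{N,L}}\hat\nabla^L_{k,l-1}\!\log_m\!\bigl(u^{N,L,\delta}_r\bigr)\,\bJ^{N,L,\delta}_r(c,k,l-1)\,\dx r.
\]
The integrand is dominated by $2m\sum_{(c,k,l)}|\bJ^{N,L}_r(c,k,l-1)|$, which is $L^1$ in time by the definition of $\ref{eq:cgce}(0,T)$. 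Pointwise convergence together with dominated convergence passes $\delta \to 0$, giving the identity at the $m$-truncated level.

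Next I would send $m\to\infty$. Pointwise, the case analysis from Step~2 of the MFE proof identifies the limit of $\hat\nabla^L_{k,l-1}\log_m(u^{N,L}_r)\,\bJ^{N,L}_r(c,k,l-1)$ with $\rmB\bigl(u^{N,L}_r(c),u^{N,L}_r(c^{k,l-1}),\bJ^{N,L}_r(c,k,l-1)\bigr)$: whenever $\bJ^{N,L}_r(c,k,l-1)\neq 0$ and one of the $u$-values vanishes the integrand of $\calR^{N,L}$ is $+\infty$, contradicting~\eqref{ass:finite_chain_rule}. An $m$-uniform majorant comes from the Young-type inequality
\[
\bigl|\hat\nabla^L_{k,l-1}\log_m(u^{N,L})\,\bJ^{N,L}(c,k,l-1)\bigr|\le L\,\sfC\bigl(L^{-1}\bJ^{N,L}(c,k,l-1)\,\big|\,\sigma_{(c,k,l-1)}\bigr) + L\,\sigma_{(c,k,l-1)}\sfC^{*}\!\bigl(\hat\nabla^L_{k,l-1}\log_m u^{N,L}\bigr).
\]
The first term integrates to $\calR^{N,L}(\bC^{N,L},\bJ^{N,L})$, finite by~\eqref{ass:finite_chain_rule}. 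For the second, the key observation is that $\sigma_{(c,k,l-1)}=0$ whenever one of $u(c),u(c^{k,l-1})$ vanishes, so only strictly positive $u$-values contribute; there the bound $\sfC^*(x)\le 2e^{|x|/2}$ and the explicit $\sigma=\tfrac12 k^{N,L}_{(c,k,l-1)}\sqrt{u(c)u(c^{k,l-1})}$ give, after simplification,
\[
L\,\sigma_{(c,k,l-1)}\sfC^{*}\!\bigl(\hat\nabla^L_{k,l-1}\log_m u^{N,L}\bigr)\le L k^{N,L}_{(c,k,l-1)}\max\bigl(u(c),u(c^{k,l-1})\bigr).
\]
Using detailed balance~\eqref{eq:gc_dbc2} to symmetrise and the bound $\kappa^L[c](k,l-1)\le C_K k\,l\,c_k c_{l-1}$ from~\eqref{e:ass:K1}, summing yields $\le 4 C_K N (N/L+1)\int \bC^{N,L}_r(\dx c)$, independent of $m$. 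Dominated convergence then gives~\eqref{eq:EDF_FKE_chain_rule}.

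The main obstacle is precisely this $m$-independent estimate of the $\sfC^*$-term; its derivation relies crucially on the exponential bound for $\sfC^*$, the structure of $\sigma$ via detailed balance, and assumption \eqref{e:ass:K1}. Once the chain rule~\eqref{eq:EDF_FKE_chain_rule} is established, the non-negativity~\eqref{eq:EDF_FKE_non-negative} follows exactly as in Step~4 of the proof of Proposition~\ref{prop:chain_rule}: apply the pointwise Legendre inequality $|\rmB(u,v,j)|\le \sfC(j|\tfrac12\sqrt{uv})+\rmD^{-}(u,v)$ from \cite[Lemma 4.19]{PRST22} to each edge and integrate in time, which produces exactly $\int_0^T\bigl[L^{-1}\calR^{N,L}+\calD^{N,L,-}\bigr]\dx t$, dominating $L^{-1}|\cE^{N,L}(\bC^{N,L}_T)-\cE^{N,L}(\bC^{N,L}_0)|$.
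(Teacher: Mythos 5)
Your proof is correct, but it follows a genuinely different route from the paper. The paper does not redo the regularisation argument at all: it verifies the global hypotheses of the abstract chain rule \cite[Theorem 4.16]{PRST22} (standard Borel space with the coarse-grained detailed balance \eqref{eq:gc_dbc2}, the entropy and $\cosh$-structure assumptions, the finite-rate bound, which is trivial on the finite set $\hat V^{N,L}$) and then checks the only nontrivial local condition, the integrability bound \cite[Equation (4.44)]{PRST22}, via the Legendre inequality \eqref{eq:Legendre-ineq-for-B} together with the estimate $\calD^{N,L,-}\le\calD^{N,L}\le \frac{2L}{L-1}C_K\frac NL(\frac NL+1)$ coming from \eqref{e:ass:K1} and the edge-reversal symmetry \eqref{eq:change_of_variable_in_direction}. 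You instead reprove the chain rule from scratch in the finite setting, transplanting the truncation--mollification scheme of Proposition~\ref{prop:chain_rule}: this is legitimate, and the finiteness of $\hat V^{N,L}$ indeed removes the $B$-truncation step (one only needs the easy fact that $t\mapsto\bC^{N,L}_t(c)$ is absolutely continuous, which follows from the TV-integrability of the flux in $\ref{eq:cgce}(0,T)$, to justify the mollification limit). Note that your key $m$-uniform majorant, obtained from $\sfC^*(x)\le 2e^{\abs{x}/2}$, the $1$-Lipschitz clamping property of $\log_m$, the structure of $\sigma_{(c,k,l-1)}$ under detailed balance and \eqref{e:ass:K1}, is essentially the same computation the paper performs to verify (4.44) (and your constant $4C_KN(N/L+1)$ is harmless since $N,L$ are fixed here); likewise your rescaled edgewise Legendre inequality $|\rmB(u(c),u(c^{k,l-1}),L^{-1}\bJ)|\le \sfC(L^{-1}\bJ\,|\,\sigma_{(c,k,l-1)})+k^{N,L}_{(c,k,l-1)}\rmD^-(u(c),u(c^{k,l-1}))$ for the non-negativity step is exactly what the paper uses, relying on the scale invariance of $\rmA$ and the $1$-homogeneity of $\rmD^-$. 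In short, your version buys self-containedness (no appeal to the abstract framework of \cite{PRST22}) at the cost of repeating its regularisation machinery, while the paper buys brevity and uniformity with its treatment of the general jump-process setting; the substantive estimates coincide.
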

\begin{remark}The proof follows closely the proof of the chain rule in \cite[Chapter 12, Section 12.3]{Hoeksema-thesis}. Note that although \cite{Hoeksema-thesis} does not cover time-dependent kernels, upon closer examination of the proof reveals that instead of \cite[Equation 12.1]{Hoeksema-thesis}, we can replace it by uniform boundedness in time of the kernel for fixed $N,L$, the proofs can be generalized to our case. Therefore, under our assumption of bounded perturbations, the chain rule holds in this case.
\end{remark}
As a result, we shall connect our definition of fluxes to theirs.  
\begin{lemma}[Equivalence of flux representation]\label{lem:equ-flux-rep}
Let $\mathcal{J}^{N,L} \in \cM(\hat{V}^{N,L}\times \hat{V}^{N,L})$. 
Suppose $\mathcal{J}^{N,L}(\d c, \d c') 
\ll \sum_{(k,l)} \delta_{c^{k,l-1}}(\d  c') \nu^L_{\bC^{N,L}}(\d c,k,l-1)$. Then
there exists a unique $\bJ^{N,L}$ on $\hat{E}^{N,L}$ such that
\[\mathcal{J}^{N,L}(\d c,\d c') = 
\sum_{k,l} \delta_{c^{k,l-1}}(\d c') \bJ^{N,L}(\d c,k,l-1).
\]
In particular, we can identify measures on $\hat{V}^{N,L}\times \hat{V}^{N,L}$ as measures on $\hat{E}^{N,L}$.
\end{lemma}
\begin{proof}
The assumption on absolute continuity of $\mathcal{J}^{N,L}$ implies its support is contained on a subset of $\hat{V}^{N,L}\times \hat{V}^{N,L}$ in which the two $c$ and $c'$ are connected by a jump vector. Therefore, we can characterise $\mathcal{J}^{N,L}$ by $\bJ^{N,L}(c,k,l-1):= \mathcal{J}^{N,L}(c,c^{k,l-1})$. Then, it is clear that the representation formula holds.
\end{proof}
\begin{remark}
With the identification in Lemma \ref{lem:equ-flux-rep} and under $\mathcal{Ent}(  \bJ^{N,L}| \mathbbm{\Theta}^L_{\bC^{N,L}})<+\infty$, we also have the finiteness of entropy of the associated measures on $\hat{V}^{N,L}\times \hat{V}^{N,L}$. In the proof below, we also cite the results in \cite{PRST22}, though not directly applicable in an irreversible setting, the strategy is very similar.
\end{remark}
\begin{proof}[Proof of Proposition \ref{prop:finitechainrule}]
\emph{Step 1: Chain rule for truncated entropy.} 

Due to the assumption $\cE^{N,L}(\bC_0^{N,L})< +\infty$,  $\bC^{N,L}_0 \ll \mathbbm{\Pi}^{N,L}$ holds. In addition, with \[\int_0^T \mathcal{R}^{N,L}_t (\bC^{N,L},\bJ^{N,L})\d t <+\infty,\] we have  for almost all $t\in [0,T]$,   $\bJ^{N,L}_t \ll \nu^L[\bC^{N,L}_t]$ and $ \bJ^{N,L}_t \ll {\nu^L}^{\dagger}[\bC^{N,L}_t]$. 
Therefore the statement of~\cite[Lemma 12.5, Lemma 12.7]{Hoeksema-thesis} (see also~\cite[Theorem 4.13]{PRST22}) implies $\bC^{N,L}_t \ll \mathbbm{\Pi}^{N,L}$ for almost all $t\in [0,T]$.
With the  truncated entropy function from the proof of Proposition \ref{prop:chain_rule} defined by $\phi_m(x):=\int_1^x \log_m(y) \d y$ and $\phi'_m(x)=\log_m (x) := \max \{ \min\{\log x,m \}, -m\}$, we have for $u_t:= \frac{\bC_t^{N,L}}{\mathbbm{\Pi}^{N,L}}$ is almost everywhere differentiable and the continuity equation holds pointwise   by \cite[Lemma 11.3, Lemma 12.5]{Hoeksema-thesis} (\cite[Corollary 4.14]{PRST22}), 
\[\begin{split}\int (\phi_m( u_t ) - \phi_m(u_s)) \d \mathbbm{\Pi}^{N,L} 
&=\int_s^t \int \phi_m'(u_r) \partial_r u_r  \d \mathbbm{\Pi}^{N,L} \d r \\
&=\int_s^t \int \phi_m'(u_r(c)) \DivL\bJ^{N,L}_r (\d c) \d r\\
&= \int_s^t \int \hat{\nabla}^L_{k,l-1} \phi_m'(u_r(c)) \bJ^{N,L}_r(\d c,k,l-1) \d r.
\end{split}\]
\noindent
\emph{Step 2: Convergence to chain rule.}
To take the limit $m \to \infty$, we need to show the integral on the right is uniformly bounded in $m$ by an integrable function.
By duality, we have the bound
\begin{equation}\label{eq:abs-gradient-wrt-flux}    
(2L)^{-1})\int |\hat{\nabla}^L  \phi_m'(u(c))|   \d \bJ^{N,L}(c,k,l-1) \le \mathcal{R}(\bC^{N,L}, \bJ^{N,L}) + \mathcal{R}^*(\bC^{N,L}, (2L)^{-1} | \hat{\nabla}^L \phi'_m (u)| ) \end{equation}
and 
\[\begin{split}
	\MoveEqLeft \mathcal{R}^*(\bC^{N,L}, (2L)^{-1} |\hat{\nabla}^L \phi'_m(u)|) = \int (e^{\frac1{2L} |\hat{\nabla}^L_{k,l-1} \phi_m' (u)(c) |}-1) \mathbbm{\Theta}^L [\bC^{N,L}](\d c,k,l-1) \\
&= \int   (e^{\frac1{2L} |\hat{\nabla}^L_{k,l-1} \phi_m' (u)(c) |}-1)  \sqrt{u(c) u(c^{k,l-1})} \mathbbm{\Theta}^L[\mathbbm{\Pi}^{N,L}](c,k,l-1)\\
& \le \int (e^{\frac1{2L} \hat{\nabla}^L_{k,l-1}\phi'(u)(c) } +e^{-\frac1{2L} \hat{\nabla}^L_{k,l-1} \phi'(u)(c)}) \sqrt{u(c) u(c^{k,l-1})} \mathbbm{\Theta}^L [\mathbbm{\Pi}^{N,L}](c,k,l-1)\\
&= \int u(c)) \d \nu^L[\mathbbm{\Pi}^{N,L}](c,k,l-1) +\int u(c^{k,l-1}) \d \flipL{\nu}[\mathbbm{\Pi}^{N,L}](c,k,l-1)\\
&= \int \nu^L [\bC^{N,L}]\d (c,k,l-1) + \flipL{\nu}[\bC^{N,L}] \d (c,k,l-1)\\
&\le 2 \frac{L}{L-1} C_K e^{\|b\|_\infty} \frac{N}{L}(\frac{N}{L}+1).
\end{split}
\]
Now the convergence on the left side of \eqref{eq:abs-gradient-wrt-flux} is due to the monotone convergence of  $\phi_m \to \phi$, while on the right side, we have $\nabla^L_{k,l-1}\phi'_m(u) \to \nabla^L_{k,l-1} \phi'(u)$ pointwise and by dominated convergence due to the upper bound above.

\smallskip
\noindent
\emph{Step 3: Non-negativity of EDF.} We begin with the Legendre-Fenchel inequality:
\[\begin{split}-\frac{1}{2L} \hat{\nabla}^{L}_{k,l-1}\phi'\biggl(\frac{\bC^{N,L}_r}{\mathbbm{\Pi}^{N,L}}(c)\!\biggr) \bJ^{N,L}_r(c,k,l-1)
\le \phi\Bigl(\bJ^{N,L}_r \Big| \mathbbm{\Theta}^L_r [\bC]\Bigr) + \mathbbm{\Theta}^L_r [\bC] \phi^{*} \biggl(\!\! -\frac1{2L} \hat{\nabla}^L_{k,l-1}\phi'\Bigl(\frac{\bC^{N,L}_r}{\mathbbm{\Pi}^{N,L}}(c)\Bigr)\!\biggr).
\end{split}\] 
Due to the fact that $\bJ^{N,L}_t \ll \mathbbm{\Theta}^L_t[\bC]$, $\bC^{N,L}_t \ll \mathbbm{\Pi}^{N,L}$ for a.e.~$t\in [0,T]$ and the uniform bound in $m$ for $\eqref{eq:abs-gradient-wrt-flux}$, the left-hand side is finite and coincides with  $-\frac1{2} \mathrm{B}\Bigl(\frac{\bC^{N,L}_r}{\mathbbm{\Pi}^{N,L}}(c), \frac{\bC^{N,L}_r}{\mathbbm{\Pi}^{N,L}} (c^{k,l-1}), \bJ^{N,L}_r(c,k,l-1)\Bigr)$. By the chain rule  \eqref{eq:EDF_FKE_chain_rule} from Step 2  and the definition of dissipation potentials, we get
\[ (\cE^{N,L}(\bC^{N,L}_s)-\cE^{N,L}(\bC^{N,L}_t))\le \int_s^t  \mathcal{R}^{N,L}_r (\bC^{N,L},\bJ^{N,L}) + \mathcal{D}^{N,L}_r (\bC^{N,L}) \d r\]
which implies ${\cL^{N,L}}(\bC^{N,L},\bJ^{N,L})\ge 0 $.
\end{proof}

\begin{proposition}[EDP solution for~\eqref{eq:FKE}]\label{prop:EDP-solution-and-finite-FKE-solution}
Under the assumption~\eqref{e:ass:K1}, let $\bC_0^{N,L}\in\cP(\hat V^{N,L})$ be given with $\cE^{N,L}(\bC_0^{N,L})< \infty$. Then
\begin{equation*}\begin{split}
\cL^{N,L} (\bC^{N,L}, \bJ^{N,L}) =0 \iff \begin{cases}
(\bC^{N,L}_t, \bJ^{N,L}_t) \in \ref{eq:cgce}(0,T)  \,, \\
  \bJ^{N,L}_t = \hat \bJ^{N,L}_t [\bC^{N,L}] \text{ for almost all } t \in [0,T] \,,  \\
\cE^{N,L}(\bC^{N,L}_T) +  \int^T_0 \mathcal{F}^{N,L}_t (\bC^{N,L}) \d t \le \cE^{N,L}(\bC_0^{N,L}) \,,
\end{cases}
\end{split}\end{equation*}
where the reference net flux $\hat \bJ^{N,L}_t[\bC^{N,L}](c,k,l)=\nu^L_t [\bC^{N,L}](c,k,l)$
and 
\[
	\mathcal{F}^{N,L}_t(\bC^{N,L}):=- \frac1{2L}\sum_{(c,k,l) \in \hat{E}^{N,L}} \hat{\nabla}^L_{k,l-1}\log\biggl(\frac{\bC^{N,L}_t}{\mathbbm{\Pi}^{N,L}}\biggr)(c) \; \hat \bJ^{N,L}_t[\bC^{N,L}](c,k,l-1) \,.
\]
\end{proposition}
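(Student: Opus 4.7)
The proof parallels that of Proposition~\ref{prop:zero-set-mean-field-EDP-functional} for the mean-field case; the ingredients are the chain rule from Proposition~\ref{prop:finitechainrule}, the pointwise Legendre duality
\[
|\rmB(u,v,j)| \le \sfC\bigl(j\,|\,\tfrac12\sqrt{uv}\bigr) + \rmD(u,v),
\]
which holds because $\rmD^-\le \rmD$, and the equality characterisation from Lemma~\ref{lem:equ_in_D}. The coarse-grained detailed balance relation~\eqref{eq:gc_dbc2} is used to rewrite the symmetric flux $\tfrac12(u-u')$ in terms of the net reference flux~\eqref{eq:def:FluxN}.

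\emph{Necessity.} Assume $\cL^{N,L}(\bC^{N,L},\bJ^{N,L})=0$. By definition this gives $(\bC^{N,L},\bJ^{N,L})\in\ref{eq:cgce}(0,T)$ and $\int_0^T \cR^{N,L}(\bC^{N,L}_t,\bJ^{N,L}_t)\,dt<\infty$, so together with $\cE^{N,L}(\bC_0^{N,L})<\infty$ the chain rule of Proposition~\ref{prop:finitechainrule} applies. Substituting the chain-rule identity into the vanishing of $\cL^{N,L}$ yields
\[
\int_0^T \sum_{(c,k,l)\in\hat E^{N,L}} \Bigl[\sfC\bigl(L^{-1}\bJ^{N,L}_t\,\big|\,\sigma_{(c,k,l-1)}(\bC^{N,L}_t)\bigr) + D^{N,L}_\kappa(\bC^{N,L}_t;c,k,l-1) + \rmB\bigl(u^{N,L}_t(c),u^{N,L}_t(c^{k,l-1}),L^{-1}\bJ^{N,L}_t\bigr)\Bigr] dt = 0.
\]
Each summand is pointwise non-negative by Legendre duality, so each must vanish for almost every $t\in[0,T]$ and every edge $(c,k,l)\in\hat E^{N,L}$. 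Lemma~\ref{lem:equ_in_D} then forces $L^{-1}\bJ^{N,L}_t(c,k,l-1) = \tfrac12\bigl(k^{N,L}_{(c,k,l-1)} u^{N,L}_t(c) - k^{N,L}_{(c,k,l-1)} u^{N,L}_t(c^{k,l-1})\bigr)$, which by~\eqref{eq:gc_dbc2} coincides with $\hat\bJ^{N,L}[\bC^{N,L}_t](c,k,l-1)$. The energy dissipation balance follows as an equality.

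\emph{Sufficiency.} Conversely, assume the three bulleted conditions. The converse part of Lemma~\ref{lem:equ_in_D}, applied with $u=u^{N,L}_t(c)$, $v=u^{N,L}_t(c^{k,l-1})$ and $j = L^{-1}\bJ^{N,L}_t = \hat\bJ^{N,L}[\bC^{N,L}_t]$, gives the pointwise Legendre equality
\[
-\rmB\bigl(u^{N,L}_t(c),u^{N,L}_t(c^{k,l-1}), L^{-1}\bJ^{N,L}_t\bigr) = \sfC\bigl(L^{-1}\bJ^{N,L}_t\,\big|\,\sigma_{(c,k,l-1)}(\bC^{N,L}_t)\bigr) + D^{N,L}_\kappa(\bC^{N,L}_t;c,k,l-1),
\]
which after summation over edges and integration in time delivers $\int_0^T [L^{-1}\cR^{N,L}(\bC^{N,L}_t,\bJ^{N,L}_t) + \cD^{N,L}(\bC^{N,L}_t)]\,dt = \int_0^T \cDis^{N,L}(\bC^{N,L}_t)\,dt$. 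Combining with the hypothesised energy dissipation inequality yields $\cL^{N,L}\le 0$. The opposite bound $\cL^{N,L}\ge 0$ follows from Proposition~\ref{prop:finitechainrule}: since $\rmD^-\le \rmD$ we have $\cL^{N,L}\ge \cL^{N,L,-}\ge 0$. Hence $\cL^{N,L}=0$.

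\emph{Main obstacle.} The delicate point is the treatment of possibly vanishing $u^{N,L}_t(c)$: this is precisely what the extensions $\rmA,\rmB$ of Definition~\ref{def:A-and-B} are designed to handle, and Lemma~\ref{lem:equ_in_D} ensures that the equality case propagates correctly in the degenerate regime. The remaining bookkeeping is the conversion of the extracted oriented difference $\tfrac12(u-u')$ into the symmetric reference flux $\hat\bJ^{N,L}$, which is where the coarse-grained detailed balance~\eqref{eq:gc_dbc2} enters.
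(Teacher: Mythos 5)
Your proposal is correct and follows essentially the same route as the paper's proof: the chain rule of Proposition~\ref{prop:finitechainrule}, the pointwise Legendre equality characterisation of Lemma~\ref{lem:equ_in_D} to identify $L^{-1}\bJ^{N,L}_t$ with $\hat\bJ^{N,L}[\bC^{N,L}_t]$, and the non-negativity $\cL^{N,L}\ge\cL^{N,L,-}\ge 0$ from~\eqref{eq:EDF_FKE_non-negative} for the converse. Your explicit remark that the coarse-grained detailed balance~\eqref{eq:gc_dbc2} converts the extracted difference $\tfrac12(u-u')$ into the reference net flux~\eqref{eq:def:FluxN} is a correct bookkeeping step that the paper leaves implicit.
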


\begin{proof}
This statement also follows from \cite[Theorem 12.3]{Hoeksema-thesis}. 
 
From $\cL^{N,L}(\bC^{N,L},\bJ^{N,L})=0$, the chain rule \eqref{eq:EDF_FKE_chain_rule} implies the inequality directly.   By Proposition \ref{prop:property-backwardkernel} \eqref{eq: lifted-backward} and $1$-homogeneouity of $\sfD$, we have
\begin{equation}\label{eq:equality-of-B}\begin{split}-&\int_0^T  \cF^{N,L}_t (\bC^{N,L}) \d t \\
=& \int_0^T \mathcal{Ent}(\bJ^{N,L}_t | \mathbbm{\Theta}^L_t [\bC^{N,L}]) + \sum_{(c,k,l)\in \hat{E}^{N,L}} \kappa^L_t [c](k,l-1)\mathbbm{\Pi}^{N,L}(c)\sfD_t \bigg(\frac{\bC^{N,L}}{\mathbbm{\Pi}^{N,L}}(c), \frac{\bC^{N,L}}{\mathbbm{\Pi}^{N,L}}(c^{k,l-1})\bigg)  \d t.  \end{split}\end{equation} Therefore the characterization of equality in Lemma \ref{lem:equ_in_D} implies 
\[\bJ^{N,L}_t = \hat \bJ^{N,L}_t [\bC^{N,L}] \text{ for almost all } t \in [0,T].\]
Conversely, we assume the three conditions on the right. The converse direction of Lemma \ref{lem:equ_in_D} gives the equality \eqref{eq:equality-of-B}. Now with the third assumption, we have $\cL^{N,L}(\bC^{N,L},\bJ^{N,L})\le 0$ and with \eqref{eq:EDF_FKE_non-negative} from the chain rule. The claim follows. 
\end{proof}

\subsection{The Liouville equation}

\begin{definition}[Smooth cylindrical function]
	A function  $f:\bR^\infty\to\bR$ is a \emph{smooth cylindrical test function} provided that $f = \psi\circ p_n$ where $p_n : \bR^\infty \to \bR^n$ is a suitable selection of coordinates and $\psi\in C^1_b(\bR^n)$, the set of functions $\bR^n \to \bR$, which are bounded, continuously differentiable with bounded derivative.
\end{definition}
\begin{definition}\label{def:liftCE}
The pair $(\bC, \bJ)$ solves the \emph{infinite particle continuity equation}, denoted by $(\bC, \bJ) \in \ref{eq:def:CEinfty}(0,T)$, provided that
\begin{enumerate}
\item $(\bC_t)_{t\in[0,T]}\subset \cP(\cP^1)$   is weakly continuous in duality with smooth cylindrical functions;
\item $(\bJ_t)_{t\in [0,T]} \subset  {\cM^+( \cP^1\times \bN\times\bN_0)}$ is a Borel measurable family such that
\[
	\int_0^T  \sum_{k,l \ge 1} \bJ_t(k,l-1)(\cP^1)  \dx t <\infty \, ;
\]
\item \label{cond:cty-weak} for all $0\le s \le t \le T$ and every smooth cylindrical function $\Phi$ the weak form holds
\begin{equation*}\label{eq:def:CEinfty} \tag{\ensuremath{\mathsf{CE}^\infty}}
\begin{split}
\int_{\cP^1} \Phi(c) (\bC_t-\bC_s) (\d c) = \int_s^t \int_{\cP^1} \sum_{k,l\geq 1} \ExNabla_{k,l-1}\!\nabla^\infty \Phi(c)\ \bJ_r(\dx c,k,l-1) \dx r \,.
\end{split}
\end{equation*}
\end{enumerate}
\end{definition}

\begin{remark}
In Lemma \ref{lem:uni_cts_C}, we will see that the limit pair $(\bC, \bJ)$ from the stochastic processes has better time regularity, $\bC$ is  continuous in duality with bounded and continuous functions with respect to $(\cP^1,\d_{\Exg})$.
\end{remark}
We state now a version of the superposition principle, which generalizes the result from~\cite{Ambrosio_Trevisan_14} towards our setting.
\begin{lemma}[Superposition principle]\label{lem:superpos}
Let $(\bC,\bJ)$ be such that
\begin{enumerate}[(a)]
\item $[0,T] \ni t \mapsto \bC(t) \in \cP(\bR^{\bN_0})$ is weakly continuous in duality with smooth cylindrical functions;
\item for each $t\in[0,T]$, $\bC(t)$ is concentrated on $\cP^1\subset \bR^{\bN_0}$;
\item $\bC(0)$ is concentrated on $\cP_{\!\le\rho}$ for some $\rho\in[0,\infty)$;
\item for each $t\in[0,T]$, $k,l\in \bN$, the measure $\bJ_{k,l-1}(t) \in \cM^+(\bR^{\bN_0})$ is absolutely continuous with respect to $\bC(t)$ and
\begin{equation*}\begin{split}
\ExDiv \frac{\d \bJ}{\d \bC}: [0,T]\times \bR^{\bN_0}  \to \bR^{\bN_0}
\quad\text{ with }\quad 
 (t,c)\mapsto \ExDiv \frac{\d \bJ(t)}{\d \bC(t)}(c) = - \sum_{k,l \in\bN} \gamma^{k,l}   \frac{\d \bJ_{k,l-1}(t)}{\d \bC(t)}(c) 
\end{split}\end{equation*} is a Borel vector-valued function;
\item for every  $F: \bR^{\bN_0} \to \bR$ smooth cylindrical and all $0\le s \le t \le T$ the weak form holds
\[ \int_{\cP^1} F(c)( \bC_t(\d c) - \bC_s (\d c)) = \int_s^t  \int_{\cP^1} \skp*{\nabla^\infty F(c),- \ExDiv\frac{\d \bJ_r}{\d \bC_r}(c)}_{\!\bN_0}\bC_r (\d c) \dx r \,;
\] 
\item the flux $\bJ$ satisfies the total variation bound
\begin{equation}\label{eq:TVbd}
\int_0^T \int  \sum_{k,l \in\bN}\frac{\d \bJ_{k,l-1}(r)}{\d \bC(r)}(c)  \, \bC_r (\d c) \dx r <\infty.
\end{equation}
\end{enumerate}
Then  there exists a Borel probability measure $\lambda$ on $C([0,T],\bR^{\bN_0})$ such that 
\begin{enumerate}[(i)]
\item $(e_t)_{\#}\lambda =\bC_t$ for all $t \in [0,T]$, with $e_t: C([0,T],\bR^{\bN_0}) \to \bR^{\bN_0}$ the evaluation map given by $e_t \gamma = \gamma_t$ for $\gamma \in C([0,T],\bR^{\bN_0})$;
\item $\lambda$ is concentrated on curves $\gamma \in \bigcup_{\rho_0 \le \rho}\AC([0,T], \cP_{\!\rho_0}, \d_{\Exg})$ with $(\gamma, \frac{\d \bJ}{\d \bC}(\gamma))\in \ExgCE(0,T)$ for $\lambda$-a.e. $\gamma$.
\end{enumerate}
\end{lemma}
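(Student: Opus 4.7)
The plan is to reduce the statement to the Ambrosio--Trevisan superposition principle \cite[Theorem 7.1]{Ambrosio_Trevisan_14} on the Polish space $\bR^{\bN_0}$ with the product topology, and then upgrade its conclusion. Hypotheses (a)--(f) are arranged to match theirs once the driving vector field is identified as
\[
b_r(c) := -\ExDiv\tfrac{\d\bJ_r}{\d\bC_r}(c) = \sum_{k,l\in\bN}\gamma^{k,l-1}\,\tfrac{\d\bJ_{k,l-1}(r)}{\d\bC(r)}(c).
\]
The integrability required by \cite[Theorem 7.1]{Ambrosio_Trevisan_14} is uniform in the coordinate $n$: since $|\gamma^{k,l-1}_n|\le 1$, we have $|b_r^n(c)|\le\sum_{k,l}|\tfrac{\d\bJ_{k,l-1}(r)}{\d\bC(r)}(c)|$, which lies in $L^1(\bC_r(\dx c)\dx r)$ by \eqref{eq:TVbd}. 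Applying that theorem produces a Borel probability measure $\lambda$ on $C([0,T],\bR^{\bN_0})$ with $(e_t)_\#\lambda=\bC_t$, concentrated on curves $\gamma$ satisfying the componentwise integral identity $(\gamma_t)_n-(\gamma_0)_n=\int_0^t b_r^n(\gamma_r)\dx r$ for every $t$ and every $n$.

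The second step upgrades this output. The key algebraic identities are $\sum_{n\ge 0}\gamma^{k,l-1}_n=0$ and $\sum_{n\ge 0}n\,\gamma^{k,l-1}_n=(k-1)+l-k-(l-1)=0$, showing that $b_r$ conserves zeroth and first moments pointwise. Summing the componentwise identity over $n$ is legitimate for the zeroth moment by Fubini and \eqref{eq:TVbd}; for the first moment we truncate with $\Phi^{(K)}_n:=n\wedge K$, which has $\sup_{k,l}|\ExNabla_{k,l-1}\Phi^{(K)}|\le 1$, and pass $K\to\infty$ using $\Mom_1(\gamma_0)=\rho$ ($\lambda$-a.s.\ by hypothesis (c)) together with monotone/dominated convergence. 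This yields $\gamma_t\in\cP_{\!\rho}$ for all $t\in[0,T]$, $\lambda$-a.e. Absolute continuity in $\d_\Exg$ then follows from Lemma \ref{lem:varform_d_Exg}: writing $\gamma_t-\gamma_s=\sum_{k,l}\alpha^{s,t}_{k,l-1}\gamma^{k,l-1}$ with $\alpha^{s,t}_{k,l-1}:=\int_s^t\tfrac{\d\bJ_{k,l-1}(r)}{\d\bC(r)}(\gamma_r)\dx r$, the variational representation gives
\[
\d_\Exg(\gamma_s,\gamma_t)\le 2\sum_{k,l\in\bN}|\alpha^{s,t}_{k,l-1}|\le 2\int_s^t\sum_{k,l\in\bN}\Bigl|\tfrac{\d\bJ_{k,l-1}(r)}{\d\bC(r)}(\gamma_r)\Bigr|\dx r,
\]
whose right-hand side is a $\lambda$-a.s.\ absolutely continuous function of $t$ by Fubini and \eqref{eq:TVbd}. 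Finally, testing the componentwise identity against any $\Phi:\bN_0\to\bR$ with $\sup_{k,l}|\ExNabla_{k,l-1}\Phi|<\infty$ and swapping the $k$-sum with the $r$-integral recovers the weak form \eqref{eq:intro-ExCE}, completing the verification that $(\gamma,\tfrac{\d\bJ}{\d\bC}(\gamma))\in\ExgCE(0,T)$ for $\lambda$-a.e.\ $\gamma$.

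The main obstacle is the preservation of the first moment along $\lambda$-a.e.\ curve. The functional $c\mapsto\sum_n n c_n$ is neither smooth cylindrical (Ambrosio--Trevisan's test class) nor in $C_b^1(\Pinfty,\d_\Exg)$, so it cannot be used directly as a test function. The truncation $n\wedge K$ is admissible but introduces a boundary contribution $\sum_{k,l}\alpha^{s,t}_{k,l-1}\sum_{n\le K}n\gamma^{k,l-1}_n$ that vanishes only when $\max(k,l)\le K$; controlling its tail as $K\to\infty$ is the delicate point and leans crucially on combining hypothesis (c) with the total variation bound \eqref{eq:TVbd} to obtain the required uniform integrability. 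A secondary concern is that the continuity assumed in (a) is against smooth cylindrical functions, weaker than continuity against $C_b(\Pinfty,\d_\Exg)$ used elsewhere; one checks that this weaker continuity is all \cite[Theorem 7.1]{Ambrosio_Trevisan_14} actually requires.
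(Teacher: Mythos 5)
Your proposal is correct and follows essentially the same route as the paper: reduce to the superposition principle of \cite[Theorem 7.1]{Ambrosio_Trevisan_14} via the identification $b_r=-\ExDiv\frac{\d\bJ_r}{\d\bC_r}$, then upgrade the componentwise identity by conservation of the zeroth and first moments and the dual/variational characterisation of $\d_\Exg$ (Proposition~\ref{prop:closedformdcF} resp.\ Lemma~\ref{lem:varform_d_Exg}) together with the total variation bound to get $\d_\Exg$-absolute continuity and the weak form~\eqref{eq:intro-ExCE}. The only difference is presentational: where the paper extends the admissible test functions to the class $\cF$ (taking $f_k=k$ for the first moment), you carry out the same extension explicitly by truncation $n\wedge K$ and dominated convergence — and the same truncation also closes the slightly glossed final step for general $\Phi$ with bounded exchange gradient, where a naive interchange of sums is not absolutely convergent.
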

 \begin{proof}
Under the assumptions (a), (d)--(f), we can apply the result of \cite[Theorem 7.1]{Ambrosio_Trevisan_14} to obtain a Borel probability measure $\lambda$ in $C([0,T];\bR^{\bN_0})$ satisfying $(e_t)_{\#} \lambda = \bC(t)$ for all $t \in [0,T]$. The measure $\lambda$ is concentrated on $\gamma \in \AC_{w}([0,T];\bR^{\bN_0})$ \cite[Section 7]{Ambrosio_Trevisan_14}, which satisfies for all sequences $f:\bN_0 \to \bR$  with finitely many non-zero terms, for all $0\le s\le t\le T$, the continuity equation
\begin{equation}\label{eq:lowerCE}
\pscal{ f, (\gamma(t) - \gamma (s))}_{\bN_0} = \int_s^t \pscal{f,-\ExDiv\biggl(\frac{\d \bJ(r)}{\d \bC(r)}(\gamma(r))\biggr) }_{\!\bN_0} \d r= \int_s^t \pscal{\ExNabla\! f,\frac{\d \bJ(r)}{\d \bC(r)}(\gamma(r))}_{\!\bN \times \bN_0} \d r.
\end{equation} 
We denote the set of curves $\gamma$  satisfying \eqref{eq:lowerCE} by $\SCE(\lambda,\bJ)$ so that $\lambda$ is concentrated on this set.
By superposition, we can express the total variation bound~\eqref{eq:TVbd} as
\[
	\int_{C([0,T],\bR^{\bN_0})} \int_0^T \sum_{k,l \in \bN}\frac{\d \bJ_{k,l-1}(r)}{\d \bC(r)}(\gamma(r)) \dx r \, \lambda (\d \gamma) <\infty \,.
\]
Hence, we have 
\begin{equation}
\label{eq:J_TVbd}\int_0^T \sum_{k,l \in \bN} \frac{\d \bJ_{k,l-1}(r)}{\d \bC(r)}(\gamma(r))  \dx r<\infty \qquad\text{ for $\lambda$-a.e. } \gamma \in C([0,T],\bR^{\bN_0}) \, . 
\end{equation} 
Note that by Lemma \ref{lm:bgrad} we can extend for each such $\gamma$, the test functions in~\eqref{eq:lowerCE} to the set $\{f:\bN_0 \to \bR: \sup_{k,l-1}| \ExNabla_{k,l-1} f| \le 1\}=\cF$ as given in  Definition~\ref{def:Xmetric}. Recall that for each $t\in [0,T]$, $\bC(t)$ is concentrated on $\cP^1$ and so also $(\gamma(r))_{r\in[0,T]}\subset \cP^1$. 

Moreover $\bC(0)$ is concentrated on $\cP_{\!\le \rho}$ so that $\gamma(0) \in \cP_{\!\rho_0}$ for some $\rho_0\le \rho$. 
By choosing the test function $(f_k)_{k\in\bN_0}=(k)_{k\in\bN_0} \in \cF$, we see that $\gamma(r)\in \cP_{\!\rho_0}$ for all $r\in[0,T]$. This allows us to apply Proposition~\ref{prop:closedformdcF} about the dual representation of $\d_\Exg$ in terms of the set $\cF$.  

The continuity is observed for $\lambda$-almost all  $\gamma \in \SCE(\lambda,\bJ)$ by taking the absolute value and supremum on the left of \eqref{eq:lowerCE} and estimating
\begin{equation}\label{eq:ac_d_f}\begin{split}
\MoveEqLeft\d_\Exg(\gamma(t),\gamma(s)) =\| \gamma(t) -\gamma(s)\|_{\cF}^* = \sup_{f\in \cF}\bigl|\pscal{ f, \gamma(t) - \gamma (s)}_{\bN_0}\bigr| \\
&=\sup_{f\in \cF}\bigg| \int_s^t \pscal{f,-\ExDiv\biggl(\frac{\d \bJ(r)}{\d \bC(r)}\bigl(\gamma(r)\bigr)\biggr) }_{\!\bN_0} \d r \bigg| \le \int_s^t \sum_{k,l \in \bN}  \frac{\d \bJ_{k,l-1}(r)}{\d \bC(r)}\bigl(\gamma(r)\bigr)  \dx r<+\infty.
\end{split}\end{equation}
By Proposition~\ref{prop:closedformdcF} , \eqref{eq:ac_d_f} and \eqref{eq:J_TVbd}, we see that if for $\lambda$ almost all $\gamma \in\SCE(\lambda,\bJ)$, then $\gamma \in \AC([0,T], \cP_{\!\rho_0}, \d_{\Exg})$ for some $\rho_0\le \rho$. Therefore  $(\gamma, \frac{\d \bJ}{\d \bC}(\gamma))\in \ExgCE(0,T)$.
\end{proof}

 \begin{theorem}\label{thm:super_in_dExg}
	If $(\bC,\bJ)\in  \ref{eq:def:CEinfty}(0,T)$, $\bC_0$ is concentrated on $\cP_{\!\le \rho}$ for some $\rho\in[0,\infty)$ and $\bJ_t (\cdot ,k,l-1) \dx t\ll \bC_t \dx t$ for all $k,l\geq 1$, then  there exists a Borel probability measure $\lambda$ on $C([0,T],\bR^{\bN_0})$ satisfying $(e_t)_{\#}\lambda =\bC_t$ for all $t \in [0,T]$ and  is  concentrated on curves $\gamma\in \bigcup_{\rho_0 \le \rho}\AC([0,T], \cP_{\!\rho_0}, \d_{\Exg})$such that $(\gamma, \frac{\d \bJ}{\d \bC}(\gamma))\in \ExgCE(0,T)$.  
\end{theorem}
\begin{proof}[Proof of Theorem~\ref{thm:super_in_dExg}]
 Recalling the definition  \ref{eq:def:CEinfty}(0,T), it is clear that the assumptions of Lemma \ref{lem:superpos} are satisfied. 
\end{proof}


 \begin{definition}[Energy dissipation function for the Liouville equation]\label{def:EDF4Lie}
 For $\rho \ge 0$, the energy dissipation functional for the Liouville equation is  
 \begin{equation*} \begin{split}
 \cL^{\rho\wedge \rho_c}(\bC,\bJ)&:=\begin{cases} \cE^{\rho\wedge \rho_c}(\bC_T) - \cE^{\rho\wedge \rho_c}(\bC_0)+ \int_0^T \cR_t (\bC , \bJ) + \cD_t(\bC)  \d t &\text{ if } (\bC, \bJ) \in \ref{eq:def:CEinfty}(0,T),\\
 +\infty &\text{ otherwise.}
 \end{cases}
\end{split}\end{equation*}
The energy and dual dissipation potentials are 
\begin{equation}\label{eq:def:Li:EnergyDualDissipation}
	 \cE^{\rho\wedge \rho_c}(\bC):=\int\sfE^{\rho\wedge \rho_c}(c) \, \bC(\d c)  +\frac12(\lambda_c-\lambda(\rho))_+ \left(\rho- \int \Mom(c) \bC(\d c)\right)\text{ and }
	\cD(\bC):=   \int\sfD(c) \, \bC(\d c) \,.
\end{equation}
\begin{remark}
In the definition of $\cE^{\rho\wedge \rho_c}$, with an abuse of notation, the dependence of $\rho$ in the second term is not made explicit becasue for those $\bC$ from Theroem \ref{thm:super_in_dExg}, we have  $\cE^{\rho\wedge \rho_c}(\bC_T)- \cE^{\rho\wedge \rho_c}(\bC_0) =\int\sfE^{\rho\wedge \rho_c}(c) \, \bC_T(\d c)   - \int\sfE^{\rho\wedge \rho_c}(c) \, \bC_0(\d c).$
\end{remark}
The primal dissipation potential is
\begin{equation}\label{eq:def:Li:PrimalDissipation}
	\cR(\bC, \bJ):= \sum_{k,l\in\bN}\int    \phi\biggl(\frac{\d \bJ(\cdot,k,l-1)}{\d \Sigma_{k,l-1}}(c) \bigg| \frac{\d \mathbbm{\Theta}[\bC](\cdot,k,l-1)}{\d \Sigma_{k,l-1}}(c) \biggr)\Sigma_{k,l-1}(\d c),\quad \mathbbm{\Theta}[\bC]= \sqrt{ \nu[\bC] \nu^{\dagger}[\bC]},\end{equation}
where $\Sigma_{k,l-1}$ is any common dominating measure for $\bC$ and $\bJ(\cdot,k,l-1)$, the expected forward flux $\nu[\bC] (\d c,k,l)= \bC(\d c) \kappa[c](k,l)$ and expected backward flux $\nu^{\dagger}[\bC] (\d c,k,l-1)= \bC(\d c) \kappa^{\dagger}[c](l,k-1)$. \end{definition}

\begin{lemma}[Consequences of superposition principle]\label{cor:connection-to-mean-field-equation} Under assumption~\eqref{e:ass:K1}, let   $\rho \ge 0 $, suppose
\begin{enumerate}[(a)]
\item $(\bC,\bJ)\in \ref{eq:def:CEinfty}(0,T)$,
\item $\bC_0$ is concentrated on $\cP_{\le\tilde \rho}$ for some $\tilde \rho\in [0,+\infty)$,    
\item $\int\sfE^{\rho\wedge \rho_c}(c) \, \bC_0(\d c) < +\infty$ ,
\item  $\cL^{\rho\wedge \rho_c}(\bC,\bJ)<+\infty$.
\end{enumerate}
 Then, 
 \begin{enumerate}[(i)]
 \item  there exists a Borel probability measure $\lambda$ on $C([0,T],\bR^{\bN_0})$ satisfying $(e_t)_{\#}\lambda =\bC_t$ for all $t \in [0,T]$ and $\lambda$ is concentrated on curves $\gamma \in \bigcup_{\rho_0 \le \rho}\AC([0,T], \cP_{\!\rho_0}, \d_{\Exg})$ such that $(\gamma, \frac{\d \bJ}{\d \bC}(\gamma))\in \ExgCE(0,T)$;
 \item the energy dissipation functional $\cL^\rho$ has the superposition 
\[
	\cL^{\rho\wedge \rho_c}(\bC ,\bJ) = \int  \sfL^{\rho\wedge \rho_c}\biggl(\gamma, \frac{\d \bJ}{\d \bC}(\gamma) \biggr) \lambda (\d \gamma)  \, ;
\]
\item the solution characterization holds
\begin{equation*}\begin{split}
\cL^{\rho\wedge \rho_c}(\bC ,\bJ)  =0 \iff  
\lambda \text{ concentrates on solutions of~\eqref{eq:MF} (Proposition~\ref{prop:zero-set-mean-field-EDP-functional}).}
\end{split}\end{equation*}
\end{enumerate}
\end{lemma}

\begin{proof}  
 By the definition of $\mathcal{L}^{\rho\wedge \rho_c}$, we have $\mathcal{L}^{\rho\wedge \rho_c}(\bC,\bJ)<+\infty$ implies $\bJ_{k,l-1}(t) \ll \bC(t)$ for all $k,l\in\bN$, $t\in [0,T]$.  Therefore, the integrability and continuity equation assumptions of $(\bC,\bJ) \in \mathsf{CE}^\infty(0,T)$ can be rewritten using $\frac{\d \bJ_{k,l-1}}{\d \bC}$ which implies the assumptions of the superposition principle on $\bJ$.  By  Theorem \ref{thm:super_in_dExg}, the statement (i) follows.
 
It remains to justify the two other claims. We note that with $\bJ(\cdot,k,l-1)\ll \bC$, 
\[\mathcal{R}(\bC,\bJ)= \int \mathsf{Ent}\bigg(\frac{\d \bJ}{\d \bC}(c), \theta[c]\bigg) \d \bC(c)  = \int \mathsf{R}\bigg(c,\frac{\d \bJ}{\d \bC}(c)\bigg)\d \bC(c).\] As a consequence of the superposition principle, it holds $\d t \lambda(\d \gamma)= \bC_t(\d c)\d t$. Moreover, due to Theroem \ref{thm:super_in_dExg} $\int \Mom(c) \bC_T(\d c)=\int \Mom(c) \bC_0(\d c)$ so that
\[\cE^{\rho\wedge \rho_c}(\bC_T) - \cE^{\rho\wedge \rho_c}(\bC_0) =\int\sfE^{\rho\wedge \rho_c}(c) \, \bC_T(\d c) - \int\sfE^{\rho\wedge \rho_c}(c) \, \bC_0(\d c). \]
 We have 
\[
	\cL^{\rho\wedge \rho_c}(\bC ,\bJ) = \int  \sfL^{\rho\wedge \rho_c}\biggl(\gamma, \frac{\d \bJ}{\d \bC}(\gamma) \biggr) \lambda (\d \gamma)  \,.
\]

Finally, with this disintegration of $\mathcal{L}^{\rho\wedge \rho_c}$, we use the characterization of $\mathsf{L}^{\rho\wedge \rho_c}$ in Proposition \ref{prop:zero-set-mean-field-EDP-functional} to get the one of $\mathcal{L}^{\rho\wedge \rho_c}$.
\end{proof} 
\begin{proposition}[EDP solution for the Liouville equation]\label{prop:EDP-solution-Li-solution} Under assumption of  \eqref{e:ass:K1}, let $\rho\ge 0$,  $\cE^{\rho\wedge \rho_c}(\bC_0)< +\infty$ and assume $\bC_0$ is concentrated on $\cP_{\le \rho_0}$ for some $\rho_0 \in [0,+\infty)$,   then
\begin{equation*}\begin{split}
\cL^{\rho\wedge \rho_c} (\bC, \bJ) =0 \iff \begin{cases}
(\bC, \bJ) \in \mathsf{CE}(0,T), \\
\bJ_{k,l-1}\ll\bC \text{ for all }k,l\in\bN \text{ and } \frac{\d \bJ}{\d \bC}(\gamma)= \overline\jmath[\gamma] \text{ for  $\lambda$-a.e. } \gamma, \\
\cE^{\rho\wedge \rho_c}(\bC_T) +  \int^T_0 \mathcal{F}_t(\bC) \d t \le \cE^{\rho\wedge \rho_c}(\bC_0),
\end{cases}
\end{split}\end{equation*}
with $\lambda$ from Lemma~\ref{cor:connection-to-mean-field-equation}, $ \overline\jmath_t[\gamma] = \kappa_t [\gamma]$, and 
\[\mathcal{F}_t(\bC):= \int  \mathsf{F}_t (c) \, \bC(\d c) . \] 
\end{proposition}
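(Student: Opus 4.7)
The plan is to lift the mean-field characterisation from Proposition~\ref{prop:zero-set-mean-field-EDP-functional} to the level of path measures via the superposition principle (Corollary~\ref{cor:connection-to-mean-field-equation}), and exploit the non-negativity of $\sfL^\rho$ supplied by the chain rule (Proposition~\ref{prop:chain_rule}).

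For the forward direction, $\cL^\rho(\bC,\bJ)=0$ immediately yields $(\bC,\bJ)\in\ref{eq:def:CEinfty}(0,T)$ and $\int_0^T \cR(\bC_t,\bJ_t)\,\d t<+\infty$, so by Remark~\ref{rmk:J_abscts_C} each density $\d\bJ_t/\d\bC_t$ exists for a.e.\ $t$. The hypotheses on $\bC_0$ then put us in the regime of Corollary~\ref{cor:connection-to-mean-field-equation}, which supplies a path measure $\lambda$ on $C([0,T],\bR^{\bN_0})$ with $(e_t)_\#\lambda=\bC_t$ and
\[
0 \;=\; \cL^\rho(\bC,\bJ) \;=\; \int \sfL^\rho\!\left(\gamma,\tfrac{\d\bJ}{\d\bC}(\gamma)\right)\lambda(\d\gamma) \,.
\]
Since $\sfL^\rho\ge 0$, the integrand vanishes for $\lambda$-a.e.\ $\gamma$. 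Applying Proposition~\ref{prop:zero-set-mean-field-EDP-functional} curve-by-curve yields, for $\lambda$-a.e.\ $\gamma$, the flux identification $\d\bJ/\d\bC(\gamma)=\overline\jmath[\gamma]$ together with the curve-level energy balance $\sfE^\rho(\gamma(T))+\int_0^T \mathsf{Dis}(\gamma(t))\,\d t \le \sfE^\rho(\gamma(0))$. Integrating against $\lambda$ and using $(e_t)_\#\lambda=\bC_t$ in conjunction with the definitions in~\eqref{eq:def:Li:EnergyDualDissipation} delivers the desired energy-dissipation inequality for $\bC$.

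For the converse, assume the three conditions on the right-hand side. The flux identification places the arguments of $\rmB$ in the equality case of Lemma~\ref{lem:equ_in_D}, which gives the pointwise identity $\mathsf{Dis}(c)=\sfR(c,\overline\jmath[c])+\sfD(c)$; Remark~\ref{rmk: consistency_D-} confirms that $\rmD$ and $\rmD^-$ coincide in this regime, so it is $\sfD$ (and not $\sfD^-$) that appears. Integrating against $\bC_t$ gives $\cDis(\bC_t)=\cR(\bC_t,\bJ_t)+\cD(\bC_t)$, and together with the assumed energy-dissipation inequality this yields $\cL^\rho(\bC,\bJ)\le 0$. In particular $\cL^\rho$ is finite and Corollary~\ref{cor:connection-to-mean-field-equation} applies a second time; writing $\cL^\rho=\int \sfL^\rho\,\d\lambda$ and invoking $\sfL^\rho\ge 0$ produces the matching lower bound $\cL^\rho\ge 0$, so $\cL^\rho=0$.

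The main delicate point I anticipate is the bookkeeping between $\lambda$-a.e.\ statements about individual curves $\gamma$ and $\bC_t$-a.e.\ statements about the density $\d\bJ_t/\d\bC_t$, together with the propagation of integrability. Specifically, the hypotheses $\cE^\rho(\bC_0)<+\infty$ and $\int_0^T \cR\,\d t<+\infty$ must be passed through $(e_t)_\#\lambda=\bC_t$ and Fubini into $\sfE^\rho(\gamma(0))<+\infty$ and $\int_0^T \sfR(\gamma(t),\overline\jmath[\gamma(t)])\,\d t<+\infty$ for $\lambda$-a.e.\ $\gamma$, which is the prerequisite for the curve-level chain rule and Proposition~\ref{prop:zero-set-mean-field-EDP-functional} to apply. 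Similarly, the $\bC_t$-a.e.\ identification $\d\bJ_t/\d\bC_t(c)=\overline\jmath[c]$ in the converse direction has to be extracted from the $\lambda$-a.e.\ statement via the evaluation map $e_t$.
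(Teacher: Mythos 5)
Your proof is correct and takes essentially the same route as the paper: both directions rest on the superposition identity $\cL^{\rho}(\bC,\bJ)=\int \sfL^{\rho}\bigl(\gamma,\tfrac{\d\bJ}{\d\bC}(\gamma)\bigr)\,\lambda(\d\gamma)$ from Corollary~\ref{cor:connection-to-mean-field-equation}/Theorem~\ref{thm:super_in_dExg}, the curve-level characterisation via Lemma~\ref{lem:equ_in_D}, and the non-negativity of $\sfL^{\rho}$ coming from the chain rule, with the integrability of $\sfE^{\rho}(\gamma(0))$ and of the $\sfR$-action propagated to $\lambda$-a.e.\ curves exactly as you describe. The only cosmetic difference is that in the forward direction the paper re-runs the chain-rule/Lemma~\ref{lem:equ_in_D} argument inline for $\lambda$-a.e.\ $\gamma$, whereas you deduce $\sfL^{\rho}(\gamma,\cdot)=0$ $\lambda$-a.e.\ and invoke Proposition~\ref{prop:zero-set-mean-field-EDP-functional} as a black box, which amounts to the same argument.
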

\begin{proof}
The idea is to use Proposition \ref{prop:zero-set-mean-field-EDP-functional} and the superposition principle of Lemma \ref{lem:superpos} which makes the representation~\eqref{eq:SuperPosEDF} rigorous and gives using the definitions~\eqref{eq:def:Li:EnergyDualDissipation} and~\eqref{eq:def:Li:PrimalDissipation} the identity
\begin{equation*}
\cL^{\rho}(\bC,\bJ)
=\int \biggr[\sfE^\rho(\gamma(T))-\sfE^{\rho}(\gamma(0))+ \int^T_0 \biggl[ \sfR_t\biggl(\gamma, \frac{\d \bJ}{\d \bC}(\gamma)\biggr)+ \sfD_t(\gamma)\biggr] \dx t \biggl] \lambda(\d \gamma)
\end{equation*} 
provided $\rho\in[0,\rho_c]$ with $\rho<\infty$.
Given $\cL^{\rho}(\bC,\bJ)=0$, $\cE^\rho(\bC_0)< +\infty$ and $\bC_0$ is concentrated on $\cP_{\le \rho_0}$ for some $\rho_0 \in [0,+\infty)$, Corollary \ref{cor:connection-to-mean-field-equation} implies the assumption of Proposition \ref{prop:chain_rule} is satisfied pointwise $\lambda$ almost everywhere. Therefore we have  for $\lambda$-a.e. $\gamma$ the identities
\[
	\sfE^\rho(\gamma(T)) -\sfE^\rho(\gamma(0))= \int_0^T \sum_{k,l \in \bN}\frac12 \rmB\biggl(\kappa_t[\gamma](k,l-1),\kappa_t^\dagger[\gamma](l,k-1),\frac{\d \bJ_t}{\d \bC_t}(\gamma(t),k,l-1)\biggr) \dx t \,,
\]
and  
 \begin{equation*}\begin{split}
\MoveEqLeft\int^T_0 \biggl[ \sfR_t\biggl(\gamma, \frac{\d \bJ}{\d \bC}\bigl(\gamma\bigr)\biggr)+\sfD_t(\gamma) \biggr] \d t\\
&= - \int_0^T \sum_{k, l\in \bN }\frac12 \rmB_t\biggl(\kappa[\gamma](k,l-1),\kappa_t^\dagger [\gamma](l,k-1),\frac{\d \bJ_t}{\d \bC_t}(\gamma(t),k,l-1)\biggr) \dx t \,.
\end{split}\end{equation*}
By applying Lemma~\ref{lem:equ_in_D}  with $u =\kappa_t[\gamma](k,l-1) ,v=\kappa^\dagger_t[\gamma](l,k-1)$, we have \[\frac{\d \bJ_t}{\d \bC_t}(\gamma(t))= \overline\jmath_t[\gamma] \text{ for Lebesgue almost every } t \text{ and }\lambda \text{ almost every }\gamma.\]
By the disintegration of $\lambda$ corresponding to the map $e_t$ from $(e_t)_{\#}\lambda =\bC_t$, we conclude
\[0=\cL^{\rho}(\bC,\bJ)= \cE^\rho(\bC_T)-\cE^\rho(\bC_0) + \int_0^T \cF_t(\bC) \dx t.\] 
  Conversely, if the three conditions hold, then the assumptions of Theorem \ref{thm:super_in_dExg} are satisfied, so there exists a measure $\lambda$ with
\begin{equation*}\begin{split}
  \cL^{\rho}(\bC,\bJ)&= \cE^\rho(\bC_T) - \cE^\rho(\bC_0)+\int\int_0^T  \bigl[\sfR_t(\gamma,\overline\jmath[\gamma]  )  + \sfD_t(\gamma)\bigr] \dx t \, \lambda(\d \gamma ).
\end{split}\end{equation*} 
By the converse statement of Lemma \ref{lem:equ_in_D}, we get the identity
\[ 
	\int \int_0^T  \bigl( \sfR_t(\gamma,\overline\jmath[\gamma]  )  + \sfD_t(\gamma) \bigr) \d t  \lambda(\d \gamma )
	= \int \int_0^T \mathsf{F}_t(\gamma) \dx t \; \lambda(\d \gamma ) \,.  
\]
Using the disintegration of $\lambda$, we have
\begin{equation*}\begin{split}
  \cL^{\rho}(\bC,\bJ)&= \cE^\rho(\bC_T) - \cE^\rho(\bC_0)+\int \int_0^T  \mathsf{F}_t(\gamma)  \dx t\, \lambda(\d \gamma )\\
  &= \cE^\rho(\bC_T)  -\cE^\rho(\bC_0) + \int_0^T  \int  \mathsf{F}_t(c) \, \bC_t(\d c) \dx t\\
  & =\cE^\rho(\bC_T)  -\cE^\rho(\bC_0) + \int_0^T \cF_t(\bC) \dx t \le 0 \,.
  \end{split}\end{equation*} 
Since $\cL^{\rho}(\bC,\bJ) = \int \sfL^{\rho}(\gamma,\overline\jmath[\gamma] ) \lambda(\d \gamma )\le 0$, the assumptions of the converse of  Proposition~\ref{prop:zero-set-mean-field-EDP-functional} hold for $\lambda$-almost all $\gamma$. This implies $\sfL^{\rho}(\gamma,\overline\jmath[\gamma] )=0 $ for $\lambda$-almost every $\gamma$, so $ \cL^{\rho}(\bC,\bJ)=0$.
\end{proof}

\section{Convergence}
\subsection{Compactness of curves}

\begin{proposition}[Compactness of $\bC^{N,L}$]\label{prop:uniform_bbd_C}
Let $(\bC^{N,L},\bJ^{N,L})\in \ref{eq:cgce}(0,T)$  then  the family $\bigl(\bC^{N,L}_t \in \cP(\cP_{\frac{N}{L}})\bigr)_{N/L \to \rho, t \in [0,T]}$ is narrowly sequentially precompact.
\end{proposition}
\begin{proof}
Since $N/L \to \rho$ and $\bC^{N,L}\in \cP_{N/L}$,  the sequence $(\bC^{N,L})_{N/L\to \rho}$ is supported on the set of probability measure with first moment less than $\overline{\rho}$, which is a precompact set in narrow topology by Prokhorov's theorem. So again by Prokhorov's theorem, the sequence of measure $(\bC^{N,L})_{N/L\to \rho}$ is a precompact set in the narrow topology. Since $\cP(\bN_0)$ with narrow topology is a Polish space, we have the sequential precompactness.

\end{proof}
\begin{lemma}\label{lem:concen_on_rho_mean} If  $\bC^{N,L} \weakto  \bC$, then
\begin{enumerate}[(i)]
\item$\rho=\liminf_{N/L \to \rho} \bC^{N,L}(\Mom_1)\ge \bC(\Mom_1)$ and 
\item the probability measure $\bC$ is concentrated on $\cP_{\le \!\rho}$. 
\end{enumerate}
\end{lemma}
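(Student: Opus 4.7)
The plan is to observe that both $\Mom_1$ and the coordinate maps $c\mapsto c_k$ are well-behaved with respect to the topology generated by $\d_\Exg$, so that the weak-$*$ convergence $\bC^{N,L} \ExWeakTo \bC$ against $C_b(\cP_{\le\overline\rho}, \d_\Exg)$ transfers the trivial identities $\bC^{N,L}(\Mom_1) = N/L$ and $\bC^{N,L}(\sfI_G)\ge \bC^{N,L}(\sfI_G^{(M)})$ to the limit $\bC$.

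First I would establish two topological facts on $\cP_{\le\overline\rho}$. (a) The first moment $\Mom_1$ is $1$-Lipschitz with respect to $\d_\Exg$, and in particular continuous and bounded by $\overline\rho$: from $\Mom_1(\mu) = \|\tail(\mu)\|_{\ell^1}$ one gets, by the reverse triangle inequality in $\ell^1$, the estimate
\[
  |\Mom_1(\mu)-\Mom_1(\nu)| \le \|\tail(\mu) - \tail(\nu)\|_{\ell^1} = \d_\Exg(\mu,\nu).
\]
(b) By Proposition~\ref{prop:topo-d_Ex}, $\d_\Exg$-convergence implies $\ell^1$-convergence of tails, hence pointwise convergence of the coordinates $c_k = \tail_k(c) - \tail_{k+1}(c)$. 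Thus each $c\mapsto c_k$ is $\d_\Exg$-continuous and bounded by one, so the truncations
$\sfI_G^{(M)}(c) := \sum_{k=1}^M G_k c_k$
lie in $C_b(\cP_{\le\overline\rho}, \d_\Exg)$ and satisfy $\sfI_G^{(M)} \nearrow \sfI_G$ pointwise as $M\to\infty$.

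For (i), I fix $M$ and use $\sfI_G^{(M)} \le \sfI_G$ together with $\sfI_G^{(M)} \in C_b(\cP_{\le\overline\rho}, \d_\Exg)$ to write
\[
  \bC(\sfI_G^{(M)}) = \lim_{N/L\to \rho} \bC^{N,L}(\sfI_G^{(M)}) \le \liminf_{N/L\to\rho}\bC^{N,L}(\sfI_G).
\]
Letting $M\to\infty$ and applying monotone convergence on the left-hand side yields $\bC(\sfI_G)\le \liminf_{N/L\to\rho}\bC^{N,L}(\sfI_G)$, which is (i). For (ii), the map $c\mapsto |\rho-\Mom_1(c)|$ is bounded and Lipschitz, hence in $C_b(\cP_{\le\overline\rho}, \d_\Exg)$. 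Since each $\bC^{N,L}$ is concentrated on $\cP_{N/L}$,
\[
  \bC^{N,L}(|\rho-\Mom_1|) = |\rho - N/L| \xrightarrow[N/L\to\rho]{} 0,
\]
and passing to the limit with the test function gives $\bC(|\rho-\Mom_1|)=0$. Non-negativity then forces $\Mom_1(c)=\rho$ for $\bC$-almost every $c$, which is exactly the concentration of $\bC$ on $\cP_{\!\rho}$.

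The only subtlety is justifying that the coordinate functionals and $\Mom_1$ are indeed continuous and bounded for the chosen topology, which reduces to the $\ell^1$-characterisation of $\d_\Exg$-convergence recorded in Proposition~\ref{prop:topo-d_Ex}. Once that is in place, both parts are short Portmanteau-type arguments and no essential obstacle remains.
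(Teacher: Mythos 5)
Your proof is correct, and it diverges from the paper's in a small but genuine way. For part (i) you are essentially reproving the paper's argument by hand: the paper simply notes that $\sfI_G$ is $\d_\Exg$-lower semicontinuous (Fatou) and invokes the Portmanteau theorem, while you unpack exactly that step via the continuous bounded truncations $\sfI_G^{(M)}$ and monotone convergence — same content, just self-contained. For part (ii) the routes differ: the paper works with the closed sets $\cA_{\rho,\epsilon}=\{\rho-\epsilon\le \Mom_1\le\rho\}$, applies the Portmanteau upper bound for closed sets and continuity of measures from above along $\cA_{\rho,\epsilon}\downarrow \cP_{\!\rho}$, whereas you test directly against the single function $c\mapsto\lvert\rho-\Mom_1(c)\rvert$, which lies in $C_b(\cP_{\le\overline\rho},\d_\Exg)$ because $\Mom_1(\mu)=\lVert \tail(\mu)\rVert_{\ell^1}$ makes $\Mom_1$ $1$-Lipschitz for $\d_\Exg$, and exploit that $\bC^{N,L}(\lvert\rho-\Mom_1\rvert)=\lvert\rho-N/L\rvert\to 0$ exactly. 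Your version is arguably cleaner: it needs no Portmanteau statement for indicators, and it sidesteps a small imprecision in the paper's set $\cA_{\rho,\epsilon}$, which as written sits inside $\cP_{\le\rho}$ and so need not contain $\hat V^{N,L}$ when $N/L>\rho$ (the paper's argument is easily repaired by widening the upper bound to $\rho+\epsilon$, but your test-function computation avoids the issue altogether). The only hypotheses you rely on beyond the paper's are the nonnegativity of the weights $G_k$ (needed for monotonicity of $\sfI_G^{(M)}$ and for $\sfI_G^{(M)}\le\sfI_G$), which is harmless since the sequence $(g_k)$ can be taken positive, and the tacit fact that each $\bC^{N,L}$ is supported on $\hat V^{N,L}$, which the paper's proof uses as well.
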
 
\begin{proof}
By Fatou's Lemma, $\Mom_1$ is narrowly-lower-semicontinuous, so the Portmanteau theorem gives the first statement.
Note that for any $\rho_0 \ge 0$ $\cP_{\le \rho_0}$ is a narrowly closed subset of $\cP$. Thus, by the characterization of narrow convergence, 
\[\limsup_{N/L\to \rho} \bC^{N,L}(\cP_{\le \rho_0}) \le \bC(\cP_{\le \rho_0})\]
Hence for all $\epsilon >0$, 
\[1=\limsup_{N/L\to \rho} \bC^{N,L}(\cP_{\le \rho+\epsilon}) \le \bC(\cP_{\le \rho+\epsilon}).\]
 By the continuity from the above property of a measure
 \[
 	\bC(\cP_{\le \rho}) =\lim_{\epsilon\to 0}\bC(\cP_{\le \rho+\epsilon})=1 \,. \qedhere
 \]
\end{proof}

 \subsection{Compactness of fluxes}\label{subsec:compact-flux}
In this section, we  assume \begin{equation}\label{eq:finite_edf}
	\sup_{N/L \to \rho} \int_0^T  \cR^{N,L}_t (\bC^{N,L},\bJ^{N,L}) \dx t  =: C_{\cR} <+\infty \,.
\end{equation} 
\begin{proposition}[Narrow compactness of flux]\label{prop:comp_j}
	Under the assumption \eqref{e:ass:K2}, suppose there exists $C_{\cR}<\infty$ such that the sequence $((\bC^{N,L}_t,\bJ_t^{N,L})\in \ref{eq:cgce}(0,T))_{N/L\to \rho}$.  Then along a subsequence of the measures $  \bJ^{N,L}_t(\d c,k,l-1)$ on $[0,T]\times \cP_{\le \overline{\rho}}$, there exists a limit 
\[
\bJ^{N,L}_t(\d c,k,l-1) \dx t \to \bJ_t(\d c,k,l-1)\dx t  \qquad\text{ narrowly in $\cM^+ \bigl(\cP_{\le \overline{\rho}}\times \bN \times \bN_0\times [s,t]  \bigr)$}
\] and $(\bJ_t(\d c, k,l-1))_{t\in[0,T]}$ is a Borel measurable family.
\end{proposition} 
\begin{lemma}\label{lem: kappa-C-b}
Under $\eqref{e:ass:K2}$, the function $c\mapsto \sum_{k,l\in\bN} \kappa[c](k,l-1)$ is narrowly continuous and bounded on $\cP_{\le \rho}$ for $\rho\ge 0$.
\end{lemma}
\begin{proof}
The boundedness is clear because of $\eqref{e:ass:K2}$ and the first moment on $\cP_{\le \rho}$ is bounded by $\rho$. We have the uniform bound
\[\sum_{k,l\in\bN}\kappa[c](k,l-1)\le C e^{\|b\|_\infty} \rho(\rho+1).\]
For the continuity, the only interesting case is for $\rho>0$ since $\cP_{\le 0}=\{\delta_0\}$.
The idea is to apply \cite[Theorem 2.8.8]{bogachev2007measure}. Assume $c^n \to c$ in $\cP_{\le \rho}$. Note that for each $n\in\bN$, 
\[0\le  \kappa[c^n](k,l-1) \le e^{\|b\|_\infty} m(k)m(l) c^n_k c^n_{l-1}\] 
and by narrowly convergence $\lim_{n\to\infty} c^n_k =c_k$ \text{ for each } $k \in \bN_0$, we have \[\lim_{n\to\infty} \kappa[c^n](k,l-1) =  \kappa[c](k,l-1)\] and \[\lim_{n\to\infty}e^{\|b\|_\infty} m(k)m(l) c^n_k c^n_{l-1}=e^{\|b\|_\infty} m(k)m(l) c_k c_{l-1}\] for each $k,l\in\bN$. 
We claim $\lim_{n\to \infty} \sum_{k \in\bN} m(k) c_k^n = \sum_{k \in \bN} m(k) c_k.$ This follows from the sublinearity of $m$ and $c^n,c\in\cP_{\le \rho}$.
Indeed, for any $\epsilon > 0$, for $N_0$ such that for all $N\ge N_0$ $\frac{m(N)}{N}\le \frac{\epsilon}{4\rho}$, there exists $n_0$ such that for all $n\ge n_0$
\[
	\sum_{k=1}^{N_0} m(k) |c_k^n - c_k| < \epsilon /2 \,.
\] 
This implies 
\begin{align*}
	\abs[\bigg]{\sum_{k\in\bN} m(k) (c_k^n - c_k)}
		&\le\sum_{k=1}^{N_0} m(k)|c_k^n - c_k| + \frac{m(N_0)}{N_0}\sum_{k={N_0}+1}^\infty  k |c_k^n - c_k|\\
		&\le \epsilon/2 + \frac{m(N_0)}{N_0}\sum_{k=N_0+1}^\infty  k (c_k^n + c_k)\\
		&\le  \epsilon/2 + \frac{m(N_0)}{N_0}(2\rho)\le \epsilon.
\end{align*}
Since $\epsilon>0$ is arbitrary, the claim is justified. By \cite[Theorem 2.8.8]{bogachev2007measure}, we conclude  
\[
	\lim_{n\to\infty}\sum_{k,l\in \bN} \kappa[c^n](k,l-1)= \sum_{k,l\in \bN} \kappa[c](k,l-1) \,. \qedhere
\]
\end{proof}

\begin{proof}[Proof of Proposition \ref{prop:comp_j}]
\emph{Step 1: Narrow compactness of reference flux.}
From the narrow compactness of $(\bC^{N,L})_{N/L \to \rho}$ on $\cP_{\le \overline{\rho}}$, without relabelling the subsequence, we have $\bC^{N,L} \weakto \bC$. We show convergence of the reference fluxes 
\[\nu^L_t [\bC^{N,L}](\d c, k,l-1) = \kappa_t^L[c](k,l-1)\bC^{N,L}_t(\d c) \weakto \kappa_t[c](k,l-1) \bC_t(\d c) = \nu_t[\bC](\d c,k,l-1)  \] narrowly in $\cM^+(\cP_{\le \overline{\rho}} \times \bN \times \bN_0)$ for each $t\in [0,T]$. Since for $c \in \hat{V}^{N,L}$,
\[\begin{split} \sum_{k,l \in \bN} | \kappa^L[c](k,l-1) - \kappa[c](k,l-1) |&= \sum_{k,l \in \bN} \bigg| \frac{1}{L-1}K(k,l-1) c(k) \Big( c(l-1)-\frac1L \delta_{k,l-1} \Big)\bigg| \\
&\le   \frac{1}{L-1} \sum_{k,l \in \bN} K(k,l-1) c(k) c(l-1)\\
& \le C e^{\|b\|_\infty}\frac{1}{L-1}\frac{N}{L}\Big(\frac{N}{L}+1\Big)
\end{split}\]
and the map $c \mapsto \sum_{k,l\in \bN}\kappa_t[c](k,l-1) $ is continuous in $\cP_{\le \rho}$ and bounded by Lemma \ref{lem: kappa-C-b}, it can be seen readily that for $f \in C_b(\cP_{\le \overline{\rho}}\times \bN\times \bN_0)$
\[\int \sum_{k,l\in\bN} f(c,k,l-1)\kappa^L_t [c](k,l-1) \bC^{N,L}_t(\d c) \to \int \sum_{k,l\in\bN} f(c,k,l-1) \kappa_t [c](k,l-1) \bC_t(\d c)\] for each $t \in [0,T]$.  Similarly, one can show that $\flipL{\nu_t}[\bC^{N,L}] \weakto \nu^{\dagger}_t[\bC]$ in $\cM^+(\cP_{\le \overline{\rho}} \times \bN \times \bN_0)$.
 
\smallskip
\noindent
\emph{Step 2: Narrow compactness of fluxes.}
To show the narrow compactness of $\bJ^{N,L}_t$, we use that for $K$ a sequentially compact set with respect to narrow convergence, the set 
\[ \{ \eta: \mathcal{Ent}( \eta |\sqrt{\mu \nu}) \le L, \mu, \nu \in K\}\] is also narrowly sequential compact by \cite[Lemma 11.2 (4)]{Hoeksema-thesis}. 

By Step 1, the set of reference flux measures $\bigl(\nu^L_t[\bC^{N,L}] , \flipL{\nu_t}[\bC^{N,L}] \bigr)_{N/L \to \rho}$ is sequentially compact in $\cM(\cP_{\le \overline{\rho}} \times \bN \times \bN_0)$. We now justify that $\sqrt{\nu^L_t [\bC^{N,L}] \flipL{\nu_t}[\bC^{N,L}] }$ is narrowly sequentially compact in $\cM(\cP_{\le \overline{\rho}} \times \bN \times \bN_0\times [s,t])$. One characterization of narrow convergence is $\lim_{N/L\to \rho} \nu^L_t[\bC^{N,L}](A) = \nu_t[\bC](A)$ for all continuity set $A$. 
With this characterization and the continuity of square root, the narrow convergence of $(\nu^L_t[\bC^{N,L}] , \flipL{\nu_t}[\bC^{N,L}] )_{N/L \to \rho}$ implies narrow convergence of $\sqrt{\nu^L_t [\bC^{N,L}] \flipL{\nu_t}[\bC^{N,L}] }$ in  $\cM(\cP_{\le \overline{\rho}} \times \bN \times \bN_0)$ for each $t\in[0,T]$. 

Let $f$ be continuous bounded on $\cP_{\le \overline{\rho}} \times \bN \times \bN_0\times E$, where $E \subset [0,T]$ a measurable subset, then \[ \int_E \int f \sqrt{\nu^L_t [\bC^{N,L}] \flipL{\nu_t}[\bC^{N,L}] } \d t \to \int_E \int f \sqrt{\nu_t[\bC] \nu^{\dagger}_t[\bC]} \d t \] due to dominated convergence theorem applied on the time integral. This gives the narrow convergence of $\bigl(\sqrt{\nu^L_t [\bC^{N,L}] \flipL{\nu_t}[\bC^{N,L}] }\d t\bigr)_{N/L\to\rho}$. 
Hence $\bigl(\sqrt{\nu^L_t [\bC^{N,L}] \flipL{\nu_t}[\bC^{N,L}] }\dx t\bigr)_{N/L \to \rho}$ is sequentially compact in $\cM(\cP_{\le \overline{\rho}}\times \bN \times \bN_0 \times E)$.

From \eqref{eq:finite_edf}, we have 
\[ \int_0^T \mathcal{Ent}\bra*{ \bJ^{N,L}_t \,\middle|\, \sqrt{\nu^L_t [\bC^{N,L}] \flipL{\nu_t}[\bC^{N,L}]}} \d  t \le C_{\cR}\]
Again by \cite[Lemma 11.2 (4)]{Hoeksema-thesis}, this implies $\bJ^{N,L}_t \d t \weakto \bJ \in \cM(\cP_{\le \overline{\rho}}\times \bN \times \bN_0 \times E)$ for any measurable $E\subset [0,T]$ along a subsequence.

\smallskip
\noindent
\emph{Step 3: Absolute continuity in time of the limit flux.}
Since $\bJ^{N,L}_t \d t \weakto \bJ \in \cM(\cP_{\le \overline{\rho}}\times \bN \times \bN_0 \times E)$ for any $E\subset [0,T]$ measurable subset, we can take $1_{E} f$ as the test function with $f\in C_b(\cP_{\le \overline{\rho}}\times \bN \times \bN_0 )$ and $E$ with Lebesgue measure zero. Then we have $0 =\int_E \int_{\cP_{\le \overline{\rho}}\times \bN \times \bN_0 }   \bJ^{N,L}_t  \d t \to \int_{\cP_{\le \overline{\rho}}\times \bN \times \bN_0  \times E}  \bJ  $. This shows that $\bJ$ is absolutely continuous with respect to Lebesgue. Thus by Radon-Nikodym theorem, there exists a measurable family $(\bJ_t)_{t\in[0,T]} \subset  \cM(\cP_{\le \overline{\rho}}\times \bN \times \bN_0)$ such that  $\bJ = \bJ_t\dx t$.
\end{proof}

\subsection{Convergence towards the infinite particle continuity equation}\label{subsec:Limit-CE-compat}
This section aims to verify that the limit measure satisfies the infinite particle continuity equation~\eqref{eq:def:CEinfty} in the sense of Definition \ref{def:liftCE}. 
\begin{proposition}\label{prop:limit satisfies CEinf}
If $(\bC^{N,L}, \bJ^{N,L})\in \ref{eq:cgce}(0,T)$ converges to $(\bC,\bJ)$ , then $(\bC,\bJ) \in  \ref{eq:def:CEinfty}(0,T)$.
\end{proposition}
\begin{proof}
The conditions in Definition \ref{def:liftCE} for $(\bC,\bJ)\in\ref{eq:def:CEinfty}(0,T)$ are verified below, in detail:
	\begin{enumerate}[(1)]
		\item From Lemma \ref{lem:concen_on_rho_mean}, we have $(\bC_t)_{t \in [0,T]} \in \cP_{\le \rho}$. The continuity of $t\mapsto \bC_t$ is obtained in Lemma~\ref{lem:uni_cts_C} below. 
		\item  By Lemma~\ref{lem:mod_of_uni_int}~\eqref{eq:Jbd} below, we have 
		\[ 
			\int_0^T \sum_{k,l\geq 1}\bJ_t(k,l-1)(\cP_{\le \overline{\rho}}) \dx t  \le \sup_{N/L \to \rho}  \int_0^T \int_{\EhNL}  \bJ^{N,L}_t (\d e) \dx t <+\infty \,.
		\]
		The  Borel measurability follows from Proposition~\ref{prop:comp_j}.
		\item In Lemma~\ref{lem:limit_sat_CE} below, we prove that $(\bC,\bJ)$ satisfies the infinite particle continuity equation~\eqref{eq:def:CEinfty}.\qedhere
	\end{enumerate}
\end{proof}
\begin{lemma}[Modulus of uniform integrability]\label{lem:mod_of_uni_int} 
	Assume  $\bigl((\bC^{N,L}_t,\bJ_t^{N,L})\in \ref{eq:cgce}(0,T)\bigr)_{N/L\to \rho}$. Then the family of fluxes $\bigl\{\bbJ^{N,L}\bigr\}$ is equi-integrable in time, i.e. there exists a non-decreasing continuous function $w :[0,\infty) \to [0,\infty)$ and ${\lim_{r \to 0}w(r)=0}$
such that \begin{equation}\label{eq:supJ-bbd-w}
\sup_{N/L\to \rho} \bigl\|\bJ^{N,L}_t\d t \bigr\|_{\TV(I \times \hat{E}^{N,L})} \le w(\abs{I}) \qquad\text{for all measurable } I\subseteq [0,T] \,.
\end{equation} 
In particular, 
	\begin{equation}\label{eq:Jbd} 
		\sup_{N/L \to \rho}  \int_0^T \int_{\EhNL}  \bJ^{N,L}_t (\d e) \dx t \le C_\cR +C_{\overline{\rho},K} T  \phi^*(1) <+\infty
	\end{equation}
	holds, where $ C_{\overline{\rho}, K} =2 C_{\overline{K}} e^{\|b\|_\infty}\overline{\rho}(\overline{\rho}+1)$ and $C_{\overline{K}}$ as in~\eqref{e:ass:K1}.
\end{lemma}
\begin{proof}
For any $M>0$, we use the dual formulation of the total variation norm and the Legendre duality to get
\begin{align*}
 \int_I \int_{\hat{E}^{N,L}} M \xi(t,e)  \bJ^{N,L}_t (\d e) \d t &\le \int_I  \mathcal{R}^{N,L}_t (\bC^{N,L},\bJ^{N,L}) + {\mathcal{R}^{N,L}_t}^*(\bC^{N,L}, M \xi) \d t \\ &\le C_{\mathcal{R}} +  \int_{I} {\mathcal{R}^{N,L}_t}^* (\bC^{N,L}, M \xi) \d t. \end{align*}
The second term can also be bounded, for any $|\xi|\le 1$
 \begin{align*}
 \int_{I} {\mathcal{R}^{N,L}_t}^*(\bC^{N,L}, M \xi) \dx t
 &= 
  \int_{I}\int_{\hat{E}^{N,L}}   \mathbbm{\Theta}_t [\bC^{N,L}](e) \phi^*(M \xi(t,e)) \dx e \dx t\\
 &\le \sup_{t\in[0,T]} \|\mathbbm{\Theta}_t [\bC^{N,L}]\|_{\ell^1(\hat{E}^{N,L})}\int_I \phi^*(M ) \dx t \,,
\end{align*} where the monotonicity of $\phi^*$ was used. The supremum in time is finite due to the uniform bound 
\[ \begin{split}
\sup_{t\in [0,T]}\|\mathbbm{\Theta}_t [\bC^{N,L}]\|_{\ell^1(\hat{E}^{N,L})}&=\sup_{t\in[0,T]} \sum_{(c,k,l) \in \hat{E}^{N,L}} \sqrt{\nu^L_t [\bC^{N,L}](c,k,l-1) {\nu^{L}_t}^\dagger[\bC^{N,L}] (c,k,l-1)} \\
&\le\sup_{t\in[0,T]}  \sum_{(c,k,l)\in \hat{E}^{N,L}} \nu^L_t[\bC^{N,L}](c,k,l-1)\\
& \le  2 C_{\overline{K}} e^{\|b\|_{\infty}}  \sup_{N/L\to \rho}\frac{N}{L} \biggl(\frac{N}{L}+1\biggr). \end{split}\]
We obtain $\eqref{eq:Jbd}$ by setting $M=\xi =1$, $I=[0,T]$.

For $|I|=s$, we take $M_s$ such that $M_s =\max \{M: \phi^{*}(M) \le 1/s \}$, or explicitly $M_s=\log (1+1/s)$ then $\int_I \phi^*(M_s) \d t \le 1$. Then
\begin{align*}
	\int_I \xi(t,e)  \bJ^{N,L}_t (\d e) \dx t 
	\le \frac{1}{M_s} \bra[\bigg]{C_{\mathcal{R}} + \sup_{N/L \to \rho}\sup_{t\in[0,T]} \| \mathbbm{\Theta}_t [\bC^{N,L}]\|_{\ell^1}} \,.
\end{align*}
If we define $w(s):=\frac{1}{M_s} (C_{\mathcal{R}} +\sup_{N/L \to \rho} \sup_{t\in[0,T]} \| \mathbbm{\Theta}_t[\bC^{N,L}]\|_{\ell^1})$, then taking the supremum over $|\xi|\le 1$ and $N/L\to \rho$, we have
\[
	\sup_{N/L\to \rho} \| \bJ^{N,L}_t \d t\|_{\TV(I \times \hat{E}^{N,L})} \le w(|I|) \,. \qedhere
\]
\end{proof}

\begin{lemma}\label{lem:limit_sat_CE}
If $(\bC,\bJ)$ is a limit of $(\bC^{N,L},\bJ^{N,L})\in \ref{eq:cgce}(0,T)$, then the limit satisfies  the weak form of~\eqref{eq:def:CEinfty} in Definition~\ref{def:liftCE}.
\end{lemma}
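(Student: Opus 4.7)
Fix $\Phi\in C_b^1(\Pinfty,\d_{\Exg})$ and $0\le s\le t\le T$. The plan is to pass to the limit $N/L\to\rho$ in the weak form of \eqref{eq:cgce}, rewritten after rescaling the discrete gradient as
\[
\int\Phi\,\d(\bC^{N,L}_t-\bC^{N,L}_s) = \int_s^t \sum_{(c,k,l)\in\hat{E}^{N,L}} \bigl(L\hat\nabla^L_{k,l-1}\Phi(c)\bigr)\, L^{-1}\bJ^{N,L}_r(c,k,l-1)\dx r,
\]
and to identify the limit with the weak form of \eqref{eq:def:CEinfty}. The left-hand side is easy: since $\Phi\in C_b(\cP_{\le\overline\rho},\d_{\Exg})$ and $\bC^{N,L}_\tau\ExWeakTo\bC_\tau$ for each $\tau\in[0,T]$ by the \Expair-convergence, it tends to $\int\Phi\,\d(\bC_t-\bC_s)$.

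For the right-hand side, I would exploit the two defining properties of $C^1_b(\Pinfty,\d_{\Exg})$: the uniform bound $M:=\sup_{c,k,l}|\ExNabla_{k,l-1}\nabla^\infty\Phi(c)|<\infty$, and the uniform approximation $\varepsilon_L:=\sup_{c,k,l}|L\hat\nabla^L_{k,l-1}\Phi(c)-\ExNabla_{k,l-1}\nabla^\infty\Phi(c)|\to 0$ as $L\to\infty$. Replacing $L\hat\nabla^L_{k,l-1}\Phi$ by its pointwise limit $\ExNabla_{k,l-1}\nabla^\infty\Phi$ costs at most $\varepsilon_L\sum_{k,l}\int_s^t\|L^{-1}\bJ^{N,L}_r(\cdot,k,l-1)\|_{\TV}\,\dx r$, which vanishes uniformly thanks to the total-variation bound provided by Lemma~\ref{lem:uni_bdd_J}. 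For each fixed $(k,l)$, $\ExNabla_{k,l-1}\nabla^\infty\Phi$ is a bounded Borel function on $\cP_{\le\overline\rho}$, so the $B_b([0,T]\times\cP_{\le\overline\rho})^*$-convergence $L^{-1}\bJ^{N,L}(\d c,k,l-1)\dx r\to\bJ(\d c,k,l-1)\dx r$ identifies the $(k,l)$-th summand with its counterpart in \eqref{eq:def:CEinfty}.

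The main obstacle is to interchange the countable sum over $(k,l)$ with the limit $N/L\to\rho$. I would truncate the sum at level $K\in\bN$, treat the finitely many terms with $\max(k,l)\le K$ by the step above, and bound the tail by $2M\sum_{\max(k,l)>K}\int_s^t\|L^{-1}\bJ^{N,L}_r(\cdot,k,l-1)\|_{\TV}\,\dx r$, together with the analogous tail on the limit side. The overall $\ell^1$-bound $w(T)$ from Lemma~\ref{lem:mod_of_uni_int} alone does not imply that this tail is uniformly small in $N,L$; the required equi-integrability in the $(k,l)$-index follows from the Legendre-duality estimate in Lemma~\ref{lem:uni_bdd_J} applied to test fluxes supported on $\{\max(k,l)>K\}$, combined with lower semicontinuity of the total variation and a Fatou/diagonal argument that uses the pointwise convergence of each $(k,l)$-marginal. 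Once this tail decay is in place, dominated convergence allows us to commute the sum with the limit and finishes the identification of the weak form of \eqref{eq:def:CEinfty} for every $\Phi\in C_b^1(\Pinfty,\d_{\Exg})$.
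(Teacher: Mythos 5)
Your overall route is the same as the paper's: the left-hand side converges by the pointwise-in-time $\ExWeak$ convergence of $\bC^{N,L}$; the replacement of $L\hat\nabla^L_{k,l-1}\Phi$ by $\ExNabla_{k,l-1}\nabla^\infty\Phi$ is paid for by the $o(1)_{L\to\infty}$ in the definition of $C^1_b(\Pinfty,\d_\Exg)$ times the uniform total-variation bound on $L^{-1}\bJ^{N,L}_t\dx t$; and each fixed $(k,l)$-term then passes to the limit by the $B_b([0,T]\times\cP_{\le\overline\rho})^*$ convergence. The one place you diverge is the final step: the paper dismisses the remaining term in a single line, invoking only $\sup_{c,k,l}\lvert\ExNabla_{k,l-1}\nabla^\infty\Phi(c)\rvert<\infty$, whereas you single out the interchange of the infinite sum over $(k,l)$ with the limit $N/L\to\rho$ as the main obstacle and propose a truncation argument. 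Flagging this is reasonable, since termwise convergence plus a uniform $\ell^1$-in-$(k,l)$ bound does not by itself rule out mass escaping to large $(k,l)$.

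However, the tail control you invoke does not deliver what is needed. Testing the duality estimate of Lemma~\ref{lem:uni_bdd_J} with $\xi=\beta\,\1_{\{\max(k,l)>K\}}\operatorname{sign}(\bJ^{N,L})$ gives
\[
\int_0^T \sum_{\substack{(c,k,l)\in\hat{E}^{N,L}\\ \max(k,l)>K}}\bigl| L^{-1}\bJ^{N,L}_t(c,k,l-1)\bigr| \dx t
\le \frac{C_\cR}{\beta}+\frac{\sfC^*(\beta)}{\beta}\int_0^T\sum_{\substack{(c,k,l)\in\hat{E}^{N,L}\\ \max(k,l)>K}}\sigma_{(c,k,l-1)}(\bC^{N,L}_t)\dx t ,
\]
and the right-hand side exhibits no decay in $K$ unless the last integral is small uniformly in $N,L$: restricting the test function to the tail edges reproduces the global bound otherwise. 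Under \eqref{e:ass:K1} that integral is controlled by expressions of the type $\int \bigl(\sum_{k>K}k\,c_k\bigr)\bigl(\Mom_1(c)+1\bigr)\,\bC^{N,L}_t(\d c)$, so uniform smallness amounts to uniform integrability of the first moment under $\bC^{N,L}_t$ — which is exactly what Lemma~\ref{lem:uniform_bbd_C} provides, but only under assumption \eqref{e:ass:K2} together with the initial bound \eqref{eq:bdd_on_C_for_Cpt}, neither of which is a hypothesis of this lemma. Lower semicontinuity of the total variation and a Fatou argument control the tail of the limit flux $\bJ$, not the prelimit tails, so they cannot close this gap either. Thus, precisely at the point where you go beyond the paper's one-line assertion, your justification is incomplete: either import the uniform-integrability hypotheses (after which your truncation scheme does close), or justify the sum--limit interchange by a different mechanism.
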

\begin{proof}
Let $\Phi$ be a smooth cylindrical function and $s,t$ such that $0\le s \le t \le T$. By the definition of the convergence, we can pass to the limit on the left-hand side
\[   \lim_{\substack{N,L\to \infty\\N/L\to\rho }} \int_{\cP_{\le\overline{\rho}}}\Phi(c)  (\bC^{N,L}_t-\bC^{N,L}_s)(\d c) = \int_{\cP_{\le\overline{\rho}}}\Phi(c) (\bC_t-\bC_s)(\d c). \]
On the other hand,  we want to pass to the limit in the flux. On $\cP$, we can consider $\Phi$ as a smooth function on $[0,1]^n$ for some $n\in \bN$ so that $\nabla^\infty \Phi$ is uniformly continuous and we have 
\begin{equation*}\begin{split}
 \sup_{c\in \cP_{\le\overline{\rho}}} \sup_{k,l\geq 1}\bigl| \bigl( \Phi(c+L^{-1}\gamma^{k,l-1})-\Phi(c)\bigr)L - \gamma^{k,l-1} \cdot \nabla^\infty \Phi(c)\bigr| = o(1)_{L\to \infty} \,.
\end{split}\end{equation*}
Therefore, we can estimate
\begin{align*}
	\MoveEqLeft \biggl | \int_s^t  \int_{\cP_{\le\overline{\rho}}} \sum_{k,l\geq 1} \bigl(\Phi( c+L^{-1} \gamma^{k,l-1})-\Phi(c)\bigr) L \;  \bJ^{N,L}_r(\d c,k,l-1)  \dx r \\
	&\quad -\int_s^t \int_{\cP_{\le\overline{\rho}}} \sum_{k,l\geq 1} \ExNabla_{k,l-1} \nabla^\infty \Phi(c)\, \bJ_r(\d c,k,l-1) \dx r  \biggr|\\ 
&\le o(1) \sup_{N/L\to \rho}\bigl\lVert \bJ_t^{N,L} \d t\bigr\rVert_{\TV}+ \int_s^t \!\!\int_{\cP_{\le\overline{\rho}}} \sum_{k,l\geq 1}  \ExNabla_{k,l-1} \nabla^\infty \Phi(c)\bigl(\bJ^{N,L}_r- \bJ_r\bigr)(\d c,k,l-1)\dx r. 
\end{align*}
Since  $\overline{\nabla}\nabla^\infty \Phi \in C_b(\cP_{\le \overline{\rho}}\times \bN \times \bN_0)$, the integral in the second term converges to zero by convergence of $\bJ^{N,L}_r \d r \to \bJ_r \d r$ from Proposition  \ref{prop:comp_j}.  In summary, passing to the limit in $N/L \to \rho$, we get for  $\Phi$ satisfying the assumptions
\[
	\int_{\cP_{\le\overline{\rho}}} \Phi(c) (\bC_t-\bC_s)(\d c) 
	= \int_s^t\int_{\cP_{\le\overline{\rho}}} \sum_{k,l\geq 1} \ExNabla_{k,l-1} \nabla^\infty \Phi(c) \, \bJ_r(\d c,k,l-1) \dx r   \,. \qedhere
\]
\end{proof}
 
\begin{lemma}[Refinement of compactness and absolute continuity of limit curve]\label{lem:uni_cts_C}
Given $(\bC^{N,L},\bJ^{N,L})\in \ref{eq:cgce}(0,T)$ with $(\bJ^{N,L}_t)$ having a uniform modulus of continuity~\eqref{eq:supJ-bbd-w}.   
Then the measure-valued maps $( \bC^{N,L}:[0,T] \to \cP(\cP_{\le \overline{\rho}}))_{N/L \to \rho}$ have a subsequence converging to a limit $\bC:[0,T]\to \cP(\cP_{\le \rho})$ such that $\bC \in \AC([0,T];\cP(\cP_{\le \rho},\d_{\Exg}))$.
\end{lemma}
\begin{proof}
We apply the refined version of Arzela-Ascoli in \cite[Proposition 3.3.1]{ambrosio2008gradient}. First we justify that the narrow topology induced by the weak convergence on $\cP_{\le \rho}$ is a Hausdorff topology on~$\cP(\cP_{\le \rho})$ compatible with the narrow topology induced by $\d_\Exg$ on  $\cP_{\le \rho}$. Because $\cP_{\le \rho}$ with weak topology and  $(\cP_{ \le \rho }, \d_\Exg)$ are separable, they are metrizable. Note that the narrow topology induced by weak convergence on $\cP_{\le \rho}$ is weaker than the narrow topology induced by $\d_\Exg$.  Now assume $(\bC^n , \mathbb{B}^n)\weakto (\bC, \mathbb{B}) $ in the weak sense, we want to show $\liminf_{n\to\infty}\d_{\mathup{BL}}(\bC^n,\mathbb{B}^n) \ge \d_{\mathup{BL}}(\bC,\mathbb{B})$ for the bounded Lipschitz distance, which thanks to the Kantorovich-Rubinstein norm metrizes the narrow topology~\cite[Theorem 8.3.2]{bogachev2007measure}.
It is defined for $\mathcal{F}_{\mathup{BL}}:=\bigl\{ \Psi  \text{ is $1$-$\d_{\Exg}$-Lipschitz and }\|\Psi\|_{\infty}\le 1\bigr\}$ by 
\begin{align*}
\ d_{\mathup{BL}}\bra*{\bC^n , \mathbb{B}^n} &:= \sup_{\Psi\in\mathcal{F}_{\mathup{BL}}} \biggl\{ \int_{\cP_{\le\overline{\rho}}} \Psi(c) \, (\bC^n -\mathbb{B}^n) (\dx c) \biggr\} \\
&\ge \int_{\cP_{\le\overline{\rho}}} \Psi(c) \, (\bC^n -\bC + \mathbb{B}- \mathbb{B}^n ) (\dx c) +  \int_{\cP_{\le\overline{\rho}}} \Psi(c) \ (\bC-\mathbb{B})(\d c)
  \text{ for each } \Psi\in \mathcal{F}_{\mathup{BL}} .
 \end{align*} 
Now, let $\mathcal{F}^1_{\mathup{BL}} :=\{ \Psi \in \mathcal{F}_{\mathup{BL}}: \Psi \text{ depends on only finitely many coordinates.} \}$. The weak continuity and $\d_\Exg$ continuity are equivalent on functions that depend on only finitely many coordinates, so  
 \[\liminf_{n\to\infty} \d_{\mathup{BL}}(\bC^n,\mathbb{B}^n) \ge  \int_{\cP_{\le\overline{\rho}}} \Psi(c) \ (\bC-\mathbb{B})(\d c) \quad \forall \Psi \in \cF_1.\]
 We claim that for $\Psi \in \mathcal{F}_{\mathup{BL}}$, there exist $\Psi_k \in \mathcal{F}^1_{\mathup{BL}}$  such that $\lim_{k\to \infty}\int_{\cP_{\le\overline{\rho}}} \Psi_k (c) \ (\bC-\mathbb{B})(\d c) = \int_{\cP_{\le\overline{\rho}}} \Psi (c) \ (\bC-\mathbb{B})(\d c)$. From this, we have $\liminf_{n\to\infty}\d_{\mathup{BL}}(\bC^n,\mathbb{B}^n) \ge \d_{\mathup{BL}}(\bC,\mathbb{B})$.
 
Indeed, for $\Psi\in \cF_{\mathup{BL}}$, define for $c\in \cP$, $\cF^1_{\mathup{BL}} \ni \Psi_k(c) := \Psi(c^k)$ with the truncated state $c^k:= (1-\sum_{i=1}^k c_i ,c_1,..., c_k)$.   Then $|\Psi(c) - \Psi_k(c)| = |\Psi(c)-\Psi(c^k)| \le \d_\Exg(c,c^k)$. We claim $\lim_{k\to\infty} \d_{\Exg} (c,c^k) = 0$. 
 
Since $\d_\Exg(c,c^k)= \sum_{n=1}^\infty |T_n(c^k)- T_n(c) |= \sum_{n=1}^{\infty}| H_n(c^k) - H_n(c) |$, where 
$H_n(c)= 1- T_n(c) = \sum_{i=0}^{n-1} c_i$. Hence $H_n(c^k) \to H_n(c)$ and $T_n(c^k) \to T_n(c)$  as $k\to \infty$ for each $n$. Here we will invoke~\cite[Theorem 2.8.8]{bogachev2007measure} again for the convergence of integrals. Observe that 
\[0\le |T_n(c^k)- T_n(c) | \le T_n(c^k)+ T_n(c) \]
and by summation by parts and monotone convergence 
\[
	\lim_{k\to \infty} \sum_{n=1}^\infty T_n(c^k) =  \lim_{k\to \infty} \Mom_1(c^k) = \Mom_1(c) = \sum_{n=1}^\infty T_n(c) \,.
\]
We have
\[
	\lim_{k\to\infty}\sum_{n=1 }^\infty |T_n(c^k)- T_n(c) |  =  0 \,. 
\]
This shows $\Psi(c^k) \to \Psi(c)$ for each $c\in\cP$. By dominated convergence, we have
\[
	\lim_{k\to \infty}\int_{\cP_{\le\overline{\rho}}} \Psi_k (c) \ (\bC-\mathbb{B})(\d c) = \int_{\cP_{\le\overline{\rho}}} \Psi (c) \ (\bC-\mathbb{B})(\d c)\,.
\]
We conclude $\liminf_{n\to\infty}\d_{\mathup{BL}}(\bC^n,\mathbb{B}^n) \ge \d_{\mathup{BL}}(\bC,\mathbb{B})$.
 
To show the uniform equicontinuity $\cP(\cP_{\le \rho},\d_{\Exg})$, we take $\Psi$ to be  1-$\d_{\Exg}$-Lipschitz on $\cP_{\le\overline{\rho}}$ so that we have $|\hat{\nabla}^{L}_{k,l-1} \Psi(c) |=|\Psi(c^{k,l-1})-\Psi(c)|\le \d_{\Exg}(c^{k,l-1},c)= \frac2L$ by Lemma \ref{lem:varform_d_Exg}. We have for any $0\le s \le t \le T$ by using Lemma~\ref{lem:mod_of_uni_int} the estimate
\begin{equation}\begin{split}\label{eq:ACestimate}
 \biggl\lvert\int_{\cP_{\le\overline{\rho}}} \Psi(c) (\bC^{N,L}_t -\bC^{N,L}_s) (\d c)\biggr\rvert &=\biggl\lvert\int_{s}^t\d r \int_{\cP_{\le\overline{\rho}}}\sum_{k,l \in \bN} L\hat{\nabla}^{L}_{k,l-1} \Psi(c) \bJ^{N,L}_r (\d c,k,l-1)\biggr\rvert\\ 
&\le 2\biggl\lvert\int_{s}^t\d r \int_{\cP_{\le\overline{\rho}}}\sum_{k,l \in \bN}  \bJ^{N,L}_r (\d c,k,l-1)\biggr\rvert\\
&\le 2\sup_{N/L\to \rho} \bigl\lVert  \bJ^{N,L}_r\d r \bigr\rVert_{\TV([s,t]\times \EhNL)}\le  2w(|t-s|).
\end{split}\end{equation} 
From $\lim_{t\to 0} w(|t|)=0$ in Lemma \ref{lem:mod_of_uni_int}, we conclude the equicontinuity of $( t \mapsto \bC^{N,L}_t )_{N\in\bN_0}$ in $( \cP(\cP_{\le \overline{\rho}},\d_{\Exg}),\d_{\mathup{BL}})$ from
\[
	d_{\mathup{BL}}\bra[\big]{\bC^{N,L}_s , \bC^{N,L}_t} 
 	=\sup_{\Psi\in\cF_{\mathup{BL}}} \biggl\{ \int_{\cP_{\le\overline{\rho}}} \Psi(c) \, (\bC^{N,L}_s -\bC^{N,L}_t )(\dx c)  \biggr\} \le 2w(|t-s|)
\]  
Then the refined Arzelà-Ascoli theorem \cite[Proposition 3.3.1]{ambrosio2008gradient}  implies $\bC^{N,L}$ converges   to $\bC$ on $[0,T]$ along a subsequence $\cP(\cP_{\le\overline{\rho}})$ and $t\mapsto \bC_t$ is continuous on $[0,T]$ in   $ \cP(\cP_{\le \overline{\rho}},\d_{\Exg})$.  
\end{proof}

\subsection{$\Gamma$-convergence of energies}\label{subsec:lsc-energy}

In this section, we will prove the $\Gamma$- convergence and large deviation result of Theorem \ref{thm:gameconv-energy}, via $\Gamma$-convergence of unnormalized entropy functionals over $c$ instead of $\cE$.

We will work with slightly relaxed conditions on $w$ compared to the standard assumptions in Section~\ref{subsec:ass} (see also Definition~\ref{def:eq-meas}). Moreover, it will be convenient to work with $\lambda_c=\log \fugcrit$ instead of $\fug$. Therefore, with slight abuse of notation compared to Section~\ref{subsec:ass}, we denote 
\begin{definition}
For any non-negative sequence $(w(k))_{k\in\bN_0}$
\begin{equation}
    Z(\lambda):= \sum_{k\in \bN_0} e^{\lambda k} w(k), \quad  \lambda_c := \mathop{\mathrm{arg\,sup}}_{\lambda} \left\{ Z(\lambda)<\infty \right\}, 
\end{equation}
and, for any $\lambda$ such that $Z(\lambda)<\infty$, 
\begin{equation}\label{def:general-rho-c}
   \equi^{\lambda}(k):= \frac{1}{Z(\lambda)} e^{\lambda k} w(k), \qquad \rho_c:= \sup_{\lambda<\lambda_c} \left\{ \Mom\left(\equi^{\lambda}
   \right)<\infty \right\}.
\end{equation}

Moreover, as in Definition~\ref{def:eq-meas}, let $\lambda(\rho)$ for any $\rho\leq \rho_c$ with $\rho<+\infty$ be the unique $\lambda$ such that $\Mom_1(\equi^{\lambda})=\rho$, and set $\equi^{\rho}:=\equi^{\lambda(\rho)}$.
\end{definition}
Note 
that $Z(\lambda)$ and $\Mom(\equi^{\lambda})$ are continuous functions of $\lambda$ on their domains, with $\rho_c=\infty$ if $Z(\lambda_c)=\infty$.

\begin{remark}
For $w'(k)=\gamma e^{\lambda' k} w(k)$ with $\gamma >0$ and $\lambda \in \bR$, we have $\lambda_c(w') = \lambda_c(w)- \lambda'$ and $\omega^{\lambda}= \omega'^{\lambda-\lambda'} $.  Moreover since $\omega^{\rho}=\omega^{\lambda(\rho)}=\omega'^{\lambda(\rho)-\lambda'}= \omega'^{\rho} $, the quantities  $\omega^\rho$, $\lambda_c -\lambda(\rho)$ and $\rho_c $ are invariant under this change. As a result, without loss of generality, we set $w(0)=w(1)=1$ in Definition \ref{def:eq-meas} and consider $\omega^\rho$ parametrized by the first moment.
\end{remark}
Throughout this section, we require the following assumption.
\begin{assumption}\label{assu:w}
There exists at least one $\lambda\in \mathbb{R}$ such that $Z(\lambda)<\infty$, i.e.\ $\lambda_c>-\infty$, and $w$ is not everywhere zero. 
\end{assumption}

In particular, $0<Z(\lambda)<\infty$ for any $\lambda<\lambda_c$.

\begin{remark}
Note that Assumptions \eqref{positive_weight} and \eqref{e:ass:Kc} imply Assumption \ref{assu:w}, namely $w(k)>0$ for all $k\in \bN_0$ and that $\lim_{k\to\infty} w(k)/w(k-1) \in (0,\infty)$.
\end{remark}

The central object will be the rate functional, \eqref{eq:thmgammalim}, which we recall for convenience
 
  \begin{equation}\label{eq:gamma-lim-F} F(c,\rho)= \left\{\begin{aligned}
    &\mathsf{Ent}(c|\equi^{\rho \wedge \rho_c}) +(\lambda_c-\lambda(\rho))_+ \left(\rho-\Mom(c) \right) \qquad&&  \mbox{if $\Mom(c)\leq\rho$}\\
    &+\infty \quad&&  \mbox{otherwise}.
\end{aligned}\right.\end{equation}

with the notation of $\lambda(\rho)=\log \Phi(\rho)$, $\lambda_c=\log \Phi(\rho_c) $ (compare with Definition~\ref{def:eq-meas}).

It will appear as the relaxation of a similar entropy functional under the (possibly noncontinuous) constraint $\Mom(c)=\rho$.

\subsubsection{$\Gamma$-convergence of rate functions}

At the risk of overwhelming the reader with various definitions, let us introduce 
\[ J(c,\rho):=\left\{\begin{aligned}
    &\sum_{k\in \bN_0} c_k \log \frac{c_k}{w_k} \qquad&&  \mbox{if $\Mom(c)=\rho$, $c \ll w$}\\
    &+\infty \quad&&  \mbox{otherwise}
\end{aligned}\right.\]
and
\[\bar{J}(c,\rho):=\left\{\begin{aligned}
    &\sum_{k\in \bN_0} c_k \log \frac{c_k}{w_k}+\lambda_c \left(\rho-\Mom(c) \right)\qquad&&  \mbox{if $\Mom(c) \leq \rho$, $c\ll w$}\\
    &+\infty \quad&&  \mbox{otherwise}
\end{aligned}\right.\]
for $\lambda_c <+\infty$ and in the case that $\lambda_c=+\infty$, 
\[\bar{J}(c,\rho):=\left\{\begin{aligned}
    &\sum_{k\in \bN_0} c_k \log \frac{c_k}{w_k}\qquad&&  \mbox{if $\Mom(c) =\rho$, $c\ll w$}\\
    &+\infty\quad &&  \mbox{otherwise.}
\end{aligned}\right.\]
Here $c\ll w$ indicates that $c$, seen as a measure over $\bN_0$, is absolutely continuous with respect to $w$, i.e. $w_k=0$ for $k\in \bN_0$ implies that $c_k=0$. Due to the elementary inequality
\[-(a+b)\leq \phi(a|b)+a-b =a \log \frac{a}{b} \leq \phi(a|b)+b+a, \] 
for $a,b>0$, we have for any $\lambda<\lambda_c$ with either $\lambda=0$ or $\Mom(c)<\infty$ the equivalence
\begin{equation}\label{eq:ent_tilt}
    \mathsf{Ent}(c|\equi^\lambda)+\lambda \Mom(c)-\log Z(\lambda) = \left\{\begin{aligned}
    &\sum_{k\in \bN_0} c_k \log \frac{c_k}{w_k}\qquad&&  \mbox{if $c\ll w$}\\
    &+\infty.\quad &&
    \end{aligned}\right.,
\end{equation}
since $c\ll w \iff c \ll \equi^{\lambda}$. Note that $J(c,\rho)=\bar{J}(c,\rho)$ when either the moment constraint $\Mom(c)=\rho$ is satisfied, or if $\lambda_c=+\infty$. 

\begin{theorem}[Lower semicontinuous relaxation]\label{thm:glimr}
With respect to the narrow topology, 
     \begin{equation}
         \Gamma-\lim \, J(c,\rho) = \bar{J}(c,\rho).
     \end{equation} 
In particular, $\bar{J}(c,\rho)$ is jointly lower semicontinuous and convex. 
\end{theorem}

First, let us prepare some technical estimates.

\begin{lemma}\label{lm:gmest}
There exists a subsequence $a:\mathbb{N}\to \mathbb{N}$ of $\mathbb{N}$ such that 
\begin{equation}\label{eq:Jsub}
    \lim_{k\to \infty} \frac{1}{a(k)} \log w(a(k)) \geq -\lambda_c.
\end{equation}
Moreover, 
\begin{equation}\label{eq:Jdual}
    \bar{J}(c,\rho)=\sup_{\lambda <\lambda_c,\, f\in \mathrm{B}_b} \left( \int f c(\d k) + \lambda \rho - \log \int e^{f(k)+\lambda k} w(\d k) \right).
\end{equation}
In particular, $\bar{J}(c,\rho)$ is lower semicontinuous and convex. 
\end{lemma}
\begin{proof}
Through contradiction, we will establish the desired first statement. Namely, suppose without loss of generalization that $\lambda_c<\infty$, and no admissible subsequence exists, then $\limsup_{k\to \infty} \frac{1}{k} \log w(k) <  -\lambda_c$. Therefore, there exist constants $\varepsilon>0$, $C>0$ such that for sufficiently large $k$, 
    \[w(k) \leq C e^{-(\lambda_c+\varepsilon) k}\]
    but this implies that 
    \[ \int e^{(\lambda_c +\varepsilon')k} w(\d k) < \infty\]
    for any $0<\varepsilon'<\varepsilon$, which contradicts the maximality of $\lambda_c$. 

For the second statement, denote the right-hand side of \eqref{eq:Jdual} by $\tilde{J}(c,\rho)$:
\begin{equation}\label{eq:Jdual2}
    \tilde{J}(c,\rho):=\sup_{\lambda <\lambda_c,\, f\in \mathrm{B}_b} \left( \int f c(\d k) + \lambda \rho - \log \int e^{f(k)+\lambda k} w(\d k) \right).
\end{equation}

For any fixed $\lambda,\lambda'$, with $\lambda<\lambda'<\lambda_c$ we consider the truncated sequence $(f_n)_{n\in \mathbb{N}}$ with $f_n(k):=(\lambda'-\lambda) (k\wedge n)$. Taking the limit $n\to \infty$ in \eqref{eq:Jdual2} leads to the inequality
\begin{equation}\label{eq:Jdual3}
    \tilde{J}(c,\rho)\geq (\lambda'-\lambda)\Mom(c)+\lambda \rho- \log \int e^{\lambda' k} \omega(\d k)
\end{equation}
with $\Mom(c)$ possibly infinite, leading to the relation $\Mom(c)=+\infty \implies \tilde{J}(c,\rho)=+\infty$. 

Since $\bar{J}(c,\rho)=+\infty$ if $\Mom(c)=+\infty$ we can now restrict ourselves to the case $\Mom(c)<\infty$. Recall the dual representation of the relative entropy, where for each $\lambda<\lambda_c$, 
\[\mathsf{Ent}(c|\equi^\lambda)=\sup_{f\in \mathrm{B}_b} \left( \int f c(\d k) - \log \int e^{f(k)} \equi^{\lambda}(\d k) \right).\]
Combining this with \eqref{eq:ent_tilt} we find 
\begin{align*}
    \tilde{J}(c,\rho)&=\sup_{\lambda <\lambda_c} \left( \lambda \rho+ \sup_{f\in \mathrm{B}_b} \left[\int f c(\d k)  - \log \int e^{f(k)+\lambda k} \equi^{\lambda}(\d k) \right] -\log Z(\lambda) \right)\\
    &= \sup_{\lambda <\lambda_c} \left( \lambda(\rho-\Mom(c)) +\mathsf{Ent}(c|\equi^{\lambda})+\lambda \Mom(c)-\log Z(\lambda) \right) \\
    &=\sup_{\lambda <\lambda_c} \lambda (\rho-\Mom(c))+\left\{\begin{aligned}
    &\sum_{k\in \bN_0} c_k \log \frac{c_k}{w_k}\qquad&&  \mbox{if $c\ll w$}\\
    &+\infty\quad &&
    \end{aligned}\right.\\
    &= \bar{J}(c,\rho).
\end{align*}
Convexity and lower semicontinuity follow directly from the dual formulation, since for any narrowly converging sequence $c_n$ and converging sequence $\rho_n$ we have 
\begin{align*}
\liminf_{n\to\infty} \bar {J}(c_n,\rho_n) &\geq \liminf_{n\to \infty} \left( \int f c_n(\d k) + \lambda \rho_n - \log \int e^{f(k)+\lambda k} \omega(\d k) \right)\\
    &= \int f c(\d k) + \lambda \rho - \log \int e^{f(k)+\lambda k} \omega(\d k),
\end{align*}
and the desired liminf-inequality is obtained after taking the supremum over $\lambda<\lambda_c$ and $f\in \mathrm{B}_b$.

\end{proof}

We are now in a position to prove the relaxation of $J$.


\begin{proof}[Proof of Theorem \ref {thm:glimr}]
It falls upon us to prove the following statements, namely, \emph{(i)} that for any non-negative sequence $(\rho_n)_{n\in \mathbb{N}}$ converging to some $\rho < \infty$, and any pointwise converging sequence $(c_n)_{n\in \mathbb{N}}$ to some $c$, 
\[\liminf_{n \to \infty} J(c_n,\rho_n) \geq \bar{J}(c,\rho),\]
and, \emph{(ii)}, that for any $(c,\rho)$ such that $\bar{J}(c,\rho)$ is finite there exist corresponding converging sequences $(\rho_n)_{n\in \mathbb{N}}$ and $(c_n)_{n\in \mathbb{N}}$ such that 
\[\limsup_{n \to \infty} J(c_n,\rho_n) \leq \bar{J}(c,\rho).\]
Here, \emph{(i)} follows from the lower semicontinuity of $J$ as established in Lemma \ref{lm:gmest} and the equality $J=\bar{J}$ when $J$ is finite. 

Now, for \emph{(ii)}, fix any $(c,\rho)$ such that $\bar{J}(c,\rho)<\infty$. Since in the case of $\lambda_c=\infty$ or $\Mom(c)=\rho$ we can simply take the constant sequence $c_n:=c$ and $\rho_n:=\rho$, we will assume that $\lambda_c<\infty$ and $\Mom(c)<\rho$. For large enough $n$, we construct the sequences $\rho_n:=\rho$ and 
\[c_n:=(1-\eps_n) c + \eps_n \delta_{a(n)}, \qquad \eps_n:=\frac{\rho-\Mom(c)}{a(n)-\Mom(c)},\]
where $a(n)$ is the sequence obtained in Lemma \ref{lm:gmest}. Note that $\eps_n\sim a(n)^{-1} \to 0$ as $n\to \infty$, the constraint $\Mom(c_n)=\rho$ is satisfied, and $\rho_n=\rho=(1-\eps_n)\Mom(c)+\eps_n a(n)$. Therefore, by convexity
\begin{align*}
    \limsup_{n \to \infty} J(c_n,\rho_n) &\leq \limsup_{n \to \infty} \,(1-\eps_n) J(c,\Mom(c)) + \limsup_{n \to \infty} \eps_n \, J(\delta_{a(n)},a(n)) \\
    &= J(c,\Mom(c)) + \limsup_{n \to \infty} \frac{\rho-\Mom(c)}{a(n)-\Mom(c)} \left(-\log \equi(a(n))\right)\\
    &\leq J(c,\Mom(c)) + (\rho-\Mom(c)) \lambda_c = \bar{J}(c,\rho),
\end{align*}
where the last inequality follows from \eqref{eq:Jsub}.    
\end{proof}
Throughout we will also use the version of $\bar{J}$ with reference measure $\omega^\rho$,  
\begin{equation}
    \bar{F}(c,\rho):=\bar{J}(c,\rho)-\inf_{c} \bar{J}(c,\rho),
\end{equation}
which will look slightly different depending on $\lambda_c$, $Z(\lambda_c)$ and $\rho_c$. 
\begin{lemma}\label{lm:gammanom}
The normalization constants and normalized rate functions are as follows:
\begin{enumerate}[(i)]
    \item  If $\lambda_c=+\infty$, then 
    \[\inf_{c} \bar{J}(c,\rho)=\lambda(\rho) \rho-\log Z(\lambda(\rho))=\bar{J}(\equi^{\rho},\rho),\] 
    and
    \[ \bar{F}(c,\rho)= \left\{\begin{aligned}
    &\mathsf{Ent}(c|\equi^{\rho}) \qquad&&  \mbox{if $\Mom(c)=\rho$}\\
    &+\infty\quad &&  \mbox{otherwise}.
\end{aligned}\right.\]
\item If $\lambda_c<\infty$ and $\rho_c=+\infty$ (and therefore $Z(\lambda_c)=+\infty$), then $\inf_{c} \bar{J}(c,\rho)=\bar{J}(\equi^\rho,\rho)$,  
    \[ \bar{F}(c,\rho)= \left\{\begin{aligned}
    &\mathsf{Ent}(c|\equi^{\rho}) +(\lambda_c-\lambda(\rho)) \left(\rho-\Mom(c) \right)\qquad&&  \mbox{if $\Mom(c)\leq\rho$}\\
    &+\infty \quad&&  \mbox{otherwise}.
\end{aligned}\right.\]
\item If $\lambda_c<\infty$ and $\rho_c<\infty$, then 
\[\inf_{c} \bar{J}(c,\rho) = \bar{J}(\equi^{\rho\wedge \rho_c},\rho\wedge \rho_c)\]
and
\[ \bar{F}(c,\rho)= \left\{\begin{aligned}
    &\mathsf{Ent}(c|\equi^{\rho}) +(\lambda_c-\lambda(\rho)) \left(\rho-\Mom(c) \right)\qquad&&  \mbox{if $\Mom(c)\leq\rho \leq \rho_c$}\\
     &\mathsf{Ent}(c|\equi^{\rho_c})\qquad&&  \mbox{if $\Mom(c)\leq\rho$, $\rho\geq \rho_c$}\\
    &+\infty \quad&&  \mbox{otherwise}.
\end{aligned}\right.\]
\end{enumerate}   
\end{lemma}

Clearly, $\bar{F}$ is equal to the proposed rate function of (4.7).
\begin{proof}
\begin{enumerate}[(i)]
\item If $\lambda_c=+\infty$, then $\bar{J}=J$,  and for any $c$ with $\Mom(c)=\rho$ and $J(c,\rho)<\infty$, we find 
\begin{align*}
    \bar{J}(c,\rho)&=\int \log \frac{c(k)}{\equi(k)} \, c(\d k) \\
    &=\int \log \frac{c(k)}{\equi^{\rho}(k)} \, c(\d k) + \int \log \frac{\equi^{\rho}(k)}{\equi(k)} \, c(\d k)\\
    &=\mathsf{Ent}(c|\equi^{\rho}) + \lambda(\rho) \Mom(c)-\log Z(\lambda(\rho)),
\end{align*}
where the last equality can be shown to hold even if $J(c,\rho)=+\infty$ via a reverse argument, since $\lambda(\rho)<\infty$, $\Mom(c)=\rho<\infty$, and $0<Z(\lambda(\rho))<\infty$. In particular, 
\[J(\equi^{\rho},\rho)=\lambda(\rho)\rho-\log Z(\lambda(\rho)),\]
and the desired inequality now follows from the decomposition
\[J(c,\rho)=\mathsf{Ent}(c|\equi^{\rho})+J(\equi^{\rho},\rho)\]

\item We assume $\lambda_c<\infty$ and $\rho_c=+\infty$, and consider any $c$ with $\Mom(c)\leq\rho$. A similar calculation to the one above leads to 
\begin{align*}
    J(c,\Mom(c))+\lambda_c \left(\rho-\Mom(c) \right)&=\mathsf{Ent}(c|\equi^{\rho})+\lambda(\rho)(\Mom(c)-\rho)+J(\equi^{\rho},\rho)+\lambda_c \left(\rho-\Mom(c) \right)\\
    &=\mathsf{Ent}(c|\equi^{\rho})+J\equi^{\rho}|\rho)+(\lambda_c-\lambda(\rho)) \left(\rho-\Mom(c) \right),
\end{align*}
where clearly all resulting terms are non-negative, and the final expression is minimized by $c:=\equi^{\rho}$, since in that case $\mathsf{Ent}(c|\equi^{\rho})=0$ and $(\rho-\Mom(c))=0$. 

\item  Finally, we assume $\lambda_c<\infty$ and $\rho_c<\infty$, which in particular implies $Z(\lambda_c)<\infty$. When $\rho<\rho_c$, the previous calculations already provide the desired statement, so in addition we will assume that $\rho\geq \rho_c$. Going through the same steps as before but now with $\lambda_c=\lambda(\rho_c)$ instead of $\lambda(\rho)$, we conclude the proof by verifying
\begin{align*}
    J(c,\Mom(c))+\lambda_c \left(\rho-\Mom(c) \right)&=\mathsf{Ent}(c|\equi^{\rho_c})+\lambda_c (\Mom(c)-\rho_c)+J(\equi^{\rho_c},\rho_c)+\lambda_c \left(\rho-\Mom(c) \right)\\
    &=\mathsf{Ent}(c|\equi^{\rho_c})+J(\equi^{\rho_c},\rho_c).
\end{align*}
\end{enumerate}
\end{proof}




\subsubsection{Full $\Gamma$-convergence and LDP}

In this section, we prove Theorem \ref{thm:gameconv-energy}. We consider the measure of the canonical ensemble induced by an arbitrary probability measure $\omega\in \cP$, which generalizes the Definition \ref{def:lifted-equilibrium}.
\begin{definition}
For $\omega \in \cP$, define the canonical measure by
	\begin{equation}\label{eq:def:canonical}
		\mathbbm{\Pi}^{N,L}_\omega = C^L_{\#}\pi^{N,L}_\omega,
	\end{equation}
	where 
 	$\pi^{N,L}_\omega (\eta)=\frac{1}{\cZ^{N,L}}\prod_{x=1}^L \equi(\eta_x) \in \cP(V^{N,L})$ with normalization $\cZ^{N,L}$ and $\eta \in V^{N,L}$.
\end{definition}
\begin{remark}On $V^{N,L}$, $\sum_{x=1}^L \eta_x = N$. Hence the measure $\pi^{N,L}_\omega$ is invariant under the change $\omega(k)\mapsto e^{\lambda k} \omega(k) $.
\end{remark}
\begin{lemma}\label{lm:entest} The functional in Theorem \ref{thm:gameconv-energy} has the decomposition
\begin{equation}\label{eq:cdecom}
 \frac{1}{L} \mathcal{Ent}(\bC^{N,L}|\mathbbm{\Pi}^{N,L}_\omega)= \int J(c,\Mom(c)) \bC^{N,L}(\d c)+A_{N,L}+o(1)_{N/L \to \rho, N, L\to \infty},
\end{equation} where $A_{N,L}=\frac{1}{L}\log \cZ^{N,L}$.
\end{lemma}
\subsubsection{Two counting estimates}
\begin{lemma}\label{lem:upperbdd1}For any $\bC^{N,L} \in \cP(\hat{V}^{N,L})$, we have the asymptotics
\[-\int_{\hat{V}^{N,L}} \log(\bC^{N,L})  \dx \bC^{N,L} =O(\sqrt{N}).\]
\end{lemma}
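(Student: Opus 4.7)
The plan is to reduce the bound to a counting estimate on $\hat V^{N,L}$. Since $\bC^{N,L}$ is a probability measure on the finite set $\hat V^{N,L}$, Jensen's inequality (or equivalently the non-negativity of the relative entropy with respect to the uniform measure) gives the universal estimate
\[
	-\int_{\hat V^{N,L}} \log(\bC^{N,L}) \dx \bC^{N,L} \;\le\; \log \lvert \hat V^{N,L}\rvert .
\]
Therefore it suffices to show $\log \lvert \hat V^{N,L}\rvert = O(\sqrt N)$, with a constant independent of $L$.

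The key observation is that each cluster-size distribution $c\in \hat V^{N,L}$ records the multiplicities $(Lc_k)_{k\ge 0}$ of the sizes of the $L$ boxes in a configuration $\eta\in V^{N,L}$. Forgetting the empty boxes (i.e.\ the coordinate $Lc_0$), the remaining data $(Lc_k)_{k\ge 1}$ encodes a partition of $N$, since $\sum_{k\ge 1} k (Lc_k) = N$. Two distinct $c$'s in $\hat V^{N,L}$ yield distinct partitions (the multiplicity of $0$ is determined by $L$ and the other multiplicities), hence
\[
	\lvert \hat V^{N,L}\rvert \;\le\; p(N),
\]
where $p(N)$ is the number of (unordered) partitions of $N$.

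Finally I would invoke the classical Hardy--Ramanujan asymptotic
\[
	p(N) \;\sim\; \frac{1}{4N\sqrt 3}\exp\!\Bigl(\pi\sqrt{\tfrac{2N}{3}}\Bigr) \qquad\text{as } N\to\infty,
\]
which yields $\log p(N) = O(\sqrt N)$ and hence the claim, uniformly in $L$. I do not expect any obstacle here: both the entropy bound and the partition reduction are elementary, and the only nontrivial input is the well-known partition asymptotic (a crude elementary bound $\log p(N) = O(\sqrt N \log N)$ obtained by counting partitions with at most $\sqrt{2N}$ distinct parts would also suffice for most purposes, but the Hardy--Ramanujan estimate gives exactly $O(\sqrt N)$ as stated).
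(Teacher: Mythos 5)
Your proposal is correct and follows essentially the same route as the paper: bound the Shannon entropy by $\log|\hat V^{N,L}|$ (maximised by the uniform measure), identify elements of $\hat V^{N,L}$ injectively with integer partitions of $N$, and invoke the Hardy--Ramanujan asymptotic $\log p(N) = O(\sqrt{N})$. You spell out the injectivity step a bit more explicitly than the paper does, but the argument is the same.
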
 
\begin{proof}
The entropy attains its maximum when $\bC^{N,L}$ is uniformly distributed so
\begin{equation*}\begin{split}
-\int_{\hat{V}^{N,L}} \log(\bC^{N,L})  \dx \bC^{N,L}  \le \log( |\hat{V}^{N,L}|).
\end{split}\end{equation*}
The size of the set $\hat{V}^{N,L}$ can be considered as the number of distinct ways of representing the natural number $N$ as a sum of up to $L$ positive integers.  Therefore, 
 $|\hat{V}^{N,L}|$ is bounded from above by $P(N)$, the partition function in number theory, which is the number of distinct ways of representing the natural number $N$ as a sum of positive integers.  This has a well-known asymptotics \cite{HardyRamanujan18} : 
 $|\hat{V}^{N,L}|\le P(N) = O(\frac{1}{N} \exp(\pi \sqrt{\frac{2N}{3}}))$ , which provides the bound $\log|\hat{V}^{N,L}| = O(\sqrt{N})$.
\end{proof}
\begin{lemma}\label{lem:upperbdd2}
	Any $c \in \hat{V}^{N,L}$ satisfies the asymptotics
	\[
		\biggl|\log(L!) - \sum_{k=0}^N \log(c_k L)! + L\sum_{k=0}^N c_k \log(c_k)\biggr|= (\sqrt{2N}+1)(1+ \log L) \,.
	\]
\end{lemma}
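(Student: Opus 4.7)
The plan is to apply Stirling's formula $\log n! = n \log n - n + O(\log(n+1))$ uniformly, substitute in both $L!$ and each factorial $(c_k L)!$, and then control the resulting number of error terms by using the mass constraint $\sum_k k c_k = N/L$ to bound the support size of $c$ on $\{k\geq 1\}$.

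First, I would fix the quantitative form of Stirling's estimate: there is a universal constant $C>0$ such that for every integer $n \geq 0$,
\[
\bigl| \log(n!) - n \log n + n \bigr| \leq C \log(n+1),
\]
with the convention $0 \log 0 = 0$. Applied to $L$ this gives $\log L! = L\log L - L + O(\log L)$. Applied to each $c_k L$ (which lies in $\{0,1,\dots,L\}$ since $c_k L \in \bN_0$) and summed, I obtain
\[
\sum_{k=0}^N \log(c_k L)! = \sum_{k: c_k > 0} \bigl[ c_k L \log(c_k L) - c_k L \bigr] + R,
\quad \text{with}\quad |R| \leq C \!\!\sum_{k: c_k > 0}\!\! \log(c_k L + 1) .
\]
The main term simplifies cleanly using $\sum_k c_k = 1$: expanding $\log(c_k L) = \log c_k + \log L$ gives $\sum_k c_k L \log(c_k L) = L\sum_k c_k \log c_k + L\log L$, and $\sum_k c_k L = L$. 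Subtracting and cancelling $L\log L$ and $L$ yields
\[
\log(L!) - \sum_{k=0}^N \log(c_k L)! + L\sum_{k=0}^N c_k \log c_k = O(\log L) - R .
\]

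The remaining step, which is the only nontrivial point, is to bound $|R|$ by $O(\sqrt N \log L)$. Since $\log(c_k L + 1) \leq \log(L+1) = O(\log L)$ uniformly, it suffices to show that the support size
\[
	S := \#\{ k \geq 0 : c_k > 0\}
\]
is at most $O(\sqrt N) + 1$. The $k=0$ term contributes at most $1$, so we may restrict attention to $S_+ := \#\{k \geq 1 : c_k > 0\}$. Here one uses the key fact that $c_k L$ is a positive integer whenever $c_k > 0$, hence $c_k \geq 1/L$ on its support. Combined with the constraint $c \in \hat V^{N,L}$, i.e.\ $\sum_{k\geq 1} k c_k = N/L$, this forces
\[
\frac{N}{L} = \sum_{k \geq 1} k c_k \geq \frac{1}{L} \sum_{k \in \supp c,\, k \geq 1} k \geq \frac{1}{L} \cdot \frac{S_+(S_+ + 1)}{2},
\]
where the last inequality holds because the smallest possible choice of $S_+$ distinct positive integers is $\{1,2,\dots,S_+\}$. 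Rearranging gives $S_+(S_+ + 1) \leq 2N$, hence $S_+ \leq \sqrt{2N}$. Therefore $|R| \leq C(1 + S_+) \log(L+1) = O(\sqrt N \log L)$, which combined with the $O(\log L)$ error above yields the claimed asymptotics.

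The only place where care is needed is the uniformity of the Stirling error across the support of $c$ and the pigeonhole-type bound on $S_+$; once those are in place the computation is purely algebraic.
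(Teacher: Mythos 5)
Your argument is correct and follows essentially the same route as the paper: both rest on the pigeonhole bound $\#\{k : c_k>0\} = O(\sqrt{N})$ coming from the first-moment constraint together with a uniform $O(\log L)$ bound on the per-index discrepancy. The only cosmetic difference is that you invoke the standard Stirling estimate $\log n! = n\log n - n + O(\log(n+1))$, whereas the paper proves the corresponding per-term bound by hand via a ceiling-function/integral comparison of $\Lambda^L(m)=m\sum_{l=1}^L\log(ml)-\sum_{l=1}^{mL}\log l$.
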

\begin{proof}
With $s(c):=|\{i\in \{0,1,.., N\}: c_i>0\}|$ for $c\in \hat{V}^{N,L}$, we can estimate
\begin{equation*}
\frac{N}{L}= \sum_{i=0}^N c_i i \ge \frac1L \sum_{i=0}^N i  \ge \frac1L \sum_{i=0}^{s(c)-1} i= \frac{1}{2L} (s(c)-1)(s(c)).
\end{equation*}
Moreover, the function $s(c)$ satisfies the inequality
\begin{equation}\label{eq:estimateP} 
1\le s(c) \le \sqrt{2N}+1.
 \end{equation}
For $m\in \{1/L,2/L,\dots,1\}$, define $\Lambda^L(m):= m \sum_{l=1}^L \log (m l) - \sum_{l=1}^{m L} \log(l)$. Then, for $c\in \hat{V}^{N,L}$, we arrive at the bound
\begin{align}\label{eq:est-factorial} 
\biggl|\log L! -\sum_{i=0}^N \log (c_i L)! + L \sum_{i=0}^N c_i \log(c_i)\biggr|&= \biggl|\sum_{i=0}^N \Lambda^L(c_i) \1_{c_i >0}\bigg|
&\le  s(c) \sup_{m\in\{1/L,2/L,\dots,1\}}  |\Lambda^L(m)| \,.
\end{align}
We can write
\begin{equation*}
\Lambda^L(m)=\int_0^L m \log (m \ceil{x} ) \d x- \int_0^{m L} \log(\ceil{x}) \d x= m \int_0^L \log\bigl(\frac{m \ceil{x}}{\ceil{mx}}\bigr) \d x.
\end{equation*}
By using that $ |m \ceil{x}- \ceil{m x}|\le 1$ for any $m\in(0,1]$ and $x >0$ as well as the bound elementary bound $\frac{x-1}{x}\le \log x \le x-1$ for $x> 0$, we have 
\[\frac{-1}{m \ceil{x}} \le \frac{m \ceil{x} - \ceil{mx}}{m \ceil{x}}\le \log \frac{m\ceil{x}}{\ceil{mx}} \le \frac{m\ceil{x}-\ceil{mx}}{\ceil{mx}}\le \frac{1}{\ceil{mx}}.\]
Now, we compute the integral of the upper and the lower bound and obtain 
\begin{equation*}\begin{split}
\int_0^L \frac{1}{\ceil{mx}} \dx x &\le \frac1m+ \int_{1/m}^L \frac{1}{mc} \d x =\frac1m(1 +  \log L + \log m) \,, \\
\int_0^L \frac{1}{m\ceil{x}} \dx x &\le  \frac1m  + \int_1^L \frac{1}{mx} \d x = \frac1m (1+ \log L) \,.
\end{split}\end{equation*}
Since $m\in (0,1]$, we can estimate
\[\bigg|\int_0^L \log \frac{m\ceil{x}}{\ceil{mx}} \dx x \bigg| \le \frac1m (1+\log L) \,. \]
Then, we obtain
\begin{equation}\label{eq:estim_Lambda}
\sup_{m\in\{1/L,2/L,\dots,1\}} | \Lambda^L(m)| \le  \sup_{m\in (0,1]}  |\Lambda^L(m)| \le 1+\log L.
\end{equation} 
The conclusion follows from applying the estimates \eqref{eq:estimateP} and \eqref{eq:estim_Lambda}  to the equation \eqref{eq:est-factorial}.
\end{proof}
\begin{proof}[Proof of Lemma \ref{lm:entest}]
If $\bC^{N,L}$ is not supported on $\{c|c\ll w\}$, i.e. $c$ such that $c_i=0$ whenever $w_i=0$, then both sides of \eqref{eq:cdecom} are equal to $+\infty$. Thus, let us now assume that $\bC^{N,L}$-a.e. we have $c\ll w$. We consider
\begin{align*}
 \frac{1}{L}& \mathcal{Ent}(\bC^{N,L}|\mathbbm{\Pi}^{N,L}_\omega)- \int J(c,\Mom(c)) \bC^{N,L}(\d c) \\
 &= \int \pra[\bigg]{\frac1L \log\frac{\bC^{N,L}(c)}{\#(C^L)^{-1}(c)} -\frac1L \log  \frac{1}{\cZ^{N,L}}\prod_{i=0,w_i\neq 0}^N w_i^{c_i L}  -\sum_{k=0, w_k\neq 0}^N c_k \log \frac{c_k}{w_k}}\bC^{N,L}( \d  c)\\
 &=\int \pra[\bigg]{\frac1L \log\frac{\bC^{N,L}(c)}{\#(C^L)^{-1}(c)}-\sum_{k=0}^N c_k \log c_k}\bC^{N,L}( \d  c) +\frac1L \log  \cZ^{N,L}\\
 &=\int \pra[\bigg]{\frac1L \log \bC^{N,L}(c) - \frac1L\bra[\bigg]{ \log {L! } -\sum_{k=0}^N \log\pra*{(c_k L)!} +L\sum_{k=0}^N c_k \log c_k}} \bC^{N,L}( \d  c)+\frac1L \log  \cZ^{N,L}.
\end{align*}
 
By Lemma \ref{lem:upperbdd1} and Lemma \ref{lem:upperbdd2}, we have the upper bound
\begin{align*}
\MoveEqLeft \biggl|\frac{1}{L} \mathcal{Ent}(\bC^{N,L}|\mathbbm{\Pi}^{N,L}_\omega)  - \int J(c,\Mom(c)) \; \bC^{N,L}(\dx c)- \frac1L \log  \cZ^{N,L}\biggr| \\
&\le \frac{1}{L} \biggl| \int  \log \bC^{N,L}(c) \; \bC^{N,L}(\dx c)\biggr|
		 +\frac{1}{L} \int \biggl|\log L! - \sum_{k=0}^N \log(c_k L)! + L\sum_{k=0}^N c_k \log(c_k)\biggr| \bC^{N,L} (\d  c)\\
		 & \le O(L^{-1/2} \log L)_{N/L \to \rho, N, L\to \infty}. 
\end{align*}
\end{proof}

\subsubsection{Proof of $\Gamma$-convergence}\label{subsubsec:proof-thm1.8}
\begin{proof}[Proof of Theorem \ref{thm:gameconv-energy} ]
First, note that equicoercivity in this setting already directly follows from the moment bounds, since $c\mapsto \Mom(c)$ is non-negative, narrowly lower semicontinuous, and coercive, and
\[ \limsup_{L\to \infty, N/L\to \rho} \int \Mom(c) \,\bC^{N,L}(\d c) = \limsup_{L\to \infty, N/L\to \rho} \frac{N}{L} = \rho < \infty.  \]
We will now show `unnormalized' versions of the necessary limsup and liminf estimates: \emph{(i)} that for any converging sequence $(\bC^{N,L},\rho_L=N/L)$ to $(\bC,\rho)$, we have the liminf estimate 
\begin{equation}\label{eq:tgliminf}
    \liminf_{L\to \infty} \frac{1}{L} \mathcal{Ent}(\bC^{N,L}|\mathbbm{\Pi}^{N,L}) \geq \int \bar{J}(c,\rho) \bC(\d c) + \liminf_{L\to \infty} A_{N,L}
\end{equation}
and, \emph{(ii)}, that for any $(\bC,\rho)$, there exist corresponding converging sequences $(\bC^{N,L}, \rho_L)$ such that 
\begin{equation}\label{eq:tglimsup}
    \limsup_{L\to \infty} \frac{1}{L} \mathcal{Ent}(\bC^{N,L}|\mathbbm{\Pi}^{N,L}) \leq  \int \bar{J}(c,\rho) \bC(\d c) + \limsup_{L\to \infty} A_{N,L}.
\end{equation}
This is enough to a posteriori conclude that 
\[-\lim_{L\to \infty} A_{N,L}=\inf_{c} \bar{J}(c,\rho)\]
and hence \eqref{eq:thmgammalim} follows from Lemma \ref{lm:gammanom}. Here, the limit of $A_{N,L}$ stems from the fact that by equicoercivity and \eqref{eq:tgliminf} there exist a $\bC^*$ (which in fact will be the limit point of $\mathbbm{\Pi}^{N,L}$)
\begin{align*}
   0 \geq \int \bar{J}(c,\rho) \bC^*(\d c) + \liminf_{L\to \infty} A_{N,L},
\end{align*}
and, by non-negativity of the entropy and \eqref{eq:tglimsup}, for any $\bC$, 
\begin{align*}
   0 \leq \int \bar{J}(c,\rho) \bC(\d c) + \limsup_{L\to \infty} A_{N,L}.
\end{align*}
Together, this implies that 
\begin{equation}\label{eq:limit ANL}-\lim_{L\to \infty} A_{N,L} = \inf_{\bC} \int \bar{J}(c,\rho) \bC(\d c) = \inf_{c} \bar{J}(c,\rho). \end{equation}

Therefore, let us now consider the unnormalized liminf estimate, \eqref{eq:tgliminf}, with $L\to \infty$, $\rho_L:=N(L)/L\to \rho$, and $\bC^{N,L}\to \bC$. Recall the liminf inequality of Theorem \ref{thm:glimr}, in particular that $\bar{J}(c,\rho)$ is jointly lower semicontinuous and equal to $J(c,\rho)$ whenever $\Mom(c)=\rho$. Therefore, by Lemma \ref{lm:entest}, it follows that
\begin{align*}
     \liminf_{L\to \infty} \frac{1}{L} \mathcal{Ent}(\bC^{N,L}|\mathbbm{\Pi}^{N,L}) &\geq  \liminf_{L\to \infty} \int J(c,\Mom(c)) \bC^{N,L}(\d c)+\liminf_{L\to \infty} A_{N,L} \\
      &= \liminf_{L\to \infty} \int \bar{J}(c,\rho_L) \bC^{N,L}(\d c)+\liminf_{L\to \infty} A_{N,L}\\
    &\geq \int \bar{J}(c,\rho) \bC(\d c)+\liminf_{L\to \infty} A_{N,L}.
\end{align*}

The limsup will be obtained via a diagonal argument. By an construction similar to \cite[Theorem 3.4]{Mariani2018}, it is sufficient to only consider $\bC=\delta_{c^*}$ for some $c^*$. Therefore, let us first start with any $c$ with  $\Mom(c)=\rho$ and $c$ compactly supported, so that $\bC=\delta_{c^*}$. From Proposition \ref{prop:recovery_seq} there exist corresponding converging sequences $\bC^{N,L}=\delta_{c^{N,L}}$ and $\rho_L=\frac{N}{L}$ such that 
\[\lim_{L\to \infty} \int \bar{J}(c,\rho_L) \bC^{N,L}(\d c)= \lim_{L\to \infty} \mathsf{Ent}(c^{N,L}|\equi)  = \mathsf{Ent}(c^*|\equi)= \int \bar{J}(c,\rho) \bC(\d c).\]
For $c$ that does not have compact support with $\Mom(c)= \rho$, consider $c^n$ to be the normalized truncation of $c$ on $\{0,1,...,n\}$ so that $c^n$ has compact support. Choose $K>\rho :\omega(K)>0$, for such $K$ define $\tilde c^n = (1-\alpha^n) c^n + \alpha^n \delta_K $, where $\alpha^n =\frac{\rho - \Mom(c^n)}{K-\Mom(c^n)}>0$ and converges to $0$ as $n\to \infty$. The sequence $\tilde c^n$ converges narrowly to $c$.  
Then by convexity, 
\[\mathsf{Ent}(\tilde c^n | \omega) \le (1-\alpha_n) \mathsf{Ent}(c^n | \omega)+ \alpha_n (-\log \omega(K))\] so that 
 \[\lim_{n\to\infty}\mathsf{Ent}(\tilde c^n | \omega)  \le \lim_{n\to \infty} \mathsf{Ent}(c^n | \omega) = \mathsf{Ent}(c|\omega).\] 
 By a diagonal sequence argument, we have along a subsequence 
 \[\lim_{j\to\infty} \mathsf{Ent}(\cD^{N_j,L_j}(\tilde c^{n_j})| \omega) \le \mathsf{Ent}(c|\omega).\]
 
Finally, suppose $\bC=\delta_{c^*}$ but $\Mom(c^*)<\rho$. By Theorem \ref{thm:glimr} there exists a sequence of $(c^*_n)_{n\in \mathbb{N}}$ converging to $c^*$ with $\Mom(c^*_n)=\rho$ for all $n\in \mathbb{N}$, such that $\bar{J}(c^*_n,\rho)\to \bar{J}(c^*,\rho)$, and therefore we can construct a suitable $\bC^{N,L}$ via a diagonal argument. 
\end{proof}

\subsection{Lower semicontinuity of dissipation potentials}


In this section, we write shortly $(\bC^{N,L},\bJ^{N,L}) \to (\bC,\bJ)$ to denote the convergence in the sense of Definition~\ref{def:conv_of_pair_measure}. 
Assumption~\eqref{eq:finite_edf} and the compactness statements from Propositions~\ref{prop:uniform_bbd_C} and~\ref{prop:comp_j} justify Definition~\ref{def:conv_of_pair_measure}, since we can extract a converging subsequence in this sense. 
\begin{proposition}[Lower-semicontinuity of dissipation potentials] \label{prop:lsc-diss-potentials} 
Assume \eqref{eq:finite_edf} and assume $(\bC^{N,L},\bJ^{N,L}) \to (\bC,\bJ)$, then for almost all $t\in [0,T]$
   \[\liminf_{N/L\to \rho}  \mathcal{R}^{N,L}_t(\bC^{N,L},\bJ^{N,L})\ge \mathcal{R}_t(\bC,\bJ) \]
and \[ \liminf_{N/L\to \rho}\cD^{N,L}_t(\bC^{N,L},\bJ^{N,L}) \ge \cD_t(\bC,\bJ). \]
In particular, $\bJ_t \ll \mathbbm{\Theta}_t [\bC]$ for almost all $t\in [0,T]$.
\end{proposition}
The proof of Proposition \ref{prop:lsc-diss-potentials} relies on a Reshetnyak-type lower-semicontinuity result from~\cite[Lemma 2.3 (8)]{PRST22} by a suitable rewriting of the functionals. In the following three statements, we show appropriate convergence statements that will be used in the proof of Proposition \ref{prop:lsc-diss-potentials}.
\begin{lemma}\label{lem:vector_measure_convergence}
If $(\bC^{N,L},\bJ^{N,L}) \to (\bC,\bJ)$ then for all $k,l \in \bN$ and  all $t\in [0,T]$
\[
	(\mathbbm{\Gamma}^{N,L,k,l-1,1}_t, \mathbbm{\Gamma}^{N,L,k,l-1,2}_t)\to ( \mathbbm{\Gamma}^{k,l-1,1}_t, \mathbbm{\Gamma}^{k,l-1,2}_t) \qquad\text{in } \sigma\bigl(\cM^+(\cP_{\le \overline{\rho}}^2;\bR^{2}), C_b(\cP_{\le \overline{\rho}}^2;\bR^{2})\bigr)
\] 
and 
\begin{equation*}
(\mathbb{V}^{N,L,k,l-1}_t , \mathbbm{\Gamma}^{N,L,k,l-1,1}_t,\mathbbm{\Gamma}^{N,L, k,l-1,2}_t ) \to  (\mathbb{V}^{k,l-1}_t , \mathbbm{\Gamma}^{k,l-1,1}_t  , \mathbbm{\Gamma}^{k,l-1,2}_t)
\end{equation*}
in  $\sigma(\cM^+(\cP_{\le \overline{\rho}}^2;\bR^{3}); C_b(\cP_{\le \overline{\rho}}^2;\bR^{3}))$,
where, for the finite particle system, the measures are defined by
\begin{align*}
 \mathbbm{\Gamma}^{N,L,k,l-1,1}_t(\dx c,\dx c')&:=\delta_{c^{k,l-1}}(\dx c')\nu_t^L [\bC^{N,L}](\d c,k,l-1) \,, \\
\mathbbm{\Gamma}^{N,L,k,l-1,2}_t(\dx c,\dx c')&:= \delta_{{c'}^{l,k-1}}(\dx c){\nu^{L}_t}^\dagger[\bC^{N,L}](\d c',l,k-1) \,, \\
\mathbb{V}^{N,L,k,l-1}_t(\dx c,\dx c')&:=\delta_{c^{k,l-1}}(\dx c') \bJ^{N,L}_t(\dx c,k,l-1) \,, 
\intertext{and for the limit}
 \mathbbm{\Gamma}^{k,l-1,1}_t(\dx c,\dx c')&:=\delta_c(\dx c')\nu_t[\bC](\d c,k,l-1)\,, \\	
  \mathbbm{\Gamma}^{k,l-1,2}_t(\dx c,\dx c')&:=\delta_{c'}(\dx c)\nu^{\dagger}_t [\bC] (\d c',l,k-1) \,,\\
  \mathbb{V}^{k,l-1}_t(\dx c,\dx c')&:=\delta_{c}(\dx c') \bJ_t(\dx c,k,l-1) \,,
\end{align*}
with $\nu^L [\bC^{N,L}]$ and $\nu[\bC]$ are defined in~\eqref{eq:expected-flux-finite} and in~\eqref{eq:expected-flux-in-limit} respectively.
\end{lemma}
\begin{proof}
The convergence to $(\bC,\bJ)$   implies for all $t\in [0,T]$ the convergence of the measure 
\[
  \bJ^{N,L}_t(\d c,k,l-1) \to \bJ_t(\d c,k,l-1)  \text{ narrowly in } \cM^+ \bigl(\cP_{\le \overline{\rho}}\times \bN \times \bN_0 \bigr).
\] 
Generally, to establish the weak convergence of product measures $\mu^n=(\mu^n_i)_{i=1}^d \to (\mu_i)_{i=1}^d $, it is equivalent to show the weak convergence of $\mu^n_i \to \mu_i$ for every $i\in \{1,\dots,d\}$, that is for each $f$ bounded and \emph{Lipschitz continuous} on $(\cP_{\le \overline{\rho}}^2, \bR)$ we have the convergence 
 \[\int f(c,c') \mu_i^n (\d c,\d c') \to \int f(c,c') \mu_i (\d c,\d c').\]  Therefore, it suffices to consider the case when $\mu^{N,L}=(\mathbb{V}^{N,L,k,l-1} , \mathbbm{\Gamma}^{N,L,k,l-1,1}  , \mathbbm{\Gamma}^{N,L,k,l-1,2})(t)$ for fixed  $k ,l \in \bbN$, $t\in [0,T]$.

\smallskip\noindent
\emph{Step 1: Convergence of $\mathbb{V}^{N,L,k,l-1}\to \mathbb{V}^{k,l-1}$.}  
\begin{align}
\MoveEqLeft\sum_{(c,c') \in \hat{V}^{N,L}\times\hat{V}^{N,L}}f(c,c') \delta_{c^{k,l-1}}(c') \bJ_t^{N,L}(c,k,l-1)-\int_{\cP_{\le \overline{\rho}}\times \cP_{\le \overline{\rho}}} f(c,c') \delta_c(\dx c') \, \bJ_t(\d c,k,l-1) \notag \\
&=\sum_{c \in \hat{V}^{N,L}} f(c,c^{k,l-1})  \bJ^{N,L}_t(c,k,l-1) - \int_{\cP_{\le \overline{\rho}}} f(c,c) \, \bJ_t(\d c,k,l-1) \notag \\
&\le \sup_{c\in \hat{V}^{N,L}} \bigl| f(c,c^{k,l-1})-f(c,c)\bigr| \sum_{c \in \hat{V}^{N,L}} \big|\bJ^{N,L}_t\big| (c,k,l-1) \label{eq:cov_Vkl-1} \\
&\qquad +\biggl|\int_{\cP_{\le \overline{\rho}}} f(c,c)\bigl( \bJ^{N,L}_t - \bJ_t\bigr)(\d c,k,l-1)\biggr|. \notag 
\end{align}
 Since $\bJ_t^{N,L}\to \bJ_t $ in duality with bounded continuous function and is non-negative, we have the convergence of $\int_{\cP_{\le\overline{\rho}}} \sum_{k,l\in \bN} \bJ^{N,L}_t(\d c,k,l-1) \to \int_{\cP_{\le\overline{\rho}}}  \sum_{k,l\in \bN} \bJ_t(d c,k,l-1) $. In particular, we have
\[\sup_{N/L\to\rho} \|\bJ_t^{N,L}(\cdot,k,l-1)\|_{\TV(\hat{V}^{N,L})}< C_t .\]
 Hence the first term in \eqref{eq:cov_Vkl-1} vanishes as $L \to \infty$ due to the upper bounded 
\[ \!\sup_{c\in \cP_{\le \overline{\rho}}} \!\!\bigl| f(c,c^{k,l-1})-f(c,c)\bigr| \!\! \sum_{c \in \hat{V}^{N,L}}\!\!\!  |\bJ^{N,L}_t| (c,k,l-1)  \le  \frac{2\operatorname{Lip}(f)}{L} \!\!\sup_{N/L\to\rho} \!\!\!  \bigl\|\bJ_t^{N,L}(\cdot,k,l-1)\bigr\|_{\TV( \hat{V}^{N,L})}.\]  
Since  $c$ and $c^{k,l-1}$ have the same zeroth and first moments, the Lipschitz estimate is obtained by  Lemma~\ref{lem:varform_d_Exg}. The second term in \eqref{eq:cov_Vkl-1} vanishes by weak convergence as $c\mapsto f(c,c)$ is a bounded continuous function.

\smallskip\noindent
\emph{Step 2: Convergence of $(\mathbbm{\Gamma}^{N,L,k,l-1,1}  , \mathbbm{\Gamma}^{N,L,k,l-1,2}) \to (\mathbbm{\Gamma}^{k,l-1,1}$,$\mathbbm{\Gamma}^{k,l-1,2})$.}\\
The two measures $(\mathbbm{\Gamma}^{N,L,k,l-1,1}  , \mathbbm{\Gamma}^{N,L,k,l-1,2})$  are symmetric under the exchange of $c \leftrightarrow c^{k,l-1}$ and $\nu \leftrightarrow \nu^{\dagger}$ so we only show the convergence  of $\mathbbm{\Gamma}^{N,L,k,l-1,1}$ in details. By definition, we consider
\begin{equation*}
\begin{split}
\MoveEqLeft\sum_{c,c' \in \hat{V}^{N,L}\times \hat{V}^{N,L}} f(c,c') \mathbbm{\Gamma}^{N,L,k,l-1,1}_t(c,c') -  \int f(c,c')\, \mathbbm{\Gamma}^{k,l-1,1}_t(\d c,\d c')\\
&= \sum_{c\in \hat{V}^{N,L}} f(c,c^{k,l-1})\nu_t^L [\bC^{N,L}](\d c,k,l-1)  -\int f(c,c) \nu_t^L [\bC^{N,L}](\d c,k,l-1) \,.
\end{split}\end{equation*}
By adding zeros and the triangle inequality, we estimate
\begin{align*}|f(c,c^{k,l-1})-f(c,c) |\kappa^L_t[c](k,l-1)&\le \frac{2\operatorname{Lip}(f)}{L} c_k c_{l-1}e^{\|b\|_\infty} K(k,l-1)\\
&\le  \frac{2\operatorname{Lip}(f)}{L}e^{\|b\|_\infty}K(k,l-1) 
\end{align*}
and similarly
\begin{align*}
\bigl|f(c,c)(\kappa^L_t[c](k,l-1)- \kappa_t[c](k,l-1))\bigr| \!&\le\! \frac{|f(c,c)|}{L-1}e^{\|b\|_\infty} K(k,l-1)c_k |c_{l-1} -\delta_{k,l-1}| \\
&\le\! \frac{\|f\|_{\infty}}{L-1} e^{\|b\|_\infty} K(k,l-1)\,.
\end{align*}
We observe for fixed $(k,l-1)$, the map $\bR^{\bN_0} \ni c\mapsto \kappa_t[c](k,l-1)$ is continuous in pointwise topology on $\bR^{\bN_0}$.  We have $c\mapsto f(c,c)\kappa_t[c](k,l-1) \in C_b(\cP_{\le \overline{\rho}})$ and
\[\int f(c,c) \kappa_t[c](k,l-1) (\bC^{N,L}_t(\d c)- \bC_t(\d c))\to 0 \qquad\text{ as } {N/L} \to \infty.\]
Hence, we conclude
\[
\biggl|\int f(c,c^{k,l-1}) \kappa^L_t[c](k,l-1)\bC^{N,L}_t(\d c) -\int f(c,c) \kappa_t[c](k,l-1) \bC_t(\d c) \biggr| \le \frac{C(f,k,l-1) e^{\|b\|_\infty}}{L} + o(1) \,. 
\]
The estimate for $\mathbbm{\Gamma}^{N,L,k,l-1,2}$ has the same structure to the same estimate for $\mathbbm{\Gamma}^{N,L,k,l-1,1}$ up to exchanging $k \leftrightarrow l$ and $\nu \leftrightarrow \nu^{\dagger}$.
\end{proof}	
\begin{corollary}\label{prop:wc-fb-flux}
If $(\bC^{N,L},\bJ^{N,L}) \to (\bC,\bJ)$, then for each $k,l\in \bN$, \[\nu_t^L [\bC^{N,L}](\d c,k,l-1) \to \nu_t[\bC](\d c,k,l-1)\]  and $S_{\#}{\nu^L_t}^{\dagger} [\bC^{N,L}](\d c,k,l-1) \to \nu^{\dagger}_t [\bC](\d c,l,k-1)$ in $\sigma(\cM^+(\cP_{\le \overline{\rho}};\bR); C_b(\cP_{\le \overline{\rho}}))$ for  all $t\in[0,T]$.
\end{corollary}
\begin{proof}
Observe \[\nu_t^L [\bC^{N,L}](\d c, k, l-1) = \sum_{c' \in \hat{V}^{N,L}} \mathbbm{\Gamma}^{N,L,k,l-1,1}_t(\d c ,\d c' ).  \] 
From Proposition \ref{lem:vector_measure_convergence}, we know the convergence of $ \mathbbm{\Gamma}^{N,L,k,l-1,1}_t$ with respect to $C_b(\cP_{\le \overline{\rho}}^2)$ so if we choose the test function in $C_b(\cP_{\le \overline{\rho}};\bR^2)$ depends only on the first variable, we have $\nu_t^L [\bC^{N,L}](\d c, k, l-1)\to \nu_t[\bC](\d c,k,l-1)$ in $C_b(\cP_{\le \overline{\rho}})$. Similarly, 
\[S_{\#}{\nu^L_t}^{\dagger}[\bC^{N,L}](\d c,k,l-1) = \sum_{c' \in \hat{V}^{N,L}} \mathbbm{\Gamma}^{N,L,k,l-1,2}_t (\d c ,\d c') \] so 
$\flipL{\nu_t}[\bC^{N,L}](\d c,k,l-1) \to \nu^{\dagger}_t [\bC](\d c,l,k-1)$.
\end{proof}

\begin{corollary}\label{prop:cts-fisher-info}
 If $(\bC^{N,L},\bJ^{N,L}) \to (\bC,\bJ)$, we have for all $t\in [0,T]$, the convergence of 
\[\int \sum_{k,l\ge 1} \nu_t^L[\bC^{N,L}](\d c,k,l-1) \to \int \sum_{k,l\ge1} \nu_t [\bC](\d c, k, l-1)\] and
\[
\int \sum_{k,l \ge 1} \flipL{\nu_t}[\bC^{N,L}](\d c,k,l-1) \to \int \sum_{k,l \ge 1} \nu_t^{\dagger}[\bC](\d c, l, k-1)
\]
as $N/L \to \rho$.
\end{corollary}
\begin{proof}
The following holds for almost all $t\in [0,T]$, and we omit the time index as it is a pointwise argument.
\begin{equation*}\begin{split}
\int \sum_{k,l=1}^N \nu^L [\bC^{N,L}](\d c, k,l-1) = \int \sum_{k,l=1}^N \kappa^L[c](k,l-1) \bC^{N,L}(\d c).
\end{split}
\end{equation*}
and for the flipped measure,
\[
\int \sum_{k,l=1}^N \flipL{\nu}(\d c,k,l-1) = \int \sum_{k,l=1}^N {\kappa^L}^{\dagger}[c^{k,l-1}](l,k-1) \bC^{N,L}(c^{k,l-1}) .
 \]
The limits have  similar forms
 \begin{equation*}
\int \sum_{k,l\in \bN} \nu[\bC](\d c, k,l-1)=\int \sum_{k,l\in \bN} \kappa[c](k,l-1)\bC (\d c)
\end{equation*} 
and \[
\int \sum_{k,l\in \bN} \nu^{\dagger}[\bC](\d c, l, k-1) = \int \sum_{k,l\in \bN} \kappa^\dagger[c](l,k-1) \bC (\d c).\]
Since each term in the sum converges by Proposition \ref{prop:wc-fb-flux}, we only need to verify that the sum is bounded. In the following, we only show the convergence of the first integral, but the flipped version can be shown by the same argument, up to notational changes.
We now want to estimate
\begin{equation*}
\biggl|\int \sum_{k,l\in \bN} (\kappa^L [c](k,l-1) - \kappa [c](k,l-1) ) \bC^{N,L} (\d c) + \int \sum_{k,l\in \bN} \kappa [c](k,l-1) (\bC^{N,L} (\d c) - \bC (\d c))\biggr|.
\end{equation*}
We can estimate the first integrand by
\[\begin{split}
    \sum_{k,l\in \bN}|\kappa^L[c](k,l-1)-\kappa [c](k,l-1)|&\le \frac{1}{L-1}\sum_{k,l\in \bN} K(k,l-1) c_k | c_{l-1}-\delta_{k,l-1}|\\
    &\le \frac{1}{L-1}\sum_{k,l\in \bN} K(k,l-1) c_k c_{l-1}\\
    &\le C_K \frac{1}{L-1} \frac{N}{L} \biggl(\frac{N}{L}+1 \biggr),
\end{split} \] where $C_K := C_{\overline{K}} e^{\|b\|_{\infty}}$.

Hence, the first integral converges to zero as $L\to +\infty$.
For the second integral, by Lemma \ref{lem: kappa-C-b} $\sum_{k,l\in \bN}\kappa[c](k,l-1) \in C_b(\cP_{\le \overline{\rho}})$, then the weak convergence of $\bC^{N,L}_t \to \bC_t$ implies this term vanishes in the limit.
\end{proof}
We are now ready to prove the main statement of this section.
\begin{proof}[Proof of Proposition \ref{prop:lsc-diss-potentials}]
 The proof uses the lower-semicontinuity of the functional defined on measures $\mathcal{F}\bigl[\hat{\phi}\bigr](\bV,\mathbbm{\Gamma}^1, \mathbbm{\Gamma}^2):=\int_{E} \hat{\phi}\biggl(\frac{\d \bV}{\d \Sigma}, \frac{\d \mathbbm{\Gamma}^1 }{\d \Sigma}, \frac{\d \mathbbm{\Gamma}^2}{\d \Sigma}\biggr) \d \Sigma  $, where $\hat{\phi}(y,x_1,x_2):=\phi(y| \sqrt{x_1 \, x_2})$, and $\cF[q](\mathbbm{\Gamma}^1,\mathbbm{\Gamma}^2) :=\int_{E} q\biggl(\frac{\d \mathbbm{\Gamma}^1}{\d \Sigma}, \frac{\d \mathbbm{\Gamma}^2}{\d \Sigma} \biggr) \dx \Sigma$ for a  dominating measure $\Sigma \in \cM^+(E)$. We omit the time index $t$ in the following as it holds for almost all $t\in[0,T]$. Hence, we express the dissipation potential as 
\[\begin{split}\mathcal{R}^{N,L}(\bC^{N,L},\bJ^{N,L})
 & =  \mathcal{Ent}(\bJ^{N,L} | \mathbbm{\Theta}[\bC^{N,L}])\\
&=\sum_{k,l\in\bN} \sum_{(c,c')\in \hat{V}^{N,L}\times \hat{V}^{N,L}} \mkern-32mu \phi\Bigl(\mathbb{V}^{N,L,k,l-1}(c,c') \,\Big|\, \sqrt{\mathbbm{\Gamma}^{N,L,k,l-1,1}(c,c')} \sqrt{\mathbbm{\Gamma}^{N,L,k,l-1,2}(c,c')}\Bigr)\\
&=\sum_{k,l\in\bN} \mathcal{F}\bigl[{\hat{\phi}}\bigr](\mathbb{V}^{N,L,k,l-1},\mathbbm{\Gamma}^{N,L,k,l-1,1},\mathbbm{\Gamma}^{N,L,k,l-1,2}).
\end{split}
\]  Since $\hat{\phi}$ is jointly convex, lower-semicontinuous and positive 1-homogeneous, the induced functional are lower-semicontinuous with respect to narrow convergence \cite[Lemma 2.3 (8)]{PRST22}.  By the lower-semicontinuity of convex functionals and Fatou's lemma, we have
\[\begin{split}
\liminf_{N/L \to \rho} \sum_{k,l\in\bN} \mathcal{F}\bigl[\hat{\phi}\bigr](\mathbb{V}^{N,L,k,l-1},\mathbbm{\Gamma}^{N,L,k,l-1,1},\mathbbm{\Gamma}^{N,L,k,l-1,2})& \ge \sum_{k,l\in\bN} \mathcal{F}\bigl[\hat{\phi}\bigr]( \mathbb{V}^{k,l-1},\mathbbm{\Gamma}^{k,l-1,1}, \mathbbm{\Gamma}^{k,l-1,2})\\
&=  \mathcal{Ent}(\bJ | \mathbbm{\Theta}[\bC])= \mathcal{R}(\bC,\bJ) .
\end{split}\] By the Assumption \eqref{eq:finite_edf} and the estimate above, we have $\bJ_t \ll \mathbbm{\Theta}_t[\bC]$ for almost all $t\in[0,T]$.  
Recall 
\begin{align*}
\mathcal{D}^{N,L}(\bC^{N,L}) 
&= \mathcal{H}^2(\nu^L[\bC^{N,L}], \flipL{\nu}[\bC^{N,L}])+\frac12 \int \sum_{k,l\in\bN} (\nu^L[\bC^{N,L}]-\flipL{\nu}[\bC^{N,L}])(\d c, k, l-1).
\end{align*} Now we can use Proposition \ref{prop:cts-fisher-info} for the continuity of the second term in $\mathcal{D}^{N,L}(\bC^{N,L}) $. It remains to show the lower-semicontinuity of the Hellinger distance, which can be expressed as 
\[\begin{split} 
 \mathcal{H}^2(\nu[\bC^{N,L}], \flipL{\nu}[\bC^{N,L}])&=   \sum_{k,l\in\bN}\frac12 \sum_{c,c'\in \hat{V}^{N,L}} (\sqrt{\mathbbm{\Gamma}^{N,L,k,l-1,1}(c,c') } - \sqrt{\mathbbm{\Gamma}^{N,L,k,l-1,2}(c,c')})^2\\
&=\sum_{k,l\in \bN} \frac12 \mathcal{F}[q] (\mathbbm{\Gamma}^{N,L,k,l-1,1},\mathbbm{\Gamma}^{N,L,k,l-1,2}).
\end{split}
\]
Again, by the lower-semicontinuity of the functional and Fatou's lemma, we obtain
\[\liminf_{N/L \to \rho} \sum_{k,l\in \bN} \frac12 \mathcal{F}[q] (\mathbbm{\Gamma}^{N,L,k,l-1,1},\mathbbm{\Gamma}^{N,L,k,l-1,2}) \ge \sum_{k,l \in \bN} \frac12 \mathcal{F}[q](\mathbbm{\Gamma}^{k,l-1,1},\mathbbm{\Gamma}^{k,l-1,2})=  \mathcal{H}^2(\nu[\bC],\flip{\nu}[\bC]).\]
\end{proof}
 
\subsection{Proof of main results}
\begin{proof}[Proof of Theroem \ref{thm:EDP_con}]
\begin{enumerate}[(a)]
\item The statement follows from Theorem \ref{thm:gameconv-energy} (proven in Section~\ref{subsubsec:proof-thm1.8}) by choosing the reference measure as the equilibrium measure of the unperturbed kernel.  
\item This follows from Fatou's lemma and Proposition \ref{prop:lsc-diss-potentials}.
\item \begin{enumerate}[(i)] 
	\item Under the given assumptions, Lemma~\ref{lem:mod_of_uni_int} holds so that we can apply Lemma~\ref{lem:uni_cts_C} to get each $\bC^{N,L}_t \weakto \bC_t$ for $t\in [0,T]$ along a subsequence. Moreover Proposition \ref{prop:comp_j} holds so we can extract yet another subsequence  so that $(\bC^{N,L},\bJ^{N,L})_{N/L \to \rho}$ converges. 
	The limit $(\bC,\bJ)$ satisfying the limit continuity equation~\eqref{eq:def:CEinfty} is a consequence of Proposition~\ref{prop:limit satisfies CEinf}.  
\item Since  for each $t\in[0,T]$, the measure $\bC_t$ is a subsequence limit,  by Lemma~\ref{lem:concen_on_rho_mean} we have the claim. \qedhere
\end{enumerate}
\end{enumerate}
\end{proof}

\begin{proof}[Proof of Theorem~\ref{thm:EDP_con_EDP}]
By the $\Gamma-\liminf$ of the energy and lower semicontinuity of the dissipation potentials of \eqref{eq:Gamma-free} and  \eqref{eq:lscPotentials} in Theorem~\ref{thm:EDP_con}. 
\end{proof}

\begin{proof}[Proof of Corollary~\ref{cor:sol2sol}]
This is a consequence of the functional characterisation of solutions of the three equations established in Section~\ref{sec:var}.
\begin{enumerate}[(i)] 
\item As an EDP solution of~\eqref{eq:FKE}, we have $\cL^{N,L}(\bC^{N,L},\bJ^{N,L})=0$ and we obtain a subsequence limit $(\bC,\bJ)\in \ref{eq:def:CEinfty}(0,T)$ by Theorem \ref{thm:EDP_con}.  By the result (ii) of Corollary \ref{cor:connection-to-mean-field-equation}  and Theorem\ \ref{thm:EDP_con_EDP}, $\cL(\bC,\bJ)=0$   so the limit is an EDP solution of~\eqref{eq:Li}. 
\item By  Theorem \ref{thm:EDP_con}(c\,ii), $\bC_0\in \cP_{\!\le\rho}$. Then the existence of the path measure $\lambda$ is given by Corollary~\ref{cor:connection-to-mean-field-equation}(i). It also follows from  Corollary \ref{cor:connection-to-mean-field-equation}(iii) that $\lambda$ concentrates on EDP solutions of~\eqref{eq:MF}.
\item By the well-preparedness of the initial data, lower semicontinuity of the functionals and $\cL^{\rho\wedge\rho_c}(\bC,\bJ)=0=\cL^{N,L}(\bC^{N,L},\bJ^{N,L})$ , we have 
\begin{equation*}\begin{split}
\limsup_{N/L \to \rho} \, \cE^{N,L}(\bC^{N,L}_t) &= 
\lim_{N/L\to\rho} \cE^{N,L}(\bC^{N,L}_0) - \liminf_{N/L\to\rho} \int_0^T  \Bigl(  \calR^{N,L}_t(\bC^{N,L},\bJ^{N,L})+\calD^{N,L}_t(\bC^{N,L})  \Bigr)\d t \\
&\le  \cE^{\rho\wedge\rho_c}(\bC_0) -    \int_0^T\bigl( \cR_t(\bC , \bJ) + \cD_t(\bC) \bigr) \d t \\
 &= \cE^{\rho\wedge\rho_c}(\bC_t) \qedhere
\end{split}\end{equation*}
\end{enumerate}
\end{proof}

By Proposition \ref{prop:EDG-time:Existence} for the time-inhomogeneous kernel (or \cite[Theorem 2.12]{Schlichting2020} for the time-homogeneous kernel), if the kernel satisfies assumption \eqref{e:ass:Ku}, then the solution of~\eqref{eq:MF} is unique given an initial data $c \in \cP_{\!\rho}$ and defines a semigroup on $\cP_{\!\rho}$, that is
\begin{equation*}
	S^{\mathsf{MFE}}_t c_0 := c_t \quad\text{ with $c_t$ the unique solution to~\eqref{eq:MF} for $t\in [0,T]$.}
\end{equation*} 
This uniqueness is now transferred to the EDP solutions of~\eqref{eq:Li} in the next Lemma.
\begin{lemma}[Uniqueness of EDP solution of \eqref{eq:Li}] \label{lem:unique_Li}
Under assumptions \eqref{e:ass:K1} and \eqref{e:ass:Ku}, suppose $\bC_0 \in \cP (\cP_{\!\le\rho})$ for some $\rho\in[0,\infty)$, $\cE^{\rho\wedge \rho_c}(\bC_0)< +\infty$ and $(\bC,\bJ)\in \ref{eq:def:CEinfty}(0,T)$ is an EDP solution of~\eqref{eq:Li} then there exists a measure $\lambda$ concentrated on EDP solution of~\eqref{eq:MF}.
Then for any $B \in \bigcup_{\rho_0 \le \rho}\cB(C([0,T];\cP_{\!\rho_0}))$ the set 
\[
  B|_0 := \{c_0 \in \cP_{\!\le \rho}: (S^{\mathsf{MFE}}_t c_0)_{t\in[0,T]}\in B \}
\]
is measurable and gives rise to the representation
\begin{equation}\label{eq:RepLi}
	\lambda(B)= \int  \1_{B|_0}(c)  \ \bC_0(\d c). 
\end{equation}
In particular, given initial data $\bC_0$, the EDP solution $(\bC,\bJ)$ to~\eqref{eq:Li} is unique.
\end{lemma}
\begin{proof}
The existence of $\lambda$ is given by Corollary \ref{cor:connection-to-mean-field-equation}. We only have to argue for the representation~\eqref{eq:RepLi}.
By the disintegration of measure, \cite[Theorem 10.4.12]{bogachev2007measure}, given a probability measure~$\lambda$ on $\bigcup_{\rho_0 \le \rho} C([0,T]; \cP_{\!\rho_0})$ there exists a probability kernel $(\lambda_c)_{c\in\cP_{\!\le \rho}}$ generated by the evaluation map $e_0$ at $t=0$ such that for all $B \in \bigcup_{\rho_0\le\rho}\cB(C([0,T];\cP_{\!\rho_0}))$, $G\in \cB(\cP_{\!\le \rho})$ it holds 
\begin{equation}\label{eq:disintegration}
\lambda(B \cap e_0^{-1}(G))= \int_{G} \lambda_{\tilde{c}}(B)\, (\lambda \circ e_0^{-1})(\d \tilde{c})=\int_{G} \lambda_{\tilde{c}}(B) \, \bC_0(\d \tilde{c}) \,.
\end{equation} 
Since $\lambda$ concentrates on the EDP solution of~\eqref{eq:MF} of the form $(c,\overline\jmath[c])\in \ExgCE(0,T)$, which has also the semigroup representation $c_t = S^{\mathsf{MFE}}_t c_0$ and we have by uniqueness the identity
\[
	\lambda_{\tilde{c}} = \delta_{(S^{\mathsf{MFE}}_t \tilde c)_{t\in [0,T]}} \,.
\]
Hence, we can conclude 
\begin{equation*}\begin{split}
\lambda_{\tilde{c}}(B)&= \1_{B|_0}(\tilde{c})
=
\begin{cases} 1 \quad \text{ if } (c_t)_{t\in [0,T]}\in B \text{ and } c_t = S^{\mathsf{MFE}}_t\tilde{c} \text{ for } t\in [0,T] ; \\
0 \quad \text{otherwise.}\end{cases}
\end{split}\end{equation*} 
Since $\tilde{c}\mapsto \lambda_{\tilde c}(B)$ is measurable so is $\tilde c \mapsto  \1_{B|_0}(\tilde{c})$.
By taking $G=\cP_{\!\le \rho}$ in \eqref{eq:disintegration}, we get the representation of $\lambda$. Since $(e_t)_\#\lambda =\bC_t$, this representation implies the uniqueness of the EDP solution $(\bC,\bJ)$ for~\eqref{eq:Li} given $\bC_0$.
\end{proof}

\begin{proof}[Proof of Theorem~\ref{thm:PropaChaos}]
The assumptions of Corollary \ref{cor:sol2sol} and Lemma~\ref{lem:unique_Li} are satisfied. Therefore, by the uniqueness of the EDP solution of~\eqref{eq:Li} in Lemma \ref{lem:unique_Li}, every subsequence obtained in Corollary \ref{cor:sol2sol} must converge to the same limit $(\bC,\bJ)$. In particular, $\bC^{N,L}_t \to \bC_t$ along the entire sequence. Using the representation in Lemma \ref{lem:unique_Li} with $\bC_0=\delta_{c_0}$, we have $\bC_t =(e_t)_\# \lambda= \delta_{c_t}$.  The proof is complete with the energy limit shown in Corollary \ref{cor:sol2sol}(iii).
\end{proof}

\begin{lemma}[Example of well-prepared initial data]\label{lem:example-well-prepare}
Suppose $\tilde \omega, \omega \in \cP^1$ such that $\sup_i \bigl|\log \frac{\tilde \omega_i}{\omega_i}\bigr|<+\infty$.  Then the canonical measure defined in~\eqref{eq:def:canonical} is a recovery sequence
\[\lim_{N/L\to \rho} \frac{1}{L} \mathcal{Ent}\bigl(\mathbbm{\Pi}^{N,L}_{\tilde \omega}|\mathbbm{\Pi}^{N,L}_\omega\bigr)  =  \mathsf{Ent}(\tilde \omega^{\rho\wedge \rho_c}| \omega^{\rho\wedge \rho_c} ).\]
In particular, if $\omega$ is the equilibrium measure with respect to $\overline{K}$, for any $\tilde \omega \in \cP^1$ such that $\sup_i \bigl|\log \frac{\tilde \omega_i}{\omega_i}\bigr|<+\infty$, it holds
\[
	\lim_{N/L\to \rho} \cE^{N,L}(\mathbbm{\Pi}^{N,L}_{\tilde \omega})= \cE^{\rho\wedge \rho_c}(\delta_{\tilde \omega^{\rho\wedge \rho_c} }) \,.
\]
\end{lemma}
\begin{proof}[Proof of  Lemma \ref{lem:example-well-prepare} ]
We consider
\begin{align*}
   \frac{1}{L}& \mathcal{Ent}(\mathbbm{\Pi}^{N,L}_{\tilde \omega}|\mathbbm{\Pi}^{N,L}_\omega)=   \int \pra[\bigg]{\frac1L \log\frac{\mathbbm{\Pi}^{N,L}_{\tilde \omega}(c)}{\#(C^L)^{-1}(c)} -\frac1L \log  \frac{1}{\cZ^{N,L}}\prod_{i=0}^N \omega_i^{c_i L} }\mathbbm{\Pi}^{N,L}_{\tilde \omega}( \d  c)\\
  =&\frac{1}{L} \mathcal{Ent}(\mathbbm{\Pi}^{N,L}_{\tilde \omega}|\mathbbm{\Pi}^{N,L}_{\tilde \omega}) + \int \pra[\bigg]{\frac1L \log  \frac{1}{{\widetilde \cZ}^{N,L}}\prod_{i=0}^N \tilde{\omega}_i^{c_i L}  -\frac1L \log  \frac{1}{\cZ^{N,L}}\prod_{i=0}^N \omega_i^{c_i L}}\mathbbm{\Pi}^{N,L}_{\tilde \omega}( \d  c)\\
  =&\frac1L \log  \frac{\cZ^{N,L}}{{\widetilde \cZ}^{N,L}}   + \int \pra[\bigg]{   \sum_{i=0}^N c_i \log \frac{\tilde{\omega}_i}{ \omega_i}}\mathbbm{\Pi}^{N,L}_{\tilde \omega}( \d  c).
\end{align*}
Let  $\cP^1 \ni c \mapsto 	F(c):= \sum_{i=0}^\infty c_i \log\frac{\tilde{\omega}_i}{ \omega_i} $. For $\mathbbm{\Pi}^{N,L}_{\tilde \omega} \weakto \delta_{ \tilde \omega^{\rho \wedge \rho_c}}$, we  have
\begin{equation}\label{eq: conv of integral}\lim_{N/L\to \rho}\int F(c)   \mathbbm{\Pi}^{N,L}_{\tilde \omega}( \d  c)=F( \tilde \omega^{\rho \wedge \rho_c}).
\end{equation}
Indeed, let $F^M(c) =\sum_{i=0}^M c_i \log \frac{\tilde \omega_i}{\omega_i}$, 
then
\begin{align*}
\int (F-F^M)(c)  \mathbbm{\Pi}^{N,L}_{\tilde \omega}( \d  c) \le \frac{1}{M+1}  \Big\|\log \frac{\tilde \omega}{\omega}\Big\|_\infty  \overline{\rho} 
\end{align*}
and by dominated convergence 
$\lim_{M\to \infty}F^M(\tilde\omega^{\rho \wedge \rho_c})= F(\tilde\omega^{\rho \wedge \rho_c}) $.
Hence, by a triangle inequality argument, we have \eqref{eq: conv of integral}.
 
 By Lemma \ref{lm:gammanom} and \eqref{eq:limit ANL},  we have 
\[\lim_{N/L \to \rho}\frac{1}{L} \log \cZ^{N,L} = -\inf_c \overline{J}(c,\rho)= - \mathsf{Ent}(\omega^{\rho\wedge \rho_c}| \omega). \] In summary,
\begin{align*}
\lim_{N/L\to \rho}  \frac{1}{L} \mathcal{Ent}(\mathbbm{\Pi}^{N,L}_{\tilde \omega}|\mathbbm{\Pi}^{N,L}_\omega)
&=\mathsf{Ent}(\tilde\omega^{\rho\wedge \rho_c} |\tilde \omega) - \mathsf{Ent}(\omega^{\rho\wedge \rho_c} | \omega)+ \sum_{i=0}^\infty \tilde \omega^{\rho \wedge \rho_c}_i \log \frac{\tilde \omega_i}{\omega_i}\\
&= \mathsf{Ent}(\tilde\omega^{\rho\wedge \rho_c} |  \omega)- \mathsf{Ent}(\omega^{\rho\wedge \rho_c} | \omega)\\
&= \mathsf{Ent}(\tilde \omega^{\rho\wedge\rho_c}|\omega^{\rho\wedge\rho_c})+ \sum_{i=0}^\infty  (\tilde\omega^{\rho\wedge \rho_c}_i  - \omega^{\rho\wedge \rho_c}_i ) \log \frac{\omega^{\rho\wedge \rho_c}_i}{\omega_i} .
\end{align*}
We now justify that the second term is zero. Note that $ \frac{\omega^{\rho\wedge \rho_c}_i}{\omega_i} = \frac{Z(0)}{Z(\lambda(\rho\wedge \rho_c))}\exp{i  \big(\lambda(\rho\wedge \rho_c)\big)  }$. Then 
\begin{align*}\sum_{i=0}^\infty  &(\tilde\omega^{\rho\wedge \rho_c}  - \omega^{\rho\wedge \rho_c} ) \log \frac{\omega^{\rho\wedge \rho_c}}{\omega}
\\
&= \log \frac{Z(0)}{Z(\lambda(\rho\wedge \rho_c))} \sum_{i=0}^{\infty} (\tilde\omega^{\rho\wedge \rho_c}_i  - \omega^{\rho\wedge \rho_c}_i )+ \big(\lambda(\rho\wedge \rho_c)\big)   \sum_{i=0}^{\infty} i (\tilde\omega^{\rho\wedge \rho_c}_i  - \omega^{\rho\wedge \rho_c}_i )\\
&=0,
\end{align*}
because $\omega^{\rho\wedge \rho_c}, \tilde\omega^{\rho\wedge \rho_c} $ are probability measures with the same first moment. 
\end{proof}

\begin{proof}[Proof of Corollary \ref{cor:loss-1-mom}]
By Theorem \ref{thm:gameconv-energy}, for any arbitrary probability measure $\omega\in \cP(\bN_0)$, the empirical measure $\mathbbm{\Pi}^{N,L}_\omega \in \cP(\hat{V}^{N,L})$ converges narrowly to $\delta_{\omega^{\rho\wedge \rho_c}}$ as $N/L \to \rho$, with $\rho_c$ defined in~\eqref{def:general-rho-c}. 

Now let $\omega = \omega^{\rho_0} \in \cP_{\rho_0}(\bN_0)$ with $\rho_0 \le \rho_c$ and $\rho_0<+\infty$, the equilibrium measure of the reversible kernel  $\overline{K}$ given in Definition~\ref{def:eq-meas},
and  $g\in \bN_0 \to \bR$ be bounded. Define $\omega_g = e^g \omega Z_g^{-1}\in \cP(\bN_0)$.  If the sequence $N/L \to \rho$ for some $\rho>\rho_c$, then
 $\mathbbm{\Pi}^{N,L}_{\omega_g} \to \delta_{\omega_g^{\rho_c}}$ and the initial condition for the stochastic process $\mathbbm{\Pi}^{N,L}_{\omega_g}$   is well-prepared by Lemma \ref{lem:example-well-prepare}. Therefore, loss of mass occurs at the initial time in the limit from the process to the deterministic system. By Theorem \ref{thm:PropaChaos}, as $\bC^{N,L}_t \to \delta_{c_t}$ for each $t>0$ where $(c_t)_{t\ge 0}$ is the solution to \eqref{eq:MF} with initial data $\omega_g^{\rho_c}$, the loss of the first moment propagates for $t>0$ as the first moment is conserved along the evolution.    
\end{proof}
\appendix
 
\section{Contraction principle}

The formulation of EDF in net flux can be considered as the contraction of the formulation with unidirectional flux in the sense of the contraction principle in large deviations theory. As discussed in \cite[Chapter 10.5.2]{Hoeksema-thesis}, \cite[Corollary 1.15]{PeletierSchlichting2022}, and \cite[Appendix A]{HoeksemaTse2023}, under the detailed balance conditions ($b\equiv 0$), we have
\[\begin{split}
\cR^{N,L}_{\mathrm{net}}(\bC^{N,L},\bJ^{N,L}_{\mathrm{net}})&:=
\inf_{\bJ\in \cM^+ (\hat{E}^{N,L})} \{   \cR^{N,L}(\bC^{N,L},\bJ): \bJ^{N,L}_{\mathrm{net}}= \frac{\bJ-S_{\#}\bJ}{2}\}\\
&= \frac1{2} \sum_{(c,k,l)\in \hat{E}^{N,L}}
\mathsf{C}\Bigl( \bJ^{N,L}(c,k,l-1) \;\Big\vert\; \iota[\bC^{N,L}]{(c,k,l-1)} \Bigr)
\end{split}\]
with
\begin{gather*}
	\iota[\bC^{N,L}_t]{(c,k,l-1)}  :=\frac{1}{2}k^{N,L}_{(c,k,l-1)}\sqrt{u^{N,L}_t(c)u^{N,L}_t(c^{k,l-1})} \,,\\
	k^{N,L}_{c,k,l-1} := \mathbbm{\Pi}^{N,L}(c)\kappa^L[c](k,l-1), \qquad
	u^{N,L}_t(c):=\frac{\bC^{N,L}_t(c)}{\mathbbm{\Pi}^{N,L}(c)} \,.
\end{gather*} 
Therefore, the net flux EDF  
\begin{equation*}\begin{split}
&\cL^{N,L}_{\mathrm{net}}(\bC^{N,L}, \bJ^{N,L}_{\net}):=
\left.\cE^{N,L}(\bC^{N,L}_t)\right|_{t=0}^T
\!\\
&+\int_0^T  \Bigl[  \calR^{N,L}(\bC^{N,L}_t,{\bJ^{N,L}_{\net}}_t)+\sum_{(c,k,l)\in \hat{E}^{N,L}}\iota[\bC^{N,L}_t](c,k,l-1)\mathsf{C}^*(-
 \hat{\nabla}^L_{k,l-1}D\cE^{N,L}(\bC^{N,L}_t)(c))  \Bigr]\d t,   \end{split}
\end{equation*} 
is related to the unidirectional flux EDF via
\[
	\frac12 \cL^{N,L}_{\mathrm{net}}\bigl(\bC^{N,L},\bJ^{N,L}_{\net}\bigr)=\inf_{\bJ\in \cM^+ (\hat{E}^{N,L})} \set*{\cL^{N,L}(\bC^{N,L},\bJ): \bJ^{N,L}_{\net}= \frac{\bJ-S_{\#}\bJ}{2}}.
\]
\begin{proposition}
Given $j_{\net}$ is a family of net fluxes with discrete state space, it holds
\[
	\inf_{ j } \set*{\cR(c, j) : \frac{j- j^\dagger}{2} = j_{\net}}  = \frac12 \cR_{\net} (c, j_{\net}) \,,
\] 
where $j^{\dagger}(x,y) = j(y,x)$, the infimum is over one-way fluxes, 
\[
	\cR(c,j)= \int \phi\Bigl(\frac{j}{\theta_c}\Bigr)  \dx \theta_c  , \quad
	 \cR_{\net}(c,j_{\net})=    \int \psi\biggl(\frac{2 j_{\net}}{\theta_c}\biggr) \dx \theta_c= \int \sfC\biggl( j_\net \,\bigg|\, \frac{\theta_c}{2}\biggr), \] with $\theta_c$ a measure on edges, $\phi(s)=s\log s -s +1$, $\phi^*(s)= e^{s}-1$, $\psi^*(s) = e^{s}+ e^{-s} - 2 = \phi^*(s)+\phi^*(-s)$, $\sfC^*(s)=4(\cosh(s/2)-1)= 2 \psi^*(s/2)$.
\end{proposition}
\begin{proof}
\emph{Lower bound}\\
We show $\cR(c,j) \ge \cR_{\net}(c, j_{\net}) $ via the convex duality  of $\cR$,
\[\begin{split}
	\cR(c, j) &= \int \phi\Bigl(\frac{j}{\theta_c}\Bigr)  \dx \theta_c=
	\sup_{w: \text{ bounded}} \int w \dx j - \int \phi^*(w) \dx\theta_c
\\
&\ge \sup\bigg\{ \int w \dx j - \int \phi^*(w) \dx\theta_c  :w \text{ bounded, } w(x,y)=-w(y,x) \bigg\}.
\end{split}\]
By antisymmetry, $\int \phi^* (w) \dx\theta_c = \frac12 \int \psi^*(w) \dx\theta_c$ and $\int w \dx j =  \int w\  (\frac{\dx j - j^\dagger}{2})= \int w \dx j_{\net}$.
Hence, for $w$ bounded and antisymmetric, we have  \[
  \int w\dx j - \int \phi^*(w) \dx\theta_c =  
  \int w\dx j_{\net} - \int  \psi^*(w) \frac{\dx\theta_c}{2}.
 \]
 Moreover, we observe by convexity of $\psi$, for any $w$, we write $w= \frac{ w_{\text{sym}}+w_{\text{asym}}}{2}$, with $w_{\text{sym}}(x,y)=w(x,y) + w(y,x)$ and $w_{\text{asym}}(x,y)=w(x,y)-w(y,x)$. Since $\psi^*$ is even and convex,
 \[\int \psi^*(w_{\text{asym}}) \dx\theta_c \le \int \psi^*(w) \dx\theta_c.\]
 Thus
 \[\sup_{\substack{w \text{ bounded},  \\\text{antisymmetric}}}\bigg\{  \int w\dx j - \int \phi^*(w) \dx\theta_c \bigg\} \ge    \sup_{w \text{ bounded}} \bigg\{ \int w\dx j_{\net} - \int  \psi^*(w) \frac{\dx\theta_c}{2}\bigg\} =  \int \psi\Big(\frac{2 j_{\net}}{\theta_c}\Big) \frac{\theta_c}{2},\]  which implies the inequality is actually an equality. Again, we use the duality to retrieve $\psi$,
  \[   \sup_{w \text{ bounded}} \bigg\{ \int w\dx j_{\net} - \int  \psi^*(w) \frac{\dx\theta_c}{2}\bigg\} =  \int \psi\Big(\frac{2 j_{\net}}{\theta_c}\Big) \frac{\theta_c}{2}.\] 
\noindent
\emph{Step 2: For given $j_{\net}$, show there exists $j$ such that $\frac{j - j^\dagger}{2} = j_\net$ and $\cR(c,j) =\cR_{\net}(c,j_\net).$} By the Lagrange multipler method for the constrainted minimization of $j \mapsto \int \phi(\frac{j}{\theta_c}) \dx\frac{\theta_c}{2}$ with constraint $\frac{j - j^\dagger}{2} = j_\net$ where $j_\net$ is a fixed parameter, we find the condition $j= \exp(z) \theta_c$  for some  antisymmetric function $z: (x,y)\mapsto z(x,y)$ . Setting $g = j/\theta_c$, $h = j_\net/\theta_c$ and solving the quadratic equation  $g^2  -2 h g- 1 =0$, we find the admissible solution $g = j/\theta_c = \sqrt{h^2 +1}+h$ and $g^\dagger = j^\dagger / \theta_c =  \sqrt{h^2 +1}- h= 1/g.$

Then for this choice of $j$,  using the elementary equality $\phi(e^z)+\phi(e^{-z})=\psi(e^z - e^{-z}),$ we have,
\[\begin{split}
	\int \phi\Big(\frac{j}{\theta_c}\Big) \dx \theta_c&=\int \phi( e^z ) \dx\theta_c =  \int \Big(\phi(e^z) + \phi(e^{-z}\Big) \frac{\dx\theta_c}{2}\\
	&=  \int \psi(e^z  - e^{-z})  \frac{ \dx\theta_c}{2}=   \int \psi\Big(\frac{2 j_\net}{\theta_c}\Big) \frac{\dx\theta_c}{2}= \frac12 \cR_{\net}(c,j_\net) \,.\qedhere
\end{split}\]

 \end{proof}
\section{Existence and uniqueness of time-inhomogeneous EDG}\label{append:Exist}
\begin{proposition}\label{prop:EDG-time:Existence}
Assume \eqref{e:ass:K1} \eqref{e:ass:Ku} and $K:[0,\infty) \to \bR^{\bN \times \bN_0 }$ is measurable. For any $T>0$, there exists a unique solution of the (time-inhomogeneous) EDG. 
\end{proposition}
\begin{remark}
The solution of EDG is defined in \cite[Definition 2.2]{Schlichting2019}. The existence proof via truncation \cite[Theorem 2.4]{Schlichting2019} can be done for the time-dependent kernel $K$. Nevertheless, we provide a proof along the lines of the Picard-Lindelöf theorem for the existence and uniqueness.  
\end{remark}
\begin{proof}
\emph{Step 1: Verify the assumptions of the Banach fixed point theorem.} 

Given the initial data $\overline{c}\in \cP^1$ with $\Mom_1(\overline{c})=\rho$, 
for $M>0$, $T>0$, consider
\[
	X:=X_{M,T}(\overline{c}):=\set[\bigg]{c: [0,T]\to \bR^{\bN_0} \,\bigg\vert\, c(0) = \overline{c},   \sup_{t\in [0,T]} \sum_{k\ge 0} (k+1)|c_k(t)-\overline{c}|  \le M < +\infty. }\,,
\] 
equipped with metric $(c,d)\mapsto \d_\Exg^T(c,d):=\sup_{t \in [0,T]}\lVert\tail(c_t-d_t)\rVert_{\ell^1}$.

Observe that if $c^n \to c$ in $\d_\Exg$ for $(c^n)\subset X$, by Fatou's lemma, $\sum_{k\ge 0} (k+1)|c_k -\overline{c}|\le M$. Hence $X$ is a closed subset under the metric $\d^T_\Exg$. So it is a complete metric space under $\d^T_\Exg$.

Consider the map $F(c)(t):=\overline{c}+ \int_0^t Qc(s) \d s,$ where 
\[{Qc}_k = J_{k-1}[c]-J_k[c]=A_{k-1}[c] c_{k-1}- B_k[c] c_k - (A_k[c] c_k - B_{k+1}[c] c_{k+1}).\]

We need to show that $F$ is a self-map, i.e., $F(c): X\to X$, for small enough $T$. First, we argue that $Qc_k$ is well-defined by showing the absolute summability of terms in $Qc$. We take a sequence of non-negative numbers $g_k \ge 0$ and write
\[\begin{split}\sum_{k\ge 0} g_k {Qc}_k &=\sum_{k\ge 0} g_k \bigl(A_{k-1}[c] c_{k-1}- A_k[c]c_k\bigr) - \sum_{k\ge 0} g_k\bigl(B_k[c] c_k- B_{k+1}[c] c_{k+1}\bigr) \\
&=\sum_{k \ge 0} g_k \biggl(\sum_{l\ge 1} K(l,k-1 )c_l c_{k-1} - K(l,k)c_l c_k \biggr) \\
&\qquad - \sum_{k\ge 0} g_k \biggl(\sum_{l\ge 1}K(k,l-1)c_{l-1} c_k - K(k+1,l-1) c_{l-1} c_{k+1} \biggr) \,.
\end{split}
\]
Here we give the complete arguments for the first term; the second term will be similar. We have
\begin{align}\label{eq:1. term}
    \MoveEqLeft\sum_k g_k \biggl(\sum_{l\ge 1} K(l,k-1 )c_l c_{k-1} - K(l,k)c_l c_k \biggr) \\
    &= \sum_k g_k c_{k-1}\sum_{l\ge 1}c_l (K(l,k-1)-K(l,k)) + \sum_{k} g_k (c_{k-1}- c_k)\sum_{l\ge 1} K(l,k) c_l \notag
\end{align}
and
\begin{align}\label{eq:2.term} 
\MoveEqLeft \sum_{k\ge 0} g_k \biggl(\sum_{l\ge 1}K(k,l-1)c_{l-1} c_k - K(k+1,l-1) c_{l-1} c_{k+1} \biggr) \\
&= \sum_{k\ge 0} \mkern-1mu g_k c_{k} \mkern-1mu  \sum_{l\ge 1}\!c_{l-1} \bigl(K(k,l-1)-K(k+1,l-1)\bigr) + \!\sum_{k\ge 0} \mkern-1mu g_k (c_{k}- c_{k+1})\sum_{l\ge 1} \mkern-2mu  K(k+1,l-1) c_{l-1}. \notag
\end{align}
Using Assumption \eqref{e:ass:Ku} in \eqref{eq:1. term} to get
\[\bigg| \sum_{k\ge 0} g_k c_{k-1}\sum_{l\ge 1} c_l (K(l,k-1)-K(l,k))\bigg| \le C_K \sum_k g_k |c_{k-1}| \sum_l l |c_l|.
\]
For the second part of the first term, we perform a summation by parts. We introduce the truncation $g_k^N =0$ for $k> N$ of $g_k$ and get
\[\begin{split}
\MoveEqLeft \sum_{l\ge 1} c_l \sum_{k\ge 0}^{N}   K(l,k)  g^N_k (c_{k-1}-c_k) \\
&=\sum_l c_l \sum_k^{N-1} c_k \Bigl((g^N_{k+1}-g^N_k)K(l,k+1) + g^N_k \bigl(K(l,k+1)-K(l,k)\bigr)\Bigr)- \sum_l c_l g^N_{N}K(l,N)c_{N}.
\end{split}\]
Similarly, we get for the second term
\[\begin{split}
 \MoveEqLeft \sum_{l\ge 1} c_{l-1} \sum_{k\ge 0}^{N}   K(k+1,l-1)  g^N_k (c_{k}-c_{k+1}) \\
&=\sum_l c_{l-1} \sum_k^{N} c_k \Bigl((g^N_{k}-g^N_{k-1})K(k+1,l-1) + g^N_{k-1} \bigl(K(k+1,l-1)-K(k,l-1)\bigr)\Bigr)\\
&\qquad - \sum_l c_{l-1} g^N_{N}K(N+1,l-1)c_{N+1}.
\end{split}\]
The double sum can be bounded by the assumptions on the kernel
\[\begin{split}
\MoveEqLeft \sum_l c_l \sum_{k\ge 0}^{N-1} c_k \Bigl((g^N_{k+1}-g^N_k)K(l,k+1) + g^N_k \bigr(K(l,k+1)-K(l,k)\bigr)\Bigr)\\
& \le C_K \sum_l  |c_l| l \sum_{k=0}^{N-1} |c_k| k |g^N_{k+1}-g^N_k| + C_K\sum_{l} |c_l|l \sum_k^{N-1} |c_k| g^N_k
\end{split}\] 
and a similar bound for
\[\begin{split}
	\MoveEqLeft \sum_l c_{l-1} \sum_k^{N} c_k \Bigl((g^N_{k}-g^N_{k-1})K(k+1,l-1) + g^N_{k-1} \bigl(K(k+1,l-1)-K(k,l-1)\bigr)\Bigr)\\
&\le C_K \sum_{l} |c_{l-1}| l \sum_k^N |c_k| (k+1) |g^N_k-g^N_{k-1}| + C_K \sum_{l} |c_{l-1}| (l-1)  \sum_k^N  |c_k| g^N_{k-1}.
\end{split}\]
Now, we set $g_k^N := k $ for $k\le N/2$ and $g_{k+1}^N - g_k^N = -1 $ for $N/2 < k < N$ so that $g_N^N=0$. Thus, the boundary term vanishes. Moreover, 
\[\sum_l c_l \sum_{k}^{N-1} c_k \Bigl((g_{k+1}-g_k)K(l,k+1) + g_k \bigl(K(l,k+1)-K(l,k)\bigr)\Bigr) \le 2 C_K \sum_l |c_l| l \sum_k^{N-1} |c_k| k. \]
In summary, we have
\[\sum_k^N g_k^N |Qc_k| \le 4 C_K ( (\Mom_1(|c|)^2+ \Mom_1(|c|)) \le 8 C_K (M^2 +M +\rho^2+\rho). \]
for every $N$.
The upper bound is uniform in $N$, so we can take $N$ to $\infty$ to get a bound for $\sum_k k |Q c_k|$.
Moreover, we can take $g_k \equiv 1$ in \eqref{eq:1. term} and \eqref{eq:2.term}.
Then, we use the assumptions of the kernel directly to see that they are also absolutely summable
\[
	\sum_k |Qc_k| \le 6 C_K (\Mom_0(|c|)\Mom_1(|c|))\le 6 C_K(M+\rho)(M+1) \,.
\]
By summing the two upper bounds, we get the estimate
\[ \begin{split}\sup_{t\in[0,T]}\sum_k (k+1) |F_k(c)(t) - \overline{c}_k| 
	&=\sup_{t\in[0,T]}\sum_k (k+1) \biggl|\int_0^t Qc_k(s) \dx s\biggr|\\&
\le \sup_{t\in [ 0,T]} \int_0^t \sum_k (k+1)|Qc_k (s) \dx s| \\
& \le   14 T C_K \bigl((M+\rho)(M+1)+\rho(\rho+1)\bigr)  < +\infty \,.
\end{split}\] 
Hence, if we choose $T\le 14 C_K \frac{M}{(M+\rho)(M+1)+(\rho+1)\rho}$, then $F(c)\in X$.

It remains to show that $F$ is a contraction on $X_{M,T}(\overline{c})$ for $T$ small enough.
Let if $c,d\in X$, for any $t\in [0,T]$, the sums are absolutely summable so
\[\begin{split}\d_\Exg(F(c(t),F(d(t) )&= \lVert \tail(F(c(t)))-\tail(F(d(t)))\rVert_{\ell^1} \\
&=\norm*{\tail\bra*{\int_0^t \bigl( Qc(s) -Qd(s)\bigr) \dx s}}_{\ell^1}\\
&\le 4 C_K M \int_0^t \sum_{m\ge 1} |E_m(s)| \dx s\\
&\le  4 C_K M T  \sup_{s\in [0,T]} \d_{\Exg}(c(s),d(s)).
\end{split}\]
Hence for 
\begin{equation}
\label{eq:T bound}T < \min \bigg\{\frac1{4 C_K M}, 14C_K \frac{M}{(M+\rho)(M+1)+(\rho+1)\rho}\bigg\},
\end{equation}
we have a contraction and can apply Banach fixed point theorem to have a unique fixed point $c\in X_{M,T}(\overline{c}): F(c(t)) = c$.
Note that the upper bound of $T$ is uniform for all initial data $\overline{c}\in \calP_\rho$.

\smallskip
\noindent
\emph{Step 2: Properties of the fixed point.}  
We need to show $c\in \cP^1$ with $\Mom_1(c)=\Mom_1(\overline{c})=\rho$.
First, we consider the positivity of $c$. This argument is identical to \cite{Esenturk2018}. 
Since $c$ is a fixed point which means the equality  \[c_k(t) = \overline{c}_k + \int_0^t Qc_k(s) \d s\] holds. This implies continuity of $c$ and 
\begin{equation}
    \label{eq:balance-eq} \frac{\d}{\d t} c_k (t) + A_k[c] c_k + B_k[c] c_k = A_{k-1}[c] c_{k-1} + B_{k+1}[c]c_{k+1}.
\end{equation}  
We derive a contradiction by considering the first time $t_0$  some coordinate of $c$ hits zero and becomes negative afterward. We denote that coordinate $k\in\bN_0$. We have $c_k(t_0)=0$ and $\frac{\d}{\d t}  c_k(t) < 0$. But this contradicts~\eqref{eq:balance-eq} because at $t_0$ the right side of \eqref{eq:balance-eq} is non-negative and the left side of~\eqref{eq:balance-eq} is negative.

As a result, such a time $t_0$ does not exist, and  $c(0)\ge 0$ implies $c(t)\ge 0$ up to $T$. We may write compactly
\[Qc_m(s) = \sum_{k,l\ge 1}\kappa_s[c](k,l-1)\gamma_{m}^{k,l-1}\] with $\gamma_m^{k,l-1}=-\delta_{k,m}-\delta_{l-1,m}+\delta_{k-1,m}+\delta_{l,m}$.
By the absolute summable bound from Step 1, we could change the order of summation.  Hence 
\[\sum_{m\ge 0} Qc_m(s) = \sum_{k,l\ge 1} \kappa_s[c](k,l-1) \sum_{m\ge 0} \gamma_m^{k,l-1} =0\]
and 
\[\sum_{m\ge 1} m Qc_m(s) = \sum_{k,l\ge 1} \kappa_s[c](k,l-1) \sum_{m\ge 1} m \gamma_m^{k,l-1} = \sum_{k,l\ge 1} \kappa_s[c](k,l-1)(-k-(l-1)+k-1+l) = 0 \,,
\]
which shows the conservation of the zeroth and first moment.
Hence, it follows $c(t) \in \cP^1$ and $\Mom_1(c(t)) = \Mom_1(\overline{c})$ for $t\in[0,T]$.  

\smallskip
\noindent
\emph{Step 3: Iterations of local existence.}
Since the bound \eqref{eq:T bound} for existence time $T$  is uniform for initial data with the same first moment and a fixed $M$, we can iterate the existence and uniqueness proof on $X_{M,T}$ with the next initial data generated by the unique fixed point from the previous step to get the solution of EDG for any positive time.
\end{proof}

\section{Explicit recovery sequence for the energy functional}\label{subsec:recovery-seq} 
For the construction of the recovery sequence, we need a suitable discretisation in $\hat V^{N,L}$ for a given $c\in \cP_{\!\rho}$. We do this via a projection in $\ell^{1,1}$, which might be non-unique.  Hence, we need to ensure that we find a measurable selection among the closest points. This measurable map will be used to define a recovery sequence via pushforward.  With the continuity of relative entropy in $\ell^{1,1}$, we conclude the convergence of energies along the discrete $\ell^{1,1}$-approximation empirical measures.

\begin{lemma}[Existence of Borel selection]\label{lem:borelselection} Let $0<\rho\le \overline{\rho}<+\infty$. Consider the complete separable metric space $(\cP_{\le \overline{\rho}},\ell^{1,1})$ defined in~\eqref{def:ell1,1}, define the set-valued map $\cD^{N,L}: \cP_{ \rho} \to 2^{\hat{V}^{N,L}}$ as 
\[
 	\cD^{N,L}(c) := \operatorname{arg\,min}\{\| c^{N,L}-c \|_{\ell^{1,1}} \colon c^{N,L}\in \hat{V}^{N,L}  \} \,,
\] 
where $\hat{V}^{N,L}$ is considered as a finite subset of $(\cP_{\le \overline{\rho}},\ell^{1,1})$.

Then there exists a measurable map  $\scrD^{N,L}: \cP_{\!\rho}\to \hat{V}^{N,L}$ satisfying $\scrD^{N,L}(c)\in \cD^{N,L}(c)$ for all~$c \in \cP_{\!\rho}$.
\end{lemma}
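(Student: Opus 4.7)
The plan is to exploit the finiteness of $\hat V^{N,L}$ to reduce the abstract measurable selection problem to the elementary observation that a lexicographic tie-breaking rule among continuous functions yields a Borel measurable choice. No deep selection theorem (such as Kuratowski--Ryll-Nardzewski) is needed.

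First I would check that $\cD^{N,L}(c)$ is well-defined and non-empty for every $c\in\cP_{\!\rho}$. For any $c \in \cP_{\!\rho}$ and $v\in\hat V^{N,L}\subset \cP_{N/L}$, the triangle inequality gives
\[
\norm{c-v}_{\ell^{1,1}} \le \Mom_0(c)+\Mom_0(v)+\Mom_1(c)+\Mom_1(v)\le 2 + \rho + N/L<\infty,
\]
so the set $\{\norm{c-v}_{\ell^{1,1}}: v\in\hat V^{N,L}\}$ is a finite set of finite real numbers. Therefore its minimum is attained and $\cD^{N,L}(c)\neq\emptyset$.

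Next I would enumerate $\hat V^{N,L}=\{v_1,\dots,v_M\}$ in some fixed (arbitrary) order, and define, for each $i\in\{1,\dots,M\}$, the map $\phi_i\colon\cP_{\!\rho}\to[0,\infty)$ by $\phi_i(c):=\norm{c-v_i}_{\ell^{1,1}}$. Each $\phi_i$ is $1$-Lipschitz on $(\cP_{\!\rho},\ell^{1,1})$, hence continuous. The pointwise minimum $m(c):=\min_{1\le i\le M}\phi_i(c)$ is therefore continuous as well. I then define the selection by lexicographic tie-breaking:
\[
i(c):=\min\set[\big]{i\in\{1,\dots,M\}:\phi_i(c)=m(c)}
\quad\text{and}\quad
\scrD^{N,L}(c):=v_{i(c)} \,.
\]
By construction $\scrD^{N,L}(c)\in\cD^{N,L}(c)$ for all $c\in\cP_{\!\rho}$.

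Finally I would verify Borel measurability. Since $\hat V^{N,L}$ is finite, it suffices to show that each level set $\{c:\scrD^{N,L}(c)=v_k\}$ is Borel in $(\cP_{\!\rho},\ell^{1,1})$. The identity
\[
\set{c:\scrD^{N,L}(c)=v_k}=\set{c:\phi_k(c)=m(c)}\cap\bigcap_{j<k}\set{c:\phi_j(c)>m(c)}
\]
exhibits this set as the intersection of a closed set and finitely many open sets, using the continuity of $\phi_i$ and $m$. Hence $\scrD^{N,L}$ is Borel measurable, completing the construction. There is no real obstacle here: the only point requiring care is the tie-breaking, and the finiteness of $\hat V^{N,L}$ makes the standard ordering argument directly applicable without invoking general measurable-selection machinery.
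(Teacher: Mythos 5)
Your proof is correct, and it takes a genuinely more elementary route than the paper. The paper invokes the Kuratowski--Ryll-Nardzewski measurable selection theorem: it shows that for every open $U\subset \cP_{\le\overline\rho}$ the set $\{c\in\cP_{\!\rho}:\cD^{N,L}(c)\cap U\neq\emptyset\}$ is closed, by decomposing $\cP_{\!\rho}$ into the Voronoi cells of the points of $\hat V^{N,L}$ and writing that set as a finite union of closed cells; KRN then produces the selection. You bypass the selection machinery altogether: since $\hat V^{N,L}$ is finite, the distance functions $\phi_i(c)=\norm{c-v_i}_{\ell^{1,1}}$ are finitely many continuous functions, the minimum $m$ is continuous, and the lexicographic tie-breaking rule gives an explicit selection whose level sets $\{\phi_k=m\}\cap\bigcap_{j<k}\{\phi_j>m\}$ are visibly Borel. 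Both arguments ultimately rest on the finiteness of $\hat V^{N,L}$ (the paper needs it to see the union of Voronoi cells is closed), so nothing is lost by your shortcut; what the paper's phrasing buys is that the same template would survive if $\hat V^{N,L}$ were replaced by a countable or more general closed-valued target, whereas your argument gains in transparency and in producing a concrete, canonical selection. One small point worth stating explicitly in your write-up: measurability of a map into a finite set is equivalent to each preimage of a singleton being Borel, which is exactly what your level-set identity verifies, and the relevant $\sigma$-algebra on $\cP_{\!\rho}$ is the Borel $\sigma$-algebra of $\ell^{1,1}$, the same one used later in the recovery-sequence construction.
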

\begin{proof}
Let us assume for the moment that for any subset $U\subset \cP_{\le\overline{\rho}}$, the set defined by
\[\hat{\cD}^{N,L}(U):=\{ c\in \cP_{\!\rho} : \cD^{N,L}(c) \cap U \neq \emptyset \} \] 
is closed. In particular, $\hat{\cD}^{N,L}(U)$ is Borel measurable for any open subset $U \subset \cP_{ \le\overline{\rho}}$. Then by Kuratowski and Ryll-Nardzewski measurable selection theorem \cite[Theorem 6.9.3]{bogachev2007measure}, there exists a measurable map $\scrD^{N,L}: \cP_{\!\rho}\to \hat{V}^{N,L}$ and it satisfies $\scrD^{N,L}(c)\in \cD^{N,L}(c)$ for all $c \in \cP_{\!\rho}$.

Hence, it remains to show that $\hat{\cD}^{N,L}(U)$ is $(\cP_{\!\rho} ,\ell^{1,1})$ closed.
For $\hat{c} \in \hat{V}^{N,L}$, define the associated Voronoi cell in the $(\cP_{\!\rho},\ell^{1,1})$ metric space by
\[
	\VorCel^{N,L}(\hat{c}):=\Bigl\{c \in \cP_{\!\rho}\colon \|c- \hat{c}\|_{\ell^{1,1}}=\min_{\tilde c\in \hat{V}^{N,L}}\| c-\tilde c\|_{\ell^{1,1}} \Bigr\}.
\]
We note that $c \mapsto \| c - \tilde c\|_{\ell^{1,1}}$ is $\ell^{1,1}$-continuous for any fixed $\tilde c$ therefore $c\mapsto \|c- \hat{c}\|_{\ell^{1,1}}-\min_{\tilde c\in \hat{V}^{N,L}}\| c-\tilde c\|_{\ell^{1,1}}$ is also $\ell^{1,1}$-continuous. 
Since $\VorCel^{N,L}(\hat{c})$ is the preimage of the closed subset $(-\infty,0]\subset \bbR$ under a continuous map, $\VorCel^{N,L}(\hat{c})$ is closed in $(\cP_{\!\rho},\ell^{1,1})$. Observe that $\cP_{\!\rho}= \bigcup_{\hat{c} \in \hat{V}^{N,L}}\VorCel^{N,L}(\hat{c})$ so that 
\begin{equation*}\begin{split}
\hat{\cD}^{N,L}(U ) 
	&= \bigcup_{\hat{c} \in \hat{V}^{N,L}}\bigl\{ c\in\VorCel^{N,L}(\hat{c}) : \cD^{N,L}(c) \cap U \neq \emptyset \bigr\} \\
	&= \bigcup_{\hat{c} \in \hat{V}^{N,L}}\bigl\{ c\in\VorCel^{N,L}(\hat{c}) : \{\hat{c}\} \cap U \neq \emptyset \bigr\} \\
	&= \bigcup_{\hat{c} \in \hat{V}^{N,L}\cap U} \VorCel^{N,L}(\hat{c}) \,.
\end{split}\end{equation*}
Since, each $\VorCel^{N,L}(\hat{c})$ for $\hat c \in \hat V^{N,L}$ is $\ell^{1,1}$ closed, we get that $\hat{\cD}^{N,L}(U)$ is $(\cP_{\!\rho} ,\ell^{1,1})$ closed because it is a finite union of closed set.
%
%
\end{proof}
 
\begin{lemma}\label{lem:VNLdensePrho}
	Given two sequences $(N_n)_{n\in\bN}$ $(L_n)_{n\in \bN}$ with $N_n,L_n \to \infty$ and $N_n/L_n \to \rho\in [0,\infty)$ as $n\to \infty$, then $ \cP_{\!\rho} \subset \overline{\bigcup_{n\in \bN}\hat{V}^{N_n,L_n}}^{\ell^{1,1}}$.
	
Moreover, if $c\in \cP_\rho$ with compact support, it is possible to construct the sequence $(c^n\in \hat{V}^{N_n,L_n})_n$ that is supported on a compact set. 
\end{lemma}
\begin{proof}
 For $\rho=0$, there is only one element $c\in \cP_0$ given by $c_0 =1$ and $c_k=0$ for $k\geq 1$. In this case $\frac{N_n}{L_n}\to 0$, so for $n$ large, $N_n < L_n$. We can use the approximation $c^n= \frac{L_n-N_n}{L_n} \delta_0 + \frac{N_n}{L_n} \delta_{1}$, which provides the approximation in $\ell^{1,1}$, since
 \[
   \norm{c^n - c}_{\ell^{1,1}} = 3\frac{N_n}{L_n} \to 3 \rho =0 \qquad\text{ as } n\to \infty.
 \]
 Hence, let now $\rho>0$. Fix $c \in \cP_{\!\rho}$, define $\gamma(c):=\sum_{l\geq 1} c_l=1-c_0 \in (0,1]$ and choose $\eps \in (0,\gamma\wedge \rho/3)$. For technical reasons, which will become clear later, we approximate the measure $c^\eps \in \cP_{\rho-\eps}$ given by 
 \begin{equation}\label{eq:def:ceps}
   c^\eps_0 = c_0 + \frac{\eps}{\gamma} \sum_{k\geq 1} \frac{c_k}{k} \qquad\text{and}\qquad c^\eps_k = c_k \bra*{ 1 - \frac{\eps}{\gamma k}} \text{ for } k\geq 1 \,.
\end{equation}
Indeed, it holds that $c^\eps_k \in [0,1)$ for each $k\in\bN_0$, so $c^\eps$ is a probability measure. Also, the first moment of $c^\eps$ is given by $\sum_{k\geq 1} k c_k^\eps = \sum_{k\geq 1} k c_k - \gamma^{-1} \eps \sum_{k\geq 1} c_k = \rho-\eps$. 

We can use $c^\eps$ instead of $c$ for the approximation, because it satisfies the closeness estimate
\[
	\norm{c^\eps - c}_{\ell^{1,1}} = \frac{\eps}{\gamma} \sum_{k=1}^\infty c_k (1+ \frac{2}{k}) \leq 3\eps\frac{1}{\gamma}\sum_{k=1}^\infty c_k  = 3 \eps. 
\]
Now, we choose $\bN \ni M_\eps> \rho$ s.t.\@ $\sum_{k> M_\eps} k c_k^\eps \leq \eps$ or equivalently
\begin{equation}\label{eq:choice:Meps}
 	\sum_{k=1}^{M_\eps} k c_k^\eps \geq  \rho - 2\eps . 
\end{equation}
In particular, we define the truncated measure supported on $\{0,\dots,M_\eps\}$
\begin{equation}\label{eq:def:cepsMeps}
 	c^{\eps,M_\eps}|_{1,\dots,M_\eps}:= c^\eps|_{1,\dots,M_\eps} \qquad\text{and}\qquad c^{\eps,M_\eps}_0 := c^\eps_0 + \sum_{k>M_\eps} c^\eps_k ,
\end{equation}
which satisfies the bound 
\[
	\norm[\big]{c^\eps - c^{\eps,M_\eps}}_{\ell^{1,1}} = \sum_{k>M_\eps} c^\eps_k + \sum_{k>M_{\eps}} (1+k) c_k^\eps \leq 3\eps.
\] 
Hence, it is sufficient to approximate $c^{\eps,M_\eps}\in \cP_{\tilde \rho^\eps}$ for some $\tilde\rho^\eps \leq \rho -\eps$.
We note that~\eqref{eq:def:ceps} and~\eqref{eq:def:cepsMeps} imply that $c_0^{\eps,M_\eps}>0$ possibly depending on $\eps$. 
This allows us to choose $n$ large enough such that
\begin{equation}\label{eq:Ln:choices}
	\frac{1}{L_n} \leq \frac{\eps}{\sum_{k=1}^{M_\eps} k}  = \frac{\eps}{\tfrac12 M_\eps(M_\eps+1)}
 	\qquad\text{and}\qquad
 	\frac{1}{L_n} \leq c_0^{\eps,M_\eps}
\end{equation}
as well as 
\begin{equation*}
 	\abs*{\frac{N_n}{L_n}-\rho}\leq \eps
 	\qquad\text{and}\qquad 
 	N_n\geq M_\eps \,.
\end{equation*} 
Next, we define the approximating measure supported on $\{0,\dots,M_\eps\}$ by
\[
    \hat c^{\eps,n}_k := \frac{[L_n c_k^{\eps,M_\eps}]}{L_n} \quad\text{ for } 1\leq k\leq M_\eps  \quad \text{and} \quad  \hat c^{\eps,n}_0 := 1-  \sum_{k=1}^{M_\eps} \frac{[L_n c_k^{\eps,M_\eps}]}{L_n}.
\] 
Note that $\hat{c}^{\eps,n}_0 \in \frac{\bN_0}{L_n}$  since $\hat{c}^{\eps,n}_k \in \frac{\bN_0}{L_n}$ for $k\ge 1$.
We have the approximation property by our choice of $L_n$ from the bound
\begin{align*}
	\norm[\big]{\hat c^{\eps,n}-c^{\eps,M_\eps}}_{\ell^{1,1}} &= \sum_{k=1}^{M_\eps} (k+2) \frac{L_n c_k - [L_n c_k]}{L_n} 
 	\leq \frac{1}{L_n} \sum_{k=1}^{M_\eps} (k+2)  \\
	&\leq \frac{\eps}{\tfrac12 M_\eps(M_\eps+1)} \tfrac12(M_\eps+5)M_\eps \le 3\eps. 
\end{align*}
By construction, we obtain $\hat c^{\eps,n}\in \hat V^{N^{\eps,n},L_n}$, where
\[
	N^{\eps,n} := \sum_{k=1}^{M_\eps} k [L_n c_k^{\eps,M_\eps}].
\] 
If $N^{\eps,n}=N_n$,
we constructed $\hat{c}^{\eps,n} \in \hat{V}^{N_n,L_n} \cap B_{9 \eps}(c)$.
If $N^{\eps,n}\neq N_n$, we need to correct the number of particles.
 Together with the properties $c^\eps \in \cP_{\rho -\eps}$, the choices of parameters~\eqref{eq:choice:Meps} and~\eqref{eq:Ln:choices}, we have the chain of inequalities
 \begin{equation}\label{eq:Nn:ineqchain}
    \rho - 3\eps \leq \frac{N^{\eps,n}}{L_n} =  \sum_{k=1}^{M_\eps} k c_k^\eps - \sum_{k=1}^{M_\eps} k \frac{ L_n c_k^\eps -[L_n c_k^\eps]}{L_n} \leq \rho -\eps \leq \frac{N_n}{L_n} \leq \rho + \eps,
	\end{equation}
	In particular, $N_n > N^{\eps,n}$ in this case. 
	Since, we have $\hat{c}_0^{\eps,n}\geq L_n^{-1}$ by our choice of $L_n$ in~\eqref{eq:Ln:choices}, we can define
 \[
    \overline c^{\eps,n} = \hat c^{\eps,n} - \delta_0 L_n^{-1} + \delta_{N_n-N^{\eps,n}} L_n^{-1} ,
 \]
 which by construction satisfies $\overline c^{\eps,n} \in \hat V^{N_n,L_n}$.The error in $\ell^{1,1}$ of this construction step is
 \[
    \norm{\overline c^{\eps,n} - \hat c^{\eps,n}}_{\ell^{1,1}} = L_n^{-1} + \frac{N_n-N^{\eps,n}+1}{L_n} \le 6\eps,
 \] 
 where we used $\frac{N_n - N^{\eps,n}}{L_n} \leq 4 \eps$ from~\eqref{eq:Nn:ineqchain} and hence we found $\overline{c}^{\eps,n} \in \hat{V}^{N_n,L_n} \cap B_{15 \eps}(c)$. 
 Since $\eps >0$ is arbitrary, this shows the first claim. 
 
Furthermore, in the case of $c$ has compact support, in the above construction, we can choose $M:=| \supp c |$ and at the end we choose $\epsilon< \frac{1}{8 L_n}$ so that $N_n - N^{\epsilon,n} \le 4 \epsilon L_n \le \frac12 $, then $\overline{c}^n := \overline{c}^{\epsilon = (8L_n)^{-1},L_n}$ is supported on $\supp c \cup \{0\}$ as $N_n-N^{\epsilon,n}\in\bN_0$.
\end{proof}
 
\begin{proposition}[Recovery sequence]\label{prop:recovery_seq}
 Let $0<\rho\le \overline{\rho}<+\infty$. Given $\bC^\rho$ supported on $\{ c\in \cP_{\!\rho}: c \text{ supported on } K\}$, where $K$ is a finite subset of $\bN_0$, there exists a sequence $\big(\bC^{N,L}\in \cP(\hat{V}^{N,L})\big)_{N/L \to \rho}$ such that 
$\bC^{N,L} \to \bC^\rho$ in duality with $C_b(\cP_{\le\overline{\rho}},\ell^{1,1})$ and 
\[
\lim_{\substack{N,L\to +\infty\\ N/L \to \rho }}\frac{1}{L} \mathcal{Ent}(\bC^{N,L}|\mathbbm{\Pi}^{N,L}_\omega) = \int \bar{F}(c,\rho)  \, \bC(\d c). \] 
In particular, $\scrD^{N,L}_{\#}\bC^\rho  \weakto  \bC^\rho$.
\end{proposition}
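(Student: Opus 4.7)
The plan is to define the recovery sequence by pushforward, taking $\bC^{N,L}:=(\scrD^{N,L})_{\#}\bC^\rho$, where $\scrD^{N,L}\colon \cP_{\!\rho}\to \hat V^{N,L}$ is the Borel measurable nearest-point selection (in the $\ell^{1,1}$ metric) constructed in Lemma~\ref{lem:borelselection}. Since $\scrD^{N,L}$ takes values in $\hat V^{N,L}$, this automatically produces an element of $\cP(\hat V^{N,L})$, so only the two convergence statements need to be verified.

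The first step is the pointwise approximation property of $\scrD^{N,L}$: for every $c\in \cP_{\!\rho}$ one has $\|\scrD^{N,L}(c)-c\|_{\ell^{1,1}}\to 0$ as $N/L\to\rho$. Indeed, Lemma~\ref{lem:VNLdensePrho} guarantees that along any sequence with $N/L\to \rho$ there are elements of $\hat V^{N,L}$ arbitrarily close to $c$ in $\ell^{1,1}$, and $\scrD^{N,L}(c)$ is by definition a closest one. Given any $f\in C_b(\cP_{\le\overline\rho},\ell^{1,1})$ it then follows from the change-of-variables formula
\[
\int f(\tilde c)\,\bC^{N,L}(\d \tilde c)=\int f(\scrD^{N,L}(c))\,\bC^\rho(\d c)
\]
together with the pointwise continuity $f(\scrD^{N,L}(c))\to f(c)$ and dominated convergence that $\bC^{N,L}\to \bC^\rho$ in duality with $C_b(\cP_{\le\overline\rho},\ell^{1,1})$. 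Since $\d_\Exg\le \|\cdot\|_{\ell^{1,1}}$ by the inequality~\eqref{eq:c1m1}, every $C_b(\cP_{\le\overline\rho},\d_\Exg)$ function belongs to $C_b(\cP_{\le\overline\rho},\ell^{1,1})$, which gives the $\ExWeak$ convergence as stated in the ``in particular'' clause.

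For the energy convergence I would replay, with the recovery sequence in place, the three-term decomposition used in the proof of Proposition~\ref{prop:cty_energy}:
\[
\tfrac1L\cE^{N,L}(\bC^{N,L})-\cE^{\rho\wedge\rho_c}(\bC^\rho)= A^{N,L}+B^{N,L}+D^{N,L},
\]
with $A^{N,L}:=\tfrac1L(\cH(\bC^{N,L}|\mathbbm\Pi^{N,L})-\cH(\bC^{N,L}|\bbnu^L_{\rho\wedge\rho_c}))$, $B^{N,L}:=\tfrac1L\cH(\bC^{N,L}|\bbnu^L_{\rho\wedge\rho_c})-\int\sfH(c|\equi^{\rho\wedge\rho_c})\,\bC^{N,L}(\d c)$, and $D^{N,L}:=\int\sfH(c|\equi^{\rho\wedge\rho_c})(\bC^{N,L}-\bC^\rho)(\d c)$. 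The term $A^{N,L}$ equals $-\tfrac1L\cH(\pi^{N,L}|\nu^L_{\rho\wedge\rho_c})$, which vanishes by the equivalence of ensembles (Proposition~\ref{prop:EquivalenceOfEnsembles}). The term $B^{N,L}$ is controlled by Lemmas~\ref{lem:upperbdd1} and~\ref{lem:upperbdd2}, giving an error of order $L^{-1}\sqrt{N}\log L \to 0$; these lemmas only use that $\bC^{N,L}$ is supported on $\hat V^{N,L}$, so they apply regardless of the structure of the push-forward. Finally, $D^{N,L}\to 0$ since $c\mapsto \sfH(c|\equi^{\rho\wedge\rho_c})$ is bounded and $\ell^{1,1}$-continuous on $\cP_{\le\overline\rho}$ by Lemma~\ref{lem:ctyofRelentropy}, so the already-proved convergence in duality with $C_b(\cP_{\le\overline\rho},\ell^{1,1})$ can be applied.

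The main technical point to keep in mind is precisely the choice of the $\ell^{1,1}$ metric in Lemma~\ref{lem:borelselection}: the energy functional is only \emph{lower} semicontinuous in $\d_\Exg$ but \emph{continuous} in $\ell^{1,1}$, so projecting onto $\hat V^{N,L}$ in the finer metric $\ell^{1,1}$ is exactly what is needed to upgrade the $\Gamma$-liminf of Proposition~\ref{prop:cty_energy} into a matching $\Gamma$-limsup and thus conclude the full $\Gamma$-convergence of the free energy claimed in Theorem~\ref{thm:EDP_con}(a).
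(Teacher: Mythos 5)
Your proposal is correct and follows essentially the same route as the paper: pushforward of $\bC^\rho$ under the measurable nearest-point selection of Lemma~\ref{lem:borelselection}, pointwise $\ell^{1,1}$-convergence from the density Lemma~\ref{lem:VNLdensePrho}, dominated convergence for the weak convergence, and the same three-term decomposition with equivalence of ensembles, the counting lemmas, and the $\ell^{1,1}$-continuity of $c\mapsto \sfH(c|\equi^{\rho\wedge\rho_c})$ from Lemma~\ref{lem:ctyofRelentropy}. The only detail the paper adds is the observation (via Lusin--Souslin) that $\cB(\cP_{\!\rho},\d_\Exg)=\cB(\cP_{\!\rho},\ell^{1,1})$, so that the pushforward under the $\ell^{1,1}$-measurable map $\scrD^{N,L}$ is well defined for a Borel measure $\bC^\rho$ regardless of which of the two topologies is used to define its Borel structure.
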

\begin{proof} 
	We have both $(\cP_{\!\rho},~\text{narrow})$ and $(\cP_{\!\rho}, \ell^{1,1})$ are Polish spaces such that  $\cB(\cP_{\!\le\overline{\rho}},\ell^{1,1})\subset\cB(\cP_{\!\le\overline{\rho}},~\text{narrow})  $. By a corollary of the Lusin-Souslin Theorem \cite[Theorem 15.1, Exercise 15.4]{kechris2012classical}, we have $\cB(\cP_{\!\le\overline{\rho}},~\text{narrow})= \cB(\cP_{\!\le\overline{\rho}},\ell^{1,1})$ so we do not have to distinguish the Borel measures in $\cP((\cP_{\!\le\overline{\rho}},\ell^{1,1}))$ and $\cP((\cP_{\!\le\overline{\rho}},~\text{narrow}))$.
 
We obtain a $\ell^{1,1}$-measurable map $\scrD^{N,L}:\cP_{\!\rho} \to \hat{V}^{N,L}$ from Lemma \ref{lem:borelselection}. 
This allows us to define the pushforward measure $\scrD^{N,L}_{\#}\bC^\rho \in \cP(\hat{V^{N,L}})$. 

By the density of $\bigcup_{N/L \to \rho}\hat{V}^{N,L}$ in $(\cP_{\!\rho},\ell^{1,1})$ proven in Lemma~\ref{lem:VNLdensePrho}, the sequence $(\scrD^{N,L}(c) \in \hat{V}^{N,L})_{N/L\to \rho}$ converges to $c \in \cP_{\!\rho}$ in $\ell^{1,1}$. In Lemma~\ref{lem:VNLdensePrho}, the appoxrimating sequence of $c$ with compact support has support of $\supp(c) \cup \{0\}$, by a contradiction argument, it can be seen that the minimizing sequence $\cD^{N,L}(c)$ in $\ell^{1,1}$ also has to have support on $\supp(c) \cup \{0\}$.

Let $f \in C_b(\cP_{\le \overline{\rho}}, \ell^{1,1})$. Since $\scrD^{N,L}(c) \to c$ in $\ell^{1,1}$ for each $c$, $f(\scrD^{N,L}(c)) \to f(c)$ as $N/L \to \rho$. By dominated convergence, we have 
\[\lim_{N/L \to \rho} \pra*{ \int f(c) \scrD^{N,L}_{\#}\bC^\rho(\d c) - \int f(c) \, \bC^\rho(\d c) }
= \lim_{N/L\to\rho} \int \bigl( f(\scrD^{N,L}(c)) - f(c)  \bigr) \, \bC^\rho(\d c)= 0. \] 
This means $\scrD^{N,L}_{\#}\bC^\rho \to \bC^\rho$ in $\sigma(\cP(\cP_{\le \overline{\rho}}), C_b(\cP_{\le \overline{\rho}},\ell^{1,1}))$.
 
Now the map $c\mapsto \mathsf{Ent}(c | \omega)$ on $\{ c\in \cP_{\!\rho}: c \text{ supported on } K.\}$ is clearly $\ell^{1,1}$ continuous. Hence 
 \[\lim_{N/L\to\rho}\int\mathsf{Ent}(c | \omega) \scrD^{N,L}_{\#}\bC^\rho(\d c)  =\int \mathsf{Ent}(c| \omega) \bC^\rho(\d c).\]
By the estimates Lemma \ref{lm:entest} and Equation \eqref{eq:limit ANL}, we have that \[
\lim_{\substack{N,L\to +\infty\\ N/L \to \rho }} \frac{1}{L} \mathcal{Ent}(\bC^{N,L}|\mathbbm{\Pi}^{N,L}_\omega) = \int \bar{F}(c,\rho)  \, \bC(\d c) . \] 
Since $C_b(\cP_{\le \overline{\rho}},~\text{narrow}) \subset C_b(\cP_{\le \overline{\rho}}, \ell^{1,1})$, this implies  $\scrD^{N,L}_{\#}\bC^\rho  \weakto  \bC^\rho$.
\end{proof}
\bibliographystyle{abbrv}
\bibliography{bib.bib}

\begin{thebibliography}{10}

\bibitem{AdamsDirrPeletierZimmer2013}
S.~Adams, N.~Dirr, M.~Peletier, and J.~Zimmer.
\newblock Large deviations and gradient flows.
\newblock {\em Philos. Trans. R. Soc. Lond. Ser. A Math. Phys. Eng. Sci.},
  371(2005):20120341, 17, 2013.

\bibitem{AdamsDirrPeletierZimmer2011}
S.~Adams, N.~Dirr, M.~A. Peletier, and J.~Zimmer.
\newblock From a large-deviations principle to the {W}asserstein gradient flow:
  A new micro-macro passage.
\newblock {\em Comm. Math. Phys.}, 307(3):791--815, 2011.

\bibitem{AgazziAndreisPattersonRenger2022}
A.~Agazzi, L.~Andreis, R.~I.~A. Patterson, and D.~R.~M. Renger.
\newblock Large deviations for {M}arkov jump processes with uniformly
  diminishing rates.
\newblock {\em Stochastic Process. Appl.}, 152:533--559, 2022.

\bibitem{ambrosio2008gradient}
L.~Ambrosio, N.~Gigli, and G.~Savar{\'e}.
\newblock {\em Gradient flows: in metric spaces and in the space of probability
  measures}.
\newblock Springer Science \& Business Media, 2008.

\bibitem{Ambrosio_Trevisan_14}
L.~Ambrosio and D.~Trevisan.
\newblock Well-posedness of {L}agrangian flows and continuity equations in
  metric measure spaces.
\newblock {\em Anal. PDE}, 7(5):1179--1234, sep 2014.

\bibitem{BCP86}
J.~M. Ball, J.~Carr, and O.~Penrose.
\newblock {The Becker-D{\"{o}}ring cluster equations: Basic properties and
  asymptotic behaviour of solutions}.
\newblock {\em Commun. Math. Phys.}, 104(4):657--692, 1986.

\bibitem{barik2024discrete}
P.~K. Barik, F.~P. da~Costa, J.~T. Pinto, and R.~Sasportes.
\newblock The discrete generalized exchange-driven system.
\newblock {\em Preprint arXiv:2408.00345}, 2024.

\bibitem{Basile2024}
G.~Basile, D.~Benedetto, L.~Bertini, and E.~Caglioti.
\newblock Asymptotic probability of energy increasing solutions to the
  homogeneous {B}oltzmann equation.
\newblock {\em Ann. Appl. Probab.}, 34(4), Aug. 2024.

\bibitem{BenNaimKrapivsky2003}
E.~Ben-Naim and P.~L. Krapivsky.
\newblock {Exchange-driven growth}.
\newblock {\em Phys. Rev. E}, 68(3):031104, 2003.

\bibitem{BertiniDeSoleGabrielleJonaLasinioLandim2015}
L.~Bertini, A.~De~Sole, D.~Gabrielli, G.~Jona-Lasinio, and C.~Landim.
\newblock Macroscopic fluctuation theory.
\newblock {\em Rev. Modern Phys.}, 87(2):593--636, 2015.

\bibitem{BertiniFaggionatoGabrielli2014}
L.~Bertini, A.~Faggionato, and D.~Gabrielli.
\newblock From level 2.5 to level 2 large deviations for continuous time
  {M}arkov chains.
\newblock {\em Markov Process. Related Fields}, 20(3):545--562, 2014.

\bibitem{bogachev2007measure}
V.~I. Bogachev and M.~A.~S. Ruas.
\newblock {\em Measure theory}, volume~1.
\newblock Springer, 2007.

\bibitem{ChlebounGarbielGrosskinsky2022}
P.~Chleboun, S.~Gabriel, and S.~Großkinsky.
\newblock {Poisson-Dirichlet asymptotics in condensing particle systems}.
\newblock {\em Electron. J. Probab.}, 27(none):1 -- 35, 2022.

\bibitem{ChlebounGrosskinsky2014}
P.~Chleboun and S.~Großkinsky.
\newblock Condensation in stochastic particle systems with stationary product
  measures.
\newblock {\em J. Stat. Phys.}, 154(1):432--465, 2014.

\bibitem{Chow2012}
S.-N. Chow, W.~Huang, Y.~Li, and H.~Zhou.
\newblock Fokker-{P}lanck equations for a free energy functional or {M}arkov
  process on a graph.
\newblock {\em Arch. Ration. Mech. Anal.}, 203(3):969--1008, 2012.

\bibitem{davis1993markov}
M.~Davis.
\newblock {\em Markov {M}odels \& {O}ptimization}, volume~49.
\newblock CRC Press, 1993.

\bibitem{EichenbergSchlichting2021}
C.~Eichenberg and A.~Schlichting.
\newblock Self-similar behavior of the exchange-driven growth model with
  product kernel.
\newblock {\em Comm. Partial Differential Equations}, 46(3):498--546, 2021.

\bibitem{ErbarFathiLaschosSchlichting2016}
M.~Erbar, M.~Fathi, V.~Laschos, and A.~Schlichting.
\newblock Gradient flow structure for {M}c{K}ean-{V}lasov equations on discrete
  spaces.
\newblock {\em Discrete Contin. Dyn. Syst. Ser. A}, 36(12):6799--6833, 2016.

\bibitem{ErbarFathiSchlichting2020}
M.~Erbar, M.~Fathi, and A.~Schlichting.
\newblock Entropic curvature and convergence to equilibrium for mean-field
  dynamics on discrete spaces.
\newblock {\em ALEA Lat. Am. J. Probab. Math. Stat.}, 17(1):445--471, 2020.

\bibitem{Esenturk2018}
E.~Esenturk.
\newblock Mathematical theory of exchange-driven growth.
\newblock {\em Nonlinearity}, 31(7):3460, jun 2018.

\bibitem{EsenturkValazquez2021}
E.~Esenturk and J.~Velazquez.
\newblock Large time behavior of exchange-driven growth.
\newblock {\em Discrete Contin. Dyn. Syst. Ser. A}, 41(2):747–775, 2021.

\bibitem{faggionato2022}
A.~Faggionato and V.~Silvestri.
\newblock A martingale approach to time-dependent and time-periodic linear
  response in markov jump processes.
\newblock {\em Latin American Journal of Probability and Mathematical
  Statistics}, 21(2):863, 2024.

\bibitem{Fathi2016}
M.~Fathi.
\newblock A gradient flow approach to large deviations for diffusion processes.
\newblock {\em J. Math. Pures Appl. (9)}, 106(5):957--993, 2016.

\bibitem{FathiSimon2016}
M.~Fathi and M.~Simon.
\newblock The gradient flow approach to hydrodynamic limits for the simple
  exclusion process.
\newblock In P.~Gon{\c{c}}alves and A.~J. Soares, editors, {\em From Particle
  Systems to Partial Differential Equations III}, pages 167--184, Cham, 2016.
  Springer, Springer International Publishing.

\bibitem{fournier2009marcus}
N.~Fournier and P.~Lauren{\c{c}}ot.
\newblock Marcus--{L}ushnikov processes, {S}moluchowski’s and {F}lory’s
  models.
\newblock {\em Stochastic Processes and their Applications}, 119(1):167--189,
  2009.

\bibitem{Grosskinsky-Schutz-Spohn2003}
S.~Gro{\ss}kinsky, G.~M. Sch{\"u}tz, and H.~Spohn.
\newblock Condensation in the zero range process: stationary and dynamical
  properties.
\newblock {\em J. Stat. Phys.}, 113:389--410, 2003.

\bibitem{GrosskinskyJatuviriyapornchai2019}
S.~Großkinsky and W.~Jatuviriyapornchai.
\newblock Derivation of mean-field equations for stochastic particle systems.
\newblock {\em Stochastic Process. Appl.}, 129(4):1455--1475, Apr. 2019.

\bibitem{HardyRamanujan18}
G.~H. Hardy and S.~Ramanujan.
\newblock {Asymptotic Formulæ in Combinatory Analysis}.
\newblock {\em Proc. London Math. Soc. (3)}, s2-17(1):75--115, 01 1918.

\bibitem{Hoeksema-thesis}
J.~Hoeksema.
\newblock {\em Mean-field limits and beyond: Large deviations for singular
  interacting diffusions and variational convergence for population dynamics}.
\newblock PhD thesis, Mathematics and Computer Science, Eindhoven University of
  Technology, Feb. 2023.
\newblock Proefschrift.

\bibitem{HoeksemaTse2023}
J.~Hoeksema and O.~Tse.
\newblock Generalized gradient structures for measure-valued population
  dynamics and their large-population limit.
\newblock {\em Calc. Var. Partial Dif.}, 62(5):158, May 2023.

\bibitem{KaiserJackZimmer2018}
M.~Kaiser, R.~L. Jack, and J.~Zimmer.
\newblock Canonical structure and orthogonality of forces and currents in
  irreversible {M}arkov chains.
\newblock {\em J. Stat. Phys.}, 170(6):1019--1050, 2018.

\bibitem{KaiserJackZimmer2019}
M.~Kaiser, R.~L. Jack, and J.~Zimmer.
\newblock A variational structure for interacting particle systems and their
  hydrodynamic scaling limits.
\newblock {\em Commun. Math. Sci.}, 17(3):739–780, 2019.

\bibitem{KantorovichRubinstein1958}
L.~Kantorovich and G.~S. Rubinstein.
\newblock On a space of totally additive functions.
\newblock {\em Vestnik Leningrad. Univ}, 13:52--59, 1958.

\bibitem{kechris2012classical}
A.~Kechris.
\newblock {\em Classical descriptive set theory}, volume 156.
\newblock Springer Science \& Business Media, 2012.

\bibitem{LamSchlichting2025}
C.~Y. Lam and A.~Schlichting.
\newblock Convergence of a stochastic particle system to the continuous
  generalized exchange-driven growth model.
\newblock {\em Preprint arXiv:2503.21572}, 2025.

\bibitem{LieroMielkePeletierRenger2017}
M.~Liero, A.~Mielke, M.~A. Peletier, and D.~R.~M. Renger.
\newblock On microscopic origins of generalized gradient structures.
\newblock {\em Discrete Contin. Dyn. Syst. Ser. S}, 10(1), 2017.

\bibitem{LUW2022}
X.~Lu and B.~Wennberg.
\newblock Solutions with increasing energy for the spatially homogeneous
  {B}oltzmann equation.
\newblock {\em Nonlinear Anal. Real World Appl.}, 3(2):243--258, 2002.

\bibitem{maas2011gradient}
J.~Maas.
\newblock Gradient flows of the entropy for finite {M}arkov chains.
\newblock {\em J. Funct. Anal.}, 261(8):2250--2292, 2011.

\bibitem{MaasMielke2020}
J.~Maas and A.~Mielke.
\newblock Modeling of chemical reaction systems with detailed balance using
  gradient structures.
\newblock {\em J. Stat. Phys.}, 181(6):2257–2303, Nov. 2020.

\bibitem{Mariani2018}
M.~Mariani.
\newblock A {$\Gamma$}-convergence approach to large deviations.
\newblock {\em Ann. Sc. Norm. Super. Pisa Cl. Sci. (5)}, 18(3):951--976, 2018.

\bibitem{mielke2013geodesic}
A.~Mielke.
\newblock Geodesic convexity of the relative entropy in reversible {M}arkov
  chains.
\newblock {\em Calc. Var. Partial Dif.}, 48:1--31, 2013.

\bibitem{Mielke2016}
A.~Mielke.
\newblock On evolutionary {$\varGamma$-Convergence} for gradient systems.
\newblock In A.~Muntean, J.~Rademacher, and A.~Zagaris, editors, {\em
  Macroscopic and large scale phenomena: {Coarse} graining, mean field limits
  and ergodicity}, pages 187--249. Springer International Publishing, Cham,
  2016.

\bibitem{MielkeMontefuscoPeletier2021}
A.~Mielke, A.~Montefusco, and M.~A. Peletier.
\newblock Exploring families of energy-dissipation landscapes via tilting:
  three types of {EDP} convergence.
\newblock {\em Continuum Mech. Thermodyn.}, 33(3), 2021.

\bibitem{MielkePeletierRenger2014GFLDP}
A.~Mielke, M.~A. Peletier, and D.~M. Renger.
\newblock On the relation between gradient flows and the large-deviation
  principle, with applications to {M}arkov chains and diffusion.
\newblock {\em Potential Anal.}, 41:1293--1327, 2014.

\bibitem{Nam2020}
K.~Nam.
\newblock Large deviations and localization of the microcanonical ensembles
  given by multiple constraints.
\newblock {\em Ann. Appl. Probab.}, 48(5), Sept. 2020.

\bibitem{PattersonRenger2019}
R.~I.~A. Patterson and D.~R.~M. Renger.
\newblock Large deviations of jump process fluxes.
\newblock {\em Mathematical Physics, Analysis and Geometry}, 22(3), Sept. 2019.

\bibitem{PattersonRengerSharma2024}
R.~I.~A. Patterson, D.~R.~M. Renger, and U.~Sharma.
\newblock Variational structures beyond gradient flows: a macroscopic
  fluctuation-theory perspective.
\newblock {\em J. Stat. Phys.}, 191(2):18, 2024.

\bibitem{PRST22}
M.~A. Peletier, R.~Rossi, G.~Savar{\'e}, and O.~Tse.
\newblock Jump processes as generalized gradient flows.
\newblock {\em Calc. Var. Partial Dif.}, 61(1):1--85, 2022.

\bibitem{PeletierSchlichting2022}
M.~A. Peletier and A.~Schlichting.
\newblock Cosh gradient systems and tilting.
\newblock {\em Nonlinear Anal.}, 231:113094, 2023.
\newblock Special Issue: \emph{Variational Models for Discrete Systems}.

\bibitem{Rachev1991}
S.~Rachev.
\newblock {\em Probability Metrics and the Stability of Stochastic Models}.
\newblock Wiley Series in Probability and Statistics - Applied Probability and
  Statistics Section. Wiley, 1991.

\bibitem{Renger2018}
D.~R.~M. Renger.
\newblock Flux large deviations of independent and reacting particle systems,
  with implications for macroscopic fluctuation theory.
\newblock {\em J. Stat. Phys.}, 172(5):1291--1326, 2018.

\bibitem{sandier2004gamma}
E.~Sandier and S.~Serfaty.
\newblock Gamma-convergence of gradient flows with applications to
  {G}inzburg-{L}andau.
\newblock {\em Comm. Pure Appl. Math.}, 57(12):1627--1672, 2004.

\bibitem{Schlichting2019}
A.~Schlichting.
\newblock Macroscopic limit of the {B}ecker-{D}\"{o}ring equation \textit{via}
  gradient flows.
\newblock {\em ESAIM Control Optim. Calc. Var.}, 25:Paper No. 22, 36, 2019.

\bibitem{Schlichting2020}
A.~Schlichting.
\newblock The exchange-driven growth model: basic properties and longtime
  behavior.
\newblock {\em J. Nonlinear Sci.}, 30(3):793--830, 2020.

\bibitem{Serfaty11}
S.~Serfaty.
\newblock Gamma-convergence of gradient flows on {H}ilbert and metric spaces
  and applications.
\newblock {\em Discrete Contin. Dyn. Syst. Ser. A}, 31(4):1427--1451, 2011.

\bibitem{si2024existence}
S.~Si and A.~K. Giri.
\newblock {Existence and Non-existence for Exchange-Driven Growth Model}.
\newblock {\em Preprint arXiv:2411.14083}, 2024.

\bibitem{Smoluchowski1916}
M.~V. Smoluchowski.
\newblock {Drei Vorträge über Diffusion, Brownsche Bewegung und Koagulation
  von Kolloidteilchen}.
\newblock {\em Phys. Zeitschrift}, 17:585--599, 1916.

\bibitem{sun2024condensation}
W.~Sun.
\newblock On the condensation and fluctuations in reversible
  coagulation-fragmentation models.
\newblock {\em Preprint arXiv:2404.09287}, 2024.

\end{thebibliography}
	
\end{document}